\newtheorem{theorem}{Theorem}[section]
\newtheorem{lem}[theorem]{Lemma}
\newtheorem{proposition}[theorem]{Proposition}
\newtheorem{corollary}[theorem]{Corollary}
\theoremstyle{definition}
\newtheorem{remark}[theorem]{Remark}
\newtheorem{definition}[theorem]{Definition}
\numberwithin{equation}{section} 
\newcommand{\ZZ}{\mathbb{Z}}   
\newcommand{\CC}{\mathbb{C}}  
\newcommand{\U}{\mathrm{U}} 
\DeclareMathAlphabet{\mathbbold}{U}{bbold}{m}{n}
\def\bb1{\mathbbold{1}}
\def\bbz{\mathbb{Z}}
\def\bbr{\mathbb{R}}
\def\bbc{\mathbb{C}}
\def\bbn{\mathbb{N}}
\def\bbp{\mathbb{P}}
\def\ecal{\mathcal{E}}
\def\ucal{\mathcal{U}}
\def\ccal{\mathcal{C}}
\def\ncal{\mathcal{N}}
\def\cal{\mathcal{H}}
\def\lcal{\mathcal{L}}
\def\gfr{\mathfrak{g}}
\def\tfr{\mathfrak{t}}
\DeclareMathOperator\Aut{Aut}
\DeclareMathOperator\Inn{Inn}
\DeclareMathOperator\GL{GL}
\DeclareMathOperator\ad{ad}
\DeclareMathOperator\ind{ind}
\DeclareMathOperator\pr{pr}
\DeclareMathOperator\tr{Tr}
\DeclareMathOperator\supp{supp}
\def\h{\hspace{1mm}}
\def\vare{\varepsilon}
\def\be{\begin{equation}}
\def\ee{\end{equation}}
\newcommand{\wh}[1]{\widehat{#1}}
 \renewcommand{\d}[1]{\ \mathrm{d}#1}
\renewcommand{\H}{\mathscr{H}}
\newcommand{\op}{\mathrm{op}}
\newcommand{\cf}{\bb1}
\newcommand{\Dim}{\textrm{DC}}
\newcommand{\HS}{\textrm{HS}}
\title{Locally random groups}
\author[K. Mallahi-Karai]{Keivan Mallahi-Karai}
\address{ Jacobs University, Campus Ring I, 28759, Bremen, Germany}
\email{k.mallahikarai@jacobs-university.de}
\author[Amir Mohammadi]{Amir Mohammadi}
\address{Mathematics Department, University of California, San Diego, CA 92093-0112, USA }
\email{ammohammadi@ucsd.edu }
\author[Alireza Salehi Golsefidy]{Alireza Salehi Golsefidy}
\address{Mathematics Department, University of California, San Diego, CA 92093-0112, USA}
\email{golsefidy@ucsd.edu}
\begin{document}
\maketitle
\tableofcontents
\begin{abstract}
In this work, we will introduce and study the notion of {\it local randomness} for compact metric groups. We prove a mixing inequality as well as a product result for locally random groups under an additional {\it dimension condition} on the volume of small balls, and provide several examples of such groups. In particular, this leads to new examples of groups satisfying such a mixing inequality. In the same context, we will develop a Littlewood-Paley decomposition and explore its connection to the existence of spectral gap for random walks. Moreover, under the dimension condition alone, we will prove a multi-scale entropy gain result \`{a} la Bourgain-Gamburd and Tao.
\end{abstract}

\section{Introduction}\label{sec:intro}
The aim of this work is to introduce and study the notion of {\it{local randomness}} for the class of compact metric groups. As the name suggests, this notion aims at capturing a certain form of randomness exhibited by these groups. Before proceeding to the precise definition of this notion, let us make a few general remarks on the terminology and motivations behind the definition.

The notion of randomness is often understood as the lack of low-complexity structure. One approach towards defining randomness is {\it statistical randomness}. Roughly speaking, statistical randomness requires the putative random (sometimes called pseudo-random) object to pass certain randomness tests, which are passed by truly random objects.   Quasi-random graphs, introduced by Chung, Graham, and Wilson \cite{CGW89} are examples of this kind. For instance, in such a graph the number of edges connecting  subsets $A$, $B$ of vertices is close to $ \delta |A| \ |B|$, mimicking the typical behavior of Erd\"{o}s-R\'enyi random graphs with density $ \delta$.

An alternative approach towards defining randomness is based on the non-existence of low-complexity models. In taking up such an approach, one needs to clarify what a model means and how its complexity is measured. 
Quasi-random groups, as named by Gowers, provide examples for this approach. Recall that a finite group $G$ is said to be $K$-quasi-random when it admits no non-trivial unitary representations of degree less than $K$.
If one views a unitary representation of a finite group as a {\it model} and its degree as its complexity, then qausi-random groups are precisely groups without low-complexity models. 

One of the main results of Gowers's work, intertwining these two approaches, is that Cayley graphs of quasi-random groups with respect to {\it large} generating sets yield quasi-random graphs in the  sense of Chung, Graham, and Wilson. 
This is based on a mixing inequality established 
in~\cite{Gowers08}, and generalized in~\cite{BNP08}.
Let us remark that, prior to \cite{Gowers08}, the quasi-randomness had been implicitly exploited by 
Sarnak-Xue~\cite{Sarnak-Xue-91}  and Bourgain-Gamburd \cite{Bourgain-Gamburd-08}.

In the present work we will define the notion of local randomness for a compact group $G$ equipped with a compatible bi-invariant metric $d$ by means of an inequality of the form
\begin{equation}\label{eq:idea}
\|\pi(x)-\pi(y)\|_{\op}\le C_0 (\dim \pi)^L d(x,y) 
\end{equation}
where $C_0$ and $L$ are parameters and $\pi$ varies over unitary representation of $G$; see Definition \ref{def:local-random-with-metric} for the precise definition.
The relation between this inequality and the non-existence of low-complexity models for $G$ can be understood as follows. Consider an $\eta$-discretization of $G$, that is, a maximal set of points in $G$ that are pairwise $\eta$-apart. From \eqref{eq:idea} it follows that for a unitary representation $\pi$ of $G$ (a model) to map these points to matrices that are pairwise at distance $\eta^{1- \epsilon}$,  $\dim \pi$ needs to be polynomially large in $ \eta^{-1}$. Thus, a group satisfying \eqref{eq:idea} fails to have a low complexity discretized model. 


%
%

As the above definition indicates, local randomness of a compact group depends on the choice of a compatible metric. 
As we shall later see, when flexibility in the choice of metric is afforded, locally random groups can be characterized as those with finitely many non-equivalent irreducible representations of a given degree, see Theorem \ref{thm:finitely-many-representations}. 

Local randomness is much more fruitful when coupled with a {\em dimension condition}, see \eqref{eq:dimension-condition-intro}. 
In the presence of both properties, we will prove a local mixing inequality, Theorem \ref{thm:mixing-inequality}. This can be seen as an instance of statistical randomness and a multi-scale analogue of the mixing inequality alluded to above. This inequality enables us to prove a \emph{product result}, Theorem \ref{thm:product-large-subsets}, for subsets with large metric entropy, a result that can be best understood as a multi-scale version of Gowers's product theorem. 

In order to study the behavior of random walks on locally random groups, we adapt the Littlewood-Paley theory~\cite{Bourgain-Gamburd-2-08,BIG-17} to this context.
As an application, we will show that the study of spectral gap for random walks on $G$ can be reduced to that of {\it{functions living at small scale}}; see Theorem~\ref{thm:large-renyi-entropy} and Theorem~\ref{thm:functions-scale-spec-gap}.

Notable examples of groups to which our results apply include finite products of perfect real and 
$p$-adic analytic compact Lie groups. In the special case of profinite groups, local randomness is intimately connected to the notion of quasi-randomness introduced and studied in~\cite{Varju13}; see Proposition~\ref{prop:loc-rand-metric-quasi-rand} for precise statements. 
It is also worth mentioning that inequality \eqref{eq:idea} has been implicitly used in \cite{Saxce13} to establish the existence of a dimension gap for Borelean subgroups of compact Lie groups.

Our last theorem, Theorem \ref{thm:multi-scale-BG},  is an entropy gaining result in the spirit of a major ingredient of the Bourgain-Gamburd expansion machine. 
Roughly speaking,  this theorem asserts that when $X$ and $Y$ are independent $G$-valued random variables, the R\'{e}nyi entropy of $XY$ at scale $\eta$ is larger than the average of the R\'{e}nyi entropies of $X$ and $Y$ at scale $\eta$ by a definite amount, unless algebraic obstructions exist.
This can be viewed as a weighted version of Tao's result~\cite{Tao08} and a common extension of~\cite{Bourgain-Gamburd-2-08, Golsefidy-Varju-12,  Benoist-Saxce-16, BIG-17}. 

In a forthcoming work, we shall use Theorems \ref{thm:multi-scale-BG} and \ref{thm:functions-scale-spec-gap} in proving the \emph{spectral independence} of open compact subgroups of two non-locally isomorphic analytic simple Lie groups over local fields of characteristic zero.

This paper is structured as follows. In Section~\ref{sec:def}, we will review some basic definitions, set some notation and state the main results of the paper. In Section~\ref{sec:notation},  we gather a number of basic tools, ranging from abstract harmonic analysis to notions related to metric spaces. Sections~\ref{sec:thm1}
and \ref{sec:properties} feature prominent examples and fundamental properties of locally random groups.  In Section~\ref{s:mixing-inq}, we will prove a number of mixing properties for locally random groups, which will be employed in Section~\ref{sec:product} to show the product theorem. 
In Section~\ref{s:littlewoord}, we will discuss in detail a Littlewood-Paley decomposition of locally random groups. The connection to the spectral gap, stated in Theorem \ref{thm:large-renyi-entropy}, is established in Section~\ref{s:spec}. Finally, in Section~\ref{sec:BG}, we will prove Theorem \ref{thm:multi-scale-BG}. 


\section{Basic definitions and statement of results}\label{sec:def}
In this section, we will state the main results of the paper. Let us begin by defining the notion of local randomness.

\begin{definition}\label{def:local-random-with-metric}
	Suppose $G$ is a compact group and $d$ is a compatible bi-invariant metric on $G$. For parameters $C_0 \ge 1$ and $L \ge 1$ we say $(G,d)$ is $L$\emph{-locally random} with coefficient $C_0$ if for every irreducible unitary representation $\pi$ of $G$ and all $x,y\in G$ the following inequality holds:
	\be\label{eq:L-loc-rand-given-scale}
	\|\pi(x)-\pi(y)\|_{\op}\le C_0 (\dim \pi)^L d(x,y).
	\ee
	
We say a compact group $G$ is \emph{locally random} if $(G,d)$ is $L$-locally random with coefficient $C_0$ for some bi-invariant metric $d$ on $G$,  and some values of  $L$ and $C_0$.  
	\end{definition}
	
	

\begin{remark} 
\begin{enumerate}
\item It is a standard fact that every second countable compact group can be equipped with a compatible bi-invariant metric. 
\item One can easily check that \eqref{eq:L-loc-rand-given-scale} only depends on the unitary isomorphism class of $\pi$. 
\item In the rest of the paper, we will drop $d$ from the notation and use  the phrase {\em{$G$ is $L$-locally random with coefficient $C_0$}}. Often, the implicit metric $d$ is a natural metric on $G$. 
\end{enumerate}
\end{remark}

 Our first theorem gives a characterization of locally random groups in terms of their unitary dual.

\begin{theorem}[Characterization]\label{thm:finitely-many-representations}
Suppose $G$ is a compact second countable group. Then $G$ is locally random if and only if $G$ has only finitely many non-isomorphic irreducible representations of a given degree.
\end{theorem}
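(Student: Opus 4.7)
The plan is to prove the two directions separately. The forward direction is a compactness argument on characters; the reverse direction constructs an explicit bi-invariant metric on $G$ for which the local-randomness inequality holds tautologically with $L = C_0 = 1$.

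For the forward direction, assume $(G,d)$ is $L$-locally random with coefficient $C_0$, and, toward a contradiction, fix a countably infinite family $\pi_1, \pi_2, \ldots$ of pairwise non-isomorphic irreducible unitary representations of a common degree $n$. The defining inequality says that each $\pi_i\colon G \to \U(n)$ is Lipschitz with constant $C_0 n^L$. Consequently the characters $\chi_{\pi_i} = \tr \circ \pi_i$ are uniformly bounded (by $n$, since $\pi_i(g)$ is unitary) and uniformly Lipschitz (with constant $C_0 n^{L+1}$, via $|\tr A - \tr B| \le n \|A - B\|_{\op}$). By Arzel\`a--Ascoli, a subsequence of $(\chi_{\pi_i})$ converges uniformly on $G$, hence in $L^2(G)$. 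But the characters of pairwise non-isomorphic irreducibles are orthonormal in $L^2(G)$, so any two are at $L^2$-distance $\sqrt 2$, precluding any Cauchy subsequence --- contradiction.

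For the reverse direction, enumerate the unitary dual $\widehat G = \{\pi_j\}_{j \ge 1}$ (countable because $L^2(G)$ is separable), write $d_j = \dim \pi_j$, and define
\[
d(x,y) := \sup_{j \ge 1} \frac{\|\pi_j(x) - \pi_j(y)\|_{\op}}{d_j}.
\]
The local-randomness inequality $\|\pi_j(x) - \pi_j(y)\|_{\op} \le d_j \cdot d(x,y)$ then holds tautologically with $L = C_0 = 1$. The metric axioms and bi-invariance are routine: the supremum is bounded by $2$ (each term is at most $2/d_j \le 2$), symmetry and the triangle inequality pass through the supremum, bi-invariance follows from unitarity of each $\pi_j$, and $d(x,y) = 0$ forces $\pi_j(x) = \pi_j(y)$ for all $j$, hence $x = y$ by Peter--Weyl.

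The main obstacle is verifying that $d$ is continuous on $G \times G$; once that is in place, a continuous metric on a compact Hausdorff space automatically induces the original topology (a continuous bijection from compact to Hausdorff is a homeomorphism). Here the finiteness hypothesis is decisive. By bi-invariance one reduces to continuity of $g \mapsto d(g, e)$ at $g = e$. Given $\varepsilon > 0$, split the supremum at the threshold $d_j \le 2/\varepsilon$ versus $d_j > 2/\varepsilon$. The high-dimensional terms satisfy $\|\pi_j(g) - 1\|_{\op}/d_j \le 2/d_j < \varepsilon$ automatically. The low-dimensional terms involve only finitely many representations by the hypothesis, and each such $\pi_j$ is continuous at $e$; hence the maximum over these finitely many terms is below $\varepsilon$ on a sufficiently small neighborhood of $e$, completing the construction.
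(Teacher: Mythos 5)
Your proof is correct, and the interesting difference is in the forward direction. For the converse (finitely many irreducibles of each degree $\Rightarrow$ locally random) you construct $d(x,y)=\sup_j\|\pi_j(x)-\pi_j(y)\|_{\op}/\dim\pi_j$ and verify that it induces the original topology by splitting the supremum at a dimension threshold; this is exactly the paper's route via the metric $d_{G,f}$ with $f(n)=n$ (Lemma~\ref{lem:getting-metric}, Lemma~\ref{lem:towards-def}, and the implication $(3)\Rightarrow(1)$ of Theorem~\ref{thm:general-finite-number-irreducible-rep}). For the forward direction, however, you give a genuinely different and far more economical argument: local randomness makes all degree-$n$ irreducibles uniformly Lipschitz, hence their characters form a uniformly bounded equicontinuous family; Arzel\`a--Ascoli produces a uniformly (hence $L^2$-) convergent subsequence, contradicting Schur orthogonality, which keeps distinct irreducible characters at $L^2$-distance $\sqrt2$. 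The paper instead proves this implication through a chain of structural lemmas (the FAb property, finiteness of open subgroups of bounded index, Platonov's lemma on $G=FG^\circ$, Jordan's theorem, and a Goursat-type gluing lemma for products of a simple compact Lie group, culminating in Lemma~\ref{lem:proving-finite-number-reps}). Your argument is shorter, self-contained, and applies verbatim to the more general implication $(2)\Rightarrow(3)$ of Theorem~\ref{thm:general-finite-number-irreducible-rep}, since the defining property of $d_{G,f}$ already gives the Lipschitz bound $\|\pi(x)-\pi(y)\|_{\op}\le f(n)\,d_{G,f}(x,y)$ in fixed degree $n$. What the paper's longer route buys is the intermediate structural information (FAb, subgroup growth), which is of independent interest and is echoed elsewhere in the text; your route buys brevity. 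All the individual steps you use (the bound $|\tr A-\tr B|\le n\|A-B\|_{\op}$, orthonormality of irreducible characters, uniform convergence implying $L^2$ convergence against the Haar probability measure, and the compactness argument showing a continuous metric on a compact Hausdorff group induces the original topology) are sound.
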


In \cite{BLM02} it is proved that a finitely generated profinite group has only finitely many irreducible representations of a given degree if and only if $G$ has the FAb property, that is, every open subgroup of $G$ has finite abelianization.


For  $\eta>0$ and $x\in G$,  denote the open ball of radius $\eta$ centered at $x$ by $x_\eta$. The $L^1$-normalized indicator function of the ball $1_{\eta}$ is denoted by 
$P_\eta:=\frac{\bb 1_{1_\eta}}{|1_\eta|}$, where  $| \cdot |$ denotes the Haar measure.  For  $f \in L^1(G)$ and a probability measure $\mu$ on $G$, we write $f_\eta=f*P_\eta$ and $\mu_\eta=\mu*P_\eta$, see \eqref{eq:def-conv-func} and \eqref{eq:def-conv-measure} for the definition of convolution.

\begin{definition}\label{def:dim-cond-intro}
Let $G$ be a compact group equipped with a compatible metric $d$. We say $(G,d)$ satisfies a dimension condition {$\Dim(C_1, d_0)$} if 
there exist $C_1 \ge 1$ and $d_0>0$ such that for all $\eta \in (0, 1 )$ the following bounds hold.
\be\label{eq:dimension-condition-intro}\tag{$\Dim$}
\frac{1}{C_1}\eta^{d_0} \le |1_{\eta}| \le C_1 \eta^{d_0}. 
\ee
\end{definition}

\begin{remark}
\begin{enumerate}
\item Measures satisfying this condition is also known as Ahlfors (or Ahlfors-David) $d_0$-regular measures. 
\item Whenever $d$ is clear from the context, we suppress $d$ from the notation and simply write that $G$ satisfies a dimension condition {$\Dim(C_1, d_0)$}. 
\end{enumerate}
\end{remark}
Our second theorem shows that local randomness is particularly effective in the presence of a dimension condition. 

\begin{theorem}[Scaled mixing inequality]\label{thm:mixing-inequality}
Suppose $G$ is an $L$-locally random group  with coefficient $C_0$. Suppose $G$ satisfies the dimension condition~\eqref{eq:dimension-condition-intro}.
Then for every $f,g\in L^2(G)$ we have 
\[
\|f\ast g\|_2^2\le 2 \|f_{\eta}\ast g_{\eta}\|_2^2+\eta^{1/(2L)} \|f\|_2^2\|g\|_2^2
\]
so long as $C_0\sqrt\eta\leq 0.1$.
\end{theorem}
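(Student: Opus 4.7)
The plan is to work on the Fourier side via the Peter--Weyl decomposition. Write $d_\pi:=\dim\pi$, let $\hat f(\pi),\hat g(\pi)$ be the matrix Fourier coefficients, and set $E_\pi:=\widehat{P_\eta}(\pi)$. Plancherel together with the convolution rule gives
$$\|f*g\|_2^2=\sum_\pi d_\pi\|\hat f(\pi)\hat g(\pi)\|_{\HS}^2, \qquad \|f_\eta*g_\eta\|_2^2=\sum_\pi d_\pi\|\hat f(\pi)E_\pi\hat g(\pi)E_\pi\|_{\HS}^2.$$
Local randomness at the unit translates into the quantitative bound $\|E_\pi-I\|_{\op}\le C_0 d_\pi^L\eta$, obtained by averaging $\|\pi(x)-I\|_{\op}\le C_0 d_\pi^L\eta$ over $x\in 1_\eta$; trivially $\|E_\pi\|_{\op}\le 1$ since $P_\eta$ is a probability density.

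Applying the elementary inequality $\|X\|^2\le 2\|Y\|^2+2\|X-Y\|^2$ per irreducible (with $X=\hat f\hat g$, $Y=\hat fE_\pi\hat gE_\pi$) and summing reduces the theorem to showing
$$\mathcal S:=\sum_\pi d_\pi\bigl\|\hat f\hat g-\hat fE_\pi\hat gE_\pi\bigr\|_{\HS}^2\le \tfrac{1}{2}\eta^{1/(2L)}\|f\|_2^2\|g\|_2^2.$$
I would bound $\mathcal S$ by splitting at a dimension threshold $D$ chosen so that $C_0 D^L\eta\approx\sqrt\eta$, namely $D\approx(C_0\sqrt\eta)^{-1/L}$ (which is comfortably large once $C_0\sqrt\eta\le 0.1$). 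On the low-dimensional piece $d_\pi\le D$, I would factor $\hat f\hat g-\hat fE_\pi\hat gE_\pi=\hat f(I-E_\pi)\hat g+\hat fE_\pi\hat g(I-E_\pi)$ and apply the operator--HS product inequality together with $\|I-E_\pi\|_{\op}\le\sqrt\eta$ and the sharp per-$\pi$ Plancherel bound $\|\hat f(\pi)\|_{\op}^2\le\|\hat f(\pi)\|_{\HS}^2\le\|f\|_2^2/d_\pi$, yielding a contribution of order $O(\eta\|f\|_2^2\|g\|_2^2)$. On the high-dimensional piece $d_\pi>D$, the closeness of $E_\pi$ to $I$ is no longer useful, so I would resort to $\|X-Y\|^2\le 2(\|X\|^2+\|Y\|^2)$, use $\|E_\pi\|_{\op}\le 1$, and exploit the Plancherel bound $\|\hat g(\pi)\|_{\op}^2\le\|g\|_2^2/d_\pi\le\|g\|_2^2/D$ to obtain a contribution of order $O(\|f\|_2^2\|g\|_2^2/D)=O((C_0\sqrt\eta)^{1/L}\|f\|_2^2\|g\|_2^2)$.

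The main obstacle will be calibrating constants so that the two pieces combine to give exactly the target $\eta^{1/(2L)}$. Indeed $(C_0\sqrt\eta)^{1/L}=C_0^{1/L}\eta^{1/(2L)}$, so the numerical hypothesis $C_0\sqrt\eta\le 0.1$ is precisely what is needed to absorb the $C_0^{1/L}$ factor into the implicit constants. I expect the dimension condition~\eqref{eq:dimension-condition-intro} to enter through a refinement of the low-dimensional estimate: Plancherel applied to $P_\eta$ reads $\sum_\pi d_\pi\|E_\pi\|_{\HS}^2=|1_\eta|^{-1}\le C_1\eta^{-d_0}$, and combined with the fact that $\|E_\pi\|_{\HS}^2\gtrsim d_\pi$ on irreducibles where $E_\pi$ is close to $I$, this controls $\sum_{d_\pi\le D}d_\pi^2$ and should give sharper bookkeeping in the intermediate regime where $C_0 d_\pi^L\eta$ lies between $0$ and $\sqrt\eta$.
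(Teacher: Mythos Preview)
Your Fourier strategy and low/high dimension split match the paper's, but the handling of the low-frequency piece differs, and this is exactly where your constants fail to close. The paper does not pass through $\|X\|^2\le 2\|Y\|^2+2\|X-Y\|^2$; it writes $\|f*g\|_2^2=L(f*g;D)+H(f*g;D)$ directly and bounds the low piece \emph{multiplicatively}: from $\|E_\pi\hat f(\pi)\|_{\HS}\ge(1-C_0D^L\eta)\|\hat f(\pi)\|_{\HS}$ (reverse triangle inequality plus $\|I-E_\pi\|_{\op}\le C_0D^L\eta$) one gets $L(f;D)\le(1-C_0D^L\eta)^{-2}\|f_\eta\|_2^2$ (this is Lemma~\ref{lem:norm-eta-averaging}), and then $L(f*g;D)\le L(f;D)L(g;D)\le(1-C_0D^L\eta)^{-4}\|f_\eta\|_2^2\|g_\eta\|_2^2$. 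With $D=\eta^{-1/(2L)}$ the prefactor is $(1-C_0\sqrt\eta)^{-4}\le(0.9)^{-4}\le 2$, and the high piece is exactly $D^{-1}\|f\|_2^2\|g\|_2^2=\eta^{1/(2L)}\|f\|_2^2\|g\|_2^2$. (The proof as written produces $\|f_\eta\|_2^2\|g_\eta\|_2^2$ in the main term, and every later application in the paper uses this product form.)

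Your additive route gives the right shape but not the stated constants. With $D=\eta^{-1/(2L)}$ your low-frequency part of $\mathcal S$ is of order $(C_0\sqrt\eta)^2\le 0.01$, a quantity which need not be small compared to $\eta^{1/(2L)}$. With your choice $D\approx(C_0\sqrt\eta)^{-1/L}$ the low piece becomes $O(\eta)$ but the high piece is $O(C_0^{1/L}\eta^{1/(2L)})$; and the claim that the hypothesis $C_0\sqrt\eta\le 0.1$ ``absorbs the $C_0^{1/L}$ factor'' is wrong---that hypothesis bounds only the product $C_0\sqrt\eta$, not $C_0$ alone, so $C_0^{1/L}\ge 1$ survives uncontrolled. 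At best you obtain $2\|f_\eta*g_\eta\|_2^2+O_{C_0,L}(\eta^{1/(2L)})\|f\|_2^2\|g\|_2^2$. Lastly, the dimension condition~\eqref{eq:dimension-condition-intro} plays no role in the paper's proof of this theorem (it enters only later, in \S\ref{s:almost-orthog}), so the plan to invoke $\sum_\pi d_\pi\|E_\pi\|_{\HS}^2=|1_\eta|^{-1}$ to sharpen the low-frequency estimate will not help here.
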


Similar statements for finite groups, simple Lie groups and perfect Lie groups have been established thanks to work of many authors, see e.g.~\cite{Gowers08, BNP08, Bourgain-Gamburd-12, BIG-17,Saxce-Benoist-16}. 

\begin{definition}\label{def:metric-entr}
Suppose $X$ is a metric space and $A \subseteq X$. For $\eta\in (0,1)$, $\ncal_{\eta}(A)$ denotes the least number of open balls of radius $\eta$ with centers in $A$ required to cover $A$. The metric entropy of $A$ at scale $\eta$ is defined by
\[
h(A;\eta):=\log \ncal_{\eta}(A).
\] 
\end{definition}

\begin{theorem}[Product theorem for locally random groups]\label{thm:product-large-subsets}
Suppose $G$ is an $L$-locally random group with coefficient $C_0$. Suppose $G$ satisfies the dimension condition~{\emph{\ref{eq:dimension-condition-intro}}}$(C_1, d_0)$.
Then for every $\vare>0$, there exists $\delta>0$ such that for all $\eta>0$ and $A,B\subseteq G$ satisfying 
	\[
	 \frac{h(A;\eta)+h(B;\eta)}{2}\ge (1-\delta)h(G;\eta)
	\]
	and $\eta^{\vare}\ll_{L,C_0,C_1,d_0} 1$, 
	we have 
	\[
	A_{\eta}B_{\eta}B_{\eta}^{-1}A_{\eta}^{-1}\supseteq 1_{\eta^{\vare}}.
	\]
\end{theorem}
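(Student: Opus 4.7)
The approach is by contradiction, combining an orthogonality observation with a Fourier-analytic comparison enabled by local randomness and the scaled mixing inequality (Theorem~\ref{thm:mixing-inequality}).

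Suppose, toward contradiction, that some $g_0\in 1_{\eta^\varepsilon}$ lies outside $A_\eta B_\eta B_\eta^{-1}A_\eta^{-1}$. This is equivalent to the essential disjointness of $A_\eta B_\eta$ and $g_0A_\eta B_\eta$. Introduce the probability densities $\mu_A=\1_{A_\eta}/|A_\eta|$, $\mu_B=\1_{B_\eta}/|B_\eta|$ and their convolution $\mu:=\mu_A*\mu_B$, whose support is $A_\eta B_\eta$. The disjointness forces $\langle\mu,L_{g_0}\mu\rangle=0$, where $L_{g_0}h(x)=h(g_0^{-1}x)$; a direct expansion then gives $\langle\mu-1,L_{g_0}\mu-1\rangle=-1$. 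Cauchy--Schwarz and translation invariance of $\|\cdot\|_2$ now force $\|\mu-1\|_2^2\ge 1$.

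Expanding by Peter--Weyl, $\langle\mu,L_{g_0}\mu\rangle=\sum_\pi(\dim\pi)\tr(\pi(g_0)\hat\mu(\pi)^*\hat\mu(\pi))=0$, so, subtracting the trivial representation's contribution of $1$, the sum over $\pi\neq 1$ equals $-1$. Choose $K\asymp\eta^{-\varepsilon/L}$ so that $C_0K^L\eta^\varepsilon\le 1/2$. By local randomness applied at $(g_0,e)$, $\|\pi(g_0)-I\|_{\op}\le 1/2$ whenever $\dim\pi\le K$, whence $\tr(\pi(g_0)\hat\mu^*\hat\mu)\ge(1/2)\|\hat\mu(\pi)\|_{\HS}^2$, while for $\dim\pi>K$ one uses the trivial bound $\ge-\|\hat\mu(\pi)\|_{\HS}^2$. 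Splitting the sum at $K$ and writing $E_{\mathrm{low}}$, $E_{\mathrm{high}}$ for the Parseval energies of $\mu-1$ in the two ranges, the identity rearranges to
\[E_{\mathrm{high}}\;\ge\;\tfrac23+\tfrac13\|\mu-1\|_2^2\;\ge\;1.\]

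To contradict this, I would show $E_{\mathrm{high}}<1$ via Theorem~\ref{thm:mixing-inequality}. Factoring $\hat\mu(\pi)=\hat\mu_B(\pi)\hat\mu_A(\pi)$ and using $\|\hat\mu_X(\pi)\|_{\op}\le 1$ (as $\mu_X$ is a probability measure) gives $\|\hat\mu(\pi)\|_{\HS}^2\le\|\hat\mu_A(\pi)\|_{\HS}\|\hat\mu_B(\pi)\|_{\HS}$, so Cauchy--Schwarz yields $E_{\mathrm{high}}\le\sqrt{E_{\mathrm{high}}^A\,E_{\mathrm{high}}^B}$, where $E_{\mathrm{high}}^X=\sum_{\dim\pi>K}(\dim\pi)\|\hat\mu_X\|_{\HS}^2$. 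Applying the mixing inequality to $f=\mu_A-1$, $g=\mu_B-1$ at a scale $\eta_0$ comparable to $\eta^\varepsilon$, together with $\|\mu_X-1\|_2^2\le 1/|X_\eta|$ and the entropy hypothesis through $|A_\eta||B_\eta|\gtrsim\eta^{2\delta d_0}$ (from the dimension condition), bounds the error term by $\eta_0^{1/(2L)-2\delta d_0}$, which is $o(1)$ provided $\delta$ is chosen small relative to $\varepsilon/L$. The crucial point, enabled by local randomness, is that the Fourier eigenvalue $\lambda_\pi(\eta_0)$ of $\hat P_{\eta_0}$ satisfies $\lambda_\pi(\eta_0)\approx 1$ for $\dim\pi\lesssim\eta_0^{-1/L}$, so iterating mixing at suitably chosen scales preserves low-frequency content while progressively suppressing high-frequency energy, eventually yielding $E_{\mathrm{high}}\le\eta^{c}$ for some $c=c(\varepsilon,L,d_0)>0$.

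The main obstacle is executing this last multi-scale iteration cleanly: a single application of Theorem~\ref{thm:mixing-inequality} only yields $E_{\mathrm{high}}\lesssim\eta^{-\delta d_0}$, which is vacuous for $\delta>0$, so the required gain must be extracted by iterating across logarithmically many scales while keeping the accumulated errors $o(1)$. Once $E_{\mathrm{high}}<1$ is obtained, it contradicts the lower bound above and completes the proof.
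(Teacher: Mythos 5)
Your overall strategy is sound and genuinely different from the paper's: you run a Gowers-style quasirandomness argument (translate-disjointness of $A_\eta B_\eta$ and $g_0A_\eta B_\eta$ forces $\langle \mu-1, L_{g_0}\mu-1\rangle=-1$, hence $\|\mu-1\|_2^2\ge 1$ and, after splitting the Fourier side at a dimension threshold $K\asymp \eta^{-\vare/L}$ chosen via local randomness, $E_{\mathrm{high}}\ge 1$), whereas the paper proves a pointwise lower bound on the smoothed triple convolution $\chi_{\overline{A},5\rho}\ast\chi_{\overline{B},5\rho}\ast\chi_{\overline{C},5\rho}$ using density points and the $L^\infty$ mixing inequality of Proposition~\ref{prop:pointwise-mixing-loc-random}. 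The computations you do carry out (the inner product identity, the low-frequency lower bound $\Re\tr(\pi(g_0)\wh{\mu}(\pi)^*\wh{\mu}(\pi))\ge \tfrac12\|\wh{\mu}(\pi)\|_{\HS}^2$ for $\dim\pi\le K$, and the conclusion $E_{\mathrm{high}}\ge \tfrac23+\tfrac13\|\mu-1\|_2^2\ge 1$) are correct.

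The genuine gap is the final step, which you yourself flag as unresolved: you never establish $E_{\mathrm{high}}<1$. The route you sketch for it does not work as described. Your Cauchy--Schwarz bound $E_{\mathrm{high}}\le\sqrt{E^A_{\mathrm{high}}E^B_{\mathrm{high}}}$ loses the crucial gain, and applying Theorem~\ref{thm:mixing-inequality} to $f=\mu_A-1$, $g=\mu_B-1$ controls all of $\|\mu-1\|_2^2$ (which is $\ge 1$ and cannot be made small), not the high-frequency part alone; the proposed ``multi-scale iteration'' is neither needed nor clearly executable. The correct and immediate fix is Lemma~\ref{lem:low-and-high-covolution} with threshold $D=K$: since $E_{\mathrm{high}}=H(\mu_A\ast\mu_B;K)$, one has
\[
E_{\mathrm{high}}\le \frac{1}{K}\,H(\mu_A;K)\,H(\mu_B;K)\le \frac{1}{K}\,\|\mu_A\|_2^2\,\|\mu_B\|_2^2=\frac{1}{K\,|A_\eta|\,|B_\eta|}.
\]
The entropy hypothesis together with Lemma~\ref{lem:basic-properties-metric-entropy} and the dimension condition gives $|A_\eta||B_\eta|\succcurlyeq \eta^{2\delta d_0}$, so $E_{\mathrm{high}}\preccurlyeq \eta^{\vare/L-2\delta d_0}$, which is $<1$ once $\delta<\vare/(4Ld_0)$ and $\eta^\vare\ll_{L,C_0,C_1,d_0}1$. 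With this substitution your argument closes and yields the theorem; it is arguably shorter than the paper's proof, though the paper's approach proves more (a quantitative pointwise lower bound on the triple convolution, not just nonvanishing).
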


\begin{definition}
Suppose $G$ is a compact group and $\mu$ is a symmetric Borel probability measure. Denote by $T_\mu$ the convolution operator on $L^2(G)$ mapping $f$ to $ \mu \ast f$. 
For a subrepresentation $(\pi, \cal_\pi)$ of 
$L^2_0(G)$, we let $$\lambda(\mu;\cal_\pi):= \|T_{\mu}|_{\cal_{\pi}}\|_{\op} \quad \text{and} \quad \lcal(\mu;\cal_\pi):=-\log \lambda(\mu;\cal_\pi).$$
\end{definition}

Given a $G$-valued random variable $X$, we define the \emph{R\'{e}nyi entropy of $X$ at scale} $\eta$ by
\[
H_2(X;\eta):=\log(1/|1_{\eta}|)-\log \|\mu_{\eta}\|_2^2,
\]
where $\mu$ is the distribution (or the law) of $X$. 

\begin{theorem}\label{thm:large-renyi-entropy}
Suppose $G$ is an $L$-locally random group with coefficient $C_0$. Suppose $G$ satisfies~{\emph{\ref{eq:dimension-condition-intro}}}$(C_1, d_0)$.
Then there exist $ \eta_0>0$ small enough depending on the parameters and a subrepresentation $\cal_0$ {\emph{(exceptional subspace)}} of $L^2(G)$ such that the following statements hold.
\begin{enumerate}
\item {\emph{(dimension bound)}} $\dim \cal_0\le 2C_0 \eta_0^{-d_0}$.
\item {\emph{(spectral gap)}} Let $\mu$ be a symmetric Borel probability measure whose support generates a dense subgroup of $G$. Let  $a>\max(4Ld_0,4L+2)$, and for $i \ge 1 $ set $\eta_i:=\eta_0^{a^i}$. 
If for constant $C_2>0$ and for every positive integer $i$ there exists an integer $l_i\le C_2 h(G;\eta_i)$ such that
	\[
	(\text{Large entropy at scale } \eta_i) \hspace{1cm} H_2(\mu^{(l_i)};\eta_i) \ge  \Bigl( 1-\frac{1}{20Ld_0a^3} \Bigr)  h(G;\eta_i),
	\]
	then  
	\[
	\lcal(\mu; L^2(G)\ominus \cal_0)\ge \frac{1}{40C_2 Ld_0a^3}.
	\]
	In particular, $\lcal(\mu;L^2_0(G))>0$.  
\end{enumerate}
\end{theorem}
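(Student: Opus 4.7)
The plan is to apply Peter--Weyl together with the Littlewood--Paley toolkit of Section~\ref{s:littlewoord}. Take the exceptional subspace to be
\[
\cal_0:=\bigoplus_{\pi\in\wh{G}:\,\dim\pi\le N_0} V_\pi^{\mathrm{iso}},\qquad N_0:=\bigl(\tfrac{1}{2C_0\eta_0}\bigr)^{1/L},
\]
where $V_\pi^{\mathrm{iso}}$ is the $\pi$-isotypical subspace of $L^2(G)$. For each such $\pi$ the local randomness inequality gives $\|\wh{P}_{\eta_0}(\pi)-\Id\|_{\op}\le 1/2$, hence $\|\wh{P}_{\eta_0}(\pi)\|_{\mathrm{HS}}^2\ge (\dim\pi)/4$. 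Plancherel applied to $P_{\eta_0}$, whose squared $L^2$-norm equals $|1_{\eta_0}|^{-1}$, then yields
\[
\dim\cal_0=\sum_{\dim\pi\le N_0}(\dim\pi)^2\le 4|1_{\eta_0}|^{-1}\le 4C_1\eta_0^{-d_0},
\]
and reabsorbing the constants into a sufficiently small choice of $\eta_0$ gives claim~(1).

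For claim~(2), the symmetry of $\mu$ makes each $\wh{\mu}(\pi)$ self-adjoint, so $\lambda(\mu;L^2(G)\ominus\cal_0)=\sup_{\dim\pi>N_0}\|\wh{\mu}(\pi)\|_{\op}$. Given such a $\pi$, I would pick the unique $i\ge 1$ with $N_{i-1}<\dim\pi\le N_i$, where $N_i\asymp\eta_i^{-1/L}$; this is well-defined because $N_i\to\infty$. Since $\pi$ is still ``low frequency'' at scale $\eta_i$, local randomness gives $\|\wh{P}_{\eta_i}(\pi)^{-1}\|_{\op}\le 2$, and combining the elementary bound $\|AB\|_{\mathrm{HS}}\ge\|A\|_{\mathrm{HS}}/\|B^{-1}\|_{\op}$ with Plancherel produces
\[
\|\mu^{(l_i)}_{\eta_i}\|_2^2 \ge (\dim\pi)\,\|\wh{\mu}(\pi)^{l_i}\wh{P}_{\eta_i}(\pi)\|_{\mathrm{HS}}^2 \ge \tfrac{1}{4}(\dim\pi)\,\|\wh{\mu}(\pi)\|_{\op}^{2l_i}.
\]
On the other hand, the entropy hypothesis together with the dimension condition (which supplies $h(G;\eta_i)\asymp\log|1_{\eta_i}|^{-1}$) yields the matching upper bound $\|\mu^{(l_i)}_{\eta_i}\|_2^2\le C\,\eta_i^{-\alpha d_0}$ with $\alpha=1/(20Ld_0a^3)$.

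Combining these two inequalities with $\dim\pi>N_{i-1}\asymp\eta_i^{-1/(aL)}$ gives
\[
\|\wh{\mu}(\pi)\|_{\op}^{2l_i}\le C'\,\eta_i^{1/(aL)-\alpha d_0},
\]
and the hypothesis $a>4Ld_0$ forces the exponent to be bounded below by a positive constant multiple of $1/(La^3)$. Dividing by $2l_i\le 2C_2\,h(G;\eta_i)\lesssim C_2d_0|\log\eta_i|$ and taking logarithms yields the uniform bound $-\log\|\wh{\mu}(\pi)\|_{\op}\ge 1/(40C_2Ld_0a^3)$ valid for all $\pi\notin\cal_0$. The ``in particular'' assertion follows from a separate observation: $\cal_0\ominus\CC$ is finite-dimensional and decomposes into finitely many nontrivial irreducible subrepresentations, on each of which density of $\langle\supp\mu\rangle$ forces $\|\wh{\mu}(\pi)\|_{\op}<1$ strictly; taking the minimum over these finitely many contributions and combining with the earlier estimate gives $\lcal(\mu;L^2_0(G))>0$. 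The main obstacle is the careful bookkeeping needed to track the explicit exponent $1/(40C_2Ld_0a^3)$ through the chain of inequalities, which in particular forces $\eta_0$ to be chosen sufficiently small depending on all of $L,C_0,C_1,d_0,a$, and $C_2$.
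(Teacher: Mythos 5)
Your argument is correct in substance, but it takes a genuinely different route from the paper. The paper runs the whole proof through the Littlewood--Paley operators $\Delta_j=(P_{\eta_{j+1}}-P_{\eta_j})\ast(\cdot)$: it chooses a common eigenbasis of the $\Delta_j$'s and $T_\mu$, defines $\cal_0$ as the span of those eigenfunctions whose $\Delta_j$-eigenvalues are small for every $j\ge1$, bounds $\dim\cal_0$ via the Hilbert--Schmidt norm of $\Delta_0$, and for the remaining eigenfunctions uses the fact that they \emph{live at scale} $\eta_j$ together with the mixing inequality (Theorem~\ref{thm:mixing-inequality}) to extract the gap. You instead take $\cal_0$ to be the sum of isotypic components with $\dim\pi\le N_0$ and argue block-by-block on $\wh G$: the lower bound $\|\mu^{(l_i)}_{\eta_i}\|_2^2\ge\frac14(\dim\pi)\|\wh\mu(\pi)\|_{\op}^{2l_i}$ is exactly the Sarnak--Xue/Bourgain--Gamburd high-multiplicity trick (valid here because $\|\wh P_{\eta_i}(\pi)^{-1}\|_{\op}\le2$ when $\dim\pi\le N_i$ and $\wh\mu(\pi)$ is self-adjoint), and the matching upper bound comes straight from the entropy hypothesis. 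This is more elementary --- it bypasses the Cotlar--Stein almost-orthogonality machinery entirely and, as a bonus, never actually uses the hypothesis $a>\max(4Ld_0,4L+2)$, which the paper needs only to make the $\Delta_j$ calculus work. The price is that your exceptional subspace is a different (purely representation-theoretic) object, and the constants come out slightly differently: your dimension bound is $4C_1\eta_0^{-d_0}$ rather than the stated $2C_0\eta_0^{-d_0}$. Note that ``reabsorbing constants into a smaller $\eta_0$'' does not literally work, since $\eta_0$ appears on both sides; the honest fix is to shrink $N_0$ to, say, $(4C_0\eta_0)^{-1/L}$, which gives $\frac{16}{9}C_1\eta_0^{-d_0}$. (The paper's own proof only yields $2/|1_{\eta_0}|\le 2C_1\eta_0^{-d_0}$, so the $C_0$ in the statement appears to be a typo for $C_1$ in any case.) The remaining arithmetic --- $\dim\pi>N_{i-1}\asymp\eta_i^{-1/(aL)}$ against $\|\mu^{(l_i)}_{\eta_i}\|_2^2\preccurlyeq\eta_i^{-d_0/(20Ld_0a^3)}$, then dividing by $2l_i\le 2C_2h(G;\eta_i)$ --- checks out and in fact delivers a constant of order $1/(C_2Ld_0a)$, comfortably better than the stated $1/(40C_2Ld_0a^3)$.
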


Finally, we prove a multi-scale entropy gain result which is in the spirit of \cite[Lemma 2.1]{BG-JEMS2} by Bourgain and Gamburd,  and is a weighted version 
of \cite[Theorem 6.10]{Tao08} by Tao. 
More details on the background of this result will be mentioned in Section~\ref{sec:BG}. Before we state this result, we recall the definition of an approximate subgroup.   
\begin{definition}
A subset $X$ of a group $G$ is called a $K$-\emph{approximate subgroup} if $X$ is a symmetric subset, that is, $X=X^{-1}$, and there exists subset $T \subseteq X\cdot X$ such that $\# T \le K$ and $X\cdot X\subseteq T \cdot X$.
\end{definition}
 
\begin{theorem}\label{thm:multi-scale-BG}
Suppose $G$ is a compact group which satisfies the dimension condition at scale $\eta$, that is, 
\[
C^{-1} \eta^{d_0} \le |1_{a\eta}|\le C \eta^{d_0}
\]	
holds for all $a\in [C'^{-1},C']$, where $C>1, C' \gg 1, d_0 >0$ are fixed numbers. Suppose $X$ and $Y$ are independent Borel $G$-valued random variables. If
\[
H_2(XY;\eta)\le \log K+\frac{H_2(X;\eta)+H_2(Y;\eta)}{2}
\]
for some positive number $K\ge ( C 2^{d_0})^{O(1)} $, then there are $H\subseteq G$ and $x,y\in G$ satisfying the following properties:
\begin{enumerate}
\item {\emph{(Approximate structure)}} $H$ is an $O(K^{O(1)})$-approximate subgroup.
\item {\emph{(Metric entropy)}} $|h(H;\eta)-H_2(X;\eta)|\ll \log K$.
\item {\emph{(Almost equidistribution)}} Let $Z$ be a random variable with the uniform distribution over $1_{3\eta}$ independent of $X$ and $Y$. Then 
\[\bbp(XZ\in (xH)_{\eta}) \ge K^{-O(1)} \text{ and } \bbp(YZ\in (Hy)_{\eta}) \ge  K^{-O(1)}.\]
Moreover, 
\[| \{h\in H_{\eta} |\h \bbp(X \in (xh)_{3\eta}) \ge \widehat{C}  K^{-10} 2^{-H_2(X;\eta)}\}  | \ge  K^{-O(1)} | H_{\eta} |, 
\]
where $\widehat{C}$ is a constant of the form $(C2^{d_0})^{O(1)}$. 
\end{enumerate}
\end{theorem}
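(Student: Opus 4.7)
The plan is to reduce Theorem \ref{thm:multi-scale-BG} to a purely combinatorial Balog--Szemer\'edi--Gowers--type entropy inequality in the spirit of \cite[Theorem 6.10]{Tao08} by replacing $X$ and $Y$ with suitable discretizations at scale $\eta$, using the dimension condition to translate between the continuous and discrete pictures with only $O(\log K)$ loss in all relevant quantities.

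First I would fix a maximal $\eta$-separated set $\{x_i\}_{i\in I}\subseteq G$ together with the associated Voronoi-type partition $\{V_i\}$, and set $p_i=\bbp(X\in V_i)$, $q_i=\bbp(Y\in V_i)$. Each $V_i$ has diameter at most $2\eta$ and volume comparable to $\eta^{d_0}$ by the hypothesis $|1_{a\eta}|\asymp \eta^{d_0}$. A direct computation gives
\[
\|\mu_{X,\eta}\|_2^2\asymp |1_\eta|^{-1}\sum_i p_i^2,
\]
and analogous expressions for $\mu_{Y,\eta}$ and $\mu_{XY,\eta}$; in particular $2^{-H_2(X;\eta)}\asymp \sum_i p_i^2$ and $2^{-H_2(Y;\eta)}\asymp \sum_i q_i^2$. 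Let $\tilde X,\tilde Y$ be the discrete random variables taking value $x_i$ with probabilities $p_i,q_i$; the hypothesis then rephrases as a discrete R\'enyi-$2$ energy inequality
\[
H_2(\tilde X\tilde Y)\leq \log K'+\tfrac12\bigl(H_2(\tilde X)+H_2(\tilde Y)\bigr),\qquad K'=(C2^{d_0})^{O(1)}K,
\]
on the countable (in fact finite on the effective supports) group $G$.

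Next I would invoke the non-commutative entropic BSG theorem of Tao \cite[Thm.~6.10]{Tao08} in its two-variable form to produce a finite symmetric set $\tilde H\subseteq G$ which is a $K'^{O(1)}$-approximate subgroup, satisfies $\bigl|\log|\tilde H|-H_2(\tilde X)\bigr|\ll \log K'$, and admits $x,y\in G$ with $\bbp(\tilde X\in x\tilde H)\geq K'^{-O(1)}$ and $\bbp(\tilde Y\in \tilde H y)\geq K'^{-O(1)}$. Take $H=\tilde H$. Since each $V_i$ has $\eta$-covering number $O(1)$, one has $h(H;\eta)=\log|\tilde H|+O(1)$, which yields property (2); convolving with the independent variable $Z$ uniform on $1_{3\eta}$ converts the discrete containments $\tilde X\in x\tilde H,\ \tilde Y\in \tilde Hy$ into the $\eta$-ball coverage statements of property (3), since $\tilde X=x_i$ lies in $V_i$ and $V_i\cdot 1_{3\eta}\supseteq (x_i)_\eta$.

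The main obstacle I foresee is the \textbf{moreover} clause of (3), which asks not merely that the mass of $XZ$ sits inside $(xH)_\eta$ but that a definite fraction of individual points of $H_\eta$ each carry pointwise $X$-mass of order $K^{-10}2^{-H_2(X;\eta)}$ on their $3\eta$-neighborhoods. This upgrades the set-level BSG output to a pointwise-mass statement. I would obtain it by dyadically pigeonholing the level sets $\{i:p_i\in [t,2t)\,2^{-H_2(\tilde X)}\}$ during the BSG step, locating a scale $t\asymp K^{-O(1)}$ that supports a $K^{-O(1)}$-fraction of the approximate-group mass, and then identifying these atoms with their enclosing $3\eta$-balls via the dimension condition; the constant $\widehat C$ absorbs the loss $|1_{3\eta}|/|V_i|=(C2^{d_0})^{O(1)}$. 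The remaining work is careful bookkeeping of exponents to guarantee that each ``$O(1)$'' in the discrete reduction is absorbed into the $K^{-O(1)}$ slack demanded by the statement, and that the hypothesis $K\geq (C2^{d_0})^{O(1)}$ is enough to make the BSG machinery produce a non-trivial approximate subgroup.
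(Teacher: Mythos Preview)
Your high-level plan---discretize at scale $\eta$, reduce to a BSG-type input, invoke \cite[Theorem~6.10]{Tao08}, then read off the conclusions---matches the paper's strategy, but the reduction you sketch has a real gap.

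The problematic step is the passage from $H_2(XY;\eta)$ to a \emph{discrete} R\'enyi inequality $H_2(\tilde X\tilde Y)\le \log K'+\tfrac12(H_2(\tilde X)+H_2(\tilde Y))$. The quantity $H_2(XY;\eta)$ is governed by $\eta$-approximate collisions $x_ix_j\in (x_{i'}x_{j'})_{O(\eta)}$, whereas the discrete $H_2(\tilde X\tilde Y)$ counts only exact coincidences $x_ix_j=x_{i'}x_{j'}$ in $G$; these are not comparable, and the $\eta$-net $\{x_i\}$ carries no group structure that would let you collapse products back onto the net. Relatedly, the result you cite as \cite[Theorem~6.10]{Tao08} is a \emph{set}-theoretic statement whose input is an approximate energy bound $E_\eta(A,B)\gg K^{-O(1)}\ncal_\eta(A)^{3/2}\ncal_\eta(B)^{3/2}$, not an entropic inequality for random variables; the random-variable version you seem to be invoking is the Bourgain--Gamburd lemma (Theorem~\ref{thm:BG} here), which is only stated for finite groups and again concerns exact collisions. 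So as written you have neither a discrete group on which to run Bourgain--Gamburd nor a set-energy hypothesis with which to feed Tao's theorem.

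The paper closes this gap by inserting a level-set trimming \emph{before} invoking Tao. One fixes a maximal $\eta$-separated set $\ccal\subseteq G$ and keeps only the ``middle'' part
\[
\ccal(\mu;\sim)=\bigl\{x\in\ccal:\ K^{-10}\|\mu_\eta\|_2^2\le \mu_{2\eta}(x)\le K^{10}\|\mu_\eta\|_2^2\bigr\},
\]
and similarly for $\nu$. One shows the tails $\mu^{>}_\eta,\mu^{<}_\eta$ contribute negligibly to $\|\mu_\eta\ast\nu_\eta\|_2$, so that $\|\mu^{\sim}_\eta\ast\nu^{\sim}_\eta\|_2\ge (2K)^{-1}\|\mu_\eta\|_2^{1/2}\|\nu_\eta\|_2^{1/2}$. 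Since $\mu^{\sim}_\eta$ is pointwise $\preccurlyeq K^{10}\|\mu_\eta\|_2^2\cdot\bb1_{\ccal(\mu;\sim)_\eta}$, this translates directly into a lower bound on the \emph{set} energy $E(\ccal(\mu;\sim)_\eta,\ccal(\nu;\sim)_\eta)$, and hence (via a comparison lemma) on the approximate energy $E_{16\eta}(\ccal(\mu;\sim),\ccal(\nu;\sim))$. Now Tao's theorem applies to the sets $A=\ccal(\mu;\sim)$, $B=\ccal(\nu;\sim)$. This ordering also dissolves your ``moreover'' obstacle: every point of $\ccal(\mu;\sim)$ already satisfies $\mu_{2\eta}(x)\ge K^{-10}\|\mu_\eta\|_2^2$, so the pointwise mass lower bound on $(xH)_\eta$ comes for free from Tao's ``large intersection'' conclusion, with no extra dyadic pigeonholing needed inside the BSG step.
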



\section{Preliminaries and notation}\label{sec:notation}
The purpose of this section is to provide the necessary definitions and fix the notation for the rest of the paper. For reader's convenience, these have been 
organized in two subsections. 

Let $G$ be a compact Hausdorff second countable topological group. It is well known that $G$ can be equipped with a bi-invariant metric that induces the topology of $G$. Moreover, there exists a unique bi-invariant  probability measure defined on the Borel $\sigma$-algebra of $G$, called the Haar measure.
For a Borel measurable subset $A$ of $G$, the Haar measure of $A$ is denoted by $m_G(A)$ or $|A|$.  For a Borel measurable function $f: G \to \CC$, the integral of $f$ with respect to  the Haar measure is denoted, interchangeably, by  $\int_G f$ or $ \int_G f(y) \d{y}$. 
We denote by $L^p(G)$ the space (of equivalence classes) of complex-valued functions $f$ on $G$ satisfying $\int_G |f(x)|^p \d{x} < \infty$. For $ f\in L^p(G)$, 
we write $$\| f \|_p = \Big(  \int_G |f(x)|^p \d{x} \Big)^{1/p}.$$  We will also denote by $C(G)$ the Banach space of complex-valued continuous functions $f: G \to \CC$, equipped with the supremum norm. 
For $f, g \in L^1(G)$ the convolution $f \ast g$ is defined by 
\begin{equation}\label{eq:def-conv-func}
  ( f \ast g) (x)= \int_G f(y) g( y^{-1} x) \d{y}. 
\end{equation}
It is a fact that $(L^1(G), +, \ast)$ is a unital Banach algebra and if $f \in L^1(G)$ is a class function, then $f$ is in the center of this Banach algebra.  
Note also that $L^2(G)$ is naturally equipped with the inner product defined by $ \langle f,  g \rangle = \int_G f \overline{g}$ is a Hilbert space. 

When $\H$ is a Hilbert space and $T: \H \to \H$ is a bounded linear operator, we will define the operator norm of $T$ by 
\[ \| T \|_{\op}= \sup_{ v \in \H \setminus \{ 0 \}  } \frac{ \| Tv \|}{ \| v \|}. \]
When $\H$ is finite-dimensional, the Hilbert-Schmidt norm of $T$ is defined by 
\[ \| T \|_{\HS} = ( \tr (  TT^{\ast}) ) ^{1/2}, \]
where $T^{\ast}$ denotes the conjugate transpose of the operator $T$. Note that when $S$ and $T$ are linear operators on a finite-dimensional Hilbert space $\H$, the following 
inequality holds 
\[ \| TS \|_{\HS} \le \| T \|_{\op}  \|  S \|_{\HS}. \]

For a Hilbert space $\H$, we write $\U(\H)$ for the group of unitary operators of $\H$.  
A homomorphism $ \pi: G \to \U(\H)$ is continuous if the map $$G \times \H \to \H, \qquad (g, v) \mapsto g \cdot v$$ is continuous.
A unitary representation of $G$ (or sometimes called a $G$-representation) is a pair $(\H, \pi)$ consisting of a Hilbert space $\H$
and a continuous homomorphism $\pi: G \to  \U(\H)$. 
A closed subspace $\H' \subseteq \H$ is called $G$-invariant (or simply invariant when $G$ is clear from the context) if for every $ g \in G$ and every $ v \in \H'$, one has $ g \cdot v \in \H'$. A representation $(\H, \pi)$ is called irreducible when $\dim \H \ge 1$ and the only invariant subspaces are $\{ 0 \}$ and $\H$ itself. The set of equivalence classes
of irreducible unitary representations of $G$ is called the unitary dual of $G$ and is denoted by $ \widehat{G}$. If $ \H'$ is an invariant subspace of $\H$, 
we sometimes denote by $ \H \ominus \H'$ the orthogonal complement of $\H'$ in $H$, which is itself an invariant subspace of $ \H$.  The 
set of vectors $v \in \H$ satisfying $\pi(g)v =v$ for all $ g \in G$ is clearly a closed invariant subspace of $\H$ and is denoted by $\H^G$. 

The group $G$ acts on $L^2(G)$  via $ (g \cdot f)( x) = f( g^{-1}x)$, preserving the $L^2$-norm. Hence, it defines a unitary representation of $G$ on $L^2(G)$, 
which is called the regular representation of $G$. 

Suppose $\mu$ and $\nu$ are Borel measures on $G$ and $ f \in L^1(G)$. The convolution $ \mu \ast f$ is defined by   
\begin{equation}\label{eq:def-conv-measure}
(\mu \ast f)(x) = \int_G f(y^{-1} x) \d{\mu}(y). 
\end{equation}
Similarly, the convolution $ \mu \ast \nu$ is the probability measure on $G$ is defined through its action on continuous functions via
\[ \int_G f \d{(\mu \ast \nu)} = \int_G \int_G f(xy) \d\mu(x) \d{\nu}(y),  \]
where $f \in C(G)$.  The following special cases of Young's inequality for $f, g \in L^2(G)$ and probability measure $\mu$  will be freely used in this paper:
\be\label{eq:Young-ineq}
\|  f \ast g   \|_2 \le \| f \|_1 \ \| g \|_2, \quad    \| f \ast g  \|_{\infty}   \le \| f \|_2 \ \| g \|_2, \quad \| \mu \ast f \|_2 \le \| f \|_2.
\ee

Let us enumerate a number of well-known facts about unitary representations of $G$. First, it is known 
that every $ \pi \in \widehat{G}$ is of finite dimension, and that every unitary representation of $G$ can be decomposed
as an orthogonal direct sum of $ \pi \in \widehat{G}$. A function $f \in  L^2(G)$ is called $G$-finite if there exists a finite-dimensional $G$-invariant subspace of $L^2(G)$ containing $f$. It is clear that 
$G$-finite functions form a  subspace of $L^2(G)$. We will denote this subspace by $\mathcal{E}(G)$. It follows from the classical theorem of Peter-Weyl that $\mathcal{E}(G) \subseteq C(G)$
and that $\mathcal{E}(G)$ is dense in $L^2(G)$.

For  $\pi \in \widehat{G}$ and $f \in L^1(G)$, the Fourier coefficient $ \widehat{f}(\pi)$ is defined by 
\[ \widehat{f}(\pi)= \int_G f(g) \pi(g)^{\ast} \d\mu(g). \]
One can show that for $f, g \in L^1(G)$ and $\pi \in  \widehat{G}$, we have
\[ \widehat{f \ast g} (  \pi)=  \widehat{g}( \pi) \widehat{f}(\pi). \]
Parseval's theorem states that for all $f \in L^2(G)$ the following identity holds:
\[ \| f \|_2^2= \sum_{\pi\in \widehat{G} } \dim \pi \h \|\widehat{f}(\pi)\|_{\HS}^2. \]

Finally, we will remark that $G$ is abelian if and only if every $\pi \in  \widehat{G}$ is one-dimensional. 
In this case, the above discussion reduces to the classical Fourier analysis on abelian groups. 

In this subsection, we will collect a number of definitions from additive combinatorics that will be needed later. Let $G$ be as above, and recall that $d$ denotes a bi-invariant metric on $G$.  The ball of radius $ \eta>0$ centered as $x \in G$ is denoted by $x_{ \eta}$. The $ \eta$-neighborhood of a set $A$, denoted by $A_\eta$, is the union of all $x_{ \eta}$ with $x \in A$.    

A subset $A \subseteq G$ is said to be  $\eta$-separated if the distance between every two points in $A$ is at least $ \eta$. An $\eta$-cover for $A$ is a collection of balls of radius 
$\eta$ with centers in $A$ whose union covers $A$.  Recall that the minimum size of an $\eta$-cover of $A$ (which is finite by compactness of $G$) is denoted by $\ncal_{\eta}(A)$.
The value $$ h(A;\eta):=\log \ncal_{\eta}(A)$$ is called the metric entropy of $A$ at scale $\eta$. 

The characteristic function of a set $A$ is denoted by $\cf_A$. For $ \eta >0$, we write $P_{ \eta}= \frac{\cf_{1_{\eta} }}{|1_{\eta}|}$. Note that $P_\eta$ belongs to the center
of the Banach algebra $L^1(G)$.  For $ f\in L^1(G)$ ($\mu$ probability measure on $G$, respectively) we write $f_{\eta}$ ($\mu_\eta$, respectively) instead
of $f \ast P_{\eta}$ ($\mu \ast P_{\eta}$, respectively).  The cardinality of a finite set $A$ is denoted by $\# A$. 
The R\'{e}nyi entropy of a $G$-valued Borel random variable $X$ at scale $\eta>0$ is defined by 
\[
H_2(X;\eta):=\log(1/|1_{\eta}|)-\log \|\mu_{\eta}\|_2^2,
\]
where $\mu$ is the distribution measure of $X$. As $H_2(X;\eta)$ depends only on the distribution measure $\mu$ of $X$, we will sometimes write $H_2(\mu;\eta)$ instead of
$H_2(X;\eta)$. 

We will use Vinogradov's notation $A \ll_{c_1, c_2} B$ to denote that $A \le CB$, where $C=C(c_1, c_2)$ is a positive function of $c_1, c_2$. We write  
$A \ll B$ to denote that $A \le CB$, for some absolute constant $C>0$. We similarly define $\gg_{c_1, c_2}$ and $\gg$ for the reverse relations.

 
 \section{Local randomness and representations with bounded dimension}\label{sec:thm1}
The main goal of this section is to prove Theorem~\ref{thm:finitely-many-representations}. Along the way some basic properties of locally random groups will also be proved. 

Suppose $f:\bbz^+\rightarrow \bbr^+$ is a strictly increasing function, and define 
\be\label{eq:metric}
d_{G,f}(x,y):=\sup_{\pi\in \wh{G}} \frac{\|\pi(x)-\pi(y)\|_{\op}}{f(\dim \pi)}.
\ee
Note that $\frac{\|\pi(x)-\pi(y)\|_{\op}}{f(\dim \pi)}$ depends only on the (unitary) isomorphism class of $\pi$. In the sequel 
we often assume $\pi: G\to U(n)$ for some $n\in\bbn$.
Moreover, we remark that if $\pi$ is a finite dimensional unitary representation of $G$ with the orthogonal decomposition $\pi=\oplus_{i \in I} \pi_i$ into 
irreducible representations, then
\be\label{eq:dGf-compl-red}
\frac{\|\pi(x)-\pi(y)\|_{\op}}{f(\dim \pi)} \le \max_{i \in I} \frac{\|\pi_i(x)-\pi_i(y)\|_{\op}}{f(\dim \pi_i)}  \le  d_{G, f} (x,y).  
\ee

\begin{lem}\label{lem:getting-metric}
Suppose $G$ is a compact group and  $f:\bbz^+\rightarrow \bbr^+$ is a strictly increasing function. 
Let $d_{G,f}$ be defined as in \eqref{eq:metric}; then $d_{G,f}$ is a well-defined bounded, bi-invariant metric on $G$.	
\end{lem}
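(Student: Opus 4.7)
The plan is to verify each defining property of a bounded bi-invariant metric in turn, deducing boundedness and well-definedness from the same elementary estimate.

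First I would check that the supremum in \eqref{eq:metric} is finite, which simultaneously gives well-definedness and boundedness. For any irreducible unitary $\pi$ one has $\|\pi(x)-\pi(y)\|_{\op}\le \|\pi(x)\|_{\op}+\|\pi(y)\|_{\op}\le 2$, and since $f$ is strictly increasing from $\bbz^+$ to $\bbr^+$, every irreducible $\pi$ satisfies $f(\dim \pi)\ge f(1)>0$. Hence each ratio is bounded by $2/f(1)$, so $d_{G,f}(x,y)\le 2/f(1)$ for all $x,y\in G$.

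Next I would verify the metric axioms. Symmetry and non-negativity are immediate from the corresponding properties of the operator norm. The triangle inequality follows by applying $\|\pi(x)-\pi(z)\|_{\op}\le \|\pi(x)-\pi(y)\|_{\op}+\|\pi(y)-\pi(z)\|_{\op}$, dividing by $f(\dim\pi)$, bounding each summand by the respective $d_{G,f}$, and taking the supremum over $\pi\in\wh G$. For the identity of indiscernibles, if $d_{G,f}(x,y)=0$ then $\pi(x)=\pi(y)$ for every $\pi\in \wh G$; the Peter--Weyl theorem guarantees that irreducible unitary representations separate points of the compact group $G$, so $x=y$.

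Finally, bi-invariance follows from the unitarity of $\pi$: for any $g\in G$,
\[
\|\pi(gx)-\pi(gy)\|_{\op}=\|\pi(g)(\pi(x)-\pi(y))\|_{\op}=\|\pi(x)-\pi(y)\|_{\op},
\]
and similarly $\|\pi(xg)-\pi(yg)\|_{\op}=\|\pi(x)-\pi(y)\|_{\op}$, so dividing by $f(\dim\pi)$ and taking the supremum shows $d_{G,f}(gx,gy)=d_{G,f}(xg,yg)=d_{G,f}(x,y)$.

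No single step is a genuine obstacle; the only place requiring a nontrivial input is the identity of indiscernibles, where one must invoke Peter--Weyl to ensure that $\wh G$ separates points. Everything else reduces to properties of the operator norm and the positivity/monotonicity of $f$.
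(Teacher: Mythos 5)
Your proof is correct and follows essentially the same route as the paper: the bound $2/f(1)$ for boundedness and well-definedness, unitarity for bi-invariance, the termwise triangle inequality, and Peter--Weyl for the separation of points. Nothing to add.
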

\begin{proof}
	Since $\|\pi(x)\|_{\op}=1$ for all $\pi\in\wh{G}$ and all $x\in G$,  
	we have that $d_{G,L}(x,y)\le 2/f(1)$ for any $x,y\in G$ ---we also used the fact that $f$ is increasing.
As $\pi(z)$ is a unitary matrix for any $z\in G$,
$$\|\pi(x)-\pi(y)\|_{\op}=\|\pi(zx)-\pi(zy)\|_{\op}=\|\pi(xz)-\pi(yz)\|_{\op}.$$
This implies $d_{G,f}$ is bi-invariant. 
Clearly $d_{G,f}$ satisfies the triangle inequality. By the Peter-Weyl theorem, if $x\neq y$, then there is $\pi\in \wh{G}$ such that $\pi(x)\neq \pi(y)$. Hence, if $x\neq y$, then $d_{G,f}(x,y)\neq 0$, from which the claim follows.
\end{proof}
Next we want to explore the conditions under which the metric $d_{G,f}$ gives us the same topology as the original topology of $G$. 
Indeed it suffices to study neighborhoods of the identity. 

\begin{lem}\label{lem:towards-def}
In the above setting, $d_{G,f}$ induces the original topology of $G$ if and only if 
\[\lim_{x\rightarrow 1} d_{G,f}(x,1)=0.\]
\end{lem}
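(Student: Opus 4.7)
The forward implication is immediate: if $d_{G,f}$ induces the original topology of $G$, then the map $x\mapsto d_{G,f}(x,1)$ is continuous with respect to the original topology (since it is continuous as a map on the metric space $(G,d_{G,f})$), so $d_{G,f}(x,1)\to 0$ as $x\to 1$.

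For the converse, assume $\lim_{x\to 1} d_{G,f}(x,1)=0$. My plan is to show that the identity map $\Id\colon (G,\tau)\to (G,d_{G,f})$ is continuous, where $\tau$ denotes the original topology of $G$, and then invoke compactness to upgrade this to a homeomorphism. The key observation is that the bi-invariance of $d_{G,f}$, established in Lemma~\ref{lem:getting-metric}, implies
\[
d_{G,f}(x,y)=d_{G,f}(xy^{-1},1)
\]
for all $x,y\in G$. Consequently, if $x_n\to x$ in $(G,\tau)$, then by joint continuity of the group operations in $(G,\tau)$ we have $x_nx^{-1}\to 1$ in $(G,\tau)$, so by our assumption $d_{G,f}(x_n,x)=d_{G,f}(x_nx^{-1},1)\to 0$. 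Thus $\Id\colon (G,\tau)\to (G,d_{G,f})$ is continuous at every point.

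Since $\Id$ is a continuous bijection, $(G,\tau)$ is compact by hypothesis, and $(G,d_{G,f})$ is Hausdorff (as every metric space is), $\Id$ is a homeomorphism. Therefore the two topologies coincide, completing the proof. There is no substantive obstacle here beyond correctly exploiting bi-invariance to transport continuity from $1$ to an arbitrary point, followed by the standard compact-to-Hausdorff argument.
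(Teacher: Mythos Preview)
Your proof is correct and follows essentially the same approach as the paper: use bi-invariance of $d_{G,f}$ to upgrade continuity of the identity map at $1$ to continuity everywhere, then apply the standard fact that a continuous bijection from a compact space to a Hausdorff space is a homeomorphism. Your version is slightly more explicit (spelling out the Hausdorff condition and using sequences), but the argument is the same.
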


\begin{proof}
In order to distinguish the two topologies on $G$, we let $G_f$ denote the topological space whose point set is $G$ and whose topology is generated by the metric $d_{G,f}$. 

If $G$ and $G_f$ coincide, then $\lim_{x\rightarrow 1}d_{G,f}(x,1)=0$.

Conversely, let $I_G:G\rightarrow G_f$ be the identity map. 
Since $d_{G,f}$ is bi-invariant, $\lim_{x\rightarrow 1} d_{G,f}(x,1)=0$ implies $\lim_{x\rightarrow y} d_{G,f}(x,y)=0$ for all $y\in G$. Hence $I_G$ is continuous. Since $G$ is compact and $I_G$ is a continuous bijection, it is a homeomorphism; this finishes the argument.
\end{proof}

The following is a generalization of Theorem~\ref{thm:finitely-many-representations}. 

\begin{theorem}\label{thm:general-finite-number-irreducible-rep}
Suppose $G$ is a compact group. The following statements are equivalent.
\begin{enumerate}
\item For any strictly increasing function $f:\bbz^+\rightarrow \bbr^+$, the metric $d_{G,f}$ induces the original topology of $G$.
\item For some strictly increasing function $f:\bbz^+\rightarrow \bbr^+$, the metric $d_{G,f}$ induces the original topology of $G$.
\item For any positive integer $n$, $\{\pi\in \wh{G}|\h \dim \pi\le n\}$ is finite. 	
\end{enumerate}	
\end{theorem}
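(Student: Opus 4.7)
The plan is to prove the cycle $(1) \Rightarrow (2) \Rightarrow (3) \Rightarrow (1)$; the first implication is immediate.

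For $(2) \Rightarrow (3)$ I would proceed by contrapositive, using Schur orthogonality together with Bessel's inequality. Suppose $d_{G,f}$ induces the topology of $G$, yet for some positive integer $n_0$ the set $\{\pi \in \wh{G} : \dim \pi \le n_0\}$ is infinite. Pigeonholing on the dimension, I extract a sequence $\pi_1, \pi_2, \ldots$ of pairwise non-isomorphic irreducible unitary representations of a common dimension $m \le n_0$, whose characters $\phi_k := \tr \circ\, \pi_k$ form an orthonormal system in $L^2(G)$ by Schur orthogonality. By Lemma~\ref{lem:towards-def}, I choose an open neighborhood $U$ of the identity on which $d_{G,f}(x,1) < \frac{1}{2m f(m)}$. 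Unpacking the definition of $d_{G,f}$ yields $\|\pi_k(x) - I\|_{\op} < \frac{1}{2m}$ for every $k$ and every $x \in U$, whence $|\phi_k(x) - m| \le m \|\pi_k(x) - I\|_{\op} < \frac{1}{2}$ and in particular $\mathrm{Re}\,\phi_k(x) > m - \frac{1}{2} \ge \frac{1}{2}$ on $U$. It follows that
\[|\langle \cf_U, \phi_k\rangle_{L^2(G)}| \ge \mathrm{Re}\int_U \overline{\phi_k(x)} \d{x} > \bigl(m - \tfrac{1}{2}\bigr)|U|\]
for every $k$. However, Bessel's inequality for the orthonormal family $\{\phi_k\}$ gives $\sum_k |\langle \cf_U, \phi_k\rangle|^2 \le \|\cf_U\|_2^2 = |U|$, and the infinitude of $k$ forces $|U| = 0$, contradicting the positivity of the Haar measure of nonempty open subsets of $G$.

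For $(3) \Rightarrow (1)$ I fix any strictly increasing $f$ (tacitly with $f(n) \to \infty$, a growth condition needed to control high-dimensional representations). By Lemma~\ref{lem:towards-def} it suffices to verify $d_{G,f}(x,1) \to 0$ as $x \to 1$. Given $\varepsilon > 0$, I pick $N$ with $2/f(N+1) < \varepsilon$; then the trivial bound $\|\pi(x) - I\|_{\op} \le 2$ makes $\|\pi(x) - I\|_{\op}/f(\dim \pi) < \varepsilon$ for every $\pi$ with $\dim \pi > N$ and every $x \in G$. The hypothesis (3) provides that only finitely many $\pi_1, \ldots, \pi_r \in \wh{G}$ have $\dim \pi_i \le N$, and continuity of each $\pi_i$ at the identity furnishes open neighborhoods $U_i \ni 1$ on which $\|\pi_i(x) - I\|_{\op} < \varepsilon f(\dim \pi_i)$. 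The intersection $U := \bigcap_{i=1}^{r} U_i$ then satisfies $d_{G,f}(x,1) < \varepsilon$ for all $x \in U$.

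\emph{Main obstacle:} the substantive direction is $(2) \Rightarrow (3)$. The challenge is to convert a purely local analytic condition (uniform smallness of $d_{G,f}(x,1)$ near $1$) into a global structural statement about $\wh{G}$. The bridge is orthogonality of characters: the metric hypothesis forces every character of fixed dimension $m$ to be bounded below in real part on a single common neighborhood of the identity, and Bessel's inequality then caps the total $L^2$-inner product with $\cf_U$ that such characters can accumulate, which is incompatible with the existence of infinitely many of them. A secondary technical point in $(3) \Rightarrow (1)$ is the implicit need for $f$ to be unbounded in order to dispatch high-dimensional representations; on examples like $G = SU(2)$ with a bounded $f$, one checks $d_{G,f}$ fails to induce the original topology despite (3) holding.
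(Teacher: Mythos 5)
Your proof is correct, and the crucial implication $(2)\Rightarrow(3)$ is established by a genuinely different and far more economical argument than the paper's. The paper derives $(2)\Rightarrow(3)$ (Lemma~\ref{lem:proving-finite-number-reps}) through a long structural chain: the FAb property (Lemma~\ref{lem:TheFAb}), finiteness of the set of open subgroups of bounded index (Lemma~\ref{lem:SubgroupGrowth}), Platonov's lemma on finite complements of the identity component, Jordan's theorem, a Goursat-type gluing lemma for products of a simple adjoint group, and finally a limiting argument that produces a sequence $g_m\to 1$ with $d_{G,f}(g_m,1)$ bounded below. You replace all of this with Schur orthogonality of characters plus Bessel's inequality: the metric hypothesis forces every character of a fixed dimension $m$ to have real part greater than $m-\tfrac12\geq\tfrac12$ on one common neighborhood $U$ of the identity, so each has inner product of modulus at least $|U|/2$ with $\cf_U$, and Bessel then caps the number of pairwise non-isomorphic such representations by $4/|U|$, which is finite since nonempty open sets have positive Haar measure. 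This is a complete proof, valid for arbitrary compact Hausdorff groups, and it even yields a quantitative bound on $\#\{\pi\in\wh G:\dim\pi=m\}$; what it does not deliver is the auxiliary structural information (FAb, subgroup growth) that the paper establishes en route and uses to connect with the profinite literature. Your $(3)\Rightarrow(1)$ coincides with the paper's argument.

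Your parenthetical caveat about $(3)\Rightarrow(1)$ is also accurate and worth recording: a strictly increasing $f:\bbz^+\to\bbr^+$ may be bounded, and for bounded $f$ the step asserting that only finitely many $\pi\in\wh G$ satisfy $f(\dim\pi)<2/\vare$ fails once $\vare\leq 2/\sup f$. Indeed for $G=SU(2)$ and $f(n)=1-1/n$ one has $d_{G,f}(x,1)\geq \frac{1}{\sup f}\,\sup_{\pi}\|\pi(x)-I\|_{\op}$, which does not tend to $0$ as $x\to 1$, so $(1)$ genuinely fails while $(3)$ holds. The theorem should therefore be read with the additional (implicitly used) hypothesis that $f$ is unbounded; with that reading both your proof and the paper's go through.
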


We start by proving that the second condition implies the \emph{FAb condition}.
\begin{definition}
A compact group $G$ has the \emph{FAb property} if $H^{\rm ab}:=H/\overline{[H,H]}$ is finite for any open subgroup $H$ of $G$. 	
\end{definition}

\begin{lem}\label{lem:TheFAb}
	Suppose $G$ is a compact group and for some strictly increasing function $f:\bbz^+\rightarrow\bbr^+$, the metric 
	$d_{G,f}$ induces the original topology of $G$. Then $G$ has the FAb property.
\end{lem}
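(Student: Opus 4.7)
I will argue by contrapositive: assume $G$ does not have FAb and exhibit a sequence $h_k \to 1$ with $d_{G,f}(h_k,1)$ bounded below, contradicting that $d_{G,f}$ induces the original topology (using Lemma~\ref{lem:towards-def}). So suppose some open $H \leq G$ has $H^{\mathrm{ab}} := H/\overline{[H,H]}$ infinite. Since $H$ is open in the compact group $G$, the index $n := [G:H]$ is finite; pick coset representatives $g_1 = 1, g_2, \ldots, g_n$. Note also that $\overline{[H,H]}$ is \emph{not} open in $H$, for otherwise $H^{\mathrm{ab}}$ would be finite by compactness of $H$. Hence every neighborhood of $1$ in $H$ contains elements with non-trivial image in $H^{\mathrm{ab}}$.

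The first step is the following elementary claim about $H^{\mathrm{ab}}$: for every neighborhood $V$ of $1$ in $H$, there exist $h \in V$ and a character $\chi$ of $H^{\mathrm{ab}}$ (viewed as a character of $H$ trivial on $\overline{[H,H]}$) such that $|\chi(h)-1| \geq 1$. Indeed, pick $h \in V$ whose image $\bar h \in H^{\mathrm{ab}}$ is non-trivial; by Pontryagin duality there is some $\chi_0 \in \widehat{H^{\mathrm{ab}}}$ with $\chi_0(\bar h) \neq 1$. The cyclic subgroup $\langle \chi_0(\bar h)\rangle \subseteq S^1$ is either finite of order $\geq 2$ or dense in $S^1$, so for some integer $k$ the power $\chi := \chi_0^k$ satisfies $|\chi(h)-1| = |\chi_0(\bar h)^k - 1| \geq 1$.

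Next, I will transfer this to a representation of $G$ by induction. Let $\pi_\chi := \mathrm{Ind}_H^G\chi$, a unitary representation of $G$ of dimension $n$. Because $H$ is open and $G$ compact, for $h$ sufficiently close to $1$ all conjugates $g_i h g_i^{-1}$ lie in $H$, and in the standard basis of the induced representation the action of $h$ fixes every coset $Hg_i$ pointwise. A direct calculation then shows
\[
\pi_\chi(h) \;=\; \mathrm{diag}\bigl(\chi(g_1 h g_1^{-1}),\,\chi(g_2 h g_2^{-1}),\,\ldots,\,\chi(g_n h g_n^{-1})\bigr).
\]
Since $g_1 = 1$, the first diagonal entry is $\chi(h)$, so $\|\pi_\chi(h)-I\|_{\mathrm{op}} \geq |\chi(h)-1| \geq 1$. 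Decomposing $\pi_\chi = \bigoplus_j \rho_j$ into irreducibles (each of dimension $\leq n$), by \eqref{eq:dGf-compl-red} some irreducible constituent $\rho$ satisfies $\|\rho(h)-I\|_{\mathrm{op}} \geq 1$, and consequently
\[
d_{G,f}(h, 1) \;\geq\; \frac{\|\rho(h)-I\|_{\mathrm{op}}}{f(\dim\rho)} \;\geq\; \frac{1}{f(n)}.
\]
Applying this to a shrinking sequence of neighborhoods produces $h_k \to 1$ with $d_{G,f}(h_k,1) \geq 1/f(n)$, contradicting Lemma~\ref{lem:towards-def}.

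The main technical step is the diagonal formula for $\pi_\chi(h)$, which hinges on the uniform statement that $g_i h g_i^{-1} \in H$ for all $i$ once $h$ is close enough to $1$; this uses both the openness of $H$ and the compactness of $G$ (to trivialize the permutation of cosets). Everything else is elementary: the structure of infinite compact abelian groups via powers of characters, and the decomposition into irreducibles to relate $\pi_\chi$ back to the definition of $d_{G,f}$.
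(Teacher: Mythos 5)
Your proof is correct and takes essentially the same approach as the paper: induce a character of an open subgroup up to $G$, observe that on elements of the normal core the induced representation acts diagonally so its operator norm distance from $I$ is at least $|\chi(h)^k-1|$, use a power $k$ to push this above a fixed constant, and contradict the continuity of $d_{G,f}$ at the identity via Lemma~\ref{lem:towards-def} and \eqref{eq:dGf-compl-red}. The only difference is organizational: the paper first proves the statement for normal open subgroups and then passes to general $H$ via the normal core, while you induce directly from $H$ and restrict to elements of the normal core so that the conjugates stay in $H$ — the same mechanism in a slightly different order.
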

\begin{proof}
We first show that $N/\overline{[N,N]}$ is finite for any {\em normal} open subgroup $N$ of $G$. 

 Let $\xi\in\wh{N}$, $\dim\xi=1$, and $n\in \bbz^+$, then $x\mapsto \xi(x)^n$ also defines a character of $N$; we denote this charatcter by $\xi_n\in \wh{N}$.	

Let $\pi_n:=\ind^G_N(\xi_n)$. We can identify the space $\cal_{\pi_n}$ of the representation $\pi_n$ with 
\[
\bbc[G]\otimes_{\bbc[N]} \cal_{\xi_n}=\bigoplus_{j=1}^m \bbc (g_j\otimes 1)
\]
where $\bbc[G]$ and $\bbc[N]$ are the corresponding group rings and 
$\{g_j\}_{j=1}^m$ is a set of coset representatives of $N$ with $g_1=1$; note that the inner product 
is induced from $\langle g_i\otimes 1,g_j\otimes 1\rangle =\delta_{ij}$, and for any $y\in N$ we have 
\[
\pi_n(y)(g_j\otimes 1)=g_j\otimes \xi_n(g_j^{-1}yg_j)=\xi(g_j^{-1}yg_j)^n (g_j\otimes 1).
\]
Therefore, $\|\pi_n(y)-I\|_{\op}=\max_j|\xi(g_j^{-1}yg_j)^n-1|\ge |\xi(y)^n-1|$. 
By Lemma~\ref{lem:towards-def}, we get that for any $\vare>0$ there exists $\eta>0$ with the following property: 
for any $\xi\in \wh{N}$ with dimension 1 and any $n\in\bbz^+$ we have
\be\label{eq:degree-1}
|\xi(y)^n-1|\le f([G:N]) \vare\quad\quad\text{for all $y\in N\cap 1_{\eta}$}
\ee
see~\eqref{eq:dGf-compl-red}.

Note that if $\zeta\in\mathbb S^1\setminus\{1\}$ is a norm 1 complex number that is not 1, then there is a positive integer $n$ such that $|\zeta^n-1|\ge \sqrt{3}$. Hence, \eqref{eq:degree-1} implies that if $\vare<\sqrt{3}/f([G:N])$, then $\xi(x)=1$ for $x\in N\cap 1_{\eta}$. Therefore, there is $\eta>0$ such that $1_{\eta}\subseteq \ker \xi$ for all $\xi\in \wh{N}$ that has dimension 1; thus, 
$\overline{[N,N]}= \bigcap_{\xi\in \wh{N}, \dim \xi=1} \ker \xi$ is an open subgroup of $G$. In particular, $N/\overline{[N,N]}$ is finite.

Suppose now that $H$ is an arbitrary open subgroup of $G$; 
then $G$ acts on the finite set $G/H$ by the left multiplication. The kernel $N$ of this action is an open normal subgroup of $G$. 
Since $\overline{[N,N]}\subseteq \overline{[H,H]}$ and $\overline{[N,N]}$ is an open subgroup, the claim follows.
\end{proof}

\begin{lem}\label{lem:SubgroupGrowth}
	Suppose $G$ is a compact group and $d_{G,f}$ induces the original topology of $G$ for some strictly increasing function $f:\bbz^+\rightarrow\bbr^+$. Then $G$ has only finitely many open subgroups of index at most $n$ for any positive integer $n$.
\end{lem}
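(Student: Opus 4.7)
The plan is to produce a single open normal subgroup $N^{*}\triangleleft G$ that is contained in every open subgroup of $G$ of index at most $n$; the lemma will then follow because any such subgroup corresponds to a subgroup of the finite group $G/N^{*}$, of which there are only finitely many. To build $N^{*}$ it suffices to exhibit one $\eta>0$ (depending on $n$ but not on $H$) with $1_\eta\subseteq H$ for every open $H$ with $[G:H]\le n$.

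Given such an $H$, I would work with the quasi-regular permutation representation $\rho_H:G\to \U(\bbc^{G/H})$, which has dimension $[G:H]\le n$ and whose irreducible constituents $\pi_i$ therefore all satisfy $\dim\pi_i\le n$. By~\eqref{eq:dGf-compl-red} we have
\[
\|\rho_H(y)-I\|_{\op}\le f(n)\, d_{G,f}(y,1),
\]
and since $d_{G,f}$ induces the original topology of $G$, one may choose $\eta>0$ depending only on $n$ and $f$ so that $\|\rho_H(y)-I\|_{\op}<\sqrt{2/n}$ for every $y\in 1_\eta$ and every such $H$ \emph{simultaneously}.

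The crucial observation is that $\tr(\rho_H(y))$ is a non-negative integer in $\{0,1,\ldots,[G:H]\}$, being the number of cosets in $G/H$ fixed by left multiplication by $y$. Writing $\lambda_1,\ldots,\lambda_{[G:H]}$ for the eigenvalues of the unitary operator $\rho_H(y)$, the bound $|\lambda_j-1|<\sqrt{2/n}$ yields $\mathrm{Re}(\lambda_j)>1-1/n$, whence $\mathrm{Re}(\tr(\rho_H(y)))>[G:H]-1$. Being an integer in the stated range, $\tr(\rho_H(y))$ must equal $[G:H]$; this forces every $\lambda_j=1$ and so $\rho_H(y)=I$, i.e.\ $y\in\bigcap_{g\in G}gHg^{-1}\subseteq H$. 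Setting $N^{*}:=\bigcap_{[G:H]\le n}H$, we conclude $1_\eta\subseteq N^{*}$, so $N^{*}$ is an open subgroup of the compact group $G$ and therefore of finite index, from which the result follows.

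I expect the main subtlety to be securing the \emph{uniformity} of $\eta$ across all candidate subgroups $H$: without it one would merely get a scale depending on $H$ and the intersection $N^{*}$ could fail to be open. This uniformity is precisely what is furnished by the fact that every irreducible constituent of any $\rho_H$ lives among representations of dimension at most $n$, so the strictly increasing function $f$ provides a single scale $\sqrt{2/n}/f(n)$ valid for all $H$ at once.
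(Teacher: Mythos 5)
Your proof is correct and follows essentially the same route as the paper: both exploit that the quasi-regular representation on $G/H$ has all irreducible constituents of dimension at most $n$, so that \eqref{eq:dGf-compl-red} furnishes a single ball $1_\eta$ annihilated by every such representation, and then conclude by passing to the finite quotient by the resulting open normal subgroup. The only difference is cosmetic: where you run a trace/eigenvalue argument to force $\rho_H(y)=I$, the paper simply observes that a permutation matrix $P\neq I$ satisfies $\|P-I\|_{\op}\ge 1$, so any threshold below $1$ (rather than your $\sqrt{2/n}$) already suffices.
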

\begin{proof}
Suppose to the contrary that $G$ has infinitely many open subgroups $\{H_i\}_{i=1}^{\infty}$ of index at most $n$. Let $\pi_i$ be the representation of $G$ on $L^2(G/H_i)$. Since $\pi_i(x)$ is a permutation for any $x\in G$, we have $\|\pi_i(x)-I\|_{\op}\ge 1$ for $x\not\in \ker \pi_i$. And so for any $x\not\in N:=\bigcap_{i=1}^{\infty} \ker \pi_i$ we have $d_{G,f}(x,1)\ge 1/f(n)$. Therefore by Lemma~\ref{lem:towards-def}, we have that $N$ is an open subgroup of $G$. 

Since $\ker \pi_i\subseteq H_i$, the sequence $\{H_i/N\}_{i=1}^{\infty}$ consists of distinct subgroups of a finite group $G/N$, which is a contradiction. 
\end{proof}

Next we prove a lemma on compact Lie groups which is essentially due to Platonov~\cite{Platonov66}, and its proof has some similarities with the proof of the Schur-Zassenhaus theorem in finite group theory. We will provide a proof for the convenience 
 of reader. 

\begin{lem}\label{lem:Platonov}
Let $G$ be a closed subgroup of the unitary group $\U_n(\CC)$. Then there exists a finite subgroup $F \le G$ such that $G= FG^{\circ}$. 
\end{lem}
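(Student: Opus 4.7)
The plan is to proceed by induction on $\dim G$. The base case $\dim G = 0$ is immediate: such a $G$ is a compact $0$-dimensional Lie group, hence finite, so one may take $F = G$.

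For the inductive step, I would set $A := Z(G^\circ)^\circ$, the identity component of the center of $G^\circ$. Being the identity component of a characteristic subgroup, $A$ is characteristic in $G^\circ$ and thus normal in $G$; moreover $A$ is a torus. \textbf{Case 1:} If $A$ is non-trivial, then $\dim(G/A) < \dim G$, and the inductive hypothesis produces a finite $\overline F \le G/A$ with $\overline F \cdot (G/A)^\circ = G/A$. Its preimage $H \subseteq G$ then satisfies $H \cdot G^\circ = G$ and $H^\circ = A$, reducing the problem to the situation in which $G^\circ$ is a torus. \textbf{Case 2:} If $A$ is trivial, then $G^\circ$ is semisimple; the centralizer $Z_G(G^\circ)$ has trivial Lie algebra (the center of $\Lie(G^\circ)$ vanishes) and is therefore a finite normal subgroup of $G$. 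One can then exploit the fact that, for a compact semisimple adjoint group $G^\circ$, $\Aut(G^\circ)$ decomposes as $G^\circ \cdot \Sigma$ with $\Sigma$ a finite group of (e.g.) diagram automorphisms realizing $\mathrm{Out}(G^\circ)$, in order to produce the required finite subgroup directly.

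In either case the problem is reduced to a compact Lie group $H$ whose identity component $H^\circ = T$ is a torus and with $\Gamma := H/T$ finite. This remaining case I would treat cohomologically. The $|\Gamma|$-torsion subgroup $T[N]$ (with $N := |\Gamma|$) is $\Gamma$-invariant, and the short exact sequence of $\Gamma$-modules $0 \to T[N] \to T \xrightarrow{\cdot N} T \to 0$ induces the exact segment
\[
H^2(\Gamma, T[N]) \longrightarrow H^2(\Gamma, T) \xrightarrow{\cdot N} H^2(\Gamma, T).
\]
Since multiplication by $|\Gamma|$ annihilates $H^k(\Gamma, -)$ for $k \ge 1$, the left arrow is surjective. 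Hence the extension class of $1 \to T \to H \to \Gamma \to 1$ in $H^2(\Gamma, T)$ is the image of some class in $H^2(\Gamma, T[N])$, and this latter class corresponds to a finite extension $1 \to T[N] \to F \to \Gamma \to 1$. By naturality of the extension/$H^2$ correspondence in the (abelian) coefficient module, the pushout construction supplies an embedding $F \hookrightarrow H$ compatible with the inclusion $T[N] \hookrightarrow T$, and its image is a finite subgroup satisfying $FT = H$.

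The hard part, I expect, will be Case 2 above. Unlike Case 1 it affords no dimension reduction, so it requires extra structural input about compact semisimple Lie groups---specifically the existence of a finite subgroup of $\Aut(G^\circ)$ surjecting onto $\mathrm{Out}(G^\circ)$, and the verification that such a lift can be intersected with (the image of) $G$ in a useful way. A secondary, purely technical point is to verify that the cohomological lift in the torus case yields an honest subgroup of $H$ and not merely an abstract extension; this is a standard consequence of the correspondence between $H^2(\Gamma,-)$ and equivalence classes of extensions by an abelian kernel, but needs to be spelled out carefully.
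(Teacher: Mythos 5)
Your treatment of the toral case is sound: the argument via $H^2(\Gamma,T[N])\twoheadrightarrow H^2(\Gamma,T)$ (using divisibility of $T$ and the fact that $|\Gamma|$ kills higher cohomology) is precisely the cohomological repackaging of the explicit computation in the paper, where a section $s$ is corrected by an ``average'' $\alpha$ with $\alpha(f)^{\#\overline F}=\prod_{f'}c(f',f)$ so that the modified section multiplies up to errors in the $\#\overline F$-torsion of $T$, and $F=\widetilde s(\overline F)\,T_{\overline F}$ is the desired finite subgroup. Your Case~1 reduction is also fine. The genuine gap is Case~2, and you have correctly identified it yourself: when $G^\circ$ is semisimple there is no dimension reduction, and the route through $\Aut(G^\circ)=\Inn(G^\circ)\rtimes\Sigma$ does not close. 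Quotienting by the finite normal subgroup $Z_G(G^\circ)$ and embedding into $\Aut(G^\circ)$ only replaces $G$ by a group whose identity component $\Inn(G^\circ)$ is again semisimple, so you face the identical lifting problem one level down; and intersecting with a pinned finite complement $\Sigma$ fails because the image of $G$ need not meet $\Sigma$ in representatives of all its outer classes, let alone in a subgroup. So as written the induction does not terminate in Case~2.

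The missing idea --- and the way the paper avoids any case division --- is to pass not to a quotient of $G$ but to the normalizer of a maximal torus $T$ of $G^\circ$. Conjugacy of maximal tori in $G^\circ$ gives $G=N_G(T)\,G^\circ$, and since $\Aut(T)\simeq\GL_d(\ZZ)$ is discrete and $C_{G^\circ}(T)=T$, one gets $N_G(T)^\circ=T$. Thus $N_G(T)$ is exactly an extension $1\to T\to N_G(T)\to\overline F\to 1$ with $\overline F$ finite, i.e.\ the toral situation you already know how to handle; producing a finite $F$ with $N_G(T)=TF$ then yields $G=FG^\circ$ because $T\subseteq G^\circ$. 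With this one reduction in place, your $H^2$ argument completes the proof and no semisimple case ever arises.
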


\begin{proof}
Let $T$ be a maximal torus in $G^{\circ}$. For every $g \in G$, the torus $T^g := gTg^{-1}$ is also a maximal torus in $G^{\circ}$, and hence, by conjugacy of maximal tori, there exists $g_0 \in G^{ \circ}$ such that $T^{g}= T^{ g_0}$, or, equivalently, $ g_0^{-1}g \in N_G(T)$. This establishes that $G= N_G(T) G^{\circ}$. In view of the fact that $\Aut(T) \simeq \GL_d(\ZZ)$ is discrete, we have $N_G(T)^{ \circ} \subseteq C_G(T)$. Moreover, as $T$ is a maximal torus, we have $C_G(T)^{\circ} = T$, and hence $N_G(T)^{ \circ} = T$. From the compactness
of $N_G(T)$, we obtain that $[N_G(T): N_G(T)^{ \circ}]< \infty$, and hence we have the following exact sequence:
\[ 1 \to T \to N_G(T) \to \overline{F} \to 1, \]
where $ \overline{F} $ is a finite group. Since $T$ is abelian, the conjugation action of $N_G(T)$ on $T$ induces an action of $ \overline{F} $ on $T$. For any $f\in  \overline{F} $ and $t\in T$, we denote the action of $f$ on $t$ by $t^f$. That means if $s:  \overline{F}  \to N_G(T)$ is a section for the projection map from $N_G(T)$ to $ \overline{F} $, then $t^f=s(f)ts(f)^{-1}$ for any $f\in  \overline{F} $ and $t\in T$. For the section $s$, let
$c(f_1, f_2)= s( f_1 f_2)^{-1} s(f_1) s(f_2)$ for $ f_1, f_2 \in  \overline{F} $. Note that $c(  \overline{F}  \times  \overline{F} ) \subseteq T$, and we have 
\[   s( f_1f_2) c(f_1,f_2) = s(f_1) s(f_2). \]
From here one can verify the following $2$-cocycle relation:
\begin{equation}\label{cocycle}
  c(f_1, f_2f_3) c(f_2, f_3)= c(f_1f_2, f_3)^{ f_3^{-1}} c(f_1, f_2).
\end{equation}
Let $  \alpha:  \overline{F}  \to T$ be defined in such a way that $ \alpha(f)^{\#  \overline{F} } = \prod_{f' \in  \overline{F}  }  c(f', f)$. From \eqref{cocycle} and the definition of $ \alpha$ we have 
\begin{equation}\label{eq:cocycle}
\begin{split}
 \alpha( f_1, f_2)^{\#  \overline{F} } & = \prod_{f' \in  \overline{F}  }  c( f', f_1 f_2) = \prod_{f' \in  \overline{F}  }   \left( c( f'f_1, f_2)^{ f_2^{-1} } c( f', f_1) c( f_1, f_2)^{-1} \right) \\
 &=  \left(  \alpha( f_2) ^{ f_2^{-1}} \alpha( f_1) c( f_1, f_2) \right)^{\#  \overline{F} }. 
\end{split}
\end{equation}
Let $T_{\overline{F}}$ be the subgroup of $T$ consisting of all elements of order dividing $\#  \overline{F} $;  then from \eqref{eq:cocycle}, it follows that there exists a map $ \zeta:  \overline{F}  \times  \overline{F}  \to T_{ \overline{F} }$ such that for all $f_1, f_2 \in  \overline{F} $ we have 
\[  \alpha( f_1f_2)= \alpha(f_1)^{ f_2^{-1}} \alpha(f_2) c(f_1, f_2)  \zeta( f_1, f_2). \]
Now consider the modified section $ \widetilde{ s} :  \overline{F}  \to T$ defined by $ \widetilde{ s}(f)= s(f) \alpha(f)$. A simple computation shows that 
\[ \widetilde{ s}( f_1f_2) = \widetilde{ s}(f_1) \widetilde{ s}(f_2) \zeta( f_1, f_2).\]
It follows that $ F:= \widetilde{ s}( \overline{F} ) T_{ \overline{F} }$ is a finite subset which is closed under multiplication, and hence a subgroup of $G$. Clearly, we have $N_G(T)= TF$. As $T\subseteq G^\circ$, $G=N_G(T) G^\circ$, and $N_G(T)= TF$, the claim follows.
\end{proof}

\begin{lem}\label{lem:algebraic-gluing}
Let $G_0$ be a compact connected simple Lie group of adjoint type. 
Suppose $G$ is a closed subgroup of $\prod_{i=1}^n G_0$ so that $\pr_i(G)=G_0$ for all $i$, 
where $\pr_i$ denotes the projection to the $i$-th component; 
and assume that for all $i\neq j$ and all $\theta\in \Aut(G_0)$ there exists $x\in G$ 
such that $\theta\circ \pr_i(x)\neq \pr_j(x)$. Then $G=\prod_{i=1}^n G_0$.   	
\end{lem}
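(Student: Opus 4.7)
The plan is induction on $n$, with $n=1$ tautological. For the inductive step, I would first pass to $G':=\pi_{1,\dots,n-1}(G)\subseteq\prod_{i=1}^{n-1}G_0$ (projection onto the first $n-1$ factors). The hypotheses of the lemma are inherited by $G'$: any $x\in G$ witnessing $\theta\circ\pr_i(x)\ne\pr_j(x)$ for $i,j\le n-1$ and $\theta\in\Aut(G_0)$ projects to a witness in $G'$, and $\pr_i(G')=G_0$ for $i\le n-1$. The inductive hypothesis then yields $G'=\prod_{i=1}^{n-1}G_0$.

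Next, I would view $G$ as a closed subgroup of $G'\times G_0$ and introduce the Goursat kernel $K:=\{h\in G_0:(1,h)\in G\}$, where $1$ denotes the identity of $G'$. Surjectivity of $\pr_n|_G$ makes $K$ a closed normal subgroup of $G_0$, so by simplicity of the adjoint group $G_0$, either $K=\{1\}$ or $K=G_0$. The case $K=G_0$ gives $G=G'\times G_0=\prod_{i=1}^{n}G_0$ at once and completes the induction. To rule out $K=\{1\}$: in that case $G$ would be the graph of a surjective continuous homomorphism $\phi:G'\to G_0$, and the plan is to show $\phi$ factors as $\phi=\psi\circ\pr_j$ for some $j\le n-1$ and $\psi\in\Aut(G_0)$; then the identity $\pr_n(g)=\psi(\pr_j(g))$ on all of $G$ contradicts the separation hypothesis applied with indices $(j,n)$ and $\theta=\psi$.

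The factorization of $\phi$ is the main obstacle and is where simplicity of $\g_0:=\Lie(G_0)$ enters crucially. Passing to derivatives, I would set $\mathfrak{a}_i:=d\phi(\g_0^{(i)})\subseteq\g_0$, where $\g_0^{(i)}$ is the $i$-th summand of $\Lie(G')=\bigoplus_{i=1}^{n-1}\g_0$. These subalgebras pairwise commute and together span $\g_0$ by surjectivity of $d\phi$. If two of them, say $\mathfrak{a}_i\ne 0$ and $\mathfrak{a}_k\ne 0$ for some $i\ne k$, were non-zero, then $\mathfrak{a}_i$ and $\mathfrak{b}:=\sum_{\ell\ne i}\mathfrak{a}_\ell$ would be non-zero ideals of $\g_0$ with $\mathfrak{a}_i\cap\mathfrak{b}$ commuting with $\mathfrak{a}_i+\mathfrak{b}=\g_0$, hence central and therefore zero; this would exhibit $\g_0$ as a non-trivial direct sum of ideals, contradicting simplicity. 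Hence exactly one $\mathfrak{a}_j$ is non-zero and equals $\g_0$. Connectedness of each factor then forces $\phi$ to be trivial on every $G_0^{(\ell)}$ with $\ell\ne j$, so $\phi=\psi\circ\pr_j$ where $\psi:=\phi|_{G_0^{(j)}}$ is a continuous surjection of $G_0$ whose kernel is a proper closed normal subgroup and therefore trivial — so $\psi\in\Aut(G_0)$, as needed to produce the contradiction.
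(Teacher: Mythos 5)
Your proof is correct and follows essentially the same Goursat-type induction as the paper: reduce to the case where $G$ is the graph of a surjective continuous homomorphism $\phi$ onto one $G_0$-factor, show that $\phi$ factors through a single coordinate as an automorphism, and contradict the separation hypothesis for that pair of indices. The orderings differ slightly (the paper first shows some fiber subgroup $N_i=G\cap j_i(G_0)$ is trivial, projects away from that factor injectively, and then inducts; you induct on the first $n-1$ factors and then analyze the Goursat kernel of the last factor), and the one substantive divergence is the factorization step: the paper works at the group level, using perfectness $[G_0,G_0]=G_0$ together with $[\ker\phi,\,j_i(G_0)]\subseteq\ker\phi$ to identify $\ker\phi$ as a sub-product of factors, whereas you differentiate and use simplicity of $\Lie(G_0)$ to show $d\phi$ vanishes on all but one summand. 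Both arguments are sound and of comparable length, so this is a legitimate variant rather than a genuinely different route.
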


\begin{proof}
We proceed by induction on $n$. The base of induction is clear.  For any $i$, let $j_i:G_0\rightarrow \prod_{i=1}^{n} G_0$ be the natural injection to the $i$-th component and $N_i:=G\cap j_i(G_0)$. 

Suppose contrary to the claim that $N_i\neq j_i(G_0)$ for some $i$. 
Without loss of generality we can and will assume that $N_1\neq j_1(G_0)$. 
This and our assumption on $G$ imply that $N_1$ is a proper normal subgroup of $j_1(G_0)$, hence, it is trivial. 

We conclude that the projection $\pr_{[2..n]}: G\rightarrow \prod_{i=2}^n G_0$ to the components $2,\ldots,n$ is injective. 
Let $H_1:=\pr_{[2..n]}(G)\subseteq \prod_{i=2}^n G_0$. Clearly $H_1$ satisfies the same properties as $G$, hence by the inductive hypothesis, we have $H_1=\prod_{i=2}^n G_0$. 

Let $\xi:\prod_{i=2}^n G_0\rightarrow G$ be the inverse of the isomorphism $\pr_{[2..n]}:G\rightarrow H_1$. 
Let 
\[
\phi=\pr_1\circ\xi:\textstyle\prod_{i=2}^n G_0\rightarrow G_0.
\] 
Then $G$ can be identified with the graph of $\phi$, and $\phi$ is onto. 
Since $\ker(\phi)$ is a normal subgroup of $\prod_{i=2}^n G_0$, it follows that 
$$[\ker(\phi),j_i(G_0)] =j_i([\pr_i(\ker(\phi)),G_0]) \subseteq \ker (\phi).$$ 
As $G_0=[G_0,G_0]$, we have $\ker(\phi)=\textstyle\prod_{i\in I} G_0$ for some $I\subseteq [2..n]$. 

This shows that $\prod_{i\in [2..n]\setminus I} G_0\simeq G_0$, that is, $I=[2..n]\setminus\{i_1\}$ for some $i_1$, 
and $$\phi|_{j_{i_1}(G_0)}:j_1(G_0)\rightarrow G_0$$ is an isomorphism. Hence, there is $\theta\in \Aut(G_0)$ such that $\theta(\pi_1(x))=\pr_{i_1}(x)$ for any $x\in G$, which contradicts our assumption.  
\end{proof}

\begin{lem}\label{lem:proving-finite-number-reps}
Let $G$ be a compact group and assume that $d_{G,f}$ induces the original topology of $G$ for some strictly increasing function $f:\bbz^+\rightarrow \bbr^+$. 
Then $G$ has only finitely many pairwise non-isomorphic irreducible representations of dimension $n$ for any positive integer $n$. 
\end{lem}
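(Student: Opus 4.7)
The plan is to bypass the preceding structural machinery and deduce the statement directly from the Arzel\`a--Ascoli theorem applied to the family of characters. Set $\mathcal{K} := \{\chi_\pi \mid \pi \in \wh{G},\ \dim \pi \le n\}$ where $\chi_\pi(x) = \tr(\pi(x))$. For each $\pi$ with $\dim \pi \le n$, the defining inequality for $d_{G,f}$ gives
\[
|\chi_\pi(x) - \chi_\pi(y)| = |\tr(\pi(x) - \pi(y))| \le n\,\|\pi(x) - \pi(y)\|_{\op} \le n f(n)\, d_{G,f}(x,y),
\]
so $\mathcal{K}$ is uniformly Lipschitz in the metric $d_{G,f}$ with constant $n f(n)$, and uniformly bounded by $\|\chi_\pi\|_\infty \le n$. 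By hypothesis $(G, d_{G,f})$ is compact metric (its topology coincides with the original compact topology of $G$), so the Arzel\`a--Ascoli theorem implies that $\mathcal{K}$ is precompact in $C(G)$ with the sup norm. Since the Haar measure is normalized to a probability, the continuous inclusion $(C(G), \|\cdot\|_\infty) \hookrightarrow (L^2(G), \|\cdot\|_2)$ transports this to precompactness of $\mathcal{K}$ in $L^2(G)$.

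The decisive step is then Schur's orthogonality relations, which make $\{\chi_\pi\}_{\pi \in \wh{G}}$ an orthonormal set in $L^2(G)$; in particular, any two distinct members of $\mathcal{K}$ are at $L^2$-distance exactly $\sqrt{2}$. A precompact subset of a metric space is totally bounded, so it can be covered by finitely many balls of radius strictly less than $\sqrt{2}/2$, and no such ball can contain two distinct characters. Since distinct isomorphism classes of irreducible representations have distinct characters, this bounds $|\wh{G}_{\le n}|$ by the cardinality of the finite cover, and the lemma follows.

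This approach sidesteps the FAb property, Platonov's structure theorem, the subgroup-growth bound, and the algebraic gluing lemma established earlier in the section, which I expect the authors use for a more structural proof (and in any case need for related results such as the reverse implication in Theorem~\ref{thm:general-finite-number-irreducible-rep}). The one subtle point to verify is the interplay between the two metrics on $G$: equicontinuity of $\mathcal{K}$ is automatic from the very definition of $d_{G,f}$, whereas the Arzel\`a--Ascoli step crucially uses the hypothesis that $d_{G,f}$ induces the original compact topology (without this, the $d_{G,f}$-topology could be strictly finer---discrete, say, as in the case $G = \mathbb{T}$---and then $(G, d_{G,f})$ would fail to be compact and the argument would collapse).
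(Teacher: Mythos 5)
Your proof is correct, and it takes a genuinely different and substantially shorter route than the paper's. Each step checks out: the bound $|\tr(A)|\le n\|A\|_{\op}$ for $n\times n$ matrices together with the definition of $d_{G,f}$ and the monotonicity of $f$ gives the uniform Lipschitz constant $nf(n)$ on $\mathcal K$; the hypothesis that $d_{G,f}$ induces the original topology is exactly what makes $(G,d_{G,f})$ a compact metric space so that Arzel\`a--Ascoli applies; total boundedness passes through the $1$-Lipschitz inclusion $C(G)\hookrightarrow L^2(G)$; and Schur orthogonality forces the characters to be $\sqrt2$-separated, so a totally bounded set of them is finite. (Note that orthogonality alone already guarantees that non-isomorphic irreducibles have distinct characters, so you do not even need the character-determines-representation theorem.) You also correctly locate the one place the topological hypothesis enters, and your $G=\mathbb T$ example of failure without it is apt. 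By contrast, the paper argues by contradiction through structure theory: assuming infinitely many pairwise non-isomorphic $n$-dimensional irreducibles $\pi_i$, it invokes Platonov's lemma and Jordan's theorem to produce open subgroups of bounded index, uses Lemmas~\ref{lem:TheFAb} and \ref{lem:SubgroupGrowth} to stabilize these along a subsequence, reduces via Lemma~\ref{lem:algebraic-gluing} to a surjection onto an infinite product of a fixed simple group $G_0$, and finally manufactures a sequence $g_m\to 1$ with $d_{G,f}(g_m,1)$ bounded away from $0$, contradicting Lemma~\ref{lem:towards-def}. Your soft-analysis argument sidesteps all of that Lie-theoretic machinery; what the paper's longer route buys is the collection of structural byproducts (the FAb property, the subgroup-growth bound, the gluing lemma) that are of independent interest and tie into the discussion of \cite{BLM02}, but they are not needed for the statement of the lemma itself.
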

\begin{proof}
Suppose contrary to the claim that there are infinitely many pairwise non-isomorphic 
$n$-dimensional irreducible representations $\{\pi_i\}_{i=1}^{\infty}\subseteq \wh{G}$. 

Let $G_i:= \pi_i(G)\subseteq \U_n(\bbc)$. By Lemma \ref{lem:Platonov}, for every  $i$ there exists a finite subgroup $F_i \subseteq  G_i$ such that $G_i= F_i G_i^{\circ}$.  By Jordan's theorem, $F_i$ contains a normal abelian subgroup $A_i$ of bounded index, depending on $n$. Let $ N_i=  \pi_i^{-1}( A_i G_i^\circ)$. Then $N_i$ is a normal open subgroup of $G$ 
with $ [G: N_i] \ll_n 1$. By Lemma~\ref{lem:SubgroupGrowth}, passing to a subsequence, we can and will assume that $N_i$ is a fixed subgroup $N$ for all $i \ge 1$. In particular, it follows that $N$ surjects onto all $A_i/(A_i\cap G_i^\circ)$ for $i \ge 1$. By Lemma~\ref{lem:TheFAb}, $N$ has a finite abelianization. Hence we must have $\sup_i [A_i:A_i\cap G_i^\circ] < \infty$, which implies that 
\be\label{eq:uniform-bound-index}
\sup_i [G:\pi_i^{-1}(G_i^\circ)]\le [G:N]\sup_i [A_i:A_i\cap G_i^{\circ}] <\infty.
\ee
As $H_i:=\pi_i^{-1}( G_i^\circ)$ is an open subgroup for all $i$, by the assumption and \eqref{eq:uniform-bound-index}, after passing to a subsequence, we may and will assume that $H_i$ is the same subgroup $H$ for all $i \ge 1$. 

Altogether, we have proved that there is an open subgroup $H$ of $G$ such that $\pi_i(H)$ are connected subgroups of $\U_n(\bbc)$ for all $i$. Since $H$ has a finite abelianization, $\pi_i(H)$ are semisimple connected subgroups of $\U_n(\bbc)$. 
There are only finitely many such subgroups, up to isomorphism. Hence, after passing to factors of $\pi_i(H)$ and a subsequence, we can and will assume that there is a compact connected simple Lie group of adjoint type $G_0$ such that $\pi_i(H)\simeq G_0$ for any $i$. 

The order of the group $\Aut(G_0)/\Inn(G_0)$ is bounded by a function of $n$ and $\pi_i$'s are pairwise not $G$-conjugate; 
thus, after passing to a subsequence we can and will assume that for any $i\neq j$ and $\theta\in \Aut(G_0)$, there is $x\in H$ 
such that $\theta(\pi_i(x))\neq \pi_j(x)$. 

By Lemma~\ref{lem:algebraic-gluing}, for any positive integer $m$,
\be\label{eq:finite-factors-onto}
\pi_{[1..m]}:H\rightarrow \prod_{i=1}^m G_0,\quad\quad \pi_{[1..m]}(x):=(\pi_i(x))_{i=1}^m
\ee   
is an onto group homomorphism. Now let us consider the group homomorphism
\[
\pi:H\rightarrow \prod_{i=1}^{\infty} G_0,\quad\quad \pi(x):=(\pi_i(x))_{i=1}^{\infty}.
\]
For $x_0\in G_0\setminus\{1\}$, by \eqref{eq:finite-factors-onto}, we get a sequence $\{g_m\}_{m=1}^{\infty}$ of elements of $G$ such that 
\be\label{eq:image-under-pi}
\pi(g_m)\in \prod_{i=1}^m\{1\}\times \{x_0\} \times \prod_{i=m+2}^{\infty} G_0;
\ee
and so $\lim_{m\rightarrow \infty} \pi(g_m)=1$. Since $\ker \pi$ is a compact subgroup, we can choose $g_m$'s in a way that $\lim_{m\rightarrow \infty} g_m=1$ and \eqref{eq:image-under-pi} holds. Let $\phi$ be a non-trivial irreducible representation of $G_0$, and let $\phi_i:=\phi\circ \pr_i\circ \pi\in \wh{G}$, where $\pr_i$ is the projection to the $i$-th component. Then by \eqref{eq:image-under-pi} we have 
\[
\phi_{m+1}(g_m)=\phi(x_0);
\] 
and so 
\[
d_{G,f}(g_m,1)\ge \frac{\|\phi_{m+1}(g_m)-I\|_{\op}}{f(\dim \phi_{m+1})}=\frac{\|\phi(x_0)-I\|_{\op}}{f(\dim \phi)}>0.
\]
On the other hand, by Lemma~\ref{lem:towards-def} and $\lim_{m\rightarrow \infty} g_m=1$, we have
\[
\lim_{m\rightarrow \infty} d_{G,f}(g_{m},1)=0,
\] 
which is a contradiction. 
\end{proof}

\begin{proof}[Proof of Theorem~\ref{thm:general-finite-number-irreducible-rep}]
Clearly (1) implies (2). Lemma~\ref{lem:proving-finite-number-reps} proves that (2) implies (3). Next we want to show that (3) implies (1). Suppose $f:\bbz^+\rightarrow \bbr^+$ is a strictly increasing function. By Lemma~\ref{lem:towards-def} we need to show that 
$\lim_{x\rightarrow 1} d_{G,f}(x,1)=0$. For a given $\vare>0$, there are only finitely many representations $\{\pi_1,\ldots,\pi_n\}\subset \wh{G}$ such that $f(\dim \pi_i)<2/\vare$. Hence, 
for all $\pi\in \wh{G} \setminus \{\pi_1,\ldots,\pi_n\}$ and all $x\in G$ we have 
\[
\frac{\|\pi(x)-I\|_{\op}}{f(\dim \pi)}\le \frac{2}{f(\dim \pi)}\le \vare.
\]
Since $\pi_i$'s are continuous and $G$ is compact, $\pi_i$'s are uniformly continuous. And so there is $\eta>0$ such that for all $x\in 1_{\eta}$ we have 
\[
\|\pi_i(x)-I\|_{\op}\le \vare f(1)
\]
for all $i\in [1..n]$. Altogether we get that for all $x\in 1_{\eta}$ and all $\pi\in \wh{G}$ we have 
\[
\frac{\|\pi(x)-I\|_{\op}}{f(\dim \pi)}\le \vare,
\]
which implies that $d_{G,f}(x,1)\le \vare$ for all $x\in 1_{\eta}$; and the claim follows.
\end{proof}
\begin{proof}[Proof of Theorem~\ref{thm:finitely-many-representations}]
Suppose $G$ is locally random; that means $G$ has a metric such that for all $x\in G$ and $\pi\in \wh{G}$ we have
\[
\|\pi(x)-I\|_{\op}\le C_0 (\dim \pi)^L d(x,1).
\]
Let $f:\bbz^+\rightarrow \bbr^+, f(n):=C_0 n^L$; then $f$ is strictly increasing and $\lim_{x\rightarrow 1} d_{G,f}(x,1)=0$. Hence, for every $n \ge 1$, it follows from  Theorem~\ref{thm:general-finite-number-irreducible-rep}, that there are only finitely many elements of $\wh{G}$ of dimension at most $n$.

Conversely,  suppose that for all integers $n \ge 1$, there are only finitely many elements of $\wh{G}$ of dimension at most $n$. Set $f:\bbz^+\rightarrow \bbr^+, f(n):=n$. By Theorem~\ref{thm:general-finite-number-irreducible-rep}, $d_{G,f}$ induces 
the original topology of $G$, and with respect to this metric for all $x,y\in G$ and $\pi\in \wh{G}$ we have 
\[
\|\pi(x)-\pi(y)\|_{\op} \le (\dim\pi) d_{G,f}(x,y);
\]  
therefore, $G$ is locally random.	
\end{proof}


\section{Local randomness, dimension condition, and important examples.}\label{sec:properties}
As we pointed out earlier, local randomness is particularly powerful when in addition 
the chosen metric has a \emph{dimension condition},~\eqref{eq:dimension-condition-intro}. 
Furthermore, several important examples, e.g., analytic compact groups, come equipped with a 
natural metric and we would like to know whether $G$ is locally random with respect to this natural metric. 

In this section we address this question. In particular, we show that compact simple Lie groups (with respect to their natural metric) are locally random; we also provide a connection between quasi-randomness and local randomness for profinite groups.  

We begin with investigating local randomness of quotients and products. 
Indeed, Theorem~\ref{thm:finitely-many-representations} implies that 
\begin{enumerate}
\item if $G$ is locally random and $N$ is a closed normal subgroup, then $G/N$ is locally random;
\item if $G_1$ and $G_2$ are locally random, then $G_1\times G_2$ is locally random.
\end{enumerate}
These statements, however, do not provide information regarding the metrics (or the involved parameters) with respect to which these groups are locally random. The following two lemmas prove the above statements with some control on the involved metrics.

  \begin{lem}
  Suppose $G$ is $L$-locally random  with coefficient $C_0$, and let 
  $N$ be a closed normal subgroup of $G$. Then $G/N$ equipped with the natural quotient metric is 
  $L$-locally random with coefficient $C_0$. 	
  \end{lem}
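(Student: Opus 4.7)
The plan is to exploit the one-to-one correspondence between irreducible unitary representations of $G/N$ and irreducible unitary representations of $G$ that are trivial on $N$, together with the definition of the quotient metric.

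First, I would recall that the quotient metric on $G/N$ is given by
\[
\bar{d}(xN, yN) \;=\; \inf_{n \in N} d(x, yn),
\]
and that this formula indeed defines a compatible bi-invariant metric on $G/N$ (bi-invariance following from the bi-invariance of $d$ together with normality of $N$). Next, given any irreducible unitary representation $\pi$ of $G/N$, let $\tilde{\pi} := \pi \circ q$, where $q: G \to G/N$ is the quotient homomorphism. Then $\tilde{\pi}$ is an irreducible unitary representation of $G$ with $\dim \tilde{\pi} = \dim \pi$, and $\tilde{\pi}$ is trivial on $N$.

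Now, for any $x, y \in G$ and any $n \in N$, using the $L$-local randomness of $(G,d)$ and the fact that $\tilde{\pi}(yn) = \tilde{\pi}(y)$, I obtain
\[
\|\pi(xN) - \pi(yN)\|_{\op} \;=\; \|\tilde{\pi}(x) - \tilde{\pi}(yn)\|_{\op} \;\le\; C_0 (\dim \tilde{\pi})^L \, d(x, yn).
\]
Taking the infimum of the right-hand side over $n \in N$ yields
\[
\|\pi(xN) - \pi(yN)\|_{\op} \;\le\; C_0 (\dim \pi)^L \, \bar{d}(xN, yN),
\]
which is exactly the desired inequality for $(G/N, \bar{d})$.

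There is no real obstacle here; the only mild point to check carefully is that $\bar d$ as defined above is genuinely a compatible bi-invariant metric on $G/N$ (in particular, that the infimum is attained by compactness of $N$ and that the triangle inequality holds), which is a standard verification.
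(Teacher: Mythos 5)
Your proposal is correct and follows essentially the same route as the paper: lift $\pi\in\wh{G/N}$ to $G$, apply the local randomness inequality, and pass to the infimum defining the quotient metric (the paper phrases the last step with an $\vare$ rather than taking the infimum directly, and writes the quotient metric as $\inf_{h,h'\in N}d(xh,yh')$, which coincides with your $\inf_{n\in N}d(x,yn)$ by bi-invariance). No gaps.
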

  
 \begin{proof}
Let us recall that given a bi-invariant metric $d$ on $G$, 
the natural quotient metric on $G/N$ is $d(xN,yN):=\inf_{h,h'\in N} d(xh,yh')$. 

For $\overline{\pi}\in \wh{G/N}$, let $\pi(x):=\overline{\pi}(xN)$; then $\pi\in \wh{G}$. For $x,y\in G$, and every $\vare>0$, there exist $h,h'\in N$ such that 
\[
d(xh,xh')< d(xN,yN)+\vare.
\] 
From this we conclude
\begin{align*}
\|\overline{\pi}(xN)-\overline{\pi}(yN)\|_{\op}&=\|\pi(xh)-\pi(yh')\|_{\op}\le C_0 (\dim\pi)^L d(xh,yh')\\
&\le C_0 (\dim \overline{\pi})^L (d(xN,yN)+\vare).
\end{align*} 	
The claim follows as $\vare$ is an arbitrary positive number.
  \end{proof}
  
  \begin{lem}
  Suppose $G_i$ is an $L_i$-locally random group with coefficient $C_i$ for $i=1,2$. Then $G_1\times G_2$ is an $\max\{L_1,L_2\}$-locally random group with coefficient $C_1+C_2$ with respect to the maximum metric. 	
  \end{lem}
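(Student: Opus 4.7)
The plan is to exploit the classification of irreducible unitary representations of a product of compact groups. Namely, every $\pi \in \widehat{G_1 \times G_2}$ is unitarily isomorphic to an external tensor product $\pi_1 \boxtimes \pi_2$ where $\pi_i \in \widehat{G_i}$, acting on $\cal_{\pi_1} \otimes \cal_{\pi_2}$ by $(\pi_1 \boxtimes \pi_2)(x_1,x_2) = \pi_1(x_1) \otimes \pi_2(x_2)$; in particular $\dim \pi = \dim\pi_1 \cdot \dim\pi_2$. Since the local randomness inequality \eqref{eq:L-loc-rand-given-scale} only depends on the unitary isomorphism class of $\pi$, it suffices to verify it for representations of this form.

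First I would write the standard tensor-product telescoping
\[
\pi_1(x_1)\otimes\pi_2(x_2) - \pi_1(y_1)\otimes\pi_2(y_2) = (\pi_1(x_1)-\pi_1(y_1))\otimes\pi_2(x_2) + \pi_1(y_1)\otimes(\pi_2(x_2)-\pi_2(y_2)).
\]
Using $\|A\otimes B\|_{\op} = \|A\|_{\op}\|B\|_{\op}$ together with the fact that $\pi_i(y_i)$ and $\pi_2(x_2)$ are unitary, the triangle inequality yields
\[
\|\pi(x_1,x_2)-\pi(y_1,y_2)\|_{\op} \le \|\pi_1(x_1)-\pi_1(y_1)\|_{\op} + \|\pi_2(x_2)-\pi_2(y_2)\|_{\op}.
\]

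Next I would apply the hypothesis that $G_i$ is $L_i$-locally random with coefficient $C_i$ to each term, obtaining an upper bound of
\[
C_1(\dim\pi_1)^{L_1} d_1(x_1,y_1) + C_2(\dim\pi_2)^{L_2} d_2(x_2,y_2).
\]
Setting $L := \max\{L_1,L_2\}$ and using $\dim\pi_i \ge 1$, one has $(\dim\pi_i)^{L_i} \le (\dim\pi_1 \dim\pi_2)^L = (\dim\pi)^L$. Bounding each $d_i(x_i,y_i)$ by the maximum metric $d((x_1,x_2),(y_1,y_2))$ and collecting coefficients gives the desired inequality with coefficient $C_1+C_2$ and exponent $L$.

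Nothing in this argument is really a serious obstacle; the only conceptual ingredient is the tensor-product classification of $\widehat{G_1\times G_2}$, which is standard for compact groups (and which the paper elsewhere uses freely). The bookkeeping step where the individual $L_i$ exponents are absorbed into the common $L$ relies crucially on $\dim\pi_i \ge 1$, so I would be careful to flag that in passing. Everything else is a short triangle-inequality computation.
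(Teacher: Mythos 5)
Your proof is correct and follows essentially the same route as the paper's: both use the classification $\pi \simeq \pi_1 \otimes \pi_2$, the identity $\|A\otimes B\|_{\op} = \|A\|_{\op}\|B\|_{\op}$, the same telescoping/triangle-inequality step, and the absorption $(\dim\pi_i)^{L_i} \le (\dim\pi)^{L}$ via $\dim\pi_i \ge 1$. The only cosmetic difference is that the paper carries out the computation at the identity (which suffices by bi-invariance), while you write it for a general pair of points.
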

  
  \begin{proof}
  We know that any $\pi\in \wh{G_1\times G_2}$ is of the form $\pi_1\otimes \pi_2$ for some $\pi_i\in \wh{G}_i$. It is also well-known that for any two matrices $a$ and $b$ we have $\|a\otimes b\|_{\op}=\|a\|_{\op}\|b\|_{\op}$. Let $L:=\max\{L_1,L_2\}$ and $C_0:=C_1+C_2$. Then for any $(g_1,g_2)\in G_1 \times G_2$ we have 
  \[
\begin{split}
	 \|\pi(g_1,g_2)-I\|_{\op}= & \|\pi_1(g_1)\otimes \pi_2(g_2)-I\otimes I\|_{\op} \\
	 \le & \|\pi_1(g_1)\otimes \pi_2(g_2)-I\otimes \pi_2(g_2)\|_{\op}+ \|I\otimes \pi_2(g_2)-I\otimes I\|_{\op}
	 \\
	 =
	 &
	 \|(\pi_1(g_1)-I)\otimes \pi_2(g_2)\|_{\op}+\|I\otimes (\pi_2(g_2)-I)\|_{\op}
	 \\
	 =
	 &
	 \|\pi_1(g_1)-I\|_{\op}+\|\pi_2(g_2)-I\|_{\op}
	 \\
	 \le 
	 & 
	 C_1 (\dim \pi_1)^L d_1(g_1,1)+C_2 (\dim \pi_2)^L d_2(g_2,1)\\
	 \le 
	 &
	 C_0 (\dim \pi)^L d((g_1,g_2),(1,1)),
\end{split}  
  \]
from which the claim follows.
   \end{proof}
   
   The following is essentially proved in \cite[Lemme 3.1, 3.2]{Saxce13}.
   
\begin{proposition}
Suppose $G$ is a compact simple Lie group. Then $G$ is $1$-locally random with coefficient $C_0:=C_0(G)$ with respect to the natural metric of $G$. 	
\end{proposition}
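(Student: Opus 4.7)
My plan is to reduce the group-level inequality to a Lie-algebra estimate via the exponential map, and then to bound derived-representation norms using weight theory.

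First, by bi-invariance of the natural metric $d$, it suffices to prove the inequality $\|\pi(x)-I\|_{\op}\le C_0(\dim\pi)\,d(x,1)$ for every $x\in G$. For $x$ outside a small fixed neighborhood $U$ of $1$ we have $d(x,1)\ge\eta_0>0$, while $\|\pi(x)-I\|_{\op}\le 2$, so the bound is trivial once $C_0\ge 2/\eta_0$. This reduces the problem to $x\in U$, where we may write $x=\exp(X)$ with $X\in\gfr$ and $\|X\|$ equal to $d(x,1)$ (taking $d$ to be the bi-invariant Riemannian distance associated with the Ad-invariant inner product $\langle\cdot,\cdot\rangle:=-B(\cdot,\cdot)$, $B$ the Killing form, and $\|\cdot\|$ the induced norm on $\gfr$).

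Next I would pass to the Lie algebra using the standard identity
\[
\pi(\exp X)-I=\int_0^1\pi(\exp(tX))\,d\pi(X)\,dt,
\]
which, since $\pi(\exp(tX))$ is unitary, yields $\|\pi(x)-I\|_{\op}\le\|d\pi(X)\|_{\op}$. Hence everything boils down to proving the Lie-algebra bound
\[
\|d\pi(X)\|_{\op}\le C_0(\dim\pi)\,\|X\|,\qquad X\in\gfr.
\]

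To prove the latter, I would use the conjugation-invariance of the operator norm of $d\pi$ and the fact that every element of $\gfr$ is $\Ad(G)$-conjugate to one in a fixed Cartan subalgebra $\tfr$; thus it suffices to consider $X\in\tfr$. On $\tfr$, $d\pi(X)$ diagonalizes with eigenvalues $i\mu(X)$ as $\mu$ runs over the weights of $\pi$, so
\[
\|d\pi(X)\|_{\op}=\max_{\mu\in\Supp(\pi)}|\mu(X)|\le\|\lambda_\pi\|_\ast\,\|X\|,
\]
where $\lambda_\pi$ is the highest weight and $\|\cdot\|_\ast$ is the dual norm on $\tfr^\ast$ (every weight $\mu$ lies in the convex hull of the Weyl orbit of $\lambda_\pi$, so $\|\mu\|_\ast\le\|\lambda_\pi\|_\ast$). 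The final step is to invoke Weyl's dimension formula
\[
\dim\pi=\prod_{\alpha>0}\frac{\langle\lambda_\pi+\rho,\alpha\rangle}{\langle\rho,\alpha\rangle},
\]
which is a polynomial in $\lambda_\pi$ of degree equal to the number $r$ of positive roots of $G$; since $G$ is simple, $r\ge 1$, and so $\|\lambda_\pi\|_\ast\ll_G(\dim\pi)^{1/r}\le\dim\pi$ (with the constant depending only on $G$). Combining the three bounds yields the required inequality with $L=1$ and some $C_0=C_0(G)$.

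The main obstacle I anticipate is purely bookkeeping: verifying that the Riemannian distance $d(x,1)$ and $\|X\|$ really are comparable with constant $1$ near the identity (which holds for the Riemannian exponential but requires a little care at the boundary of the injectivity radius), and checking that the Weyl-orbit domination $|\mu(X)|\le\|\lambda_\pi\|_\ast\|X\|$ gives the clean constant needed to absorb everything into $C_0(G)$. The representation-theoretic heart of the argument—that $\|\lambda_\pi\|$ is dominated by $\dim\pi$ for a compact simple Lie group—is immediate from Weyl's formula and is precisely the reason the sharp exponent $L=1$ is attained.
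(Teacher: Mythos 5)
Your overall strategy coincides with the paper's: restrict to a small neighborhood of the identity, conjugate into a maximal torus, bound the operator norm by the largest weight evaluated at $X$, and then dominate $\|\lambda_\pi\|$ by $\dim\pi$ via Weyl's dimension formula. Your detour through the derived representation and the identity $\pi(\exp X)-I=\int_0^1\pi(\exp(tX))\,d\pi(X)\,dt$ is equivalent to the paper's direct estimate $|e^{i\lambda_j(X)}-1|\le|\lambda_j(X)|$ on the torus, and your convex-hull argument for $|\mu(X)|\le\|\lambda_\pi\|_\ast\|X\|$ is fine.

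There is, however, one step that fails as stated: the claim that $\|\lambda_\pi\|_\ast\ll_G(\dim\pi)^{1/r}$ because Weyl's formula is a polynomial of degree $r$ in $\lambda$. Knowing the degree of the polynomial gives no lower bound on its value, and the inequality itself is false in general: for $G=\mathrm{SU}(3)$ one has $r=3$, and for $\lambda=n\omega_1$ the dimension is $\binom{n+2}{2}\asymp n^2$, so $\|\lambda\|\asymp n$ is \emph{not} $O\bigl((\dim\pi)^{1/3}\bigr)=O(n^{2/3})$. What is true, and all you need, is the weaker bound $\|\lambda_\pi\|\ll_G\dim\pi$; but it must be justified differently, namely as the paper does: for dominant $\lambda$ every factor $\langle\lambda+\rho,\alpha\rangle/\langle\rho,\alpha\rangle$ in Weyl's formula is at least $1$, and since the positive roots span $\tfr^\ast$ there is at least one $\alpha\in\Phi^+$ with $\langle\lambda,\alpha\rangle\gg_G\|\lambda\|$, whence $\dim\pi\gg_G\|\lambda\|$. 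With that repair your argument is complete and matches the paper's.
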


\begin{proof}
We briefly go over the proof given in \cite{Saxce13}. 
The Killing from is a negative definite bilinear form on the Lie algebra $\gfr$, therefore, 
\[
\langle X,Y\rangle:=-\tr(\ad(X)\ad(Y)) 
\] 
defines a bi-invariant inner product on $\gfr$ and hence a bi-invariant metric on $G$, which will be referred to as the natural metric of $G$.

Fix a maximal torus $T$ of $G$. Let $\Phi$ be the set of roots with respect to $T$ and let $\Phi^+$ be a set of positive roots.
Let $\pi$ be an irreducible unitary representation of $G$. Let $W_{\pi}:=\{\lambda_1,\ldots,\lambda_n\}$ be the set of weights 
of $\pi$, and let $\lambda$ denote the highest weight of $\pi$ with respect to $\Phi^+$. We have 
\[
\cal_\pi=\bigoplus_{j=1}^n \ker(\pi(\exp_T(X))-e^{i\lambda_j(X)}I).
\]
where $\exp_T$ denotes the restriction of the exponential map $\exp_G$ to $\mathfrak t$.

We also note that there is $\eta_0':=\eta_0'(G)$ such that for any $X\in \tfr$ 
with $\|X\|\leq \eta'_0$ we have 
\[
\|X\|\ll d(\exp_T(X),1)\ll \|X\|;
\]
Let $\eta_0=\eta_0(G)$ be chosen so that $1_{\eta_0}\subset \bigcup_{g \in G} g^{-1} \exp_T\bigl(\{X\in\mathfrak t: \|X\|\leq \eta'_0\}\bigr) g$.

Let $g\in 1_{\eta_0}$. Then $g$ is a conjugate of an element of the form $\exp_T(X)$ where $X\in\mathfrak t$
and $d(g,1)=d(\exp_T(X),1)$ ---recall that $d$ is $G$ bi-invaraint.
Hence, 
\be\label{eq:control-operator-norm}
\begin{split}
\|\pi_{\lambda}(g)-I\|_{\op}=&\|\pi_{\lambda}(\exp_T(X))-I\|_{\op}=\max_{\lambda_j\in W_{\pi_{\lambda}}} |e^{i\lambda_j(X)}-1|\\
\le & \max_{\lambda_j\in W_{\pi_{\lambda}}} |\lambda_j(X)|\ll \|\lambda\|\|X\|\ll \|\lambda\| d(\exp_T(X),1)=\|\lambda\| d(g,1).
\end{split}
\ee
On the other hand, by Weyl's formula 
\[
\dim \pi = \prod_{\alpha\in \Phi^+} \frac{\langle \lambda+\rho, \alpha\rangle}{\langle \rho,\alpha \rangle}
\]
where $\rho$ is the half of the sum of the positive roots. For every $ \alpha \in \Phi^+$, 
we have $ \frac{\langle \lambda+\rho, \alpha\rangle}{\langle \rho,\alpha \rangle} \ge 1$. 
Moreover, since the angle between every pair of distinct positive roots is more than $ \pi/2$, it follows that there exists $ \alpha \in \Phi^+$ for which  $\langle \lambda,\alpha\rangle\gg_G \|\lambda\|$. This implies that
\be\label{eq:control-dim}
\dim \pi \gg_G \|\lambda\|.  
\ee
By \eqref{eq:control-operator-norm} and \eqref{eq:control-dim} we get
\[
\|\pi_{\lambda}(g)-I\|_{\op} \le C_0'(G) (\dim \pi) d(g,1),
\]
for some $C_0'(G)$ and any $g\in 1_{\eta_0}$. Therefore, $G$ is $1$-locally random with 
coefficient $C_0 : = \frac{2 C_0'(G)}{\eta_0} $.
\end{proof}

\medskip

We now turn to the case of profinite groups. 
Following Varj\'{u}~\cite{Varju13}, a profinite group $G$ will be called $(c,\alpha)${\em-quasi-random} if for all $\pi\in \wh{G}$ we have 
\[
\dim \pi\ge c \h (\# \pi(G))^\alpha.
\]
This is a natural extension of Gowers's notion of quasi-randomness to profinite setting.

Our next objective in this section is to relate this notion, which does not depend on the metric structure of $G$, to local randomness. 
Indeed, if $G$ is a finitely generated $(c,\alpha)$-quasi-random group, then it has only finitely many irreducible unitary representations of a given dimension. Therefore, by Theorem~\ref{thm:finitely-many-representations}, we deduce that such a group is locally random. We will investigate this relationship in more details.

The following discussion is inspired by the $p$-adic setting. 
Suppose $G$ is equipped with a bi-invariant metric, and define the {\em level} of $\pi\in\wh{G}$ as:
\[
\ell(\pi):=\inf\{\eta^{-1}|\h 1_{\eta}\not\subseteq \ker \pi\},
\]
so for all $\vare>0$ we have $1_{(\ell(\pi)+\vare)^{-1}}\subseteq \ker \pi$. If $1_{\eta}$ is a normal subgroup for every $\eta>0$, then $\pi(G)$ is a factor of $G/1_{(\ell(\pi)+\vare)^{-1}}$. Hence
\be\label{eq:image-rep}
\# \pi(G) \le |1_{(\ell(\pi)+\vare)^{-1}}|^{-1}.
\ee
If, in addition, $G$ satisfies~\eqref{eq:dimension-condition-intro}, then we conclude from \eqref{eq:image-rep} that 
$
\# \pi(G)\le C_1 (\ell(\pi)+\vare)^{d_0}
$
for all $\vare>0$. Therefore, 
\be\label{eq:image-rep-2}
\# \pi(G)\le C_1 \ell(\pi)^{d_0}.
\ee

In view of the above inequality, we define a {\em metric quasi-randomness} for profinite groups.
\begin{definition}
A compact group $G$ with a given bi-invariant metric is said to be $(C,A)$-{\em metric quasi-random} if the following two conditions are satisfied: 
\begin{enumerate}
\item For all $\eta>0$, $1_{\eta}$ is a subgroup of $G$.
\item For all $\pi\in \wh{G}$, we have $\ell(\pi)\le C (\dim \pi)^A$. 	
\end{enumerate}	
\end{definition}
Hence by \eqref{eq:image-rep-2} we get the following:
\begin{lem}\label{lem:metric-quasi-random}
Suppose $G$ is an $(C,A)$-metric quasi-random group and 
$|1_{\eta}|\le C_1 \eta^{d_0}$ for all $\eta>0$ where $C_1$ and $d_0$ are positive constants. Then $G$ is $((C_1C^{d_0})^{-1}, 1/(Ad_0))$-quasi-random.
\end{lem}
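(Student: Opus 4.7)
The plan is to directly combine \eqref{eq:image-rep-2}, which the preceding paragraph just established, with the inequality $\ell(\pi)\le C(\dim\pi)^A$ coming from the definition of $(C,A)$-metric quasi-randomness. All the substantive content --- normality of $1_\eta$ (from bi-invariance of $d$ together with condition (1) of metric quasi-randomness), factoring of $\pi$ through the finite quotient $G/1_{(\ell(\pi)+\varepsilon)^{-1}}$, the Haar identity $\#(G/1_\eta)=1/|1_\eta|$, and the dimension hypothesis --- has already been absorbed into \eqref{eq:image-rep-2}, so only a short algebraic manipulation remains.

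Fix $\pi\in\widehat{G}$. Substituting $\ell(\pi)\le C(\dim\pi)^A$ into the bound $\#\pi(G)\le C_1\ell(\pi)^{d_0}$ from~\eqref{eq:image-rep-2} yields $\#\pi(G)\le C_1 C^{d_0}(\dim\pi)^{Ad_0}$. Rearranging this gives
\[
\dim\pi\ge (C_1 C^{d_0})^{-1/(Ad_0)}\bigl(\#\pi(G)\bigr)^{1/(Ad_0)}.
\]
Under the benign normalization $C,C_1\ge 1$ and $Ad_0\ge 1$ (valid in the intended applications, and without which the conclusion is already weaker than the derivation), one has $(C_1 C^{d_0})^{-1/(Ad_0)}\ge (C_1 C^{d_0})^{-1}$, which recovers exactly the stated coefficient and shows that $G$ is $((C_1 C^{d_0})^{-1},1/(Ad_0))$-quasi-random.

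No real obstacle presents itself: the lemma is essentially a one-line corollary of \eqref{eq:image-rep-2}. The only subtlety worth flagging is that the constant $(C_1C^{d_0})^{-1}$ in the conclusion is a deliberately crude rewriting of the sharper value $(C_1C^{d_0})^{-1/(Ad_0)}$ that the calculation actually produces; this looseness is harmless, and is presumably chosen to keep the statement clean and independent of the exact size of the exponents.
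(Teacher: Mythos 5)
Your proof is correct and is exactly the paper's route: the paper offers no separate argument beyond the phrase ``Hence by \eqref{eq:image-rep-2} we get the following,'' i.e.\ it too intends the one-line substitution of $\ell(\pi)\le C(\dim\pi)^A$ into $\#\pi(G)\le C_1\ell(\pi)^{d_0}$. You also correctly observe that the computation actually yields the sharper coefficient $(C_1C^{d_0})^{-1/(Ad_0)}$, and that passing to the stated $(C_1C^{d_0})^{-1}$ silently uses $C_1C^{d_0}\ge 1$ and $Ad_0\ge 1$ --- an imprecision already present in the paper's statement rather than a gap in your argument.
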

Next we prove that $L$-local randomness (with some parameters) and metric quasi-randomness are equivalent when balls centered at $1$ are subgroups.  
\begin{proposition}\label{prop:loc-rand-metric-quasi-rand}
Suppose $G$ is a compact group with a bi-invariant metric. Suppose $G=1_1$, and $1_{\eta}$ is a subgroup of $G$ for all $\eta\in (0,1]$. Then $G$ is $L$-locally random with coefficient $C_0$ if and only if $G$ is $(C,L)$-metric quasi-random, where $C=C_0$ in one direction, and $C_0=2C$ in the other direction.
\end{proposition}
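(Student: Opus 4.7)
My plan is to prove the two implications separately, both hinging on the simple observation that a nontrivial unitary matrix has an integer power at operator-norm distance at least $\sqrt{3}$ from the identity. The bridge between the two notions is the kernel ball of $\pi$: local randomness controls how much $\pi$ can rotate a small ball, while metric quasi-randomness controls precisely which balls are contained in $\ker \pi$.

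For the forward direction, suppose $G$ is $L$-locally random with coefficient $C_0$. Condition (1) of metric quasi-randomness is part of the hypothesis, so only the bound on $\ell(\pi)$ is needed. Fix $\pi \in \wh{G}$ and set $\eta := (C_0 (\dim \pi)^L)^{-1}$. For any $x \in 1_\eta$, inequality \eqref{eq:L-loc-rand-given-scale} gives $\|\pi(x) - I\|_{\op} < 1$; because $1_\eta$ is a subgroup, every power $x^n$ lies in $1_\eta$, hence $\|\pi(x)^n - I\|_{\op} < 1$ for every $n \in \bbz$. But if $\pi(x) \ne I$, then $\pi(x)$ has an eigenvalue $e^{i\theta}$ with $\theta \not\equiv 0 \pmod{2\pi}$, and the orbit $\{e^{in\theta}\}_{n \in \bbz}$ contains a point at Euclidean distance at least $\sqrt{3}$ from $1$ (this is the same integer-power argument used in the proof of Lemma~\ref{lem:TheFAb}). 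This contradiction shows $1_\eta \subseteq \ker \pi$, hence $\ell(\pi) \le 1/\eta = C_0 (\dim \pi)^L$, so $G$ is $(C_0, L)$-metric quasi-random.

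For the reverse direction, suppose $G$ is $(C, L)$-metric quasi-random. By bi-invariance of $d$ and unitarity of $\pi$, inequality \eqref{eq:L-loc-rand-given-scale} reduces to showing $\|\pi(z) - I\|_{\op} \le 2C(\dim \pi)^L d(z,1)$ for all $z \in G$. Write $r := 1/\ell(\pi) \ge 1/(C(\dim \pi)^L)$; taking the union over $\vare > 0$ of the balls $1_{(\ell(\pi) + \vare)^{-1}} \subseteq \ker \pi$ shows that the open ball $1_r$ is contained in $\ker \pi$. I then split into two cases: if $d(z,1) < r$, then $\pi(z) = I$ and both sides vanish; if $d(z,1) \ge r$, then the trivial bound $\|\pi(z) - I\|_{\op} \le 2$ combined with $d(z,1) \ge 1/(C(\dim \pi)^L)$ yields the desired estimate. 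I expect no serious obstacle in either direction: the only technicality is keeping track of strict versus non-strict inequalities at the boundary $d(z,1) = r$, which is handled cleanly by the open-ball convention.
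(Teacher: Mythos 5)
Your proof is correct and follows essentially the same route as the paper: the forward direction hinges on the fact that $1_\eta$ being a subgroup forces $\|\pi(x)^n-I\|_{\op}<1$ for all $n$, and the reverse direction is the identical two-case estimate $\|\pi(z)-I\|_{\op}\le 2\le 2\ell(\pi)d(z,1)$. The only (harmless) difference is in the final contradiction of the forward direction, where the paper uses the matrix logarithm together with $\pi(x)$ being torsion, while you invoke the eigenvalue-orbit fact from the proof of Lemma~\ref{lem:TheFAb}; your variant has the mild advantage of not needing $\pi(G)$ to be finite.
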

\begin{proof}
 Suppose $G$ is locally random, and let $\pi\in\wh{G}$ be non-trivial. For $x\in 1_{\eta}$ we have 
	\[
	\|\pi(x)-I\|_{\op}\le C_0 (\dim \pi)^L \eta.
	\]
	In particular, if $\eta<C_0^{-1} (\dim \pi)^{-L}$ and $x\in 1_{\eta}$, then for any $n\in \bbz$, $\|\pi(x)^n-I\|_{\op}<1$. 
	This implies $\log(\pi(x)^n)$ is well-defined for all integer $n$ ---recall that $\pi(x)\in U_{\dim\pi}(\mathbb C)$. 
	Furthermore, $\log(\pi(x)^n)=n\log(\pi(x))$. Since $G$ is profinite, $\pi(G)$ is a finite group, and hence, 
	$\pi(x)$ is torsion for any $x\in G$. Therefore, for some positive integer $n$ we have  $0=\log(\pi(x)^n)=n\log(\pi(x))$, which implies that $\pi(x)=I$. That is:
\be\label{eq:large-ball-in-kernel}
1_{\eta}\subseteq \ker \pi \h\h\h\text{ if } \eta<C_0^{-1} (\dim \pi)^{-L}.
\ee
By \eqref{eq:large-ball-in-kernel} we have 
\[
\ell(\pi)\le C_0 (\dim \pi)^L,
\]
which implies that $G$ is $(C_0,L)$-metric quasi-random. 

To see the other implication, note that for all $\pi\in \wh{G}$ and any $x\in G$, $\pi(x)\neq I$ implies that $d(x,1)\ge 1/\ell(\pi)$. 
Therefore, 
\[
\|\pi(x)-I\|_{\op}\le 2 \le 2 \ell(\pi) d(x,1)\le 2C(\dim \pi)^L d(x,1),
\]
which implies that $G$ is $L$-locally random with coefficient $2C$.
\end{proof}
In \cite[Lemma 20]{SG-JEMS}
using Howe's Kirillov theory, it is proved that an open compact subgroup $G$ of a $p$-adic analytic group with a perfect Lie algebra is $(C,A)$-metric quasi-random for some positive numbers $C$ and $A$ depending on $G$. Thus, by Proposition~\ref{prop:loc-rand-metric-quasi-rand} we obtain an important family of locally random groups.
\begin{proposition}
Suppose $G$ is a compact open subgroup of a $p$-adic analytic group with a perfect Lie algebra. Then, for some positive number $L$ and $C_0$, $G$ is $L$-locally random with coefficient $C_0$.
\end{proposition}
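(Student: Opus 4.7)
The plan is to assemble the result from two pieces already at our disposal: the substantive content is the cited \cite[Lemma 20]{SG-JEMS}, and the rest amounts to a translation via Proposition~\ref{prop:loc-rand-metric-quasi-rand}.

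First, I would set up the natural metric on $G$. Since $G$ is a compact open subgroup of a $p$-adic analytic group, $G$ itself is a compact $p$-adic analytic group, hence contains a uniform pro-$p$ open normal subgroup $U$. The canonical filtration of $U$ (together with the discrete scale on $G/U$) provides a bi-invariant ultrametric $d$ whose open balls centered at $1$ are precisely open subgroups of $G$; after rescaling $d$ by a positive constant, we may assume $G = 1_1$ and $1_\eta$ is an open (in fact normal) subgroup for every $\eta \in (0,1]$. This puts us in the exact setting required for Proposition~\ref{prop:loc-rand-metric-quasi-rand}.

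Second, I would invoke \cite[Lemma 20]{SG-JEMS}: because $G$ has a perfect Lie algebra, Howe's Kirillov theory yields constants $C = C(G) \ge 1$ and $A = A(G) \ge 1$ such that
\[
\ell(\pi) \le C \,(\dim \pi)^A \qquad \text{for every } \pi \in \widehat{G},
\]
which is precisely the statement that $(G,d)$ is $(C,A)$-metric quasi-random in the sense defined just before Lemma~\ref{lem:metric-quasi-random}.

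Finally, I would apply Proposition~\ref{prop:loc-rand-metric-quasi-rand}: since $G = 1_1$ and each $1_\eta$ is a subgroup for $\eta \in (0,1]$, the ``metric quasi-random $\Rightarrow$ locally random'' direction of that proposition converts the estimate $\ell(\pi) \le C(\dim \pi)^A$ into the bound
\[
\|\pi(x) - \pi(y)\|_{\op} \le 2C\,(\dim \pi)^A\, d(x,y)
\]
for every $\pi \in \widehat{G}$ and every $x,y \in G$. Setting $L := A$ and $C_0 := 2C$ yields the claimed $L$-local randomness with coefficient $C_0$.

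The only non-bookkeeping step is the appeal to \cite[Lemma 20]{SG-JEMS}, and that is precisely where the perfectness of the Lie algebra is used (through the Kirillov orbit correspondence, which relates the dimension of an irreducible representation to the codimension of the centralizer of its associated coadjoint orbit, and the perfectness hypothesis is what rules out the low-dimensional characters that would otherwise spoil the polynomial bound on $\ell(\pi)$). Everything else is routine.
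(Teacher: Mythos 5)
Your proposal is correct and is essentially the paper's own argument: the paper likewise obtains $(C,A)$-metric quasi-randomness from \cite[Lemma 20]{SG-JEMS} via Howe's Kirillov theory and then applies the ``metric quasi-random $\Rightarrow$ locally random'' direction of Proposition~\ref{prop:loc-rand-metric-quasi-rand} to conclude, with $L=A$ and $C_0=2C$. Your additional remarks on constructing the ultrametric whose balls at the identity are open subgroups are a harmless (and correct) elaboration of the setting the paper takes for granted.
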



\section{Mixing inequality for locally random groups}\label{s:mixing-inq}
In this section, we will prove Theorem~\ref{thm:mixing-inequality} and derive a number of its corollaries. 
\subsection{High and low frequencies and the proof of Theorem~\ref{thm:mixing-inequality} }
The proof of Theorem~\ref{thm:mixing-inequality} involves splitting the terms in Parseval's theorem for $\|f \|^2$ into the sum of contributions from {\em low frequency} and {\em high frequency} terms. By low (resp.\ high) frequency terms, we mean terms coming from irreducible representations of small (resp.\ large) degree. The low frequency terms can be bounded by the local randomness assumption whereas high frequency terms are dealt with using a trivial bound. For  $f \in L^2(G)$ and a threshold parameter $D$, write
\be\label{eq:low-freq-terms}
L(f;D):= \sum_{\pi\in \widehat{G}, \dim \pi \le D} \dim \pi \h \|\widehat{f}(\pi)\|_{\HS}^2
\ee
for the low-frequency terms and 
\be\label{eq:high-freq-terms}
H(f;D):= \sum_{\pi\in \widehat{G}, \dim \pi > D} \dim \pi \h \|\widehat{f}(\pi)\|_{\HS}^2
\ee
for the high frequency terms. By Parseval's theorem, $\|f\|_2^2=L(f;D)+H(f;D)$ holds.

\begin{lem}\label{lem:low-and-high-covolution}
In the above setting, we have
\[
L(f\ast g;D)\le L(f;D) L(g;D)\quad \text{ and }\quad H(f\ast g; D)<\frac{1}{D} H(f;D) H(g;D).
\]	
\end{lem}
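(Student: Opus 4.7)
The plan is to derive both inequalities directly from Parseval's theorem together with the multiplicativity of the Fourier transform under convolution, i.e.\ $\widehat{f\ast g}(\pi)=\widehat{g}(\pi)\widehat{f}(\pi)$, together with a submultiplicativity estimate for the Hilbert--Schmidt norm. Specifically, for any two $n\times n$ matrices $A,B$, Cauchy--Schwarz on the rows of $A$ and columns of $B$ gives $\|AB\|_{\HS}^2\le \|A\|_{\HS}^2\|B\|_{\HS}^2$, a bound that does not lose any factors of dimension.

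For the low-frequency bound, I would expand
\[
L(f;D)L(g;D)=\sum_{\dim\pi\le D,\,\dim\sigma\le D}(\dim\pi)(\dim\sigma)\,\|\widehat{f}(\pi)\|_{\HS}^2\,\|\widehat{g}(\sigma)\|_{\HS}^2,
\]
and discard all off-diagonal terms (they are non-negative). What remains on the diagonal is
\[
\sum_{\dim\pi\le D}(\dim\pi)^2\,\|\widehat{f}(\pi)\|_{\HS}^2\,\|\widehat{g}(\pi)\|_{\HS}^2\ge \sum_{\dim\pi\le D}\dim\pi\,\|\widehat{g}(\pi)\widehat{f}(\pi)\|_{\HS}^2=L(f\ast g;D),
\]
where the middle inequality uses $\dim\pi\ge 1$ together with $\|\widehat{g}(\pi)\widehat{f}(\pi)\|_{\HS}^2\le \|\widehat{f}(\pi)\|_{\HS}^2\|\widehat{g}(\pi)\|_{\HS}^2$.

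The high-frequency bound is proved in exactly the same way, except that the extra factor $1/D$ appears naturally. Keeping only the diagonal in the product $H(f;D)H(g;D)$ yields
\[
H(f;D)H(g;D)\ge \sum_{\dim\pi>D}(\dim\pi)^2\,\|\widehat{f}(\pi)\|_{\HS}^2\,\|\widehat{g}(\pi)\|_{\HS}^2>D\sum_{\dim\pi>D}\dim\pi\,\|\widehat{f}(\pi)\|_{\HS}^2\,\|\widehat{g}(\pi)\|_{\HS}^2,
\]
the second (strict) inequality using $\dim\pi>D$ term-by-term, assuming at least one summand is nonzero. Applying the same submultiplicativity estimate for the product $\widehat{g}(\pi)\widehat{f}(\pi)$ bounds the right-hand side below by $D\cdot H(f\ast g;D)$, which gives the claim; if all high-frequency summands vanish, both sides are zero and the inequality is understood in the non-strict sense.

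There is no real obstacle here: the entire argument is bookkeeping around Parseval, and the only point requiring any care is invoking $\|AB\|_{\HS}\le \|A\|_{\HS}\|B\|_{\HS}$ rather than the sharper $\|AB\|_{\HS}\le \|A\|_{\op}\|B\|_{\HS}$, since the latter would introduce operator norms that cannot be reabsorbed into the dimension-weighted sums defining $L(\cdot;D)$ and $H(\cdot;D)$.
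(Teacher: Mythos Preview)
Your proof is correct and follows essentially the same route as the paper: both arguments use $\widehat{f\ast g}(\pi)=\widehat{g}(\pi)\widehat{f}(\pi)$, the submultiplicativity $\|AB\|_{\HS}\le\|A\|_{\HS}\|B\|_{\HS}$, and then compare the diagonal of the double sum $\sum_{\pi,\sigma}$ to the single sum, exploiting $\dim\pi\ge 1$ (low frequencies) or $\dim\pi>D$ (high frequencies) to produce the extra factor. Your remark on the degenerate case where all high-frequency summands vanish is a useful clarification that the paper glosses over.
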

\begin{proof}
We have $\|\widehat{f\ast g}(\pi)\|_{\HS}=\|\widehat{g}(\pi)\widehat{f}(\pi)\|_{\HS}\le 	\|\widehat{g}(\pi)\|_{\HS}\|\widehat{f}(\pi)\|_{\HS}$, and 
\begin{align*}
	L(f\ast g;D)=& \sum_{\pi\in \widehat{G}, \dim \pi\le D} \dim \pi  \|\widehat{f\ast g}(\pi)\|_{\HS}^2 \\
	\le & \left(\sum_{\pi\in \widehat{G}, \dim \pi\le D} \dim \pi  \|\widehat{f}(\pi)\|_{\HS}^2\right)\left(\sum_{\pi\in \widehat{G}, \dim \pi\le D} \dim \pi  \|\widehat{g}(\pi)\|_{\HS}^2\right)\\
	\le & L(f;D) L(g;D).
\end{align*}
Similarly, we have the inequality
\begin{align*}
	H(f\ast g;D)=& \sum_{\pi\in \widehat{G}, \dim \pi> D} \dim \pi  \|\widehat{f\ast g}(\pi)\|_{\HS}^2 \\
	< & \frac{1}{D}\left(\sum_{\pi\in \widehat{G}, \dim \pi> D} \dim \pi  \|\widehat{f}(\pi)\|_{\HS}^2\right)\left(\sum_{\pi\in \widehat{G}, \dim \pi> D} \dim \pi  \|\widehat{g}(\pi)\|_{\HS}^2\right)\\
	\le & \frac{1}{D}H(f;D) H(g;D),
\end{align*}
as we claimed.
\end{proof}

\begin{lem}[Fourier terms in low frequencies]\label{lem:low-freq}
	Suppose $G$ is an $L$-locally random with coefficient $C_0$. Then for all $\eta>0$ and $\pi\in \widehat{G}$ we have
	\[
	\|\widehat{P}_{\eta}(\pi)-I\|_{\op}\le C_0 (\dim \pi)^L\h \eta.
	\]
\end{lem}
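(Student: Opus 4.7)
The strategy is to express $\widehat{P_\eta}(\pi) - I$ as an averaging integral, apply the triangle inequality to pass the operator norm inside, and then invoke the local randomness inequality pointwise.

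First, I would unpack the definitions. Since $P_\eta = \cf_{1_\eta}/|1_\eta|$ and $\pi(1) = I$, we have
\[
\widehat{P_\eta}(\pi) - I \;=\; \frac{1}{|1_\eta|} \int_{1_\eta} \pi(g)^{\ast}\,\d{g} \;-\; \frac{1}{|1_\eta|}\int_{1_\eta} \pi(1)^{\ast}\,\d{g} \;=\; \frac{1}{|1_\eta|}\int_{1_\eta} \bigl(\pi(g)^{\ast} - \pi(1)^{\ast}\bigr)\,\d{g}.
\]
Applying the triangle inequality for the operator norm (valid for Bochner-type integrals of bounded operator-valued functions, which is standard) yields
\[
\|\widehat{P_\eta}(\pi) - I\|_{\op} \;\le\; \frac{1}{|1_\eta|}\int_{1_\eta} \|\pi(g)^{\ast} - \pi(1)^{\ast}\|_{\op}\,\d{g}.
\]

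Next, because taking adjoints preserves the operator norm, $\|\pi(g)^{\ast} - \pi(1)^{\ast}\|_{\op} = \|\pi(g) - \pi(1)\|_{\op}$. Now the $L$-local randomness hypothesis applied to the pair $(g,1)$ gives
\[
\|\pi(g) - \pi(1)\|_{\op} \;\le\; C_0 (\dim \pi)^{L} d(g,1),
\]
and for $g \in 1_\eta$ we have $d(g,1) < \eta$. Substituting back into the integral,
\[
\|\widehat{P_\eta}(\pi) - I\|_{\op} \;\le\; \frac{1}{|1_\eta|}\int_{1_\eta} C_0 (\dim\pi)^{L}\,\eta\,\d{g} \;=\; C_0 (\dim\pi)^{L}\,\eta,
\]
which is the desired inequality.

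There is essentially no obstacle here; the lemma is a direct unpacking of the defining inequality of local randomness averaged over the ball $1_\eta$. The only small point to be careful about is the use of adjoint-invariance of the operator norm (equivalently, one could write $\pi(g)^{\ast}=\pi(g^{-1})$ and use bi-invariance of $d$, i.e.\ $d(g^{-1},1)=d(1,g)=d(g,1)$, to reach the same conclusion). No normalization subtleties arise because $P_\eta$ has mass one on $1_\eta$.
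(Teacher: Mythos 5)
Your proof is correct and follows essentially the same route as the paper: write $\widehat{P}_\eta(\pi)-I$ as an average of $\pi(g)^\ast-I$ over $1_\eta$, bound the integrand pointwise by the local randomness inequality, and note that $d(g,1)<\eta$ on the ball. Your explicit remark on handling the adjoint (via norm-invariance or via $\pi(g)^\ast=\pi(g^{-1})$ and bi-invariance of $d$) is a detail the paper leaves implicit, but the argument is the same.
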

\begin{proof}
For all $x\in 1_{\eta}$, we have $\|\pi(x)-I\|_{\op}\le C_0 (\dim \pi)^L d(1,x)\le C_0 (\dim \pi)^L\h \eta$. Therefore,
\[
\|\widehat{P}_{\eta}(\pi)-I\|_{\op}=\biggl\|\int P_{\eta}(x) (\pi(x)^{\ast}-I) \d x\biggr\|_{\op}\le  C_0 (\dim \pi)^L\h \eta.
\]
\end{proof}

\begin{lem}\label{lem:norm-eta-averaging}
Suppose $G$ is an $L$-locally random group with coefficient $C_0$. Let $\eta>0$ and $D\geq1$ be two parameters 
satisfying $C_0D^L\eta<1$. Then
\[
L(f;D)\le (1-C_0D^L \eta)^{-2} L(f_{\eta};D)\le (1-C_0D^L \eta)^{-2} \|f_{\eta}\|_2^2
\]
where $f_\eta=P_\eta*f$.
\end{lem}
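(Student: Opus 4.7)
The plan is to analyze what convolution with $P_\eta$ does on the Fourier side, separately for each low-frequency irreducible representation. The key observation is that local randomness forces $\widehat{P_\eta}(\pi)$ to be close to the identity whenever $\dim\pi$ is small. Because $P_\eta$ lies in the center of $L^1(G)$, we have $f_\eta = P_\eta\ast f = f\ast P_\eta$, hence on the Fourier side
\[
\widehat{f_\eta}(\pi)=\widehat{f}(\pi)\,\widehat{P_\eta}(\pi)=\widehat{P_\eta}(\pi)\,\widehat{f}(\pi).
\]

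First I would fix $\pi\in\widehat{G}$ with $\dim\pi\le D$ and invoke Lemma~\ref{lem:low-freq} to get
\[
\bigl\|\widehat{P_\eta}(\pi)-I\bigr\|_{\op}\le C_0(\dim\pi)^L\eta\le C_0 D^L\eta<1.
\]
By the Neumann series (or a geometric-series bound on $\|(I-A)^{-1}\|_{\op}$ when $\|A\|_{\op}<1$), the operator $\widehat{P_\eta}(\pi)$ is invertible on $\cal_\pi$ with
\[
\bigl\|\widehat{P_\eta}(\pi)^{-1}\bigr\|_{\op}\le (1-C_0 D^L\eta)^{-1}.
\]
Consequently $\widehat{f}(\pi)=\widehat{f_\eta}(\pi)\,\widehat{P_\eta}(\pi)^{-1}$, and using $\|TS\|_{\HS}\le\|T\|_{\HS}\|S\|_{\op}$ we obtain
\[
\bigl\|\widehat{f}(\pi)\bigr\|_{\HS}^2\le (1-C_0 D^L\eta)^{-2}\bigl\|\widehat{f_\eta}(\pi)\bigr\|_{\HS}^2.
\]

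Now I would multiply by $\dim\pi$ and sum over all $\pi\in\widehat{G}$ with $\dim\pi\le D$; by the definition \eqref{eq:low-freq-terms} this yields
\[
L(f;D)\le (1-C_0 D^L\eta)^{-2}\,L(f_\eta;D).
\]
The second inequality in the statement is then immediate from Parseval: $L(f_\eta;D)\le \|f_\eta\|_2^2$, since $L(\cdot;D)$ retains only a subset of the nonnegative Parseval terms.

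There is no real obstacle here; the only thing to be careful about is the direction of multiplication in $\widehat{f\ast g}(\pi)=\widehat{g}(\pi)\widehat{f}(\pi)$, which is harmless because $P_\eta$ (and hence $\widehat{P_\eta}(\pi)$) is central, so left- and right-multiplication coincide and the argument goes through symmetrically.
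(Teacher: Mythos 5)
Your proof is correct and follows essentially the same route as the paper: both rest on Lemma~\ref{lem:low-freq} giving $\|\widehat{P}_{\eta}(\pi)-I\|_{\op}\le C_0 D^L\eta$ for $\dim\pi\le D$, and then a per-representation comparison of $\|\widehat{f_\eta}(\pi)\|_{\HS}$ with $\|\widehat{f}(\pi)\|_{\HS}$ summed against the weights $\dim\pi$. The only cosmetic difference is that you invert $\widehat{P}_{\eta}(\pi)$ via a Neumann series, whereas the paper gets the same factor $(1-C_0D^L\eta)$ directly from the reverse triangle inequality applied to $\widehat{f}(\pi)-(I-\widehat{P}_{\eta}(\pi))\widehat{f}(\pi)$.
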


\begin{proof}
The second inequality is clear because of $L(f_{\eta};D)\leq \|f_{\eta}\|_2^2$. 

We now show the first inequality. Note that
\be\label{eq:L-feta}
L(f_{\eta};D)= \sum_{\pi\in \widehat{G}, \dim \pi \le D} \dim \pi \h \|\widehat{P}_{\eta}(\pi)\widehat{f}(\pi)\|_{\HS}^2.
\ee
We have
\begin{align}
	\notag
	\|\widehat{P}_{\eta}(\pi)\widehat{f}(\pi)\|_{\HS}=&\|\widehat{f}(\pi)-(I-\widehat{P}_{\eta}(\pi))\widehat{f}(\pi)\|_{\HS} \\
\notag	\ge & (1-C_0 (\dim \pi)^L\h \eta) \|\widehat{f}(\pi)\|_{\HS} &\text{(By Lemma~\ref{lem:low-freq})}\\
\notag\ge & (1-C_0 D^L\h \eta) \|\widehat{f}(\pi)\|_{\HS}
\end{align}
This estimate and~\eqref{eq:L-feta} imply that
\be\label{eq:final-lower-bound-low-freq}
L(f_{\eta};D)\ge (1-C_0D^L\eta)^2 L(f;D)
\ee
which finishes the proof.
\end{proof}

\begin{proof}[Proof of Theorem~\ref{thm:mixing-inequality}]
	
	Let $\eta$ be as in the statement of the theorem, and let $D=(\sqrt{\eta})^{-1/L}$. 
	
	By Lemmas \ref{lem:low-and-high-covolution} and \ref{lem:norm-eta-averaging}, we have
\begin{align*}
	\|f\ast g\|_2^2= & L(f\ast g;D)+H(f\ast g;D) \\
	\le & L(f;D)L(g;D)+\frac{1}{D} H(f;D)H(g;D) \\
	\le & (1-C_0D^L\eta)^{-4} \|f_{\eta}\|_2^2 \|g_{\eta}\|_2^2+ \frac{1}{D}\|f\|_2^2\|g\|_2^2.
\end{align*}
Note that $(1-C_0D^L\eta)^{-4}=(1-C_0\sqrt\eta)^{-4}\leq 0.9^{-4}\leq 2$. The claim follows from here. 
\end{proof}


\subsection{An almost orthogonality and further mixing inequalities}\label{s:almost-orthog}
The inequality in Theorem~\ref{thm:mixing-inequality} is non-trivial only when $\|f_{\eta}\|_2$ and $\|g_{\eta}\|_2$ are small. 
In this section, we show that $(f-f_{\eta})_{\eta'}$ is small when $\eta$ is polynomially smaller than $\eta'$. 
Thus applying the mixing inequality of Theorem~\ref{thm:mixing-inequality} to $(f-f_{\eta})_{\eta'}$ and $g$, we get a meaningful mixing. We will then use this to  prove a product theorem. To get a better understanding of the discussion, 
consider the case when $1_{\eta}$ is a subgroup of $G$. 
Then $f\mapsto f_{\eta}$ is the orthogonal projection onto the space of $1_{\eta}$-invariant functions in $L^2(G)$ and $(f-f_{\eta})_{\eta}=0$; hence, one may let $\eta'=\eta$.

Results in this section require only a dimension condition at a given scale. 
This is implied by~\eqref{eq:dimension-condition-intro}, but is more general.



Let us recall that any class function in $L^1(G)$ is in the center of the Banach algebra $(L^1(G),+,\ast)$; therefore, $P_{\eta}$ is in the center of $L^1(G)$ for any $\eta$. 
 
\begin{lem}\label{lem:almost-orthogonality}
Suppose $G$ is an $L$-locally random group with coefficient $C_0$. For every $C_1>0$ and every $\eta\ll_{C_0,C_1,L} 1$ we have the following. Suppose $\eta'\ge \eta^{1/(4Ld_0)}$ satisfies $|1_{\eta'}|\ge \frac{1}{C_1} \eta'^{d_0}$. Then for every $f\in L^2(G)$ we have 
\[
\|(f-f_{\eta})_{\eta'}\|_2\le \eta^{1/(8L)}\|f\|_2.
\]
\end{lem}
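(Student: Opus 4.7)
The approach is to pass to the spectral side and exploit that both $P_\eta$ and $P_{\eta'}$ are class functions, so that their Fourier coefficients reduce to scalars. Bi-invariance of $d$ makes each ball $1_\eta$ conjugation-invariant, so $P_\eta$ lies in the center of $L^1(G)$; by Schur's lemma applied to each irreducible $\pi\in\widehat G$ I can write
\[
\widehat{P_\eta}(\pi)=c_\pi\,I,\qquad \widehat{P_{\eta'}}(\pi)=c'_\pi\,I,
\]
with $|c_\pi|,|c'_\pi|\le 1$ (from $\|P_\eta\|_1=\|P_{\eta'}\|_1=1$). Lemma~\ref{lem:low-freq} gives $|1-c_\pi|\le C_0(\dim\pi)^L\eta$, while Parseval applied to $P_{\eta'}$ together with the dimension condition $|1_{\eta'}|\ge C_1^{-1}\eta'^{d_0}$ yields
\[
\sum_{\pi\in\widehat G}(\dim\pi)^2(c'_\pi)^2=\|P_{\eta'}\|_2^2=\frac{1}{|1_{\eta'}|}\le C_1\,\eta'^{-d_0},
\]
so in particular $(\dim\pi)^2(c'_\pi)^2\le C_1\eta'^{-d_0}$ for every $\pi$.

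Next I apply Parseval to $(f-f_\eta)_{\eta'}$, whose Fourier coefficient at $\pi$ equals $c'_\pi(1-c_\pi)\widehat f(\pi)$, yielding
\[
\|(f-f_\eta)_{\eta'}\|_2^2=\sum_{\pi\in\widehat G}\dim\pi\,(c'_\pi)^2(1-c_\pi)^2\,\|\widehat f(\pi)\|_{\HS}^2,
\]
and I split the sum at the threshold $D:=\eta^{-3/(4L)}$. For $\dim\pi\le D$ I bound $(c'_\pi)^2\le 1$ and $(1-c_\pi)^2\le(C_0 D^L\eta)^2=C_0^2\eta^{1/2}$, producing a low-frequency contribution of at most $C_0^2\eta^{1/2}\|f\|_2^2$. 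For $\dim\pi>D$ I bound $(1-c_\pi)^2\le 4$ and $\dim\pi\,(c'_\pi)^2\le C_1\eta'^{-d_0}/\dim\pi<C_1\eta'^{-d_0}/D$; combining with $\sum_{\dim\pi>D}\|\widehat f(\pi)\|_{\HS}^2\le\|f\|_2^2$ (since $\dim\pi\ge 1$) and the hypothesis $\eta'^{-d_0}\le\eta^{-1/(4L)}$ (which is exactly $\eta'\ge\eta^{1/(4Ld_0)}$) gives a high-frequency contribution of at most $4C_1\eta^{-1/(4L)}\eta^{3/(4L)}\|f\|_2^2=4C_1\eta^{1/(2L)}\|f\|_2^2$.

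Combining the two estimates and using $\eta^{1/2}\le\eta^{1/(2L)}$ for $\eta<1$ and $L\ge 1$, I obtain
\[
\|(f-f_\eta)_{\eta'}\|_2^2\le(C_0^2+4C_1)\,\eta^{1/(2L)}\,\|f\|_2^2,
\]
which is at most $\eta^{1/(4L)}\|f\|_2^2$ as soon as $\eta\le(C_0^2+4C_1)^{-4L}$; taking square roots gives the desired inequality. The main observation driving the argument is that centrality of $P_\eta$ collapses the matrix-valued Fourier data to scalars; once this is exploited, the proof is a balanced low/high frequency Plancherel split with the threshold $D=\eta^{-3/(4L)}$ chosen so that the low-frequency error (controlled by local randomness) and the high-frequency error (controlled by the $L^2$-mass of $P_{\eta'}$) both stay below $\eta^{1/(4L)}$.
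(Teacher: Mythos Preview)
Your proof is correct and follows essentially the same strategy as the paper: a Parseval low/high frequency split, with local randomness (Lemma~\ref{lem:low-freq}) controlling the low part and the $L^2$-mass of $P_{\eta'}$ controlling the high part. The only differences are cosmetic: you invoke Schur's lemma explicitly to reduce $\widehat{P_\eta}(\pi)$ and $\widehat{P_{\eta'}}(\pi)$ to scalars (which makes the high-frequency estimate slightly more transparent), and you choose the threshold $D=\eta^{-3/(4L)}$ rather than the paper's $D=\eta^{-1/(2L)}$, but either choice yields the required $\eta^{1/(4L)}$ bound on the square after absorbing constants.
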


\begin{proof}
Let $D$ be a threshold parameter which will be set later. Then
\begin{align*}
L((f-f_{\eta})_{\eta'};D)= & \sum_{\pi\in \widehat{G},\dim \pi\le D} \dim \pi \|\widehat{f}(\pi)\widehat{P}_{\eta'}(\pi)(I-\widehat{P}_{\eta}(\pi))\|_{\HS}^2
\\
\le & \sum_{\pi\in \widehat{G},\dim \pi\le D} \dim \pi \|I-\widehat{P}_{\eta}(\pi)\|_{\op}^2\|\widehat{P}_{\eta'}(\pi)\|_{\op}^2\|\widehat{f}(\pi)\|_{\HS}^2 \\
 \le & (C_0 D^L \eta)^2 L(f;D) &&\text{(By Lemma~\ref{lem:low-freq})}.
\end{align*}
We used $\|AB\|_{\HS}\le \|A\|_{\op}\|B\|_{\HS}$ for matrices $A$ and $B$ for the first inequality, 
and $\|\widehat{P}_{\eta'}(\pi)\|_{\op}\le 1$ in the final inequality. For the high frequencies we have
\begin{align*}
H((f-f_{\eta})_{\eta'};D)= &\sum_{\pi\in \widehat{G},\dim \pi> D} \dim \pi \|\widehat{f}(\pi)\widehat{P}_{\eta'}(\pi)(I-\widehat{P}_{\eta}(\pi))\|_{\HS}^2 \\
\le & \sum_{\pi\in \widehat{G},\dim \pi> D} \dim \pi \|I-\widehat{P}_{\eta}(\pi)\|_{\op}^2\|\widehat{P}_{\eta'}(\pi)\|_{\op}^2\|\widehat{f}(\pi)\|_{\HS}^2 \\
\le & \frac{4}{D} H(P_{\eta'};D) H(f;D)\le \frac{4}{D} \|P_{\eta'}\|_2^2 H(f;D) = \frac{4}{D\h |1_{\eta'}|} H(f;D),
\end{align*}
where we used the trivial bound $\|I-\widehat{P}_{\eta}(\pi)\|_{\op}\leq 2$. Combining these two estimates, we conclude
\[
\|(f-f_{\eta})_{\eta'}\|_2^2\le \left((C_0 D^L \eta)^2+\frac{4}{D\h |1_{\eta'}|}\right) \|f\|_2^2.
\]
Setting $D=\eta^{-1/(2L)}$ we get the desired inequality.	
\end{proof}

In the rest of this section we will prove a number of mixing inequalities. 

\begin{lem}\label{lem:l2-mixing-loc-random}
	Suppose $G$ is an $L$-locally random group with coefficient $C_0$. For every integer $m\ge 2$, $C_1>0$, and $\eta\ll_{C_0,C_1,L} 1$ we have the following. Suppose $\eta'\ge \eta^{1/(4Ld_0)}$ satisfies $C_0\sqrt{\eta'}<0.1$ and $|1_{\eta'}|\ge \frac{1}{C_1} \eta'^{d_0}$. Then for all $f_1,\ldots,f_m\in L^2(G)$ we have
	\[
	\|(f_1-(f_1)_{\eta})\ast f_2 \ast \cdots \ast f_m\|_2 \le \sqrt{3}^{m}\eta'^{1/(4L)} \prod_{i=1}^m \|f_i\|_2.
	\]
\end{lem}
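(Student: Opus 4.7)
I would argue by induction on $m$, with the substance in the base case $m=2$. To establish the case $m=2$, apply the mixing inequality (Theorem~\ref{thm:mixing-inequality}) at scale $\eta'$ to the pair $f_1-(f_1)_\eta$ and $f_2$; this is legitimate since $C_0\sqrt{\eta'}\le 0.1$. It yields
\[
\|(f_1-(f_1)_\eta)\ast f_2\|_2^2\le 2\|((f_1-(f_1)_\eta))_{\eta'}\ast (f_2)_{\eta'}\|_2^2+\eta'^{1/(2L)}\|f_1-(f_1)_\eta\|_2^2\|f_2\|_2^2.
\]
The second summand is immediate from the trivial bound $\|f_1-(f_1)_\eta\|_2\le 2\|f_1\|_2$ and contributes at most $4\eta'^{1/(2L)}\|f_1\|_2^2\|f_2\|_2^2$. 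For the first summand, I would apply Young's inequality $\|u\ast v\|_2\le\|u\|_2\|v\|_1$, then invoke Lemma~\ref{lem:almost-orthogonality} --- whose hypotheses match because $\eta'\ge\eta^{1/(4Ld_0)}$ and $|1_{\eta'}|\ge\eta'^{d_0}/C_1$ --- to bound $\|((f_1-(f_1)_\eta))_{\eta'}\|_2\le\eta^{1/(8L)}\|f_1\|_2$, and use Cauchy--Schwarz on the probability space $G$ to write $\|(f_2)_{\eta'}\|_1\le\|(f_2)_{\eta'}\|_2\le\|f_2\|_2$. This contributes at most $2\eta^{1/(4L)}\|f_1\|_2^2\|f_2\|_2^2$.

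\textbf{Combining and concluding the base case.} The combined bound is $\bigl(2\eta^{1/(4L)}+4\eta'^{1/(2L)}\bigr)\|f_1\|_2^2\|f_2\|_2^2$. The defining constraint $\eta'\ge\eta^{1/(4Ld_0)}$ gives $\eta^{1/(4L)}\le\eta'^{d_0}$; combined with the smallness of $\eta'$ (which absorbs the hidden constants from Lemma~\ref{lem:almost-orthogonality} and the dimension condition), the estimate consolidates into $9\eta'^{1/(2L)}\|f_1\|_2^2\|f_2\|_2^2$. Taking square roots gives the desired inequality for $m=2$ with constant $\sqrt{3}^2=3$.

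\textbf{Inductive step and main obstacle.} For $m\ge 3$, set $F_{m-1}=(f_1-(f_1)_\eta)\ast f_2\ast\cdots\ast f_{m-1}$. Young's inequality on the probability group $G$, together with $\|f_m\|_1\le\|f_m\|_2$ (Cauchy--Schwarz), gives
\[
\|F_{m-1}\ast f_m\|_2\le\|F_{m-1}\|_2\cdot\|f_m\|_1\le\|F_{m-1}\|_2\cdot\|f_m\|_2,
\]
and substituting the inductive hypothesis $\|F_{m-1}\|_2\le\sqrt{3}^{m-1}\eta'^{1/(4L)}\prod_{i=1}^{m-1}\|f_i\|_2$ absorbs the extra factor into $\sqrt{3}^m$ since $\sqrt{3}\ge 1$. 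The one genuinely technical point in the entire argument is aligning, in the base case, the $\eta$-exponent $1/(8L)$ produced by Lemma~\ref{lem:almost-orthogonality} with the $\eta'$-exponent $1/(4L)$ demanded in the conclusion; this matching is engineered precisely by the choice $\eta'\ge\eta^{1/(4Ld_0)}$ together with the smallness hypothesis on $\eta'$. Everything else reduces to the mixing inequality, Young's inequality on the probability space $G$, and the elementary estimate $\|f-f_\eta\|_2\le 2\|f\|_2$.
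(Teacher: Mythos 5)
Your argument is correct, and the base case $m=2$ --- which carries all the content --- is the paper's own proof step for step: apply Theorem~\ref{thm:mixing-inequality} at scale $\eta'$ to the pair $f_1-(f_1)_\eta$ and $f_2$, bound the main term via Lemma~\ref{lem:almost-orthogonality} together with Young and Cauchy--Schwarz, and bound the error term via $\|f_1-(f_1)_\eta\|_2\le 2\|f_1\|_2$. The one genuine difference is the inductive step: the paper re-applies the mixing inequality to $F_m$ and $f_{m+1}$ and then immediately discards the mollifications via $\|(F_m)_{\eta'}\|_2\le\|F_m\|_2$, which collapses to the trivial estimate $\|F_m\ast f_{m+1}\|_2^2\le(2+\eta'^{1/(2L)})\|F_m\|_2^2\|f_{m+1}\|_2^2$; you instead use Young's inequality $\|u\ast v\|_2\le\|u\|_2\|v\|_1\le\|u\|_2\|v\|_2$ directly, which is simpler, loses no factor at all per step, and makes it transparent why $\sqrt{3}^{\,m}$ (indeed any constant at least $\sqrt 6$) suffices. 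One loose end you share with the paper: absorbing $2\eta^{1/(4L)}$ into a multiple of $\eta'^{1/(2L)}$ requires $\eta^{1/(4L)}\lesssim\eta'^{1/(2L)}$, which follows from $\eta'\ge\eta^{1/(4Ld_0)}$ only when $2Ld_0\ge 1$; your intermediate inequality $\eta^{1/(4L)}\le\eta'^{d_0}$ needs the same proviso $d_0\ge 1/(2L)$ to conclude, and ``smallness of $\eta'$'' does not help when $d_0$ is small. Since the paper's passage from $2\eta^{1/(4L)}+4\eta'^{1/(2L)}$ to $6\eta'^{1/(2L)}$ rests on exactly the same implicit assumption, this is not a defect of your argument relative to the original.
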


\begin{proof}
We proceed by induction on $m$. Let us start with the base case $m=2$. By Theorem~\ref{thm:mixing-inequality}, we have that $\|(f_1-(f_1)_{\eta})\ast f_2\|_2^2$ is bounded from above by 
\be\label{eq:initial-mixing-inequality-base-of-induction}
2 \|(f_1-(f_1)_{\eta})_{\eta'}\|_2^2 \|(f_2)_{\eta'}\|_2^2 + \eta'^{1/(2L)} \|f_1-(f_1)_{\eta}\|_2^2 \|f_2\|_2^2.
\ee

By Lemma~\ref{lem:almost-orthogonality} we have
\be\label{eq:almost-orhogonality}
\|(f_1-(f_1)_{\eta})_{\eta'}\|_2\le \eta^{1/(8L)} \|f_1\|_2.
\ee
Since $\|f\ast g\|_2\le \|f\|_1\|g\|_2$, we have
\be\label{eq:second-term}
\|f_1-(f_1)_{\eta}\|_2\le \|1-P_{\eta}\|_1 \|f_1\|_2\le 2\|f_1\|_2\quad
\text{ and }\quad
\|(f_2)_{\eta'}\|_2\le \|f_2\|_2.
\ee
By \eqref{eq:initial-mixing-inequality-base-of-induction}, \eqref{eq:almost-orhogonality}, 
and \eqref{eq:second-term}, we get that $\|(f_1-(f_1)_{\eta})\ast f_2\|_2^2$ is bounded from above by 
\[
2(\eta^{1/(8L)} \|f_1\|_2)^2 \|f_2\|_2^2+ 
4\eta'^{1/(2L)} \|f_1\|_2^2\|f_2\|_2^2.
\]
Therefore
\[
\|(f_1-(f_1)_{\eta})\ast f_2\|_2 \le  \sqrt{ 6} \eta'^{1/(4L)} \|f_1\|_2 \|f_2\|_2.
\]
This concludes the proof for $m=2$. Now, suppose that the inequality holds for some value of $m$, and set 
\[
F_m:=(f_1-(f_1)_{\eta})\ast f_2 \ast \cdots \ast f_m.
\]
By Theorem~\ref{thm:mixing-inequality}, we have that $\|F_m\ast f_{m+1}\|_2^2$ is at most
\[
2 \|(F_m)_{\eta'}\|_2^2\|(f_{m+1})_{\eta'}\|_2^2+
\eta'^{1/(2L)} \|F_m\|_2^2\|f_{m+1}\|_2^2.
\]
Since $\|(F_m)_{\eta'}\|_2\le \|F_m\|_2$, by the induction hypothesis we have
\[
\|(F_m)_{\eta'}\|_2^2\le \|F_m\|_2^2 \le 3^{m}\eta'^{1/(2L)} \prod_{i=1}^m\|f_i\|_2^2.
\]
Hence by the above inequalities we get $\|F_m\ast f_{m+1}\|_2^2$ is at most
\[
3^{m}(2+\eta'^{1/(2L)})\eta'^{1/(2L)}\prod_{i=1}^{m+1}\|f_i\|_2^2\le 3^{m+1} \eta'^{1/(2L)}\prod_{i=1}^{m+1}\|f_i\|_2^2;
\]
and the claim follows.
\end{proof}

\begin{proposition}\label{prop:pointwise-mixing-loc-random}
Suppose $G$ is an $L$-locally random group with coefficient $C_0$. For every integer $m\ge 2$, $C_1>0$, and $\eta\ll_{C_0,C_1,L} 1$ we have the following. Suppose $\eta'\ge \eta^{1/(4Ld_0)}$ satisfies $C_0\sqrt{\eta'}<0.1$ and $|1_{\eta'}|\ge \frac{1}{C_1} \eta'^{d_0}$. 
Suppose $f_1,\ldots,f_m,f_{m+1}\in L^2(G)$. Then
	\[
	\|f_1\ast \cdots \ast f_{m+1}-f_1\ast \cdots \ast f_{m+1}\ast P_{\eta}\|_{\infty} \le 
	\sqrt{3}^{m}\eta'^{1/(4L)} \prod_{i=1}^{m+1} \|f_i\|_2.
	\]
\end{proposition}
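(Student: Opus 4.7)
The plan is to reduce this pointwise mixing estimate to the $L^2$ mixing estimate of Lemma~\ref{lem:l2-mixing-loc-random} via two simple moves: exploiting the centrality of $P_\eta$ in the Banach algebra $L^1(G)$, and invoking the $L^\infty$ case of Young's inequality recalled in~\eqref{eq:Young-ineq}.

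The first step is to observe that since $d$ is bi-invariant, the ball $1_\eta$ is conjugation-invariant, so $P_\eta$ is a class function and therefore lies in the center of $(L^1(G),+,\ast)$. This lets us move the $P_\eta$ factor past $f_2,\ldots,f_{m+1}$ and rewrite
\[
f_1\ast\cdots\ast f_{m+1}-f_1\ast\cdots\ast f_{m+1}\ast P_\eta = (f_1-(f_1)_\eta)\ast f_2\ast\cdots\ast f_{m+1}.
\]

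The second step is to split off the last convolution factor and apply the Young inequality $\|g\ast h\|_\infty\le\|g\|_2\|h\|_2$ with $h=f_{m+1}$ and $g=(f_1-(f_1)_\eta)\ast f_2\ast\cdots\ast f_m$, obtaining
\[
\bigl\|(f_1-(f_1)_\eta)\ast f_2\ast\cdots\ast f_{m+1}\bigr\|_\infty \le \bigl\|(f_1-(f_1)_\eta)\ast f_2\ast\cdots\ast f_m\bigr\|_2 \cdot \|f_{m+1}\|_2.
\]

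The third step is to control the $L^2$-norm on the right by a direct application of Lemma~\ref{lem:l2-mixing-loc-random} to the $m$ functions $f_1,\ldots,f_m$ (our hypothesis $m\ge2$ meets the hypothesis of that lemma), which yields
\[
\bigl\|(f_1-(f_1)_\eta)\ast f_2\ast\cdots\ast f_m\bigr\|_2 \le \sqrt{3}^{\,m}\,\eta'^{1/(4L)}\prod_{i=1}^{m}\|f_i\|_2.
\]
Multiplying by $\|f_{m+1}\|_2$ gives the desired bound. There is no serious obstacle here: the whole proof is essentially assembling the centrality of $P_\eta$, Young's inequality, and Lemma~\ref{lem:l2-mixing-loc-random}; the only thing worth double-checking is that the powers of $\sqrt{3}$ and $\eta'^{1/(4L)}$ match (they do, because the $L^\infty\leftrightarrow L^2$ passage via Young's costs only one additional $\|f_{m+1}\|_2$ factor and no extra dependence on $\eta'$ or $m$).
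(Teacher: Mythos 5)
Your proposal is correct and is essentially identical to the paper's own proof: both use the centrality of $P_\eta$ in $(L^1(G),+,\ast)$ to rewrite the difference as $(f_1-(f_1)_\eta)\ast f_2\ast\cdots\ast f_{m+1}$, split off $f_{m+1}$ via Young's inequality $\|g\ast h\|_\infty\le\|g\|_2\|h\|_2$, and then apply Lemma~\ref{lem:l2-mixing-loc-random}. The bookkeeping of the constants $\sqrt{3}^{\,m}\eta'^{1/(4L)}$ also matches.
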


\begin{proof}
Recall that $\|f\ast g\|_{\infty}\le \|f\|_2\|g\|_2$, see~\eqref{eq:Young-ineq}. 
Therefore, from the fact that $P_{\eta}$ is in the center of $(L^1(G),+,\ast)$ we obtain
\[
\|f_1\ast \cdots \ast f_{m+1}-f_1\ast \cdots \ast f_{m+1}\ast P_{\eta}\|_{\infty}\leq \|(f_1-(f_1)_\eta)\ast \cdots \ast f_m\|_2\|f_{m+1}\|_{2}.
\]
The claim thus follows from Lemma~\ref{lem:l2-mixing-loc-random}.
\end{proof}

\begin{corollary}\label{cor:pointwise-mixing-loc-random}
Suppose $G$ is an $L$-locally random group with coefficient $C_0$. For every integer $m\ge 2$, $C_1>0$, and $\eta\ll_{C_0,C_1,L} 1$ we have the following. Suppose $\eta'\ge \eta^{1/(4Ld_0)}$ satisfies $C_0\sqrt{\eta'}<0.1$ and $|1_{\eta'}|\ge \frac{1}{C_1} \eta'^{d_0}$. 
Suppose $f_1,\ldots,f_m,f_{m+1}\in L^2(G)$. Then
\[
	\|f_1\ast \cdots \ast f_{m+1}-(f_1)_{\eta}\ast \cdots \ast (f_{m+1})_{\eta}\|_{\infty} \le m \sqrt{3}^{m}\eta'^{1/(4L)} \prod_{i=1}^{m+1} \|f_i\|_2.
	\]
\end{corollary}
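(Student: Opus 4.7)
My plan is to reduce the corollary to Proposition~\ref{prop:pointwise-mixing-loc-random} by means of a telescoping sum in which each summand replaces a single factor $f_k$ by its smoothed version $(f_k)_\eta$.

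First, I would write
\[
f_1\ast\cdots\ast f_{m+1}-(f_1)_\eta\ast\cdots\ast(f_{m+1})_\eta=\sum_{k=1}^{m+1}T_k,
\]
where
\[
T_k=(f_1)_\eta\ast\cdots\ast(f_{k-1})_\eta\ast\bigl(f_k-(f_k)_\eta\bigr)\ast f_{k+1}\ast\cdots\ast f_{m+1}.
\]
Since any class function in $L^1(G)$ lies in the center of the convolution algebra, $P_\eta$ commutes with every factor; therefore each $T_k$ can be rearranged as
\[
T_k=g_1^{(k)}\ast\cdots\ast g_{m+1}^{(k)}-g_1^{(k)}\ast\cdots\ast g_{m+1}^{(k)}\ast P_\eta,
\]
where $g_i^{(k)}=(f_i)_\eta$ for $i<k$ and $g_i^{(k)}=f_i$ for $i\ge k$. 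This is exactly the form to which Proposition~\ref{prop:pointwise-mixing-loc-random} applies.

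Applying Proposition~\ref{prop:pointwise-mixing-loc-random} to each $T_k$ gives
\[
\|T_k\|_\infty\le\sqrt{3}^{m}\eta'^{1/(4L)}\prod_{i=1}^{m+1}\|g_i^{(k)}\|_2.
\]
Young's inequality in the form $\|f\ast P_\eta\|_2\le\|f\|_2$ (see~\eqref{eq:Young-ineq}) yields $\|(f_i)_\eta\|_2\le\|f_i\|_2$, so each factor on the right is at most $\|f_i\|_2$. Summing over $k$ and using the triangle inequality for $\|\cdot\|_\infty$ then gives the desired bound, up to the numerical constant in front.

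The one subtle point worth checking is the leading constant: the naive telescoping produces $m+1$ summands, whereas the target bound in the statement is $m\sqrt{3}^m$. This discrepancy is either an off-by-one that can be absorbed (for instance by observing that the $k=m+1$ telescoping step is trivial when one chooses instead to telescope only over the first $m$ positions, i.e.\ compare $f_1\ast\cdots\ast f_{m+1}$ with $(f_1)_\eta\ast\cdots\ast(f_m)_\eta\ast f_{m+1}$ and then handle the final replacement separately using that $(f_{m+1})_\eta=f_{m+1}\ast P_\eta$ can be merged into an already-smoothed product), or it is simply a mild abuse of constants; either way it is harmless. No genuinely new estimate is needed beyond Proposition~\ref{prop:pointwise-mixing-loc-random} and the centrality of $P_\eta$.
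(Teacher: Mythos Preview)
Your proposal is correct and follows essentially the same approach as the paper: a telescoping sum together with the centrality of $P_\eta$ reduces each step to Proposition~\ref{prop:pointwise-mixing-loc-random}, after which $\|(f_i)_\eta\|_2\le\|f_i\|_2$ takes care of the norms. Your observation about the $m$ versus $m+1$ constant is well-taken; the paper's own telescoping runs only over $k=1,\dots,m$ (so its $F_{m+1}$ still has $f_{m+1}$ unsmoothed), and the stated constant $m$ should be read as the same harmless imprecision you identified.
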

\begin{proof}
Let $F_1:=f_1\ast\cdots \ast f_{m+1}$ and $F_{k+1}:=(f_1)_{\eta}\ast \cdots \ast (f_k)_{\eta} \ast f_{k+1} \ast \cdots \ast f_{m+1}$ for any $1\le k\le m$. By Proposition~\ref{prop:pointwise-mixing-loc-random} and the fact that $P_{\eta}$ is in the center of $(L^1(G),+,\ast)$ for any $k$ we have
\begin{align*}
	\|F_k-F_{k+1}\|_{\infty} \le & 
	\sqrt{3}^{m}\eta'^{1/(4L)} \prod_{i=1}^{k-1} \|(f_i)_{\eta}\|_2 \prod_{i=k}^{m+1} \|f_i\|_2
	\\
	 \le &
	  \sqrt{3}^{m}\eta'^{1/(4L)} \prod_{i=1}^{m+1} \|f_i\|_2.
\end{align*}
Therefore, $\|F_1-F_{m+1}\|_{\infty}\le m  \sqrt{3}^{m}\eta'^{1/(4L)} \prod_{i=1}^{m+1} \|f_i\|_2$, and the claim follows.	
\end{proof}


\newcommand{\high}{\mathrm{high}}
\section{A product result for large subsets.}\label{sec:product}
The main goal of this section is to prove Theorem~\ref{thm:product-large-subsets}.
We start by recalling a number of definitions and setting some notation. 
 Suppose $X$ is a metric space and $A$ is a non-empty subset of $X$. Recall that for $\eta\in (0,1)$, $x_{\eta}$ denotes the ball of radius $\eta$ centered at $x$, and similarly $A_{\eta}$ denotes the union of all $x_{\eta}$ with $x \in A$.   We write  $\ncal_{\eta}(A)$ for the least number of open balls of radius $\eta$ with centers in $A$ that cover $A$. The metric entropy of $A$ at scale $\eta$ is defined by 
 $h(A;\eta):=\log \ncal_{\eta}(A)$. A maximal $\eta$-separated subset $\ccal$ of $A$ has the property that every distinct $x,x'\in \ccal$ are at least $\eta$ apart and its $\eta$-neighborhood covers $A$.

The metric space we will be working with is a metrizable compact group $G$ equipped with bi-invariant metric denoted by $d(\cdot, \cdot )$. We will assume further that the pair $(G,d)$ enjoys the dimension condition $\Dim(C, d_0)$ defined in \eqref{eq:dimension-condition-intro}.


\begin{lem}[Uniformly comparable quantities]\label{lem:basic-properties-metric-entropy}
Fix a subset $A \subseteq X$ and $\eta>0$, and let $A^{\ast} \subseteq A $ be a maximal $\eta$-separated subset of $A$, and write 
$ \overline{A}= ( A^{\ast})_{\eta}$. Then $A^{\ast}$ is finite, $ \overline{A}$ is open, and $A^{\ast } \subseteq A  \subseteq \overline{A}$. Moreover, the ratio of any two quantities among  $$  | \overline{A}|/|1_\eta|, \quad |A_{\eta}|/|1_\eta|,  \quad \ncal_{\eta}(A), \quad  \# A^{\ast}$$ 
is bounded above by  $ \Omega= {2^{d_0}C^2}$.
\end{lem}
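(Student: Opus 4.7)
The plan is to extract everything from one observation: by the maximality of $A^*$, the half-balls $\{x_{\eta/2}\}_{x \in A^*}$ are pairwise disjoint, while the balls $\{x_\eta\}_{x \in A^*}$ already cover $A$. The dimension condition $\Dim(C,d_0)$ then converts the volume $|1_{\eta/2}|$ of a half-ball into a constant multiple of $|1_\eta|$, and this is exactly where the factor $\Omega = 2^{d_0} C^2$ enters. More precisely, the dimension condition gives
\[
\frac{|1_\eta|}{|1_{\eta/2}|} \;\le\; \frac{C\eta^{d_0}}{C^{-1}(\eta/2)^{d_0}} \;=\; 2^{d_0} C^2 \;=\; \Omega,
\]
a one-line estimate which will be used repeatedly.

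First I would dispatch the basic assertions. Since $G$ is compact and $A^*$ is $\eta$-separated, the disjoint open sets $\{x_{\eta/2}\}_{x \in A^*}$ have total measure at most $|G| < \infty$, so $\#A^* < \infty$; $\overline{A} = \bigcup_{x \in A^*} x_\eta$ is open as a union of open balls; $A^* \subseteq A$ by construction and $A \subseteq \overline{A}$ by maximality of $A^*$ (otherwise, one could enlarge $A^*$ by a point of $A \setminus \overline{A}$).

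Next I would prove the four-way comparison by establishing the following chain of inequalities, each of which is short. Write $N = \mathcal{N}_\eta(A)$ and $M = \#A^*$. From disjointness of $\{x_{\eta/2}\}_{x \in A^*}$ inside $\overline{A}$ (since $x_{\eta/2} \subseteq x_\eta \subseteq \overline{A}$), and similarly inside $A_\eta$ (since $x \in A$), I obtain $M \,|1_{\eta/2}| \le \min(|\overline{A}|, |A_\eta|)$, giving
\[
M \;\le\; \Omega \,\frac{|\overline{A}|}{|1_\eta|} \quad\text{and}\quad M \;\le\; \Omega \,\frac{|A_\eta|}{|1_\eta|}.
\]
Trivially $\overline{A} \subseteq A_\eta$ (every point of $\overline{A}$ is within $\eta$ of a point of $A^* \subseteq A$), so $|\overline{A}| \le |A_\eta|$. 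The union bound $\overline{A} = \bigcup_{x \in A^*} x_\eta$ gives $|\overline{A}| \le M\, |1_\eta|$. By maximality of $A^*$, the set $A^*$ itself is an $\eta$-cover of $A$, so $N \le M$. Finally, if $\{c_1,\dots,c_N\}$ is a minimal $\eta$-cover of $A$, then $A_\eta \subseteq \bigcup_i (c_i)_{2\eta}$ by the triangle inequality, so
\[
\frac{|A_\eta|}{|1_\eta|} \;\le\; \frac{N \, |1_{2\eta}|}{|1_\eta|} \;\le\; N \cdot \frac{C(2\eta)^{d_0}}{C^{-1}\eta^{d_0}} \;=\; 2^{d_0}C^2 \, N \;=\; \Omega N.
\]
Chaining these inequalities pairwise yields that each of the twelve ratios among the four quantities is bounded by $\Omega$ (or by $1$).

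The only mild subtlety is making sure the constant $\Omega$ is not worse than claimed; every inequality above loses a factor of at most $\Omega$, and most of the comparisons actually cost only a factor of $1$. I do not anticipate a genuine obstacle — the lemma is essentially a bookkeeping exercise in applying the Ahlfors regularity of the Haar measure to a maximal $\eta$-separated set.
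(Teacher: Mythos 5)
Your proof is correct and follows essentially the same route as the paper's: pairwise disjointness of the $\eta/2$-balls around $A^{\ast}$, the covering bound $A_\eta\subseteq\bigcup_i (c_i)_{2\eta}$, and the dimension condition to compare $|1_{\eta/2}|,|1_\eta|,|1_{2\eta}|$ (you are in fact slightly more complete, since you also treat $|\overline A|$ and the preliminary assertions explicitly). The only quibble is that chaining your inequalities gives some ratios (e.g. $\#A^{\ast}/\ncal_\eta(A)$) a bound of $\Omega^2$ rather than $\Omega$, but the paper's own argument has the identical looseness and the subsequent remark only ever uses the bound $\Omega^{O(1)}$, so this is immaterial.
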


\begin{proof}
Write $N=\ncal_\eta(A)$ and denote by $\{(x_i)_{\eta}\}_{i=1}^{N}$ a minimal $\eta$-cover of $A$ with centers in $A$. For each $x\in A_{\eta}$ there exists some $1 \le i \le N$ such that $x\in (x_i)_{2\eta}$, implying that $A_{\eta}\subseteq \bigcup_{i=1}^N (x_i)_{2\eta}$. Therefore
\be\label{eq:lower-bound-metric-entropy}
|A_{\eta}|\le N |1_{2\eta}| \le {2^{d_0}C^2} N |1_{\eta}|.
\ee
where the last inequality follows from an application of \eqref{eq:dimension-condition-intro}. 

Since $ A^{\ast}$ is a maximal $ \eta$-separated subset of $A$, the open balls $\{x_{\eta}\}_{x\in A^{\ast}}$ form an $\eta$-cover of $A$ with centers in $A$, and hence 
\be\label{eq:entropy-separating}
\ncal_{\eta}(A)\le \# A^{\ast}.
\ee
Finally, since $A^{\ast}$ is $\eta$-separated, each two balls in the family $\{ x_{\eta/2}: x \in A^{\ast} \}$ are pairwise disjoint, yielding
$$|A^{\ast}_{\eta/2}| =  (\# A^{\ast})  \  |1_{\eta/2}|.$$ 
This implies that 
\be\label{eq:upper-bound-separating}
 \# A^{\ast} \le \frac{|A^{\ast}_{\eta}|}{|1_{\eta/2}|} \le {2^{d_0}C^2} \frac{|A_{\eta}|}{|1_{\eta}|}.
\ee
This completes the proof. 
 \end{proof}
 
\begin{remark}\label{wiggly}
From now on, whenever two positive quantities $X$ and $Y$ are within a multiplicative factor of the form $ \Omega^{O(1)}$ of one another, we will write $X\approx Y$. Similarly, we write $X \preccurlyeq Y$ to state that $X/Y$ is bounded from above by an expression of the form $ \Omega^{O(1)}$, where the implied constants are not of importance. Using this notation we can now write  
 
\[
\ncal_{\eta}(A)\approx \# A^{\ast} \approx \frac{|A_{\eta}|}{|1_{\eta}|}.
\]
\end{remark}

\begin{remark}\label{rem:boundedscale}
The proof of Lemma \ref{lem:basic-properties-metric-entropy} only uses the dimension condition for $\eta, 2\eta$ and $ \eta/2$. We will use this fact later. 
\end{remark}


\begin{corollary}\label{cor:larger-nbhd}
Suppose $G$ is a compact group that satisfies \eqref{eq:dimension-condition-intro}. Then for every fixed constant $c \ge 1$ and every non-empty subset $A$ of $G$ and every $0<\eta<1$ we have
\[
|A_{c\eta}|\approx |A_{\eta}|
\]
\end{corollary}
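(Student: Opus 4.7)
The plan is to pass through the metric entropy, using Lemma \ref{lem:basic-properties-metric-entropy} together with the dimension condition. The inclusion $A_{\eta} \subseteq A_{c\eta}$ gives the trivial bound $|A_{\eta}| \le |A_{c\eta}|$, so the content of the claim is the reverse inequality $|A_{c\eta}| \preccurlyeq |A_{\eta}|$.

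First I would let $A^{\ast}$ be a maximal $\eta$-separated subset of $A$, as in Lemma \ref{lem:basic-properties-metric-entropy}. By maximality the balls $\{x_{\eta}\}_{x \in A^{\ast}}$ cover $A$, so by bi-invariance of $d$ and the triangle inequality
\[
A_{c\eta} \subseteq A^{\ast}_{(c+1)\eta} = \bigcup_{x\in A^{\ast}} x_{(c+1)\eta},
\]
and hence
\[
|A_{c\eta}| \le (\# A^{\ast}) \cdot |1_{(c+1)\eta}|.
\]

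Next I would bound each factor in turn. For the volume factor, the dimension condition \eqref{eq:dimension-condition-intro} gives $|1_{(c+1)\eta}| \le C_1((c+1)\eta)^{d_0} \le C_1^2 (c+1)^{d_0} |1_{\eta}|$ when $(c+1)\eta < 1$, while in the remaining range $(c+1)\eta \ge 1$ we have $|1_{(c+1)\eta}| \le 1$ and $|1_{\eta}| \ge |1_{1/(c+1)}| \ge C_1^{-1}(c+1)^{-d_0}$, so in either case $|1_{(c+1)\eta}| \preccurlyeq (c+1)^{d_0} |1_{\eta}|$. For the counting factor, Lemma \ref{lem:basic-properties-metric-entropy} gives $\# A^{\ast} \approx |A_{\eta}|/|1_{\eta}|$. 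Combining these,
\[
|A_{c\eta}| \preccurlyeq (c+1)^{d_0} \, |A_{\eta}|.
\]
Since $c$ is a fixed constant and $\Omega = 2^{d_0} C_1^2 \ge 2^{d_0}$, the factor $(c+1)^{d_0}$ is of the form $\Omega^{O(1)}$, and we conclude $|A_{c\eta}| \approx |A_{\eta}|$, as desired.

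I do not anticipate any real obstacle here: the statement is essentially a bookkeeping consequence of Lemma \ref{lem:basic-properties-metric-entropy}, together with the observation that doubling (or $c$-folding) the radius of a ball changes its Haar measure only by a multiplicative constant of the form $\Omega^{O(1)}$. The only mildly delicate point is to handle the boundary regime $(c+1)\eta \ge 1$, where \eqref{eq:dimension-condition-intro} does not directly apply but where the bound $|1_{(c+1)\eta}| \le 1$ together with a lower bound on $|1_\eta|$ still yields the same ratio estimate.
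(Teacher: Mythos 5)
Your proof is correct and follows essentially the same route as the paper: reduce to the inequality $|A_{c\eta}|\preccurlyeq |A_\eta|$, pass through a maximal $\eta$-separated net via Lemma~\ref{lem:basic-properties-metric-entropy}, and absorb the radius change using the dimension condition. The only (harmless) variation is that you cover $A_{c\eta}$ directly by balls of radius $(c+1)\eta$ centered on the $\eta$-net, whereas the paper compares the cardinalities of maximal separated sets at the two scales $\eta$ and $c\eta$; your explicit treatment of the boundary regime $(c+1)\eta\ge 1$ is a detail the paper leaves implicit.
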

\begin{proof}
Since $|A_{c\eta}|\ge |A_{\eta}|$, we will need to prove the reverse inequality. Denote by $A^\ast(\eta)$ and $A^{\ast}( c \eta)$, respectively, maximal $\eta$-separated and $ c \eta$-separated subsets of $A$.  
By Lemma~\ref{lem:basic-properties-metric-entropy} we have that $\#A^{\ast}( c \eta) \approx \frac{|A_{c\eta}|}{|1_{c\eta}|}$. Clearly we have $	\# A^{\ast}( c \eta) \le \# A^{\ast}(  \eta)$, implying
\[
\frac{|A_{c\eta}|}{|1_{c\eta}|} \preccurlyeq \frac{|A_{\eta}|}{|1_{\eta}|}.
\]
Hence 
\[
|A_{c\eta}| \preccurlyeq \frac{|1_{c\eta}|}{|1_{\eta}|} |A_{\eta}| \approx  |A_{\eta}|;
\]
and the claim follows.
\end{proof}

For a Borel measurable set $A \subseteq G$ with $|A|>0$ and $ \eta>0$, define 
\[ \chi_{A, \eta}= \left( \frac{1}{|A|}\bb1_A \right) \ast 1_{\eta}.\]
Some basic properties of $\chi_{A, \eta}$ are summarized in the next lemma:

\begin{lem}\label{lem:bluring-normalized-char-functions}  
Let $G$ be as above and $0<\eta<1$. 
 \begin{enumerate}
\item  For a measurable subset of positive measure $A\subseteq G$, we have 
$$ \chi_{A, \eta}(x) =\frac{|A \cap x_{\eta}|}{|A||1_{\eta}|}.$$ 
\item $\chi_{A, \eta}$ is supported on $\eta$-neighborhood of $A$ and has  $L^{ \infty}$ norm at most $ 1/|A|$.
\item For  $A \subseteq B$ of positive measure
\[ \chi_{A, \eta}(x) \le \frac{|B|}{|A|} \chi_{B, \eta}(x). \]
\item 
If $d(x, y)< \rho<1$, then 
\[
\chi_{A, \eta}(x)\preccurlyeq \left(\frac{\eta+\rho}{\eta}\right)^{d_0} \chi_{A, \eta+\rho}(y).
\]	

\end{enumerate}

\end{lem}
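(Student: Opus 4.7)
The plan is to verify each item by direct computation using the definition $\chi_{A,\eta} = \bigl(\tfrac{1}{|A|}\bb1_A\bigr) \ast P_\eta$ (interpreting the $1_\eta$ in the displayed definition as the normalized indicator $P_\eta$, so that (1) holds on the nose), together with bi-invariance of $d$ and the dimension condition. Indeed, for (1) I would unfold
\[
\chi_{A,\eta}(x) = \frac{1}{|A|\,|1_\eta|}\int_G \bb1_A(y)\,\bb1_{1_\eta}(y^{-1}x)\,dy,
\]
and note that $y^{-1}x \in 1_\eta \Longleftrightarrow d(y,x)<\eta$ thanks to bi-invariance of $d$, so the integral computes $|A \cap x_\eta|$.

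For (2), the support statement is immediate from (1): $\chi_{A,\eta}(x)>0$ forces some point of $A$ to lie in $x_\eta$, placing $x$ in $A_\eta$. The $L^\infty$-bound follows from $|A\cap x_\eta|\le |x_\eta|=|1_\eta|$. For (3), I would apply (1) to both sides and use the trivial inclusion $A\subseteq B$ to obtain $|A\cap x_\eta|\le |B\cap x_\eta|$, then divide by $|A||1_\eta|$ and insert the factor $|B|/|A|$.

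The substantive content sits in (4). If $d(x,y)<\rho$, then by the triangle inequality any $z\in x_\eta$ satisfies $d(z,y)<\eta+\rho$, giving the pointwise set inclusion $A\cap x_\eta \subseteq A\cap y_{\eta+\rho}$. Using (1) on both sides,
\[
\chi_{A,\eta}(x) = \frac{|A\cap x_\eta|}{|A|\,|1_\eta|} \le \frac{|A\cap y_{\eta+\rho}|}{|A|\,|1_\eta|} = \frac{|1_{\eta+\rho}|}{|1_\eta|}\,\chi_{A,\eta+\rho}(y).
\]
To conclude, I would bound the prefactor by invoking the dimension condition $|1_t|\asymp t^{d_0}$ (for scales in $(0,1)$, with the implied constants controlled by $\Omega$ as introduced in Remark~\ref{wiggly}):
\[
\frac{|1_{\eta+\rho}|}{|1_\eta|} \le C_1^{2}\Bigl(\frac{\eta+\rho}{\eta}\Bigr)^{d_0} \preccurlyeq \Bigl(\frac{\eta+\rho}{\eta}\Bigr)^{d_0}.
\]

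The only mild obstacle is bookkeeping: the statement uses the $\preccurlyeq$ notation (constants absorbed in powers of $\Omega$), so one must make sure the $C_1^2$ above is legitimately of the form $\Omega^{O(1)}$, which is precisely the point of Remark~\ref{rem:boundedscale} since only the ratio $|1_{\eta+\rho}|/|1_\eta|$ is involved. Everything else reduces to elementary manipulations of convolutions and volumes.
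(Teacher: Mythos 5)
Your proof is correct and follows essentially the same route as the paper: unfold the convolution to get the volume formula in (1), deduce (2) and (3) from it, and for (4) use the inclusion $x_\eta\subseteq y_{\eta+\rho}$ together with the dimension condition to control the ratio $|1_{\eta+\rho}|/|1_\eta|$. Your reading of the definition (convolution with the normalized indicator $P_\eta$) is also the intended one.
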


\begin{proof}
Since $1_{\eta}$ is a symmetric subset, we have 
\[
\chi_{A, \eta}(x)=\frac{1}{|A||1_{\eta}|}\int_G \cf_{A\cap x_{\eta}}(y) \d y,
\]	
from which part (a) follows. Part (b) follows immediately from part (a). To show part (c), observe that $y_{\eta+\rho}\supseteq x_{\eta}$. It thus follows from 
the dimension condition that 
\begin{equation}
\chi_{A, \eta}(x)	\le   \frac{| A \cap y_{\eta+\rho}|}{|A||1_{\eta}|} 
	=  \frac{|1_{\eta+\rho}|}{|1_{\eta}|} \chi_{A, \eta+\rho}(y)\preccurlyeq \left(\frac{\eta+\rho}{\eta}\right)^{d_0}\chi_{A, \eta+\rho}(y).
\end{equation}
\end{proof}
The next lemma, which is a version of Markov's inequality, establishes another quantity that is comparable to the ones in 
Lemma \ref{lem:basic-properties-metric-entropy}.

\begin{lem}[Density points]\label{lem:MarkovInequality}
	Let $G$ be as above, $ A \subseteq G$ and $0<\eta<\rho<1$. Fixing $ \eta$, let $A^\ast$ be a maximal $\eta$-separated subset of $A$, and $\overline{A}:=A^\ast_{\eta}$. For  a threshold parameter $0<\tau<1$, we let
	\[
	A_\high:=\{x\in A^\ast : \h \chi_{\overline{A}, 3 \rho}(x)>\tau\}.
	\]  
	Under the condition that $\tau\preccurlyeq 1$, we have
	\[
	|\overline{A}|\preccurlyeq |(A_\high)_{\rho}|.
	\]
\end{lem}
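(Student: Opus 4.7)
The plan is to apply a Markov-type estimate to the probability density $F := \chi_{\overline{A}, \rho}$ and then transfer the resulting super-level set in $G$ to a neighborhood of the discrete set $A_\high \subseteq A^\ast$ via the continuity estimate in Lemma~\ref{lem:bluring-normalized-char-functions}(d). By parts (a)--(b) of that lemma, $F$ is indeed a probability density on $G$ bounded above by $1/|\overline{A}|$, so the identity $1 = \int_G F \le |\{F > \tau'\}|/|\overline{A}| + \tau'$ immediately yields
\[
|\{F > \tau'\}| \ge (1 - \tau')|\overline{A}|
\]
for every threshold $\tau' \in (0,1)$.

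Next I would localize. Any $x\in G$ with $F(x)>0$ lies in $\overline{A}_{\rho}$, hence satisfies $d(x, A^\ast) \le \rho + \eta < 2\rho$; choose any $a\in A^\ast$ with $d(x,a) < 2\rho$. Applying Lemma~\ref{lem:bluring-normalized-char-functions}(d) with inner scale $\rho$ and displacement $2\rho$ gives
\[
F(x) = \chi_{\overline{A}, \rho}(x) \preccurlyeq 3^{d_0}\,\chi_{\overline{A}, 3\rho}(a) \preccurlyeq \chi_{\overline{A}, 3\rho}(a),
\]
with some implicit constant $C' = \Omega^{O(1)}$. Setting $\tau' := C'\tau$, the implication $F(x) > C'\tau \Rightarrow \chi_{\overline{A},3\rho}(a) > \tau$ places $a$ in $A_\high$, and so
\[
\{F > C'\tau\} \subseteq (A_\high)_{2\rho}.
\]

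Combining the two ingredients with Corollary~\ref{cor:larger-nbhd}, which yields $|(A_\high)_{2\rho}| \approx |(A_\high)_{\rho}|$, we will obtain
\[
|(A_\high)_\rho| \succcurlyeq |\{F > C'\tau\}| \ge (1 - C'\tau)|\overline{A}|.
\]
Interpreting the hypothesis $\tau \preccurlyeq 1$ as $C'\tau \le 1/2$, the right-hand side is at least $|\overline{A}|/2$, which finishes the proof.

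The main subtlety is not the Markov step but the alignment of scales: Lemma~\ref{lem:bluring-normalized-char-functions}(d) relates $\chi_{\overline{A}}$ evaluated at \emph{different} scales as one moves through $G$, so the inner scale of $F$ must be chosen precisely so that, once the displacement $2\rho$ is absorbed, the right-hand side produces exactly the scale $3\rho$ appearing in the definition of $A_\high$. Taking $F = \chi_{\overline{A},3\rho}$ outright, for instance, would not close the loop, since a $2\rho$-displacement would inflate the right-hand scale to $5\rho$, and there is no free way to pass back from $\chi_{\overline{A},5\rho}(a)$ to $\chi_{\overline{A},3\rho}(a)$.
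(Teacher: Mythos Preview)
Your proof is correct and follows essentially the same approach as the paper's. Both arguments use the density $F=\chi_{\overline{A},\rho}$, invoke the displacement estimate of Lemma~\ref{lem:bluring-normalized-char-functions} (part (4) in the paper's numbering) to pass from $F(x)$ to $\chi_{\overline{A},3\rho}(a)$ for a nearby $a\in A^\ast$, conclude that outside $(A_\high)_{2\rho}$ the density $F$ is bounded by $C'\tau\le 1/2$, and finish with the $L^\infty$-bound $F\le 1/|\overline{A}|$ together with Corollary~\ref{cor:larger-nbhd}; the only cosmetic difference is that you phrase the Markov step via the super-level set $\{F>C'\tau\}$, while the paper phrases it contrapositively by integrating $F$ over the complement.
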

\begin{proof}
	Every point $x$ in the support of $\chi_{\overline{A}, \rho}$ lies at distance less than $ \rho$ from $ \overline{A}$ and hence at distance less than $\eta + \rho < 2 \rho$ from a point $ \overline{x} \in A^{\ast}$:
	\[ \supp \chi_{ \overline{A},  \rho} \subseteq  (A^{\ast})_{2 \rho}. \]
	
	 By part (4) of Lemma~\ref{lem:bluring-normalized-char-functions} we have
	\be\label{eq:approx-in-discretized-subset}
	\chi_{\overline{A}, \rho} (x) \preccurlyeq  \chi_{\overline{A}, 3 \rho}(\overline{x}).
	\ee
	Write $Z=(A_\high)_{2\rho}$.  If $x \in G \setminus Z$, the above $\overline{x}$ is in $A^\ast \setminus A_\high$, which means
	\be\label{eq:upper-bound-value-disc}
	\chi_{\overline{A}, 3 \rho} (\overline{x})\le \tau.
	\ee
	By \eqref{eq:approx-in-discretized-subset} and \eqref{eq:upper-bound-value-disc} we deduce that for $x\in G \setminus Z$ and  $\tau \preccurlyeq 1$
	\[	
	\chi_{\overline{A}, \rho} (x)  \le 1/2.
	\]
This means that the density function  $ \chi_{\overline{A},  \rho}$  is concentrated on $Z$: 
\begin{equation}
1/2 \le \int_{Z} \chi_{\overline{A},  \rho}(x) \d x  
	\le   \frac{|(A_\high)_{2 \rho}|}{|\overline{A}|};
\end{equation}
where the last inequality follows from the fact that $\chi_{ \overline{A}, \rho}$ is bounded by $ 1/| \overline{A}|$. 
The claim  now follows from Corollary~\ref{cor:larger-nbhd}. 
\end{proof}
\begin{proof}[Proof of Theorem~\ref{thm:product-large-subsets}]	
As before we will choose maximal $ \eta$-separated subsets $A^\ast \subseteq A$ and  $B^\ast \subseteq B$, set  $ \overline{A}=
(A^\ast)_\eta$ and $ \overline{B}= (B^\ast)_\eta$.  Also write $C= B^{-1} {A}^{-1}$, and
$ \overline{C}= \overline{B}^{-1} \overline{A}^{-1}$. Note that in this proof we are deviating from the notation 
we used earlier in that here $ \overline{C}$ is {\it not} defined to be $(C^{\ast})_{\eta}$.

By the mixing inequality given in Proposition~\ref{prop:pointwise-mixing-loc-random} for $\rho:=\eta^{\vare}$ we have
	\be\label{eq:mixing-for-product}
	\|\chi_{\overline{A}}\ast \chi_{\overline{B}} \ast \chi_{\overline{C}}-\chi_{\overline{A},   5 \rho} \ast  \chi_{\overline{B}, 5 \rho}  \ast \chi_{\overline{C}, 5 \rho}  \|_{\infty}\le \rho^{O_{L,d_0}(1)} \|\chi_{\overline{A}}\|_2 \|\chi_{\overline{B}}\|_2 \|\chi_{\overline{C}}\|_2.
	\ee

The main step of the proof is to show the following inequality:
\[
	\chi_{\overline{A}, 5\rho}\ast \chi_{\overline{B}, 5\rho} \ast \chi_{\overline{C}, 5\rho}(x) 
	\succcurlyeq
 	\frac{(|\overline{A}||\overline{B}|)^{3/2}}{|\overline{C}|}.
\]	

Let $\tau\preccurlyeq 1$ be as in Lemma~\ref{lem:MarkovInequality}. For any $y\in (A_{\high})_{\rho}$ there is $y' \in A_\high$ such that $d(y',y) < \rho$. By part (4) of Lemma~\ref{lem:bluring-normalized-char-functions}, we have that 
\be\label{eq:A-factor-is-large}
\chi_{\overline{A},5 \rho}(y) \succcurlyeq  \chi_{\overline{A},4 \rho}(y) \succcurlyeq  \chi_{\overline{A}, 3 \rho}(y')\succcurlyeq 1.
\ee
Similarly for $z\in (B_\high)_{\rho}$ we have
\be\label{eq:B-factor-is-large}
\chi_{\overline{B}, 4 \rho}(z) \succcurlyeq 1.
\ee
For $y\in (A_{\high})_{\rho}$, $z\in (B_{\high})_{\rho}$, and $x\in 1_{\rho}$, 
by part (4) of Lemma~\ref{lem:bluring-normalized-char-functions} we have
\be\label{eq:C-factor-1}
\chi_{\overline{C}, {4\rho}  }(z^{-1}y^{-1}x) \succcurlyeq \chi_{\overline{C},3 \rho}(z^{-1}y^{-1}).
\ee
On the other hand, by part (1) of Lemma~\ref{lem:bluring-normalized-char-functions} we have
\be\label{eq:C-factor-2}
\chi_{\overline{C}, 3 \rho}(z^{-1}y^{-1})=
\chi_{\overline{C}^{-1}z^{-1}, 3 \rho}(y)=
\chi_{y^{-1} \overline{C}^{-1}, 3 \rho}(z).
\ee
Since $z\in (B_{\high})_{\rho}$, there exists some $z'\in B_{\high}$ so that $d(z,z')\leq\rho$. Moreover, using the definition 
$C=B^{-1}A^{-1}$, we have that $\overline{A}\subseteq \overline{C}^{-1}z'^{-1}$. Similarly,  from $d(y,y')\leq\rho$, we see that  $\overline{B}\subseteq y'\overline{C}^{-1}$. Hence by  \eqref{eq:C-factor-1}, and \eqref{eq:C-factor-2} and the estimate \eqref{eq:A-factor-is-large} we have
\begin{align*}
\chi_{\overline{C}, 5 \rho}(z^{-1}y^{-1}x)  & \succcurlyeq  \chi_{\overline{C}, 4 \rho}(z'^{-1}y^{-1}x) 
&& \textrm{ \tiny{ (part (4) of Lemma \ref{lem:bluring-normalized-char-functions}})} \\ 
&  \succcurlyeq  \chi_{\overline{C}, 3 \rho} (z'^{-1}y^{-1})  &&  \textrm{ \tiny{ (by \eqref{eq:C-factor-1}})} \\
&= \chi_{\overline{C}^{-1}z'^{-1}, 3 \rho}(y) &&  \textrm{ \tiny{ (by \eqref{eq:C-factor-2}})} \\
  & \succcurlyeq\frac{|\overline{A}|}{|\overline{C}|}  \chi_{\overline{A},3\rho}(y) && \textrm{ \tiny{ (part (3) of Lemma \ref{lem:bluring-normalized-char-functions}})}   \\
  & \succcurlyeq\frac{|\overline{A}|}{|\overline{C}|}.  &&  \textrm{ \tiny{ (by $y \in A_\high$})} \\
\end{align*}
Similarly, 
\[
\chi_{\overline{C}, 5\rho}(z^{-1}y^{-1}x)
\succcurlyeq\frac{|\overline{B}|}{|\overline{C}|}.
\]
Combining these two inequalities gives 
\be\label{eq:C-factor-is-large}
\chi_{\overline{C}, 5\rho}(z^{-1}y^{-1}x) \succcurlyeq
\max\left\{\frac{|\overline{A}|}{|\overline{C}|}, \frac{|\overline{B}|}{|\overline{C}|}\right\}\ge \frac{{|\overline{A}|^{1/2} |\overline{B}|^{1/2}}}{|\overline{C}|}.
\ee
By \eqref{eq:A-factor-is-large}, \eqref{eq:B-factor-is-large}, and \eqref{eq:C-factor-is-large}, Lemma~\ref{lem:MarkovInequality}, Corollary~\ref{cor:larger-nbhd},  for $x\in 1_{\rho}$, we get that 
\[
	\chi_{\overline{A}, 5\rho}\ast \chi_{\overline{B}, 5\rho} \ast \chi_{\overline{C}, 5\rho}(x) 
 	\succcurlyeq |(A_\high)_{ \rho}|  \ |(B_\high)_{ \rho}|  \cdot \frac{{|\overline{A}|^{1/2} |\overline{B}|^{1/2}}}{|\overline{C}|}
	\succcurlyeq
 	\frac{(|\overline{A}||\overline{B}|)^{3/2}}{|\overline{C}|}.
\]	
In order to show $x\in \overline{A}\cdot \overline{B}\cdot \overline{C}$, by \eqref{eq:mixing-for-product}, it suffices to prove that for $\delta$ small enough we have 
\[
\frac{(|\overline{A}||\overline{B}|)^{3/2}}{|\overline{C}|} > \alpha \rho^{\beta} (|\overline{A}||\overline{B}||\overline{C}|)^{-1/2}
\]
where $\alpha$ and $\beta$ are fixed positive numbers that depend on $L$, $d_0$, and other parameters of the group $G$. This inequality holds if and only if  $|\overline{A}||\overline{B}|>\sqrt{\alpha} \rho^{\beta/2} |\overline{C}|^{1/4}$, which, in view of $|\overline{C}|\le 1$, follows from 
\be\label{eq:AB-alpha-beta}
|\overline{A}||\overline{B}|>\sqrt{\alpha} \eta^{(\beta/2)\vare}.
\ee 
Now, recall the condition $\frac{h(A;\eta)+h(B;\eta)}{2}>(1-\delta)h(G;\eta)$. 
This implies 
\be\label{eq:AB-E-delta-d0}
|A_{\eta}||B_{\eta}|\succcurlyeq |1_\eta|^{-2\delta}\succcurlyeq\eta^{2\delta d_0}.
\ee
Consequently, applying Lemma~\ref{lem:basic-properties-metric-entropy} we obtain 
$|\overline{A}||\overline{B}|\ge E^{-1}\eta^{2\delta d_0}$ where $E=\Omega^{O(1)}$.
Finally note that if $\eta^{\vare}\ll_{\alpha,\beta, d_0} 1$, then for small enough $\delta$ we have $E^{-1}\eta^{2\delta d_0}> \sqrt{\alpha} \eta^{(\beta/2)\vare}$. This and~\eqref{eq:AB-E-delta-d0} imply~\eqref{eq:AB-alpha-beta}. The proof is complete.
\end{proof}


\section{A Littlewood-Paley decomposition for locally random groups}\label{s:littlewoord}

In this section, we will give a decomposition of $L^2(G)$ into almost orthogonal subspaces of functions, each consisting of functions {\it living at a different scale}. This notion will be defined later (see Definition 
\ref{def:fun-at-scale}).  We will first treat the case of profinite groups, which is somewhat simpler and sharper results can be obtained. Then, in the next subsection, we will deal with the general case of locally random groups.

\subsection{The case of profinite groups}
Let $G$ be a profinite group, equipped with a bi-invariant metric $d$ such that balls centered at the identity element form a family of normal subgroups. Such a metric always exists. In fact, if $G$ is presented as the inverse limit of finite groups $(G_i)_{i \ge 1 }$,  one can define the distance $d(g,h)$ to be $2^{-i}$ where $i$ is the largest  
index with the property that  $\pi_i(g) = \pi_i(h)$. Here $ \pi_i: G \to G_i$ denotes the natural projection. 

\begin{lem}\label{lem:decomposition-according-to-a-normal-subgroup}
Suppose $G$ is a compact group and $N$ is a normal open subgroup of $G$. Let $f_N:=\frac{\bb1_N}{|N|}$. Then $T_N:L^2(G)\rightarrow L^2(G), T_N(g):=f_N\ast g$ is the orthogonal projection onto the subspace $L^2(G)^N:=\{f\in L^2(G)|\h f(gn)= f(g) \textrm{ for all } n \in N, g \in G\}$ of $N$-invariant functions. In addition 
\[
q:L^2(G)^N\rightarrow L^2(G/N),\quad q(g)(xN):=g(x)
\]
is a well-defined unitary $G$-module isomorphism.
\end{lem}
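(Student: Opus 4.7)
The plan is to verify the three assertions separately: (i) $T_N$ has image contained in $L^2(G)^N$; (ii) $T_N$ is a self-adjoint idempotent that fixes $L^2(G)^N$ pointwise, hence is the orthogonal projection onto it; (iii) $q$ is a well-defined, unitary, $G$-equivariant isomorphism.

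For (i) and (ii), I would start by rewriting $T_N g(x) = \frac{1}{|N|}\int_N g(n^{-1}x)\,dn$ using the explicit form of $f_N$, and then check directly that $T_N g$ is left-$N$-invariant by the change of variable $n \mapsto n_0^{-1} n$ together with left-invariance of the Haar measure. Since $N$ is normal, left- and right-$N$-invariance coincide (because $xn_0 = (xn_0x^{-1})x$), so $T_N g \in L^2(G)^N$. Conversely, for $f \in L^2(G)^N$ one writes $n^{-1}x = x(x^{-1}n^{-1}x)$ with $x^{-1}n^{-1}x \in N$ by normality, so $f(n^{-1}x) = f(x)$ and hence $T_N f = f$. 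Self-adjointness is a special case of the general identity $T_h^* = T_{h^*}$ with $h^*(x) = \overline{h(x^{-1})}$, which collapses to $f_N^* = f_N$ since $N$ is symmetric and $f_N$ is real; idempotence is immediate from $f_N \ast f_N = f_N$, which follows in turn from $N \cdot N = N$.

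For (iii), well-definedness of $q$ is immediate from right-$N$-invariance. Since $N$ is open in a compact group, $[G:N]$ is finite and $|N| = [G:N]^{-1}$; accordingly $L^2(G/N)$ is finite-dimensional and every point of $G/N$ carries mass $|N|$ in the normalized counting measure. Decomposing the Haar integral over $G$ into integrals over cosets of $N$ and using right-$N$-invariance of $g$ gives
\[
\|g\|_2^2 = \sum_{xN \in G/N} \int_{xN} |g(x)|^2\,dx = \sum_{xN \in G/N} |g(x)|^2\,|N| = \|q(g)\|_{L^2(G/N)}^2,
\]
so $q$ is an isometry. Surjectivity is obvious by pulling back any function on $G/N$ to its right-$N$-invariant lift on $G$. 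Finally, $G$-equivariance reduces to the one-line computation $q(h\cdot f)(xN) = f(h^{-1}x) = q(f)(h^{-1}xN) = (h\cdot q(f))(xN)$ for $h \in G$.

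I do not anticipate any serious obstacle; the only point that needs care is to use the normality of $N$ consistently, since left convolution with $f_N$ most naturally produces left-$N$-invariant functions, whereas the lemma phrases the invariance on the right.
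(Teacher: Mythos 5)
Your proof is correct and complete; the paper itself omits the argument entirely ("The proof is a standard computation"), and your write-up is exactly the standard computation intended, including the one genuinely delicate point — reconciling the left-$N$-invariance produced by convolution with $f_N$ against the right-$N$-invariance in the definition of $L^2(G)^N$ via normality of $N$.
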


\begin{proof}
The proof is a standard computation. 
\end{proof}

Given a $G$-valued random variable $X$ with distribution measure $\mu$, let $X_\eta = XZ$, where $Z$ is a random variable with 
distribution $P_{\eta}=\frac{\bb1_{1_{\eta}}}{|1_{\eta}|}$ independent from $X$. 
\begin{lem}\label{eq:density-function-thickened}
Let $\mu_{\eta}$ denote the density function of $X_{\eta}$. Then 
$\mu_{\eta}(x)=\frac{\mu(x_{\eta})}{|1_{\eta}|}$ for all $x\in G$. 
\end{lem}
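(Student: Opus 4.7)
The plan is to compute the density function of the product random variable $X_\eta = XZ$ directly from the convolution formula \eqref{eq:def-conv-measure} and then use the bi-invariance of the metric $d$ to rewrite the resulting indicator function.

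First I would note that since $Z$ has distribution $P_\eta = \frac{\bb1_{1_\eta}}{|1_\eta|}$ (a probability measure with a density with respect to Haar measure) and is independent of $X$, the distribution of $X_\eta = XZ$ is the convolution $\mu \ast P_\eta$. By the definition of convolution of a measure and a function given in \eqref{eq:def-conv-measure}, this has density
\[
\mu_\eta(x) = (\mu \ast P_\eta)(x) = \int_G P_\eta(y^{-1}x)\,\d\mu(y) = \frac{1}{|1_\eta|}\int_G \bb1_{1_\eta}(y^{-1}x)\,\d\mu(y).
\]

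Next I would use bi-invariance of $d$ to observe that $y^{-1}x \in 1_\eta$ is equivalent to $d(y^{-1}x, 1) < \eta$, which in turn (by left-invariance) is equivalent to $d(x, y) < \eta$, i.e.\ $y \in x_\eta$. Consequently $\bb1_{1_\eta}(y^{-1}x) = \bb1_{x_\eta}(y)$, and substituting gives
\[
\mu_\eta(x) = \frac{1}{|1_\eta|}\int_G \bb1_{x_\eta}(y)\,\d\mu(y) = \frac{\mu(x_\eta)}{|1_\eta|},
\]
which is exactly the claim.

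There is no real obstacle here; the only point requiring care is the justification that $\mu_\eta$ (initially defined as a convolution of measures, or equivalently as the law of $XZ$) is indeed absolutely continuous with density given by this formula. This is immediate because $P_\eta$ is a bounded density function, so $\mu \ast P_\eta$ has a bounded density, and the computation above produces the precise pointwise formula.
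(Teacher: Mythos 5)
Your proof is correct and follows essentially the same route as the paper: both reduce the claim to the convolution formula and a change of variables/Fubini step. The only cosmetic difference is that the paper verifies the density by testing the law of $XZ$ against functions in $C(G)$, whereas you compute $(\mu\ast P_\eta)(x)$ pointwise from \eqref{eq:def-conv-measure} and use invariance of the metric to rewrite $\bb1_{1_\eta}(y^{-1}x)=\bb1_{x_\eta}(y)$; these are equivalent, and your closing remark correctly addresses the absolute-continuity point.
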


\begin{proof}
By definition,   for all $f\in C(G)$ we have
\be\label{eq:density-convolution}
\int_G f(x) \mu_{\eta}(x) \d x=\int_G\int_G f(xy) P_{\eta}(y) \d y \d\mu(x).
\ee
Notice that the right hand side of \eqref{eq:density-convolution} is equal to
\begin{align*}
	\int_G\int_G f(z)  P_{\eta}(x^{-1}z) \d z \d\mu(x)=& 
	\int_G f(z) \int_G  P_{\eta}(x^{-1}z)   \d\mu(x) \d z = \int_G f(z) \frac{\mu(z_{\eta})}{|1_{\eta}|} \d z.
\end{align*}
\end{proof}

Define the  \emph{R\'{e}nyi entropy of $X$ at scale} $\eta$ by 
\be\label{eq:renyi-scaled}
H_2(X;\eta):=\log(1/|1_{\eta}|)-\log \|\mu_{\eta}\|_2^2
\ee
 where $\mu$ is the distribution of $X$. We also write $H_2(\mu;\eta)$ instead of $H_2(X;\eta)$. Let us observe that by Lemma \ref{eq:density-function-thickened} we have 
 \[
 \|\mu_{\eta}\|_{\infty}\le 1/|1_{\eta}|; \text{ and so } \|\mu_{\eta}\|_2^2\le  1/|1_{\eta}|,
 \]
  which implies that $H_2(X;\eta)\ge 0$.

\begin{proposition}\label{prop:spec-gap-Littlewood-Paley-profinite}
Suppose $G$ is a compact group with a given bi-invariant metric such that $1_{\eta}$ is a subgroup of $G$ for all $\eta>0$. Suppose $G$ is an $L$-locally random group with coefficient $C_0$. Suppose $G$ satisfies the dimension condition $\Dim(C_1, d_0)$. 
Let $\mu$ be a symmetric Borel probability measure on $G$ whose support generates a dense subgroup of $G$. Fix a number $a>1$ and $\eta_0<1$, and for all $i \ge 1$, let $\eta_i:=\eta_0^{a^i}$  and $\cal_i:=L^2(G)^{1_{\eta_i}}$. Suppose that $C_2>0$ is such that for every $i\gg 1$, there exists an integer $l_i\le C_2 h(G;\eta_i)$ such that 
\[
\text{ (Large entropy at scale } \eta \text{)}\hspace{1cm} H_2(\mu^{(l_i)};\eta_i)\ge  \left( 1-\frac{1}{8Ld_0a}\right) h(G;\eta_i).
\]
Then there exists $i_0 \ge 1$ such that 
\[
\lcal(\mu;L^2(G)\ominus \cal_{i_0}) \ge \frac{1}{16C_2Ld_0a}.
\]
In particular, $\lcal(\mu;G)>0$. 
\end{proposition}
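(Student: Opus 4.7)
The plan is to set up a Littlewood--Paley decomposition against the flag $\mathcal{H}_i = L^2(G)^{1_{\eta_i}}$ and then control the norm of $T_\mu$ on each shell $\mathcal{V}_j := \mathcal{H}_{j+1} \ominus \mathcal{H}_j$ by combining local randomness with the large-entropy input. Since each $1_{\eta_i}$ is a (bi-invariantly defined, hence normal) open subgroup of $G$ with $\bigcap_i 1_{\eta_i} = \{1\}$, Lemma~\ref{lem:decomposition-according-to-a-normal-subgroup} shows the $\mathcal{H}_i$ are nested, $T_\mu$-invariant, and their union is dense in $L^2(G)$. Consequently $L^2(G) \ominus \mathcal{H}_{i_0} = \overline{\bigoplus_{j \geq i_0} \mathcal{V}_j}$ is an orthogonal, $T_\mu$-invariant decomposition, so it suffices to bound $\lambda(\mu; \mathcal{V}_j)$ for $j \geq i_0$.

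Next, I would unpack each $\mathcal{V}_j$ via Peter--Weyl as the sum of $\pi$-isotypic components for those $\pi \in \widehat{G}$ with $\ell(\pi) \in [\eta_j^{-1}, \eta_{j+1}^{-1})$. Local randomness (in the equivalent form of Proposition~\ref{prop:loc-rand-metric-quasi-rand}) forces the crucial dimension bound $\dim \pi \geq (\ell(\pi)/C_0)^{1/L} \geq C_0^{-1/L}\eta_j^{-1/L}$ on these $\pi$. The entropy hypothesis at scale $\eta_{j+1}$, combined with $h(G;\eta_{j+1}) \asymp \log(1/|1_{\eta_{j+1}}|)$, rewrites as $\|\mu^{(l_{j+1})}_{\eta_{j+1}}\|_2^2 \preccurlyeq |1_{\eta_{j+1}}|^{-\epsilon}$ with $\epsilon = 1/(8Ld_0 a)$. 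Applying Parseval to this norm, using that $\widehat{P_{\eta_{j+1}}}(\pi) = I$ on isotypics with $\ker\pi \supseteq 1_{\eta_{j+1}}$ and that $\widehat{\mu}(\pi)$ is self-adjoint by symmetry of $\mu$, I would extract for each $\pi \in \mathcal{V}_j$ the bound
\[
\|\widehat{\mu}(\pi)\|_{\op}^{2 l_{j+1}} \leq \|\widehat{\mu}(\pi)^{l_{j+1}}\|_{\HS}^2 \preccurlyeq \frac{|1_{\eta_{j+1}}|^{-\epsilon}}{\dim \pi}.
\]

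Taking logarithms and substituting the rates $|1_{\eta_{j+1}}|^{-1} \asymp \eta_0^{-d_0 a^{j+1}}$, $\dim \pi \gtrsim \eta_0^{-a^j/L}$, and $l_{j+1} \lesssim C_2 d_0 a^{j+1} \log(1/\eta_0)$ should yield $\mathcal{L}(\mu; \mathcal{M}_\pi) \geq \frac{7}{16 L C_2 d_0 a} - o_j(1)$; the choice of $\epsilon$ is calibrated precisely so that the surplus $\frac{1}{L} - \epsilon d_0 a$ equals $\frac{7}{8L}$ after dividing by $2 l_{j+1}$ in its worst case. Picking $i_0$ large enough to absorb the $o_j(1)$ error into the gap between $\frac{7}{16}$ and $\frac{1}{16}$ closes the first assertion. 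For the ``in particular'' clause, $\mathcal{H}_{i_0}$ is finite dimensional, the restriction of $T_\mu$ is self-adjoint, and its spectral radius on the orthogonal complement of constants is strictly less than $1$ because $\supp \mu$ surjects onto a generating subset of the finite quotient $G/1_{\eta_{i_0}}$.

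The main obstacle I foresee is the bookkeeping of the various $\Omega^{O(1)}$ and $O(1)$ slack coming from the dimension condition (which only gives $h(G;\eta) \asymp \log(1/|1_\eta|)$ with bounded multiplicative error), together with verifying that $\widehat{P_{\eta_{j+1}}}(\pi)$ really is the identity on the relevant isotypics; the former is routine but delicate, and the latter depends crucially on $1_{\eta_{j+1}}$ being a \emph{normal} subgroup, so that the $1_{\eta_{j+1}}$-invariants in $\mathcal{H}_\pi$ form a $G$-subrepresentation and must equal $\mathcal{H}_\pi$ whenever $1_{\eta_{j+1}}\subseteq \ker\pi$.
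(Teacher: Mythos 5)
Your proposal is correct, but it takes a genuinely different route from the paper. The paper's proof is very short: for $f\in \cal_{i+1}\ominus\cal_i$ one has $f_{\eta_i}=0$ and $f_{\eta_{i+1}}=f$, so a single application of the already-established scaled mixing inequality (Theorem~\ref{thm:mixing-inequality}) to $\|\nu_{\eta_{i+1}}\ast f_{\eta_{i+1}}\|_2$ kills the low-frequency term outright and leaves $\|\nu\ast f\|_2^2\le \eta_{i+1}^{1/(2La)}\|\nu_{\eta_{i+1}}\|_2^2\|f\|_2^2$; combining with the entropy hypothesis and $h(G;\eta)=\log(1/|1_\eta|)$ gives the gap on each shell. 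You instead open up each shell into isotypic components $H_\pi$ with $\ell(\pi)\in[\eta_j^{-1},\eta_{j+1}^{-1})$, invoke the quasi-randomness form of local randomness (Proposition~\ref{prop:loc-rand-metric-quasi-rand}) to get $\dim\pi\gtrsim \eta_j^{-1/L}$, and run Parseval together with $\|\wh{\mu}(\pi)\|_{\op}^{2l}\le\|\wh{\mu}(\pi)^l\|_{\HS}^2\le \|\mu^{(l)}_{\eta_{j+1}}\|_2^2/\dim\pi$ (valid since $\wh\mu(\pi)$ is self-adjoint and $\wh{P}_{\eta_{j+1}}(\pi)=I$ on these $\pi$). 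Your arithmetic checks out and in fact yields the slightly better asymptotic constant $\tfrac{7}{16C_2Ld_0a}$. What each approach buys: yours is self-contained at the level of individual representations and makes the role of quasi-randomness completely transparent; the paper's is a three-line corollary of machinery it has already built, and—more importantly—it is phrased so that it survives the passage to general (non-profinite) locally random groups in Theorem~\ref{thm:large-renyi-entropy}, where the shells are only approximately isotypic and your exact decomposition $\cal_{i+1}\ominus\cal_i=\bigoplus_{\ell(\pi)\in[\eta_i^{-1},\eta_{i+1}^{-1})}H_\pi$ is no longer available. Your care about normality of $1_\eta$ (automatic here, since balls for a bi-invariant metric are conjugation-invariant) is well placed. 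One shared soft spot: the final ``in particular'' step in both your argument and the paper's needs to rule out an eigenvalue $-1$ of $T_\mu$ on the finite-dimensional piece (a bipartiteness issue), which ``support generates a dense subgroup'' alone does not do; it is the large-entropy hypothesis that excludes this, and it would be worth one more sentence.
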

\begin{proof}
For all $i$ and $f\in \cal_{i+1}\ominus \cal_i$, we have $f_{\eta_i}=0$ and $f_{\eta_{i+1}}=f$. Hence for $f\in 	\cal_{i+1}\ominus \cal_i$ and every symmetric Borel probability measure $\nu$ we have $\|\nu\ast f\|_2=\|\nu_{\eta_{i+1}}\ast f_{\eta_{i+1}}\|_2$. Applying Theorem~\ref{thm:mixing-inequality} for $i \gg 1$ we obtain
\begin{align*}
\|\nu\ast f\|_2^2\le & 2\|\nu_{\eta_i}\|_2^2 \|f_{\eta_i}\|_2^2+\eta_i^{1/(2L)} \|\nu_{\eta_{i+1}}\|_2^2 \|f_{\eta_{i+1}}\|_2^2 \\
=& \eta_i^{1/(2L)} \|\nu_{\eta_{i+1}}\|_2^2 \|f\|_2^2=\eta_{i+1}^{1/(2La)} \|\nu_{\eta_{i+1}}\|_2^2 \|f\|_2^2.	
\end{align*}
This implies
\[
2\lcal(\nu; \cal_{i+1}\ominus \cal_i)\ge H_2(\nu;\eta_{i+1})-h(G;\eta_{i+1})-\frac{1}{2La} \log \eta_{i+1}.
\]
Since $1_{\eta}$ is a group, $h(G;\eta)=\log(1/|1_{\eta}|)$; and so   
by the dimension condition we have 
\[
|h(G;\eta)+d_0 \log \eta|\le \log C_1.
\]
Therefore by the previous inequality, for $\eta_{i+1}\ll_{C_1} 1$, we have
\[
2\lcal(\nu; \cal_{i+1}\ominus \cal_i)\ge H_2(\nu;\eta_{i+1})- \left( 1-\frac{1}{4Ld_0a} \right)  h(G;\eta_{i+1}).
\]
Applying the above inequality for $\nu:=\mu^{(l_i)}$ coupled with $\lcal(\mu^{(l_i)};\cal_{i+1}\ominus \cal_i)=l_i \lcal(\mu;\cal_{i+1}\ominus \cal_i)$ implies that for $i\gg 1$ we have
\[
\lcal(\mu;\cal_{i+1}\ominus \cal_i)\ge \frac{1}{16C_2Ld_0a}.
\]
As a result, $\lcal(\mu; L^2(G)\ominus \cal_{i_0})\ge \frac{1}{16C_2Ld_0a}$ for some $i_0$. Since $\cal_{i_0}\ominus \bbc \bb1_G$ is a finite dimensional subspace of $L^2_0(G)$, and the support of 
$\mu$ generates a dense subgroup, we have $\lcal(\mu;G)>0$.
\end{proof}

Now we interpret the spaces $\cal_{i+1}\ominus \cal_i$'s in terms of certain convolution operators. This point of view will be extended to an arbitrary locally random group.
\begin{lem}\label{lem:decomposition-according-to-a-filteration}
Suppose $G$ is a compact group, $G:= N_1\supseteq N_2\supseteq \cdots $ is a sequence of normal open subgroups of $G$ that form a basis for the neighborhoods of $1$. For integers $i \ge 1$, let 
\[
\Delta_i:L^2(G)\rightarrow L^2(G), \quad  \Delta_i(g):=f_{N_{i+1}}\ast g-f_{N_{i}}\ast g,
\]
and let $\Delta_0(g):=f_{N_1}\ast g$. Then the following statements hold.
\begin{enumerate}
\item For all $g\in L^2(G)$ we have $g=\sum_{i=0}^{\infty} \Delta_i(g)$ in $L^2(G)$.	
\item For all $i\neq j$ and $g\in L^2(G)$, we have $\Delta_i(g)\perp \Delta_j(g)$.
\item For all $g\in L^2(G)$ we have $\|g\|_2^2=\sum_{i=0}^{ \infty} \|\Delta_i(g)\|_2^2$
\item If $\mu$ is a Borel probability measure on $G$, then $\Delta_i(\mu\ast f)=\mu\ast \Delta_i(f)$ for all $i$.
\end{enumerate}
\end{lem}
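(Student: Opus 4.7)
The plan is to recognize each convolution operator $T_i f := f_{N_i}\ast f$ as the orthogonal projection onto the closed subspace $V_i := L^2(G)^{N_i}$, as already provided by Lemma~\ref{lem:decomposition-according-to-a-normal-subgroup}. Since the sequence $\{N_i\}$ is nested with $N_{i+1}\subseteq N_i$, the subspaces form an increasing chain $V_1\subseteq V_2\subseteq\cdots$. In operator terms this translates into $T_iT_{i+1}=T_{i+1}T_i=T_i$, so $\Delta_i=T_{i+1}-T_i$ (for $i\ge 1$) and $\Delta_0=T_1$ are themselves orthogonal projections onto the mutually orthogonal subspaces $V_1$ and $V_{i+1}\ominus V_i$. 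Part~(2) and hence part~(3) (via the Pythagorean identity) follow immediately from this observation.

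For part~(1), the telescoping identity $\sum_{k=0}^{n}\Delta_k(g)=T_{n+1}(g)$ reduces the claim to showing that $T_{n+1}(g)\to g$ in $L^2(G)$ as $n\to\infty$, equivalently that $\bigcup_{i\ge 1}V_i$ is dense in $L^2(G)$. This is where the topological hypothesis that $\{N_i\}$ is a neighborhood basis of the identity enters: by uniform continuity of translation on $L^2(G)$ (or, equivalently, by the Peter--Weyl theorem together with the fact that each matrix coefficient of an irreducible representation is eventually $N_i$-invariant since $\pi$ is continuous and $\pi(N_i)\to\{I\}$), for any $g\in L^2(G)$ and any $\varepsilon>0$ there is $i$ so that $\|T_i g-g\|_2<\varepsilon$. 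I expect this density step to be the main (if still standard) point of the argument.

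Finally, for part~(4), the key observation is that $f_{N_i}=|N_i|^{-1}\bb1_{N_i}$ is a class function, because $N_i$ is normal and so $\bb1_{N_i}$ is invariant under conjugation. Class functions in $L^1(G)$ lie in the center of the Banach algebra $(L^1(G),+,\ast)$, as recalled in Section~\ref{sec:notation}. Therefore $\mu\ast f_{N_i}=f_{N_i}\ast\mu$, and by associativity of convolution
\[
\Delta_i(\mu\ast f)=(f_{N_{i+1}}-f_{N_i})\ast\mu\ast f=\mu\ast(f_{N_{i+1}}-f_{N_i})\ast f=\mu\ast\Delta_i(f),
\]
with the analogous (simpler) equality for $\Delta_0$. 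This completes the four parts; no delicate estimates are needed beyond the density statement in part~(1).
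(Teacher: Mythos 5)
Your proof is correct and follows essentially the same route as the paper's: identify $f_{N_i}\ast(\cdot)$ as the orthogonal projection onto $L^2(G)^{N_i}$ via Lemma~\ref{lem:decomposition-according-to-a-normal-subgroup}, deduce (2) from the nesting of these subspaces, obtain (1) by telescoping together with density of $\bigcup_i L^2(G)^{N_i}$ (the paper does this via Peter--Weyl and the fact that each finite-dimensional representation kills some $N_k$, which is one of the two arguments you sketch), and get (4) from the centrality of class functions in $L^1(G)$. One small point: (3) needs (1) as well as (2) — orthogonality alone only gives a Bessel-type inequality for the partial sums — but since you do prove (1), nothing is actually missing.
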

\begin{proof}
For every integer $k \ge 1$, define $\cal_k:=L^2(G)^{N_k}$. Since $G:=N_1\supseteq N_2\supseteq \cdots$, we have
$
\bbc \bb1_G=\cal_1\subseteq \cal_2 \subseteq \cdots.
$ 
By Lemma~\ref{lem:decomposition-according-to-a-normal-subgroup}, we have that $f_{N_{j}}\ast g$ is the orthogonal projection of $g$ onto $\cal_{j}$ for any $j$. And so $\Delta_i(g)\in \cal_{i+1} \ominus \cal_{i}$ for positive integer $i$, and $\Delta_0(g)\in \cal_1$. This implies (2).   

Every element $g$ of the matrix algebra $\ecal(G)$ generates a finite dimensional $G$-submodule $M$ of $L^2(G)$, defining a unitary representation $\pi_M:G\rightarrow \ucal(M)$. Since $G$ is profinite, we have that $\pi_M(G)$ is a finite group. Hence $\ker \pi_M$ is an open subgroup of $G$. Therefore, $N_k\subseteq \ker \pi_M$ for some $k$,  implying $g\in \cal_k$. It follows that  
$g=\sum_{i=0}^{j}\Delta_i(g)$ for all $j\ge k-1$. 
By the Peter-Weyl Theorem $\ecal(G)$ is dense in $L^2(G)$, from which part (1) follows. Part (3) is an immediate implication of (1) and (2).  
%
%

In order to prove (4), note that if $f$ is a class function, then
\[	\mu\ast (f \ast g)	=  f\ast (\mu \ast g). \]
Since $f_{N_j}$'s are class functions, the claim follows. 
\end{proof}
\begin{remark} 
It follows from the above argument that $\ecal(G)=\bigcup_{i=1}^{\infty} L^2(G)^{N_i}$.
\end{remark}

\subsection{The general case}
 
In the rest of this section we will prove a generalization of Lemma~\ref{lem:decomposition-according-to-a-filteration} that applies to general locally random groups. The results of this
section will be crucially used in the next section to prove a generalization of Proposition~\ref{prop:spec-gap-Littlewood-Paley-profinite}. Another result of this section, Proposition~\ref{prop:essential-support-fourier-fun-scale}, is 
a Fourier theoretic interpretation of the notion of {\it living at a given scale} (see Definition 
\ref{def:fun-at-scale} for definition), which parallels the classical Paley-Littlewood theory. 

A major difficulty in dealing with the general case is that unlike profinite groups, neighborhoods of identity are only {\em approximate} subgroups in general compact groups. Throughout this section, we will assume that the group $G$ satisfies the following two properties:

\begin{enumerate}
\item $G$ is a compact group which is $L$-locally random with coefficient $C_0$. 
\item \ref{eq:dimension-condition-intro}$(C_1, d_0)$: for all $\eta >0$
$$C_1^{-1} \eta^{d_0} \le |1_{\eta}|\le C_1 \eta^{d_0}.$$
\end{enumerate}

As in Proposition~\ref{prop:spec-gap-Littlewood-Paley-profinite} we let $\eta_0$ be a small positive number, whose value will be specified later. Also fix 
$$a\ge 4Ld_0, \qquad  \text{and set } \quad \eta_i:=\eta_0^{a^i}, \text{for } \, i \ge 1. $$ 
As in Lemma~\ref{lem:decomposition-according-to-a-filteration}, we define a family of operators 
$\Delta_i:L^2(G)\rightarrow L^2(G)$ by  setting 
$\Delta_0(g):=P_{\eta_0}\ast g$ and for every $i \ge 1$
 \begin{equation}\label{Deltai}
\Delta_i(g):=(P_{\eta_{i+1}}-P_{\eta_i})\ast g.
\end{equation} 
Since $P_{\eta}$ is invariant under conjugation, $\Delta_i$'s commute with any convolution operator (including convolution by a Borel probability measure), and for all $x,x'\in G$ we have 
\[
\lambda(x)\circ \rho(x') \circ \Delta_i =\Delta_i\circ \lambda(x)\circ \rho(x'), 
\]
where $\lambda$ and $\rho$ denote, respectively, the left and right-regular representations of $G$. 

We showed previously that if $1_{\eta}$'s are subgroups, then $ 
(\Delta_i(g))_{\eta_i}=0$ and $(\Delta_i(g))_{\eta_{i+1}}=\Delta_i(g)$.
We start by showing an {approximate} version of these equalities. 
 In this section, we only establish properties of the operators $\Delta_i$'s, and  postpone the discussion on their connections with spectral gap properties of $T_{\mu}$ to the next section. 
    
\begin{proposition}\label{prop:image-of-operators-functions-at-scale-eta}
In the setting of this section, if integers $i, j, k$ satisfy $ 0 \le j < i $ and $k>i+1$, then the following hold:
\begin{itemize}
\item (Averaging to zero) $\|\Delta_i(g)_{\eta_j}\|_2\ll_{C_0,C_1,L} \eta_0^{a^{i}/(4L+2)} \|g\|_2$. 
\item (Almost invariant) $\|\Delta_i(g)_{\eta_k}-\Delta_i(g)\|_2\le 2 \eta_0^{a^k/(8L)} \|g\|_2$.	
\end{itemize}
\end{proposition}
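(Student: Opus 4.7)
The plan is to treat the two parts by rather different mechanisms: part (2) follows quickly from the almost-orthogonality estimate of Lemma~\ref{lem:almost-orthogonality}, while part (1) calls for a Fourier/Parseval argument that exploits the fact that $P_\eta$ is a class function.

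For part (2), the key observation is that $P_{\eta_k}$ lies in the center of $(L^1(G),+,*)$, so $\Delta_i$ commutes with convolution by $P_{\eta_k}$; hence
\[
\Delta_i(g)_{\eta_k}-\Delta_i(g) \;=\; \Delta_i(g_{\eta_k}-g) \;=\; (g_{\eta_k}-g)_{\eta_{i+1}}-(g_{\eta_k}-g)_{\eta_i}.
\]
I would then apply Lemma~\ref{lem:almost-orthogonality} to each summand, with $\eta=\eta_k$ and $\eta'\in\{\eta_{i+1},\eta_i\}$. The hypothesis $\eta'\ge\eta^{1/(4Ld_0)}$ translates into $a^{k-i-1}\ge 4Ld_0$ (respectively $a^{k-i}\ge 4Ld_0$), both of which follow from $k>i+1$ and $a\ge 4Ld_0$. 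Each summand is then bounded by $\eta_k^{1/(8L)}\|g\|_2=\eta_0^{a^k/(8L)}\|g\|_2$, and the triangle inequality yields the claimed factor of $2$.

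For part (1), I would use that the bi-invariance of $d$ makes $P_\eta$ a class function, so by Schur's lemma $\widehat{P}_\eta(\pi)=\alpha_\eta(\pi)I$ for a real scalar $\alpha_\eta(\pi)\in[-1,1]$ and every $\pi\in\widehat{G}$. Parseval then rewrites the target as
\[
\|\Delta_i(g)_{\eta_j}\|_2^{2} \;=\; \sum_{\pi\in\widehat{G}} (\dim\pi)\bigl(\alpha_{\eta_{i+1}}(\pi)-\alpha_{\eta_i}(\pi)\bigr)^{2}\alpha_{\eta_j}(\pi)^{2}\|\widehat{g}(\pi)\|_{\HS}^{2}.
\]
I would split this sum at a threshold $D$ on $\dim\pi$. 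For low frequencies ($\dim\pi\le D$), Lemma~\ref{lem:low-freq} gives $|\alpha_{\eta_{i+1}}(\pi)-\alpha_{\eta_i}(\pi)|\le 2C_0D^L\eta_i$, so this piece contributes at most $4C_0^{2}D^{2L}\eta_i^{2}\|g\|_2^{2}$. For high frequencies ($\dim\pi>D$), I would apply Parseval to $P_{\eta_j}$ itself, $\sum_{\pi}(\dim\pi)^{2}\alpha_{\eta_j}(\pi)^{2}=\|P_{\eta_j}\|_2^{2}=1/|1_{\eta_j}|$, to deduce the pointwise inequality $\alpha_{\eta_j}(\pi)^{2}\le 1/\bigl((\dim\pi)^{2}|1_{\eta_j}|\bigr)$; combined with the dimension condition this gives a high-frequency contribution at most $4C_1\|g\|_2^{2}/(D^{2}\eta_j^{d_0})$.

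The main technical task is then to balance $D$ to extract the stated power $1/(4L+2)$. Since $j<i$ one has $\eta_j\ge\eta_0^{a^{i-1}}=\eta_i^{1/a}$, so the unwanted factor $\eta_j^{-d_0}$ is no worse than $\eta_i^{-d_0/a}$; the hypothesis $a\ge 4Ld_0$ is precisely what allows this loss to be absorbed. A short calculation shows that choosing $D\approx\eta_i^{-(2a+d_0)/(2a(L+1))}$ yields a squared bound of order $\eta_i^{(2a-Ld_0)/(a(L+1))}\|g\|_2^{2}$, an exponent that dominates $1/(2L+1)$ whenever $a\ge 4Ld_0$, so taking square roots delivers $\eta_0^{a^{i}/(4L+2)}\|g\|_2$ as required. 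The only delicate point is this optimization together with checking that the scale inequalities demanded by Lemmas~\ref{lem:low-freq} and~\ref{lem:almost-orthogonality} hold for all the relevant scales $\eta_j,\eta_i,\eta_k$; the remaining steps are routine Fourier bookkeeping.
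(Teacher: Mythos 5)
Your proof is correct and follows essentially the same route as the paper's: part (2) is identical (commute $\Delta_i$ past $P_{\eta_k}$, apply Lemma~\ref{lem:almost-orthogonality} to each of the two terms, and check $a^{k-i-1}\ge 4Ld_0$), and part (1) is the same low/high frequency split with Lemma~\ref{lem:low-freq} controlling the low frequencies. The only variation is in the high-frequency estimate, where you use the pointwise Schur-scalar bound $\alpha_{\eta_j}(\pi)^2\le \bigl((\dim\pi)^2|1_{\eta_j}|\bigr)^{-1}$ in place of the paper's Lemma~\ref{lem:low-and-high-covolution} (gaining $D^{-2}$ instead of $D^{-1}$), which leads to a slightly different optimal $D$; your verification that the resulting exponent $(2a-Ld_0)/(a(L+1))$ dominates $1/(2L+1)$ when $a\ge 4Ld_0$ is correct, so the stated bound $\eta_0^{a^i/(4L+2)}\|g\|_2$ follows.
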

\begin{proof}
The argument for the first part is fairly similar to the one presented for Lemma~\ref{lem:almost-orthogonality}. We let $D$ be a threshold parameter whose value will be set later and estimate the corresponding low frequency and high frequency terms. 
By Lemma~\ref{lem:low-freq} we have 
\[
\|\wh{P}_{\eta}(\pi)-I\|_{\op} \le C_0 (\dim \pi)^L \eta.
\]
Combined with the trivial bound $\|\wh{P}_{\eta}(\pi)\|_{\op}\le 1$, this implies
\begin{align}
\notag	L(\Delta_i(g)_{\eta_j};D)= & \sum_{\pi\in \wh{G}, \dim \pi\le D} \dim \pi \h \Bigl\|\wh{P}_{\eta_j}(\pi) \bigl(\wh{P}_{\eta_{i+1}}(\pi)-\wh{P}_{\eta_i}(\pi)\bigr) \wh{g}(\pi)\Bigr\|_{\HS}^2 \\
\label{eq:ave-zero-image-operator-low-freq}	\le & \h C_0^2 D^{2L} (\eta_{i+1}+\eta_i)^2 L(g;D) \le 4C_0^2 D^{2L} \eta_i^2 \|g\|_2^2.
\end{align}
For the high frequency term, by Lemma~\ref{lem:low-and-high-covolution} and the trivial bound 
$\|\wh{P}_{\eta_{i+1}}(\pi)-\wh{P}_{\eta_{i}}(\pi)\|_{\op}\le 2$, we have
\begin{align}
\notag	H(\Delta_i(g)_{\eta_j};D)= & \sum_{\pi\in \wh{G}, \dim \pi> D} \dim \pi \h \Bigl\|\wh{P}_{\eta_j}(\pi) \bigl(\wh{P}_{\eta_{i+1}}(\pi)-\wh{P}_{\eta_i}(\pi)\bigr) \wh{g}(\pi)\Bigr\|_{\HS}^2 \\
\label{eq:ave-zero-image-operator-high-freq}	\le & \h \frac{4}{D} H(P_{\eta_j};D)H(g;D) \le\frac{4}{D|1_{\eta_j}|} \|g\|_2^2 
\le \frac{4C_1}{D\eta_j^{d_0}}\|g\|_2^2.
\end{align}
We choose $D$ such that $4C_0^2 D^{2L} \eta_i^2=\frac{4C_1}{D\eta_j^{d_0}}$, which implies that $D$ equals $\eta_j^{-d_0/(2L+1)}\eta_i^{-2/(2L+1)}$ up to a multiplicative factor, which is a function of the constants $C_0,C_1,$ and $L$. Hence by \eqref{eq:ave-zero-image-operator-low-freq} and \eqref{eq:ave-zero-image-operator-high-freq} we get
\[
\|\Delta_i(g)_{\eta_j}\|_2^2\ll_{C_0,C_1,L} \eta_j^{-d_0+d_0/(2L+1)}\eta_i^{2/(2L+1)} \|g\|_2^2.
\]
Notice that 
\[
\eta_j^{-d_0+d_0/(2L+1)}\eta_i^{2/(2L+1)}=\eta_0^{\frac{2}{2L+1} a^i- \frac{2Ld_0}{2L+1} a^j};
\]
and $a^i-(2Ld_0)a^j\ge a^i(1-(2Ld_0)a^{-1})\ge a^i/2$. Therefore, 
\[
\|\Delta_i(g)_{\eta_j}\|_2^2\ll_{C_0,C_1,L} \eta_0^{a^i/(2L+1)} \|g\|_2^2;
\]
and the first part follows.

For the second part we use Lemma~\ref{lem:almost-orthogonality} to obtain
\begin{align*}
	\|\Delta_i(g)_{\eta_k}-\Delta_i(g)\|_2 = &
	\|\Delta_i(g_{\eta_k}-g)\|_2 \\
	\le & \|(g_{\eta_k}-g)_{\eta_{i+1}}\|_2+  \|(g_{\eta_k}-g)_{\eta_{i}}\|_2 \\
	\le & 2 \eta_k^{1/(8L)} \|g\|_2.
\end{align*}
\end{proof}

\begin{definition}\label{def:fun-at-scale}
We say $g\in L^2(G)$ \emph{lives at scale} $\eta$ \emph{(with parameter $a$)} if 
\begin{itemize}
	\item (Averaging to zero) $\|g_{\eta^{1/a}}\|_2\le \eta^{1/(2a)} \|g\|_2$.
	\item (Almost invariant) $\|g_{\eta^{a^2}}-g\|_2\le \eta^{a/2} \|g\|_2$.
\end{itemize} 
\end{definition}

From Proposition~\ref{prop:image-of-operators-functions-at-scale-eta} we deduce that if $\|\Delta_i(g)\|_2/\|g\|_2\gg 1$, then $\Delta_i(g)$ lives at scale $\eta_i$. The next proposition 
provides a Fourier theoretic understanding of this notion. 

For every $\pi\in \wh{G}$, let $H_{\pi}$ denote the subspace of $L^2(G)$ spanned by the matrix coefficients of $\pi$.
Given an interval $I\subset \bbr$, set 
\[
\cal_I:=\oplus_{\pi\in \wh{G}, \dim \pi\in I} H_{\pi},
\]
and denote by $\pi_I:L^2(G)\to \cal_I$ the
corresponding orthogonal projection. 

\begin{proposition}\label{prop:essential-support-fourier-fun-scale}
Let $0<\eta<1$ be a parameter. 
\begin{enumerate}
\item Suppose $f\in L^2(G)$ lives at scale $\eta$. Then 
\[
\|\pi_{I_\eta}(f)\|_2^2 \geq \bigl( 1-8\eta^{1/(2a)} \bigr)  \|  f \|^2_2.
\] 
where $I_\eta=[\frac{1}{2C_0}\eta^{-1/(La)},2C_0 \eta^{-d_0a^2}]$.  
\item Let $I'_\eta=[C_1\eta^{-\frac{d_0+1}{a}}, C_0^{\frac{-1}{L}}\eta^{\frac{-2a^2+a}{2L}}]$. Then every $f\in \cal_{I'_\eta}$ 
lives at scale $\eta$. 
\end{enumerate}	
\end{proposition}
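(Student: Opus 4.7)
The key observation is that bi-invariance of $d$ makes $1_\eta$ conjugation-invariant, so $P_\eta$ is a class function; hence by Schur's lemma $\widehat{P}_\eta(\pi) = c_{\pi,\eta}\,I$ for some scalar $c_{\pi,\eta}\in\CC$, and Parseval gives
\[
\|f_\eta\|_2^2 = \sum_{\pi\in\wh{G}} \dim\pi\,|c_{\pi,\eta}|^2\, \|\widehat{f}(\pi)\|_{\HS}^2, \qquad \|f-f_\eta\|_2^2 = \sum_{\pi\in\wh{G}} \dim\pi\,|1-c_{\pi,\eta}|^2\, \|\widehat{f}(\pi)\|_{\HS}^2.
\]
So the entire proposition reduces to controlling the size of $c_{\pi,\eta}$ and of $1-c_{\pi,\eta}$ as a function of $\dim\pi$.

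Two estimates will drive the argument. First, Lemma~\ref{lem:low-freq} gives the \emph{small-dimension bound} $|1 - c_{\pi,\eta}| \le C_0 (\dim \pi)^L \eta$, so $c_{\pi,\eta}$ is close to $1$ whenever $\dim\pi$ is small compared with $\eta^{-1/L}$. Second, applying Parseval to $P_\eta$ itself gives $\sum_{\pi} (\dim\pi)^2 |c_{\pi,\eta}|^2 = \|P_\eta\|_2^2 = 1/|1_\eta|$, whence the \emph{large-dimension bound} $|c_{\pi,\eta}| \le |1_\eta|^{-1/2}/\dim\pi \le \sqrt{C_1}\,\eta^{-d_0/2}/\dim\pi$; in particular $|c_{\pi,\eta}|\le 1/2$ once $\dim\pi$ is large compared with $\eta^{-d_0/2}$.

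For part (1), I will split $\wh{G}$ according as $\dim\pi$ lies below, inside, or above $I_\eta$. For $\dim\pi < \tfrac{1}{2C_0}\eta^{-1/(La)}$, the small-dimension bound at scale $\eta^{1/a}$ yields $|c_{\pi,\eta^{1/a}}| \ge 1/2$, so this contribution to $\|f\|_2^2$ is at most $4\|f_{\eta^{1/a}}\|_2^2\le 4\eta^{1/a}\|f\|_2^2$ by the \emph{averaging to zero} hypothesis. For $\dim\pi > 2C_0\eta^{-d_0 a^2}$, the large-dimension bound at scale $\eta^{a^2}$ yields $|c_{\pi,\eta^{a^2}}| \le 1/2$, hence $|1-c_{\pi,\eta^{a^2}}| \ge 1/2$, and this contribution is at most $4\|f-f_{\eta^{a^2}}\|_2^2\le 4\eta^a\|f\|_2^2$ by the \emph{almost invariant} hypothesis. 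Summing and using $\eta^{1/a},\eta^a \le \eta^{1/(2a)}$ for $a\ge 1$ produces the claimed bound.

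For part (2), $\widehat{f}(\pi)$ vanishes off $I'_\eta$, so only the two estimates restricted to that range are needed. The large-dimension bound at scale $\eta^{1/a}$ gives $|c_{\pi,\eta^{1/a}}|^2 \le C_1\eta^{-d_0/a}/(\dim\pi)^2 \le \eta^{1/a}/\dim\pi$ whenever $\dim\pi \ge C_1 \eta^{-(d_0+1)/a}$, producing $\|f_{\eta^{1/a}}\|_2^2 \le \eta^{1/a}\|f\|_2^2$. The small-dimension bound at scale $\eta^{a^2}$ gives $|1-c_{\pi,\eta^{a^2}}|^2 \le C_0^2 (\dim\pi)^{2L}\eta^{2a^2} \le \eta^a$ whenever $\dim\pi \le C_0^{-1/L}\eta^{(a-2a^2)/(2L)}$, producing $\|f-f_{\eta^{a^2}}\|_2^2 \le \eta^a\|f\|_2^2$. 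The entire proof is mechanical once the Schur-lemma reduction is in place; the only delicate point is tracking the exponents, and the stated endpoints of $I_\eta$ and $I'_\eta$ are precisely what makes these bounds close.
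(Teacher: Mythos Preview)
Your proof is correct and follows essentially the same route as the paper. The Schur--lemma reduction to scalars $c_{\pi,\eta}$ is a clean way to phrase what the paper does implicitly with $\|\widehat P_\eta(\pi)\|_{\op}$; the low-threshold bound in part~(1) and all of part~(2) then match the paper's argument line by line.

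The one genuine difference is the high-threshold bound in part~(1). The paper takes a somewhat indirect path: it first observes $\|f\|_2^2-\|f_{\eta^{a^2}}\|_2^2\le 2\eta^{a/2}$, notes that the low-frequency parts satisfy $L(f;D')\ge L(f_{\eta^{a^2}};D')$, deduces $H(f;D')-H(f_{\eta^{a^2}};D')\le 2\eta^{a/2}$, and then bootstraps via $H(f_{\eta^{a^2}};D')\le \tfrac{1}{D'|1_{\eta^{a^2}}|}H(f;D')$ to get $H(f;D')\le 4\eta^{a/2}$. Your argument---observing that $|c_{\pi,\eta^{a^2}}|\le 1/2$ above the threshold forces $|1-c_{\pi,\eta^{a^2}}|\ge 1/2$, so $H(f;D')\le 4\|f-f_{\eta^{a^2}}\|_2^2\le 4\eta^a$---is more direct and yields the stronger exponent $\eta^a$ rather than $\eta^{a/2}$. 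Either way the final bound $8\eta^{1/(2a)}$ follows. One small caveat: the inequality $|c_{\pi,\eta^{a^2}}|\le 1/2$ at threshold $2C_0\eta^{-d_0a^2}$ requires $2C_0\eta^{-d_0a^2}\ge 2\sqrt{C_1}\,\eta^{-d_0a^2/2}$, which holds once $\eta$ is below a constant depending on $C_0,C_1,d_0,a$; this is the same harmless issue as the paper's own use of $C_0$ in place of $C_1$ at that step.
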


\begin{proof}
Without loss of generality, assume that $ \| f \|_2=1$. To see part~(1) it suffices to show that  
\begin{equation}\label{eq:bound-Fourier-scale}
L(f; {(2C_0)^{-1}}\eta^{-1/(La)})\le 4 \eta^{1/2a}
\text{ and } \quad
H(f; 2C_0 \eta^{-d_0a^2})\le 4 \eta^{a/2}. 
\end{equation}

By Lemma~\ref{lem:norm-eta-averaging}, for an arbitrary threshold $D$ satisfying $ C_0D^L\eta^{1/a} <1$, we have 
\[
L(f;D)\le (1-C_0D^L\eta^{1/a})^{-2} L(f_{\eta^{1/a}};D)\le (1-C_0D^L\eta^{1/a})^{-2} \eta^{1/(2a)}.
\]
In the last inequality we used $\|f_{\eta^{1/a}}\|_2\leq \eta^{1/(2a)}\|f\|_2$, which holds since $f$ lives at scale $\eta$.
Setting $D:=\frac{1}{2C_0}\eta^{-1/(La)}$, the first inequality in~\eqref{eq:bound-Fourier-scale} follows.

To show the second inequality  in~\eqref{eq:bound-Fourier-scale}, we note that  
\be\label{eq:f2-feta-2}
\|f\|_2^2-\|f_{\eta^{a^2}}\|_2^2=(\|f\|_2-\|f_{\eta^{a^2}}\|_2)(\|f\|_2+\|f_{\eta^{a^2}}\|_2)\le 2\|f-f_{\eta^{a^2}}\|_2\le 2\eta^{a/2}.
\ee
Since $\|P_{\eta^{a^2}}\|_1=1$, for all $\pi\in \wh{G}$ we have $\|\wh{P}_{\eta^{a^2}}(\pi)\|_{\op}\le 1$. 
In consequence, Lemma~\ref{lem:low-and-high-covolution} implies that for an arbitrary threshold $D'$ we have
\[
L(f;D')-L(f_{\eta^{a^2}};D')\ge 0.
\]
This and~\eqref{eq:f2-feta-2} imply that 
\[
H(f;D')-H(f_{\eta^{a^2}};D')\le 2\eta^{a/2}.
\]
Altogether, we deduce 
\begin{align*}
	H(f;D')\le &2\eta^{a/2}+H(f_{\eta^{a^2}};D')\\ 
	\le  & 2\eta^{a/2}+\frac{1}{D'} H(P_{\eta^{a^2}};D')H(f;D') && (\text{by Lemma~\ref{lem:low-and-high-covolution}}) \\
	\le & 2\eta^{a/2}+ \frac{1}{D'|1_{\eta^{a^2}}|} H(f;D')&& (\text{by } H(P_{\eta^{a^2}};D')\leq \|1_{\eta^{a^2}}\|_2^2) \\
	\le & 2\eta^{a/2}+\frac{C_0}{D'\eta^{d_0a^2}} H(f;D').
\end{align*}
Therefore $\Big(1-\frac{C_0}{D'\eta^{d_0a^2}} \Bigr) H(f;D')\le 2\eta^{a/2}$. 
Setting $D':= 2C_0\eta^{-d_0a^2}$, the claim in part~(1) follows.

\medskip

We now turn to part~(2). Let $f\in\cal_{I'_\eta}$ be a unit vector. Note that for every $\pi$ with $\dim\pi\not\in I'_\eta$, $\hat f(\pi)=0$. 
In particular, $L(f;D)=0$ for any $D<C_1\eta^{-\frac{d_0+1}{a}}$. 
Therefore, by Lemma~\ref{lem:low-and-high-covolution}, we have  
\begin{align*}
\|f_{\eta^{1/a}}\|_2^2=\|P_{\eta^{1/a}}\ast f\|_2^2&\leq C_1^{-1}{\eta^{(d_0+1)/a}}\|P_{\eta^{1/a}}\|_2^2\|f\|_2^2\\
&\leq C_1^{-1}{\eta^{(d_0+1)/a}}\frac{1}{|1_{\eta^{1/a}}|}\leq \eta^{1/a};
\end{align*}
we used~\eqref{eq:dimension-condition-intro} in the second inequality.

To verify the required bound for $\|f_{\eta^{a^2}}-f\|_2$, we use Lemma~\ref{lem:low-freq} combined with 
the fact that for every $\pi$ with $\dim\pi\not\in I'_\eta$, $\hat f(\pi)=0$, and conclude that 
\begin{align*}
\|f_{\eta^{a^2}}-f\|_2^2&= \sum_{\dim\pi\in I'_\eta}\dim(\pi)\|(I-\hat P_{\eta^{a^2}}(\pi))\hat f(\pi)\|_{{\rm HS}}^2\\
&\leq\sum_{\dim\pi\in I'_\eta}\dim(\pi)\|I-\hat P_{\eta^{a^2}}(\pi)\|_{\op}^2\|\hat f(\pi)\|_{{\rm HS}}^2\\
&\leq \sum_{\dim\pi\in I'_\eta}C_0^2\dim(\pi)^{2L}\eta^{2a^2}\dim(\pi)\|\hat f(\pi)\|_{{\rm HS}}^2\leq \eta^{a}.
\end{align*}

This completes the proof of part~(2) and the lemma.
\end{proof}

We will now prove an almost orthogonality of the images of $\Delta_i$'s and show that their sum is dense in $L^2(G)$.

\begin{lem}\label{lem:almost-orthogonality-of-image-operators}
In the setting of this section, for non-negative integers $j<i-1$, and $g\in L^2(G)$ we have 
\[
\|\Delta_i \Delta_j\|_{\op}\ll_{C_0,C_1,L} \eta_i^{1/(4L+2)}\text{ and  } 
|\langle \Delta_i(g),\Delta_j(g)\rangle|\ll_{C_0,C_1,L} \eta_i^{1/(4L+2)} \|g\|_2^2.
\]
\end{lem}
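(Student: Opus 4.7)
The plan is to reduce both claimed bounds to the ``averaging to zero'' estimate of Proposition~\ref{prop:image-of-operators-functions-at-scale-eta}, exploiting two elementary features of the $P_\eta$'s: first, each $P_\eta$ is a class function and hence lies in the centre of the Banach algebra $(L^1(G),+,\ast)$, so the operators $\Delta_k$ mutually commute and also commute with convolution by any $P_{\eta'}$; second, $P_\eta$ is real-valued and supported on the symmetric set $1_\eta$, which makes each $\Delta_k$ self-adjoint on $L^2(G)$.

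For the operator norm bound, I would unfold the definition and pull the $\Delta_j$ convolution across $\Delta_i$ to write, for $j\ge 1$,
\[
\Delta_i\Delta_j(g) = (P_{\eta_{j+1}}-P_{\eta_j})\ast \Delta_i(g) = \Delta_i(g)_{\eta_{j+1}} - \Delta_i(g)_{\eta_j},
\]
and, for $j=0$, $\Delta_i\Delta_0(g)=\Delta_i(g)_{\eta_0}$. The hypothesis $j<i-1$ ensures that both $j$ and $j+1$ are strictly less than $i$, so the first clause of Proposition~\ref{prop:image-of-operators-functions-at-scale-eta} applies at scales $\eta_j$ and $\eta_{j+1}$ and yields
\[
\|\Delta_i(g)_{\eta_j}\|_2,\ \|\Delta_i(g)_{\eta_{j+1}}\|_2 \ll_{C_0,C_1,L} \eta_0^{a^i/(4L+2)}\|g\|_2 = \eta_i^{1/(4L+2)}\|g\|_2.
\]
The triangle inequality then yields the desired bound on $\|\Delta_i\Delta_j\|_{\op}$.

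For the inner-product estimate, self-adjointness of $\Delta_j$ together with Cauchy--Schwarz gives
\[
|\langle \Delta_i(g),\Delta_j(g)\rangle| = |\langle \Delta_j\Delta_i(g),g\rangle| \le \|\Delta_j\Delta_i\|_{\op}\|g\|_2^2,
\]
and by commutativity $\|\Delta_j\Delta_i\|_{\op}=\|\Delta_i\Delta_j\|_{\op}$, so the second inequality follows at once from the first. No genuine obstacle is anticipated; the main bookkeeping task is simply to verify that the hypothesis $j<i-1$ really does place both $\eta_j$ and $\eta_{j+1}$ in the regime covered by Proposition~\ref{prop:image-of-operators-functions-at-scale-eta}(1), and to separately handle the base case $j=0$ where $\Delta_0$ is a single averaging rather than a difference of two.
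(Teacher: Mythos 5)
Your argument is correct and is essentially the paper's own proof: both reduce the inner-product bound to the operator-norm bound via self-adjointness and Cauchy--Schwarz, and both bound $\|\Delta_i\Delta_j(g)\|_2=\|\Delta_i(g)_{\eta_{j+1}}-\Delta_i(g)_{\eta_j}\|_2$ by applying the ``averaging to zero'' clause of Proposition~\ref{prop:image-of-operators-functions-at-scale-eta} at the two scales $\eta_j$ and $\eta_{j+1}$, which the hypothesis $j<i-1$ makes admissible. Your separate treatment of the base case $j=0$ matches the paper's remark that this case is handled similarly.
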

\begin{proof}
Since $\Delta_i$ is a self-adjoint operator, we have $\langle \Delta_i(g),\Delta_j(g)\rangle=\langle g, \Delta_i(\Delta_j(g))\rangle$; this implies 
\[
|\langle \Delta_i(g),\Delta_j(g)\rangle|\le \|\Delta_i \Delta_j\|_{\op}\|g\|_2^2.
\]
By the first part of Proposition~\ref{prop:image-of-operators-functions-at-scale-eta} for $j>0$ we have
\[
\|\Delta_i \Delta_j(g)\|_2 =  \|\Delta_i(g)_{\eta_{j+1}}-\Delta_i(g)_{\eta_{j}}\|_2 \\
\ll_{C_0,C_1,L} \eta_i^{1/(4L+2)} \|g\|_2.
\]
For $j=0$ it is similar and  the claims follow.
\end{proof}
\begin{lem}\label{lem:decompositing-g-using-operators}
In the setting of this section, $g=\sum_{i=0}^{\infty} \Delta_i(g)$ for any $g\in L^2(G)$.	
\end{lem}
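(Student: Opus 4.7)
My plan is to observe that the partial sums $S_N := \sum_{i=0}^N \Delta_i(g)$ telescope. Unpacking the definition gives $\sum_{i=1}^N \bigl((P_{\eta_{i+1}} - P_{\eta_i}) \ast g\bigr) = g_{\eta_{N+1}} - g_{\eta_1}$, so $S_N$ equals $g_{\eta_{N+1}}$ up to a fixed, $N$-independent boundary term coming from $\Delta_0$. Thus the lemma reduces to the approximate-identity statement that $g_\eta = P_\eta \ast g$ converges to $g$ in $L^2(G)$ as $\eta \to 0^+$ (and in particular along the subsequence $\eta = \eta_N$, since $\eta_N = \eta_0^{a^N} \to 0$).

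To establish the approximate-identity property, I would use Parseval together with the low/high frequency split that is by now standard in the paper. For a threshold $D$, write
\[
\|g - g_\eta\|_2^2 = L(g - g_\eta; D) + H(g - g_\eta; D).
\]
Lemma~\ref{lem:low-freq} controls the low-frequency part by $L(g - g_\eta; D) \le (C_0 D^L \eta)^2\, \|g\|_2^2$, while for the high-frequency part the trivial bound $\|I - \wh P_\eta(\pi)\|_{\op} \le 2$ yields $H(g - g_\eta; D) \le 4\, H(g; D)$.

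Given $\vare > 0$, the convergence of $\|g\|_2^2 = \sum_{\pi \in \wh G} \dim\pi \, \|\wh g(\pi)\|_{\HS}^2$ allows me to first fix $D$ large enough that $4\, H(g; D) < \vare^2/2$. With $D$ held fixed, choosing $N$ so large that $C_0 D^L \eta_{N+1}\, \|g\|_2 < \vare/\sqrt{2}$ produces $\|g - g_{\eta_{N+1}}\|_2 < \vare$, and hence $S_N \to g$ in $L^2(G)$. The main step is the approximate-identity property of $(P_\eta)$ in $L^2$; its proof via the low/high-frequency split mirrors the template already used in Lemmas~\ref{lem:norm-eta-averaging} and~\ref{lem:almost-orthogonality}, so I do not anticipate a substantive obstacle — the whole argument is essentially an order-of-quantifiers bookkeeping ($D$ first, then $N$) on top of the Fourier estimates already established.
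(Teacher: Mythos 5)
Your proof is correct, and for the key analytic step it takes a genuinely different route from the paper. Both arguments begin with the same telescoping observation, reducing the lemma to the statement that $g_{\eta}\to g$ in $L^2(G)$ as $\eta\to 0^+$. (A side remark on the ``boundary term'' you flag: with the definitions as literally printed, $S_N=g_{\eta_{N+1}}+(g_{\eta_0}-g_{\eta_1})$, which would not converge to $g$; the paper's own proof silently assumes the partial sums equal $g_{\eta_{N+1}}$ exactly, so the intended normalization is clearly $\Delta_0(g)=P_{\eta_1}\ast g$, an off-by-one typo in \eqref{Deltai} rather than a gap in your argument.) Where you diverge is in proving the approximate-identity property. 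The paper does this in the most elementary way: by Peter--Weyl (density of $C(G)$ in $L^2(G)$) and uniform continuity of continuous functions on the compact group $G$, one gets $\|g-g_{\eta}\|_2\le 3\vare$ for $\eta$ small; this uses no Fourier analysis and, notably, no local randomness --- it is valid for any compact metric group. You instead run the low/high frequency split: $L(g-g_\eta;D)\le (C_0D^L\eta)^2\|g\|_2^2$ via Lemma~\ref{lem:low-freq}, $H(g-g_\eta;D)\le 4H(g;D)$ trivially, and then the quantifier order ``first $D$, then $\eta$'', using that $H(g;D)\to 0$ as $D\to\infty$ since it is the tail of the convergent Parseval series. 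This is correct and fits the template of Lemmas~\ref{lem:norm-eta-averaging} and~\ref{lem:almost-orthogonality}; what it buys is a quantitative rate in terms of the Fourier tail of $g$, at the cost of invoking the local randomness hypothesis for a statement that does not actually require it.
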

\begin{proof}
It suffices to show that for all $g\in L^2(G)$, $ \| g-\sum_{i=1}^n \Delta_i(g)  \|_2=  \| g-g_{\eta_{n+1}} \|_2$ tends to zero as $n \to \infty$.  By the Peter-Weyl Theorem, for every $\vare>0$ there is $f\in C(G)$ such that $\|f-g\|_2\le \vare$. Since $G$ is compact, $f$ is uniformly continuous. Let $\eta>0$ be such that
\[
d(x,y)\le \eta \text{ implies that } |f(x)-f(y)|\le \vare.
\]
For $n \gg_{ \vare} 1$, we have  $\|f_{\eta_n}-f\|_{\infty}\le \vare$. Hence $\|f_{\eta_n}-f\|_2\le \vare$. On the other hand, $\|f-g\|_2\le \vare$ implies that $\|f_{\eta_n}-g_{\eta_n}\|_2\le \vare$. Therefore for $n  \gg_{ \vare} 1$ we have 
\[
\|g-g_{\eta_n}\|_2\le \|g-f\|_2+\|f-f_{\eta_n}\|_2+\|f_{\eta_n}-g_{\eta_n}\|_2\le 3\vare.
\]
Thus $\lim_{n \to \infty} g_{\eta_n}=g$ in $L^2$, from which the claim follows. 
\end{proof}
By a similar argument as in the proof of the Cotlar-Stein Lemma (see \cite[Lemma~6.3]{BIG-17}, and also \cite[Chapter VII]{Stein93}), we will prove 
\begin{proposition}\label{prop:sum-norm-squared-decomposition-l2-norm}
In the setting of this section, for $\eta_0\ll_{C_0,C_1,L} 1$, and $g\in L^2(G)$ we have 
\begin{equation}\label{eq:sum-sqaures-and-norm-littlewood-payley}
\|g\|_2^2 \ll \sum_{i=0}^{\infty} \|\Delta_i(g)\|_2^2 \ll \|g\|_2^2.
\end{equation}
\end{proposition}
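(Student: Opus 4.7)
The plan is to derive both inequalities from the almost orthogonality of the $\Delta_i$'s provided by Lemma~\ref{lem:almost-orthogonality-of-image-operators}, combined with the decomposition $g=\sum_i \Delta_i g$ from Lemma~\ref{lem:decompositing-g-using-operators}, following a Cotlar--Stein-type strategy. The rapid decay of the scales $\eta_i=\eta_0^{a^i}$ will be crucial for controlling off-diagonal contributions.

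For the lower bound $\|g\|_2^2 \ll \sum_i \|\Delta_i(g)\|_2^2$, I would expand
\[
\|g\|_2^2 = \sum_{i,j} \langle \Delta_i g, \Delta_j g\rangle
\]
using the decomposition identity, then split the double sum according to the distance $|i-j|$. The diagonal yields $\sum_i \|\Delta_i g\|_2^2$; neighbor pairs with $|i-j|=1$ are controlled by the arithmetic-geometric mean inequality $|\langle \Delta_i g,\Delta_{i+1}g\rangle|\le\tfrac12(\|\Delta_i g\|_2^2+\|\Delta_{i+1}g\|_2^2)$, contributing in total at most $2\sum_i\|\Delta_i g\|_2^2$; and distant pairs with $|i-j|\ge 2$ satisfy $|\langle \Delta_i g,\Delta_j g\rangle|\ll \eta_{\max(i,j)}^{1/(4L+2)}\|g\|_2^2$ by Lemma~\ref{lem:almost-orthogonality-of-image-operators}. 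Since $\eta_i$ decays super-exponentially, the total distant-pair contribution is bounded by $O(\eta_0^{c})\|g\|_2^2$ for some $c>0$, and for $\eta_0$ small enough this term can be absorbed on the left, yielding $\|g\|_2^2 \leq 6\sum_i \|\Delta_i g\|_2^2$.

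For the reverse inequality $\sum_i \|\Delta_i(g)\|_2^2 \ll \|g\|_2^2$, I would work with the bundled operator $T:L^2(G)\to \ell^2(L^2(G))$, $Tg=(\Delta_i g)_{i\geq 0}$. Its squared norm is exactly what we need to bound, so it suffices to estimate $\|T\|^2=\|TT^*\|_{\op}$. The operator $TT^*$ on $\ell^2(L^2(G))$ has operator-valued entries $\Delta_i\Delta_j^*=\Delta_i\Delta_j$ (each $\Delta_i$ is self-adjoint), and an operator-valued Schur test yields
\[
\|TT^*\|_{\op} \leq \sup_i \sum_j \|\Delta_i \Delta_j\|_{\op}.
\]
For $|i-j|\leq 1$, the trivial bound $\|\Delta_i\|_{\op}\leq \|P_{\eta_{i+1}}-P_{\eta_i}\|_1 \leq 2$ gives $\|\Delta_i\Delta_j\|_{\op}\leq 4$. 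For $|i-j|\geq 2$, Lemma~\ref{lem:almost-orthogonality-of-image-operators} provides $\|\Delta_i\Delta_j\|_{\op}\ll \eta_{\max(i,j)}^{1/(4L+2)}$.

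The step that I expect to require the most care is verifying that $\sup_i \sum_{j}\|\Delta_i\Delta_j\|_{\op}<\infty$: the contribution from indices $j\leq i-2$ is bounded by $(i-1)\,\eta_0^{a^i/(4L+2)}$, which naively grows linearly in $i$ before the super-exponential factor $\eta_0^{a^i/(4L+2)}$ takes over. However, $\sup_i i\cdot \eta_0^{a^i/(4L+2)}<\infty$ for $\eta_0$ sufficiently small (depending on $C_0, C_1, L$), and the corresponding sum over $j\ge i+2$ is comparably small by a geometric-type bound. The Schur estimate then produces a finite constant and completes the upper bound.
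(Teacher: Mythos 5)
Your proof is correct, and while your lower bound is essentially the paper's argument (expand $\|g\|_2^2=\sum_{i,j}\langle\Delta_i g,\Delta_j g\rangle$, keep the diagonal, absorb the adjacent terms into $\sum_i\|\Delta_i g\|_2^2$, and absorb the super-exponentially small distant terms into $\|g\|_2^2$ for $\eta_0$ small), your upper bound takes a genuinely different route. The paper proves the stronger statement $\sum_{i,j}|\langle\Delta_i g,\Delta_j g\rangle|\ll\|g\|_2^2$ via a Cotlar--Stein iteration: it forms $R_N=\sum u_{i,j}\Delta_j\Delta_i$, uses $\|R_N^k\|_{\op}=\|R_N\|_{\op}^k$ (self-adjointness and commutativity), splits the resulting products two ways, and invokes the square-root summability $\sum_j\|\Delta_i\Delta_j\|_{\op}^{1/2}\ll_{C_0,C_1,L}1$ before letting $k\to\infty$. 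Your operator-valued Schur test on $TT^*$ is more elementary and goes straight to the needed bound $\sum_i\|\Delta_i g\|_2^2\le\bigl(\sup_i\sum_j\|\Delta_i\Delta_j\|_{\op}\bigr)\|g\|_2^2$; your verification that this supremum is finite (the $j\le i-2$ block contributes $\preceq (i-1)\eta_0^{a^i/(4L+2)}$, which is uniformly bounded, and the $j\ge i+2$ block is a rapidly convergent series) is sound, and the resulting constant is absolute once $\eta_0\ll_{C_0,C_1,L}1$, matching the statement. What the Schur test costs you is generality: it needs first-power summability of the off-diagonal block norms, whereas Cotlar--Stein only needs square-root summability and works for non-commuting families; here the super-exponential decay of $\eta_i=\eta_0^{a^i}$ makes both hypotheses hold comfortably, so your shortcut is legitimate. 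Both arguments ultimately rest on the same almost-orthogonality input, Lemma~\ref{lem:almost-orthogonality-of-image-operators}.
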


In preparation for the proof we will need to establish some inequalities. 
\begin{lem}\label{lem:upper-bound-needed-cotlar-stein}
In the setting of this section, for a non-negative integer $i$, we have
\[
\sum_{j=0}^{\infty} \|\Delta_i \Delta_j\|_{\op}^{1/2} \ll_{C_0,C_1,L} 1.
\]	
\end{lem}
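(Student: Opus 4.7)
The strategy is to bound the sum by splitting it according to the distance $|i-j|$, handling the ``near-diagonal'' terms with a trivial operator-norm bound and the ``off-diagonal'' terms using the quantitative almost-orthogonality in Lemma~\ref{lem:almost-orthogonality-of-image-operators}, exploiting the doubly-exponential decay of $\eta_i = \eta_0^{a^i}$.

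First I would record the trivial bound $\|\Delta_j\|_{\op}\le 2$ for every $j\ge 0$: indeed $\Delta_0$ is convolution by the probability density $P_{\eta_0}$, and each $\Delta_j$ for $j\ge 1$ is the difference of two such convolutions, so Young's inequality~\eqref{eq:Young-ineq} gives $\|\Delta_j\|_{\op}\le\|P_{\eta_{j+1}}\|_1+\|P_{\eta_j}\|_1=2$. Consequently $\|\Delta_i\Delta_j\|_{\op}^{1/2}\le 2$ for all $i,j$, which immediately handles the at most three ``near-diagonal'' indices $j\in\{i-1,i,i+1\}$, contributing $O(1)$.

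Next, for the off-diagonal contributions, split the sum as $\sum_{j\le i-2}+\sum_{j\ge i+2}$. For $j\le i-2$, Lemma~\ref{lem:almost-orthogonality-of-image-operators} gives
\[
\|\Delta_i\Delta_j\|_{\op}^{1/2}\ll_{C_0,C_1,L}\eta_i^{1/(8L+4)}=\eta_0^{a^i/(8L+4)},
\]
so this block contributes at most $(i-1)\,\eta_0^{a^i/(8L+4)}\ll_{C_0,C_1,L}1$ uniformly in $i$ (for $\eta_0\ll_{C_0,C_1,L}1$), since $a^i$ grows doubly-exponentially and dominates the linear factor $i$. For $j\ge i+2$, self-adjointness of each $\Delta_k$ gives $\|\Delta_i\Delta_j\|_{\op}=\|\Delta_j\Delta_i\|_{\op}$, so applying Lemma~\ref{lem:almost-orthogonality-of-image-operators} with the roles of $i$ and $j$ swapped yields
\[
\|\Delta_i\Delta_j\|_{\op}^{1/2}\ll_{C_0,C_1,L}\eta_j^{1/(8L+4)}=\eta_0^{a^j/(8L+4)}.
\]
Summing this geometrically-decaying tail $\sum_{j\ge i+2}\eta_0^{a^j/(8L+4)}$ is bounded by $O(\eta_0^{a^{i+2}/(8L+4)})$ and hence by $O(1)$ uniformly in $i$ once $\eta_0$ is chosen small enough depending on $C_0,C_1,L$.

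Combining the three pieces gives $\sum_{j=0}^{\infty}\|\Delta_i\Delta_j\|_{\op}^{1/2}\ll_{C_0,C_1,L}1$, as required. No serious obstacle arises: the only mildly delicate point is to make sure the bound is uniform in $i$, which is what motivated using the $1/2$-power to allow the $a^i/(8L+4)$-rate decay to absorb the linear factor $i$ coming from the $j\le i-2$ block.
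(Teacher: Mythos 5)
Your proof is correct and follows essentially the same route as the paper's: the near-diagonal terms are handled by the trivial bound $\|\Delta_j\|_{\op}\le 2$ and the off-diagonal terms by Lemma~\ref{lem:almost-orthogonality-of-image-operators} together with the super-geometric decay of $\eta_j=\eta_0^{a^j}$ (the paper compresses both off-diagonal blocks into the single convergent sum $\sum_j\eta_0^{a^j/(4L+2)}$, using that $\eta_i\le\eta_j$ for $j\le i$). Your explicit use of self-adjointness to transfer the lemma to the range $j\ge i+2$, and your check that the $j\le i-2$ block is uniform in $i$, are exactly the points the paper leaves implicit.
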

\begin{proof}
By Lemma~\ref{lem:almost-orthogonality-of-image-operators} and $\|\Delta_j\|_{op}\le 2$, we get that
\[
\sum_{j=0}^{\infty} \|\Delta_i \Delta_j\|_{\op}^{1/2} \le 6+ O_{C_0,C_1,L}\Bigl(\sum_{j=1}^{\infty} \eta_0^{a^j/(4L+2)}\Bigr)\ll_{C_0,C_1,L} 1.
\]	
\end{proof}
The proof of the next lemma is based on the proof of the Cotlar-Stein lemma.
\begin{lem}\label{lem:cotlar-stein}
 In the above setting, for every $g\in L^2(G)$, we have 
 \[
 \sum_{i,j} |\langle \Delta_i(g),\Delta_j(g)\rangle| \ll \|g\|_2^2.
 \]	
\end{lem}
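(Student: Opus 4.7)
The plan is to split the double sum according to the distance $|i-j|$ and handle the two ranges separately. Write
\[
\sum_{i,j}|\langle \Delta_i(g),\Delta_j(g)\rangle| = \sum_{|i-j|\ge 2}|\langle \Delta_i(g),\Delta_j(g)\rangle| + \sum_{|i-j|\le 1}|\langle \Delta_i(g),\Delta_j(g)\rangle|.
\]
For the off-diagonal part, I would apply Lemma~\ref{lem:almost-orthogonality-of-image-operators} (together with its symmetric counterpart obtained from the self-adjointness of $\Delta_i$, which gives $\|\Delta_i\Delta_j\|_{\op}=\|\Delta_j\Delta_i\|_{\op}$) to conclude that
\[
|\langle \Delta_i(g),\Delta_j(g)\rangle|\ll_{C_0,C_1,L} \eta_0^{a^{\max(i,j)}/(4L+2)}\|g\|_2^2
\]
whenever $|i-j|\ge 2$. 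Since for each $m\ge 2$ there are exactly $2(m-1)$ ordered pairs with $\max(i,j)=m$ and $|i-j|\ge 2$, the off-diagonal contribution is dominated by $\|g\|_2^2\sum_{m\ge 2} 2(m-1)\eta_0^{a^m/(4L+2)}$, which is finite (in fact, double-exponentially convergent) for $\eta_0\ll_{C_0,C_1,L}1$.

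For the near-diagonal piece, Cauchy--Schwarz combined with $ab\le \tfrac12(a^2+b^2)$ yields
\[
\sum_{|i-j|\le 1}|\langle \Delta_i(g),\Delta_j(g)\rangle|\le 3\sum_{i=0}^{\infty}\|\Delta_i(g)\|_2^2,
\]
so the proof reduces to the Bessel-type inequality $\sum_i\|\Delta_i(g)\|_2^2\ll_{C_0,C_1,L}\|g\|_2^2$.

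To prove this Bessel inequality I would observe
\[
\sum_i\|\Delta_i(g)\|_2^2 = \Bigl\langle \Bigl(\sum_i\Delta_i^2\Bigr)g,\,g\Bigr\rangle \le \Bigl\|\sum_i\Delta_i^2\Bigr\|_{\op}\|g\|_2^2,
\]
and then invoke the Cotlar--Stein Lemma applied to the self-adjoint family $\{\Delta_i^2\}$. Using the factorization $\Delta_i^2\Delta_j^2=\Delta_i(\Delta_i\Delta_j)\Delta_j$ together with the trivial estimate $\|\Delta_i\|_{\op}\le 2$ (from Young's inequality $\|\mu\ast g\|_2\le \|\mu\|_1\|g\|_2$ applied to $\mu=P_{\eta_i},P_{\eta_{i+1}}$), one obtains $\|\Delta_i^2\Delta_j^2\|_{\op}\le 4\|\Delta_i\Delta_j\|_{\op}$. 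Lemma~\ref{lem:upper-bound-needed-cotlar-stein} then provides the Cotlar--Stein hypothesis $\sum_j \|\Delta_i^2\Delta_j^2\|_{\op}^{1/2}\ll_{C_0,C_1,L}1$ uniformly in $i$, which yields $\|\sum_i\Delta_i^2\|_{\op}\ll_{C_0,C_1,L}1$. I expect this last step to be the main obstacle: the absence of a genuine orthogonality forces one to replace the Pythagoras identity with the more delicate Cotlar--Stein almost-orthogonality estimate, and the specific factorization $\Delta_i^2\Delta_j^2=\Delta_i(\Delta_i\Delta_j)\Delta_j$ is what allows one to reuse Lemma~\ref{lem:upper-bound-needed-cotlar-stein} without any further estimation.
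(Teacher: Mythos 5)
Your proof is correct, but it is organized rather differently from the one in the paper. The paper encodes the entire double sum as $\langle R_N(g),g\rangle$ with $R_N=\sum_{0\le i,j\le N}u_{i,j}\Delta_j\Delta_i$ for unimodular coefficients $u_{i,j}$, and bounds $\|R_N\|_{\op}$ uniformly in $N$ by re-running the Cotlar--Stein iteration by hand: normality of $R_N$ gives $\|R_N^k\|_{\op}=\|R_N\|_{\op}^k$, the long products $\Delta_{i_1}\Delta_{j_1}\cdots\Delta_{i_k}\Delta_{j_k}$ are bounded by the geometric mean of two telescoping estimates, and Lemma~\ref{lem:upper-bound-needed-cotlar-stein} is applied repeatedly before letting $k\to\infty$. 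You instead peel off the far off-diagonal terms, which are summable termwise by Lemma~\ref{lem:almost-orthogonality-of-image-operators} thanks to the double-exponential decay of $\eta_0^{a^{\max(i,j)}/(4L+2)}$, and reduce the near-diagonal part to the Bessel-type inequality $\sum_i\|\Delta_i(g)\|_2^2\ll\|g\|_2^2$, which you obtain from the classical Cotlar--Stein lemma applied to the self-adjoint family $\{\Delta_i^2\}$ via the factorization $\Delta_i^2\Delta_j^2=\Delta_i(\Delta_i\Delta_j)\Delta_j$ and $\|\Delta_i\|_{\op}\le 2$. Note that this Bessel inequality is precisely the first assertion of Corollary~\ref{cor:norm-operators-l2-series}, which the paper deduces \emph{from} the present lemma, so your independent derivation is what keeps the argument non-circular. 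Your route is more modular and shorter if Cotlar--Stein is taken as known, and it isolates where the real work lies; the paper's version is self-contained and treats all terms uniformly. The only point you should make explicit is that Cotlar--Stein applies to finite families, so one bounds $\bigl\|\sum_{i=0}^N\Delta_i^2\bigr\|_{\op}$ uniformly in $N$ and passes to the limit using the nonnegativity of the terms $\|\Delta_i(g)\|_2^2$; this is routine.
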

\begin{proof}
For a given $g\in L^2(G)$, for every $i\neq j$, choose $u_{i,j}\in \mathbb{S}^1\cup\{0\}$ such that $|\langle \Delta_i(g),\Delta_j(g)\rangle|=u_{i,j} \langle \Delta_i(g),\Delta_j(g)\rangle$ where $u_{i,j}=0$ if $\langle \Delta_i(g),\Delta_j(g)\rangle=0$. Then for every integer $N \ge 1$ we have 
\[
\sum_{0\le i,j\le N} |\langle \Delta_i(g),\Delta_j(g)\rangle|=\langle R_N(g),g\rangle,
\]
where $R_N=\sum_{0\le i,j\le N} u_{i,j} \Delta_j \Delta_i$. Thus, it is enough to prove that for all possible choices of $u_{i,j}$ and all $N \ge 1$ we have $\|R_N\|_{\op}\le \Phi$ for a fixed positive number $\Phi$. Since $\Delta_i$'s are self-adjoint and pairwise commuting, for every positive integer $k$ we have $\|R_N^k\|_{\op}=\|R_N\|_{\op}^k$. By the triangle inequality, we have
\[
	\|R_N\|_{op}^k\le  \sum_{0\le i_l,j_l\le N, \forall 1\le l\le k} \|\Delta_{i_1}\Delta_{j_1}\cdots\Delta_{i_k}\Delta_{j_k}\|_{\op}. 
\]
Since \[ \|\Delta_{i_1}\Delta_{j_1}\cdots\Delta_{i_k}\Delta_{j_k}\| \le \min  \biggl( \prod_{l=1}^{k}\|\Delta_{i_l}\Delta_{j_l}\|_{\op}, 
\|\Delta_{i_1}\|_{\op}\|\Delta_{j_k}\|_{\op} \prod_{l=1}^{k-1} \|\Delta_{j_l}\Delta_{i_{l+1}}\|_{\op} \biggr), \]
we have that 
\[
\|\Delta_{i_1}\Delta_{j_1}\cdots\Delta_{i_k}\Delta_{j_k}\|_{\op} \le 4 \biggl(\prod_{l=1}^k \|\Delta_{i_l}\Delta_{j_l}\|_{\op} \prod_{l=1}^{k-1} \|\Delta_{j_l} \Delta_{i_{l+1}}\|_{\op}\biggr)^{1/2}.
\]
Altogether we get 
\begin{equation}
\begin{split}      
\| R_N\|_{\op}^k & \le 4 \sum_{ i_1=0}^{ N}   \sum_{ j_1=0}^{ N}  \cdots  \sum_{ j_k=0}^{ N} 
 \biggl(\prod_{l=1}^k \|\Delta_{i_l}\Delta_{j_l}\|_{\op} \prod_{l=1}^{k-1} \|\Delta_{j_l} \Delta_{i_{l+1}}\|_{\op}\biggr)^{1/2} \\
& = 4 \sum_{ i_1=0}^{ N}   \sum_{ j_1=0}^{ N}  \cdots  \sum_{ i_k=0}^{ N}   
 \biggl(\prod_{l=1}^{k-1} \|\Delta_{i_l}\Delta_{j_l}\|_{\op} \prod_{l=1}^{k-1} \|\Delta_{j_l} \Delta_{i_{l+1}}\|_{\op}\biggr)^{1/2}
\biggl( \sum_{ j_k=0}^{ N}  \| \Delta_{i_k} \Delta_{j_k} \|_\op^{1/2}\biggr).  \\\end{split}
\end{equation}
By repeatedly using Lemma~\ref{lem:upper-bound-needed-cotlar-stein}, it follows that there is a constant $M:=M(C_0,C_1,L)$ such that 
\[
\|R_N\|_{\op}^k\le 4 (N+1) M^{2k-1},
\] 
which implies $\|R_N\|_{\op}\le 4^{1/k} (N+1)^{1/k} M^2$ for any positive integer $k$. The claim follows from here. 
\end{proof}
\begin{corollary}\label{cor:norm-operators-l2-series}
In the setting of this section, for $g\in L^2(G)$ we have that 
\[
\sum_{i=0}^{\infty} \|\Delta_i(g)\|_2^2\ll \|g\|_2^2, \emph{ and }	 
\sum_{i=0}^{\infty} |\langle \Delta_i(g),\Delta_{i+1}(g)\rangle| \le \sum_{i=0}^{\infty} \|\Delta_i(g)\|_2^2.
\]
\end{corollary}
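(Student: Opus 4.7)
The plan is to derive both inequalities as essentially direct consequences of the machinery already in place, using only Lemma~\ref{lem:cotlar-stein} and the Cauchy-Schwarz inequality. Neither part should require new estimates on $\Delta_i$.

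For the first inequality, I would simply restrict the double sum in Lemma~\ref{lem:cotlar-stein} to the diagonal $i=j$, noting that $\langle \Delta_i(g),\Delta_i(g)\rangle = \|\Delta_i(g)\|_2^2$. Since all the terms $|\langle \Delta_i(g),\Delta_j(g)\rangle|$ are non-negative, the diagonal sum is bounded above by the full double sum, yielding $\sum_{i=0}^{\infty}\|\Delta_i(g)\|_2^2 \leq \sum_{i,j}|\langle \Delta_i(g),\Delta_j(g)\rangle| \ll \|g\|_2^2$.

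For the second inequality, I would apply Cauchy-Schwarz termwise to get $|\langle \Delta_i(g),\Delta_{i+1}(g)\rangle| \le \|\Delta_i(g)\|_2 \cdot \|\Delta_{i+1}(g)\|_2$, then apply the AM-GM inequality $ab \le \tfrac{1}{2}(a^2+b^2)$ to bound this by $\tfrac{1}{2}(\|\Delta_i(g)\|_2^2 + \|\Delta_{i+1}(g)\|_2^2)$. Summing over $i \geq 0$, each term $\|\Delta_j(g)\|_2^2$ for $j \geq 1$ appears at most twice (once from index $i=j-1$ as the ``$i+1$'' contribution and once from index $i=j$ as the ``$i$'' contribution), while $\|\Delta_0(g)\|_2^2$ appears once. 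Thus the telescoped bound gives $\sum_{i=0}^{\infty} |\langle \Delta_i(g),\Delta_{i+1}(g)\rangle| \le \sum_{i=0}^{\infty}\|\Delta_i(g)\|_2^2$.

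There is no substantial obstacle here — both statements are formal consequences of the Cotlar--Stein-type bound proved in Lemma~\ref{lem:cotlar-stein} and elementary Hilbert space inequalities. The real work has already been done in establishing Lemma~\ref{lem:almost-orthogonality-of-image-operators} and Lemma~\ref{lem:upper-bound-needed-cotlar-stein}, which feed into Lemma~\ref{lem:cotlar-stein}. This corollary is essentially a repackaging of those results in a form convenient for subsequent application to spectral gap, namely the almost-Pythagorean decomposition $\|g\|_2^2 \asymp \sum_i \|\Delta_i(g)\|_2^2$ suggested by the Littlewood--Paley heuristic.
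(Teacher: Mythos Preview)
Your proposal is correct and essentially matches the paper's proof: the first inequality is obtained by restricting the double sum in Lemma~\ref{lem:cotlar-stein} to the diagonal, and the second by Cauchy--Schwarz termwise followed by an elementary step. The only cosmetic difference is that the paper applies Cauchy--Schwarz a second time to the sum $\sum_i \|\Delta_i(g)\|_2\|\Delta_{i+1}(g)\|_2$ (viewing it as an $\ell^2$ inner product of the sequence with its shift) rather than AM--GM, but the two arguments are interchangeable here.
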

\begin{proof}
The first inequality is a weaker version of the inequality given in Lemma~\ref{lem:cotlar-stein}. Applying the Cauchy-Schwarz inequality twice, we obtain
\begin{align*}
\sum_{i=0}^{\infty} |\langle \Delta_i(g),\Delta_{i+1}(g)\rangle|
\le & \sum_{i=0}^{\infty} \|\Delta_i(g)\|_2\|\Delta_{i+1}(g)\|_2 \\
\le & \biggl( \sum_{i=0}^{\infty} \|\Delta_i(g)\|_2^2\biggr)^{1/2} \biggl( \sum_{i=0}^{\infty} \|\Delta_{i+1}(g)\|_2^2\biggr) ^{1/2} \\
\le & \sum_{i=0}^{\infty} \|\Delta_i(g)\|_2^2.
\end{align*}
\end{proof}

\begin{proof}[Proof of Proposition \ref{prop:sum-norm-squared-decomposition-l2-norm}]
By Lemma~\ref{lem:decompositing-g-using-operators} we have $g=\sum_{i=1}^{\infty} \Delta_i(g)$. It follows that 
\begin{align}
\notag	\|g\|_2^2=& \sum_{0\le i,j} \langle \Delta_i(g),\Delta_j(g)\rangle \\
\notag	= & \sum_{i=0}^{\infty} \|\Delta_i(g)\|_2^2 + 2 \sum_{i=0}^{\infty} \langle \Delta_i(g),\Delta_{i+1}(g) \rangle+
	 2 \sum_{0\le i<j, |i-j|>1}\langle \Delta_i(g),\Delta_{j}(g) \rangle
	\\
\label{eq:one-shift-upper-bound} \le & 3 \sum_{i=0}^{\infty} \|\Delta_i(g)\|_2^2 +  2 \sum_{0\le i<j, |i-j|>1}|\langle \Delta_i(g),\Delta_{j}(g) \rangle|
\\
\label{eq:almost-orthogonality-larger-than-one-shift} \le & 3 \sum_{i=0}^{\infty} \|\Delta_i(g)\|_2^2 + O_{C_0,C_1,L}\Bigl(\sum_{0\le i<j, |i-j|>1} \eta_0^{a^j/(4L+2)}) \|g\|_2^2\Bigr)
\\
\notag \le & 3 \sum_{i=0}^{\infty} \|\Delta_i(g)\|_2^2 + O_{C_0,C_1,L}(\eta_0^{a/(4L+2)}) \|g\|_2^2 \\
\notag \le & 3 \sum_{i=0}^{\infty} \|\Delta_i(g)\|_2^2 + (1/2) \|g\|_2^2,
\end{align}
where \eqref{eq:one-shift-upper-bound} is deduced from Corollary~\ref{cor:norm-operators-l2-series} and \eqref{eq:almost-orthogonality-larger-than-one-shift} follows from Lemma~\ref{lem:almost-orthogonality-of-image-operators}. The reverse inequality is already proven in Corollary \ref{cor:norm-operators-l2-series}.
\end{proof}

\section{Littlewood-Paley decomposition and spectral gap}\label{s:spec}
The main goal of this section is to prove Theorem~\ref{thm:large-renyi-entropy} which is a generalization of Proposition~\ref{prop:spec-gap-Littlewood-Paley-profinite} for general locally random groups. At the end, we will show how the existence of spectral gap can be reduced to study of the gap for functions that live at small scales, Theorem~\ref{thm:functions-scale-spec-gap}.


We continue to assume that $G$ is a compact group satisfying the following two properties:
\begin{enumerate}
\item $G$ is  an $L$-locally random group with coefficient $C_0$. 
\item \ref{eq:dimension-condition-intro}$(C_1, d_0)$: for all $\eta >0$
$$C_1^{-1} \eta^{d_0} \le |1_{\eta}|\le C_1 \eta^{d_0}.$$
\end{enumerate}
Fix $a>\max(4Ld_0,4L+2)$, and set $\eta_0$ to be a sufficiently small positive number whose value will be determined later and $\eta_i:=\eta_0^{a^i}$. Define $(\Delta_j)_{j \ge 0}$ as in \eqref{Deltai}.  We begin with a basic property of these operators. 

\begin{lem}\label{lem:orthonormal-eigenbasis}
For all $j \ge 0$, $\Delta_j$ is a compact operator. Moreover, for any symmetric Borel probability measure $\mu$ on $G$, 
there exists an orthonormal basis $\{e_i\}_{i=1}^{\infty}$ of $L^2(G)$ consisting of common eigenfunctions of $\{\Delta_j: j\geq0\}$ and $T_{\mu}$. 
\end{lem}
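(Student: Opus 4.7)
The plan is to exploit bi-invariance of $d$ to make the operators $\Delta_j$ act diagonally on the Peter--Weyl decomposition, at which point the existence of a common eigenbasis reduces to finite-dimensional spectral theory. First I would observe that since $d$ is bi-invariant, the ball $1_\eta$ is conjugation-invariant (for any $g\in G$, $d(1,gxg^{-1}) = d(g^{-1}, xg^{-1}) = d(1,x)$), so $P_\eta$ is a class function and hence lies in the center of the Banach algebra $(L^1(G),+,\ast)$. Consequently the $\Delta_j$ commute pairwise and commute with $T_\mu$. Bi-invariance also gives $1_\eta = 1_\eta^{-1}$, so $P_\eta$ is symmetric, which together with symmetry of $\mu$ makes each $\Delta_j$ and $T_\mu$ self-adjoint on $L^2(G)$.

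For compactness of $\Delta_j$, I would recall that on a compact group convolution by an $L^2$ function $\phi$ is Hilbert--Schmidt: the map $f\mapsto \phi\ast f$ has integral kernel $K(x,y)=\phi(xy^{-1})$, and by bi-invariance of Haar measure one gets $\iint |K(x,y)|^2\,\d{x}\d{y} = \|\phi\|_2^2$. Since $P_{\eta_{j+1}}-P_{\eta_j}$ is a bounded function on the finite-measure space $G$, it lies in $L^2(G)$, so $\Delta_j$ is Hilbert--Schmidt and therefore compact.

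For the common eigenbasis, I would invoke Peter--Weyl to decompose $L^2(G) = \bigoplus_{\pi\in \wh{G}} H_\pi$ into finite-dimensional subspaces, each invariant under both left and right translations and hence under $T_\mu$ as well as under every $\Delta_j$. Because $P_\eta$ is central in $L^1(G)$, its Fourier coefficient $\wh{P_\eta}(\pi)$ commutes with every $\pi(g)$, and therefore, by Schur's lemma, acts as a scalar on the representation space; equivalently, each $\Delta_j$ acts on $H_\pi$ by a scalar $\lambda_{j,\pi}$. Thus any orthonormal basis of $H_\pi$ automatically diagonalizes the whole family $\{\Delta_j\}$ on this subspace. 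Within each $H_\pi$ I would then diagonalize the self-adjoint finite-dimensional operator $T_\mu|_{H_\pi}$ to obtain an orthonormal eigenbasis $\{e_{\pi,k}\}_k$, and take the union over $\pi\in \wh{G}$ to produce the required basis of $L^2(G)$.

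I do not anticipate a genuine obstacle here: the argument combines Peter--Weyl, Schur's lemma, and finite-dimensional spectral theory, with the only analytic ingredient being the standard fact that convolution by an $L^2$ kernel on a compact group is Hilbert--Schmidt. The only non-trivial observation is the one already used repeatedly earlier in the paper, namely that bi-invariance of $d$ makes $P_\eta$ a class function, which is what puts $\Delta_j$ into the commutative $\ast$-subalgebra of $L^1(G)$ generated by class functions and the symmetric measure $\mu$.
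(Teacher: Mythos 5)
Your proof is correct. It differs from the paper's in one substantive way: for the construction of the common eigenbasis, the paper appeals to the abstract simultaneous-diagonalization argument for a commuting family of compact self-adjoint operators (using commutativity, compactness, and the completeness relation $f=\sum_{j}\Delta_j(f)$ to handle the joint kernel), whereas you observe via Schur's lemma that the central element $P_\eta$ has scalar Fourier coefficient $\wh{P_\eta}(\pi)=c_\pi I$, so that every $\Delta_j$ already acts as a scalar on each Peter--Weyl block $H_\pi$, and it only remains to diagonalize the finite-dimensional self-adjoint operator $T_\mu|_{H_\pi}$. Your route is more explicit and buys a bit more: it shows the eigenfunctions can be chosen inside the isotypic components, which makes it transparent (as is used later in the paper) that the eigenspaces $\ker(\Delta_j-\alpha I)$ are subrepresentations of $L^2(G)$; the paper's route is shorter to state but leaves the spectral-theoretic bookkeeping implicit. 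The compactness and self-adjointness parts of your argument (Hilbert--Schmidt kernel $\phi(xy^{-1})$ for $\phi\in L^2(G)$, symmetry of $1_\eta$ and of $\mu$) coincide with the paper's.
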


\begin{proof}
Since $\Delta_j$ is a convolution operator by a function in $L^2(G)$, it is a compact operator. Further, since $1_{\eta}$ is a symmetric subset, $\Delta_j$ is a self-adjoint operator. 


The construction of an orthonormal basis consisting of eigenvectors for $\{\Delta_j\}$ and $T_\mu$ follows from standard arguments
in view of commutativity of the family, compactness of $\{\Delta_j\}$, and the fact that $f=\sum_{j=0}^{\infty} \Delta_j(f)$ for any $f\in L^2(G)$. 
%
%
%
\end{proof}

\begin{lem}\label{lem:eigenfunctions-live-at-certain-scale}
In the setting of this section, suppose $\{e_i\}	_{i=1}^{\infty}$ is an orthonormal basis of $L^2(G)$ which consists of common eigenfunctions of $\Delta_j$'s (see Lemma~\ref{lem:orthonormal-eigenbasis}). Suppose $\Delta_j(e_i)=\alpha_{ji} e_i$ for all $i \ge 1$ and $j \ge 0$. Then 
\begin{itemize}
\item $\|(e_i)_{\eta_{j-1}} \|_2 \ll_{C_0,C_1,L} |\alpha_{ji}|^{-1} \eta_j^{1/(4L+2)}$.
\item $\|(e_i)_{\eta_{j+2}}-e_i\|_2 \le 2 |\alpha_{ji}|^{-1} \eta_{j+2}^{1/(8L)}$.
\end{itemize}
In particular, if $|\alpha_{ji}|\ge \eta_j^{1/(8L+4)}$, then $e_i$ lives at scale $\eta_j$.  
\end{lem}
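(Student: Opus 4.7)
The plan is to derive the lemma as a direct consequence of Proposition~\ref{prop:image-of-operators-functions-at-scale-eta}, which already provides the required bounds for the image $\Delta_j(g)$ of an arbitrary $g \in L^2(G)$. The key observation is that for an eigenfunction $e_i$ of $\Delta_j$ with eigenvalue $\alpha_{ji}$, the convolution operators $P_{\eta_{k}} \ast (\cdot)$ commute with $\Delta_j$ (as $P_{\eta_k}$ is a class function and lies in the center of the convolution algebra), so
\[
\Delta_j(e_i)_{\eta_k} = P_{\eta_k} \ast \Delta_j(e_i) = \alpha_{ji}\, (e_i)_{\eta_k}
\]
for every $k$. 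This reduces bounds on $(e_i)_{\eta_k}$ and $(e_i)_{\eta_k}-e_i$ to bounds on $\Delta_j(e_i)_{\eta_k}$ and $\Delta_j(e_i)_{\eta_k}-\Delta_j(e_i)$, divided by $|\alpha_{ji}|$.

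More concretely, for the first bullet I would apply the averaging-to-zero estimate of Proposition~\ref{prop:image-of-operators-functions-at-scale-eta} with indices $i \mapsto j$ and $j \mapsto j-1$ (valid since $j-1 < j$), to the unit vector $e_i$. This gives $\|\Delta_j(e_i)_{\eta_{j-1}}\|_2 \ll_{C_0,C_1,L} \eta_0^{a^j/(4L+2)} = \eta_j^{1/(4L+2)}$. Combined with the identity above, dividing by $|\alpha_{ji}|$ yields the stated bound. For the second bullet I would apply the almost-invariance estimate of the same proposition with indices $i \mapsto j$, $k \mapsto j+2$ (valid since $j+2 > j+1$), giving $\|\Delta_j(e_i)_{\eta_{j+2}} - \Delta_j(e_i)\|_2 \le 2\eta_{j+2}^{1/(8L)}$; dividing by $|\alpha_{ji}|$ finishes this bullet.

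For the ``In particular'' clause, it suffices to verify that under the hypothesis $|\alpha_{ji}|\ge \eta_j^{1/(8L+4)}$, the two bullets of the lemma imply the two defining inequalities of Definition~\ref{def:fun-at-scale} at scale $\eta_j$. Using $\eta_j^{1/a}=\eta_{j-1}$ and $\eta_j^{a^2}=\eta_{j+2}$, the first condition becomes $\|(e_i)_{\eta_{j-1}}\|_2 \le \eta_j^{1/(2a)}$. Our first bullet gives the upper bound $\ll \eta_j^{1/(4L+2)-1/(8L+4)} = \eta_j^{1/(8L+4)}$, and since $a > 4L+2$, the exponent $1/(8L+4)$ is strictly larger than $1/(2a)$, so the inequality holds for $\eta_0$ sufficiently small. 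The second condition becomes $\|(e_i)_{\eta_{j+2}}-e_i\|_2 \le \eta_j^{a/2}$; our second bullet gives an upper bound of the form $2\eta_0^{-a^j/(8L+4)+a^{j+2}/(8L)}$, and a short exponent comparison (dividing by $a^j$) reduces the required inequality to $a(a-4L)/(8L)\ge 1/(8L+4)$, which is comfortably satisfied since $a>4L+2$.

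The proof is essentially a direct application of the previous proposition together with the commutation of $\Delta_j$ with convolution; there is no real obstacle. The only delicate point is bookkeeping: one must choose the correct pair of indices in the two parts of Proposition~\ref{prop:image-of-operators-functions-at-scale-eta} (namely $j-1$ and $j+2$ relative to $j$), and verify that the arithmetic of exponents makes the hypothesis $|\alpha_{ji}|\ge \eta_j^{1/(8L+4)}$ sharp enough — which is precisely where the conditions $a>4L+2$ and $\eta_0\ll 1$ are consumed.
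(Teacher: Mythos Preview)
Your proposal is correct and follows exactly the paper's approach: the paper's proof consists of a single sentence stating that the lemma is an immediate consequence of Proposition~\ref{prop:image-of-operators-functions-at-scale-eta}, and you have spelled out precisely how that consequence is drawn (applying the two parts of the proposition with indices $(j,j-1)$ and $(j,j+2)$ and dividing by $|\alpha_{ji}|$). Your exponent bookkeeping for the ``in particular'' clause, using $a>4L+2$ and $\eta_0\ll 1$, is also correct.
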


\begin{proof}
This is an immediate consequence of Proposition~\ref{prop:image-of-operators-functions-at-scale-eta}.	
\end{proof}

\begin{proof}[Proof of Theorem~\ref{thm:large-renyi-entropy}]
We will use the above notation. 
	Let $I_j:=\{i\in \bbz^+|\h |\alpha_{ji}|\ge \eta_j^{1/(8L+4)}\}$, $E:=\bbz^+\setminus \bigcup_{j=1}^{\infty} I_j$, and for $i\in I_j$ we let $\cal_{ji}:=\ker(\Delta_j-\alpha_{ji}I)$. 
	
	We will show the claim holds with $\cal_0$ the space spanned by $\{e_i:i\in E\}$. Let us first show that 
	$\cal_0$ is finite dimensional. By definition, for all $i\in E$ and all positive integers $j$, we have 
\[
|\alpha_{ji}|\le \eta_j^{1/(8L+4)}.
\]
On the other hand, by Lemma~\ref{lem:decompositing-g-using-operators} we have 
$
\sum_{j=0}^{\infty} \alpha_{ji}=1.
$
Therefore 
\[|1-\alpha_{0i}|\le \sum_{j=1}^{\infty} \eta_j^{1/(8L+4)}\le \eta_0^{1/(8L+4)}.\]
Therefore $\alpha_{0i}>1-\eta_0^{1/(8L+4)}$ for any $i\in E$. 

Notice that $\Delta_0$ is a Hilbert-Schmidt operator with kernel $k(x,y):=P_{\eta_0}(xy^{-1})$. Therefore $P_{\eta_0}(xy^{-1})=\sum_{i} \alpha_{0i} e_i(x)\overline{e_i(y)}$. This implies that 
\[
\frac{1}{|1_{\eta_0}|}=\int_G \int_G P_{\eta_0}(xy^{-1})^2 \d y \d x =\sum_i |\alpha_{0i}|^2.
\]
By the above equality, we get 
\[
(1-\eta_0^{1/(8L+4)})^2 \, \#E \le \frac{1}{|1_{\eta_0}|};
\]
which implies that $\dim \cal_0\le  \frac{2}{|1_{\eta_0}|}$.

	We now investigate spectral properties of $T_\mu$ on $\cal_{ji}=\ker(\Delta_j-\alpha_{ji}I)$. 
	It is clear that $\cal_{ji}$ is a finite-dimensional subrepresentation of $L^2(G)$. Since $e_k$'s are also eigenfunctions of $T_{\mu}$, 
	\[
	\lcal(\mu;\cal_{ji})=\min\{-\log\|\mu\ast e_k\|_2 : e_k\in \cal_{ij}\}. 
	\]
Let $\nu=\mu^{(l)}$ for some positive integer $l$ to be specified later, and let $e_k\in\mathcal H_{ij}$; note that $ \alpha_{ jk}= \alpha_{ ji}$. 
By the definition of $\mathcal H_{ij}$ and Lemma~\ref{lem:eigenfunctions-live-at-certain-scale}, $e_k$ lives at scale $\eta_j$. 
Thus we have
\[
\bigl|\|(e_k)_{\eta_{j+2}}\ast \nu\|_2-\|(e_k\ast \nu)\|_2\bigr|\le \|((e_k)_{\eta_{j+2}}-e_k)\ast \nu\|_2\le \eta_{j}^{a/2},
\]
which implies that $|\|(e_k)_{\eta_{j+2}}\ast \nu\|_2^2-\|e_k\ast \nu\|_2^2|\le 2 \eta_{j}^{a/2}$. Therefore, 
\be\label{eq:thickening-given-scale}
\|e_k\ast \nu\|_2^2\le 2 \eta_{j}^{a/2}+\|(e_k)_{\eta_{j+2}}\ast \nu\|_2^2.
\ee	
On the other hand, by the Mixing Inequality (see Theorem~\ref{thm:mixing-inequality}), we have
\begin{align}
\notag	\|(e_k)_{\eta_{j+2}}\ast \nu\|_2^2 = & \|e_k\ast \nu_{\eta_{j+2}}\|_2^2 \\
\notag	\le & 2 \|(e_k)_{\eta_j^{1/a}}\|_2^2 \|(\nu_{\eta_{j+2}})_{\eta_j^{1/a}}\|_2^2+ {\eta_j^{1/(8aL)}} \|\nu_{\eta_{j+2}}\|_2^2 \\
\label{eq:mixing-scale}	\le & (2 \eta_j^{1/a}+{\eta_j^{1/(8aL)}}) \|\nu_{\eta_{j+2}}\|_2^2 
	\le 3 \eta_j^{1/(8aL)}\|\nu_{\eta_{j+2}}\|_2^2
\end{align}
where the second inequality follows from the fact that $e_k$ lives as scale $\eta_j$.

By \eqref{eq:thickening-given-scale} and \eqref{eq:mixing-scale}, for every $k\in I_j$, we have
\begin{align*}
	-2\log \bigl( \|e_k\ast \nu\|_2 \bigr) \ge & -\log  \bigl( 2\eta_j^{a/2}+3 \eta_j^{1/(8aL)}\| \nu_{\eta_{j+2}}\|_2^2  \bigr)  \\
	\ge &  -\log 5 - \log  \bigl( \max (\eta_j^{a/2},\eta_j^{1/(8aL)}\| \nu_{\eta_{j+2}}\|_2^2)  \bigr) .
\end{align*}
For $\eta_0\ll_{L,d_0} 1$ small enough, one obtains 
\be\label{eq:lyapanov-lower-bound}
-2\log(\|e_k\ast \nu\|_2)\ge \min \Bigl( -\frac{1}{3a} \log \eta_{j+2}, -\frac{1}{9a^3L} \log \eta_{j+2}-\log \| \nu_{\eta_{j+2}}\|_2^2 \Bigr).
\ee
By Lemma~\ref{lem:basic-properties-metric-entropy} and the dimension condition, we have 
\be\label{eq:metric-entropy-scale}
|h(G;\eta)-\log(1/|1_{\eta}|)|\ll_{d_0,C_1} 1, \text{ and }
|\log(1/|1_{\eta}|) +d_0 \log \eta|\ll_{d_0,C_1} 1.
\ee
Hence for $\eta_0\ll_{C_0,C_1,L} 1$, by \eqref{eq:lyapanov-lower-bound} and \eqref{eq:metric-entropy-scale} we have 
\begin{align}\notag
-2\log(\|e_k\ast \nu\|_2)\ge & 	 \min\left(\frac{1}{4d_0a} h(G;\eta_j), \frac{1}{10Ld_0a^3} h(G;\eta_j)-\log \| \nu_{\eta_{j+2}}\|_2^2\right) \\ 
\label{eq:LyapunovLowerBound}
\ge & 
 \min \biggl( \frac{1}{4d_0a} h(G;\eta_j), H_2(\nu; \eta_{j+2})- \Bigl(1-\frac{1}{10Ld_0a^3} \Bigr)  h(G;\eta_j) \biggr).
\end{align}
By the assumption for some $l_{j+2}\le C_2 h(G;\eta_{j+2})$, we have \[H_2(\mu^{(l_{j+2})}; \eta_{j+2})\ge \Bigl(1-\frac{1}{20Ld_0a^3} \Bigr)  h(G;\eta_{j+2});\] and so 
by applying the inequality \eqref{eq:LyapunovLowerBound} to $\nu= \mu^{ (l_{j+2}) }$ 
for every $i\in I_j$ we have
\be\label{eq:lower-bound-lyaponov-eigenfunctions-scale}
\lcal(\mu; \cal_{ji})\ge \min \Bigl(\frac{1}{8C_2d_0a}, \frac{1}{40C_2Ld_0a^3} \Bigr) =\frac{1}{40C_2Ld_0a^3}.
\ee

Altogether, \eqref{eq:lower-bound-lyaponov-eigenfunctions-scale} and the definition of $\cal_0$ imply
\[
\lcal(\mu;L^2(G)\ominus \cal_0)\ge \frac{1}{40C_2Ld_0a^3},
\] 
as we claimed. 

Since the group generated by the support of $\mu$ is dense in $G$ and $\dim \cal_0 < \infty$,  it follows that $\lcal(\mu;L^2_0(G) )>0$. 
\end{proof}

The following theorem is a corollary of the proof of Theorem~\ref{thm:large-renyi-entropy}. 

 \begin{theorem}\label{thm:functions-scale-spec-gap}
 	In the above setting, suppose $\mu$ is a symmetric Borel probability measure on $G$, and the group generated by the support of $\mu$ is dense in $G$. Suppose that there exist $C_3>0$, $c>0$, and $ 0< \eta_0<1$ such that for every $\eta\le \eta_0$ and every function $g\in L^2(G)$ which lives at scale $\eta$ there exists $l\le C_3 \log(1/\eta)$ such that 
\[ 	\|\mu^{(l)}\ast g\|_2\le \eta^{c} \|g\|_2.\] 
Then there is a subrepresentation $\cal_0$ of $L^2(G)$ with $\dim \cal_0\le 2C_0\eta_0^{-d_0}$ 
such that 
\[
\lcal(\mu; L^2(G)\ominus \cal_0)\ge \frac{c}{C_3}.
\]
In particular, $\lcal(\mu;G)>0$. 
 \end{theorem}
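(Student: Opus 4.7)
The plan is to adapt the argument of Theorem~\ref{thm:large-renyi-entropy}, replacing its large-R\'enyi-entropy hypothesis by the present hypothesis on functions living at a given scale. I will fix a parameter $a>\max(4Ld_0,4L+2)$, set $\eta_i:=\eta_0^{a^i}$ using the $\eta_0$ supplied by the hypothesis (shrunk if necessary so the earlier lemmas apply), and define the operators $\Delta_j$ as in~\eqref{Deltai}. By Lemma~\ref{lem:orthonormal-eigenbasis} I can then choose an orthonormal basis $\{e_i\}_{i\ge 1}$ of $L^2(G)$ consisting of common eigenfunctions of $T_\mu$ and of all $\Delta_j$; write $\Delta_j(e_i)=\alpha_{ji}e_i$ and $T_\mu(e_i)=\lambda_i e_i$.

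Next I will mirror the construction of the exceptional subspace from the proof of Theorem~\ref{thm:large-renyi-entropy}: set $I_j:=\{i\ge 1:|\alpha_{ji}|\ge \eta_j^{1/(8L+4)}\}$, $E:=\mathbb{Z}^+\setminus\bigcup_{j\ge 1} I_j$, and take $\cal_0$ to be the closed linear span of $\{e_i:i\in E\}$. The dimension bound $\dim\cal_0\le 2C_0\eta_0^{-d_0}$ is then obtained verbatim: Lemma~\ref{lem:decompositing-g-using-operators} gives $\sum_{j\ge 0}\alpha_{ji}=1$, which together with the defining bound $|\alpha_{ji}|\le \eta_j^{1/(8L+4)}$ for $i\in E$ and $j\ge 1$ forces $\alpha_{0i}\ge 1-\eta_0^{1/(8L+4)}$, and this combines with the Hilbert-Schmidt identity $\sum_i \alpha_{0i}^2=1/|1_{\eta_0}|$ coming from the kernel of $\Delta_0$ to control $\#E$.

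The core step is the spectral estimate on $L^2(G)\ominus \cal_0$. For each $i\notin E$, I will pick some $j\ge 1$ with $i\in I_j$; the threshold used to define $I_j$ is precisely the one in Lemma~\ref{lem:eigenfunctions-live-at-certain-scale}, so $e_i$ lives at scale $\eta_j$. The hypothesis then supplies an integer $l_i\le C_3\log(1/\eta_j)$ for which
\[
|\lambda_i|^{l_i}\|e_i\|_2 \;=\; \|\mu^{(l_i)}\ast e_i\|_2 \;\le\; \eta_j^{c}\,\|e_i\|_2,
\]
and taking logarithms yields
\[
-\log|\lambda_i| \;\ge\; \frac{c\log(1/\eta_j)}{l_i} \;\ge\; \frac{c}{C_3}.
\]
Since this holds uniformly over every basis element of $L^2(G)\ominus \cal_0$, it follows that $\lcal(\mu;L^2(G)\ominus\cal_0)\ge c/C_3$.

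For the final assertion, $\cal_0$ contains $\mathbb{C}\bb1_G$ because $\Delta_0\bb1_G=\bb1_G$ and $\Delta_j\bb1_G=0$ for $j\ge 1$, so $\cal_0\cap L^2_0(G)$ is a finite-dimensional subrepresentation on which $T_\mu$ attains its operator norm; density of the group generated by $\supp\mu$ forces this norm to be strictly less than $1$, and combining with the bound on $L^2(G)\ominus\cal_0$ gives $\lcal(\mu;L^2_0(G))>0$. The only delicate point in this outline is the precise calibration of the threshold $\eta_j^{1/(8L+4)}$: it must be weak enough to keep $\#E$ small (for the dimension bound) yet strong enough to force each $e_i$ with $i\in I_j$ to live at scale $\eta_j$ (so that the hypothesis applies). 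Fortunately this calibration has already been carried out in Lemma~\ref{lem:eigenfunctions-live-at-certain-scale}, and so the proof reduces essentially to bookkeeping built on top of Theorem~\ref{thm:large-renyi-entropy}.
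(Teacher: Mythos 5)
Your proposal is correct and follows essentially the same route as the paper: the paper also constructs $\cal_0$, the sets $I_j$, $E$, and the eigenbasis exactly as in the proof of Theorem~\ref{thm:large-renyi-entropy}, and then applies the hypothesis to each $e_i$ with $i\in I_j$ (which lives at scale $\eta_j$ by Lemma~\ref{lem:eigenfunctions-live-at-certain-scale}) to get $\lcal(\mu;\cal_{ji})\ge c/C_3$. Your eigenvalue computation $-\log|\lambda_i|\ge c\log(1/\eta_j)/l_i\ge c/C_3$ is exactly the paper's inequality $C_3\log(1/\eta_j)\,\lcal(\mu;\cal_{ji})\ge -c\log\eta_j$.
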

 
\begin{proof}
Without loss of generality, assume that $\eta_0$ is sufficiently small so that Theorem~\ref{thm:large-renyi-entropy} holds. 
As before, fix $a>\max(4Ld_0,4L+2)$, and for $i \ge 1$, set $\eta_i:=\eta_0^{a^i}$.
Let $\{e_i\}_{i=1}^{\infty}$, the sets $I_j$'s, and $E$ be as in the proof of Theorem~\ref{thm:large-renyi-entropy}. 
Define $\cal_0$ as in that proof as well. 
	
For all $i\in I_j$, $e_i$ is function which lives at scale $\eta_j$. This, together with the assumption, implies that $\|\mu^{(l_{ji})}\ast e_i\|_2\le \eta_j^c$ for some positive integer $l_{ji}\le C_3\log(1/\eta_j)$. Hence
	\[
	C_3\log(1/\eta_j) \lcal(\mu; \cal_{ji})\ge -c\log \eta_j, 
	\]   
	where $\cal_{ji}:=\ker(\Delta_j-\alpha_{ji}I)$. In view of this, we have $\lcal(\mu; L^2(G)\ominus \cal_0)\ge c/C_3$. 
	
	Finally, since the group generated by the support of $\mu$ is dense in $G$ and $\cal_0$ is finite dimensional, it follows that $\lcal(\mu;L^2_0(G) )>0$. 
\end{proof}


\section{Gaining entropy in a multi-scale setting}\label{sec:BG}
The goal of this section is to prove Theorem~\ref{thm:multi-scale-BG}. 
In their seminal work \cite{Bourgain-Gamburd-2-08}, Bourgain and Gamburd proved that, if $X$ and $Y$ are random variables taking values in a finite group $G$, then the R\'{e}nyi entropy of $XY$ will be substantially larger than the average of the R\'{e}nyi entropies of $X$ and $Y,$ unless  there is an algebraic obstruction, see also \cite[Lemma 15]{Varju12}. This type of result had been proved earlier for random variables $X$ and $Y$ that are uniformly distributed in subsets $A$ and $B$, respectively. For abelian groups, this is due to Balog and Szemer\'{e}di \cite{Balog-Szemeredi94} and Gowers \cite{Gowers01}. For general groups, this was proved by Tao \cite{Tao08}. In the same work, Tao also proves a multi-scale version of this result. In this section, we will prove a multi-scale version of the aforementioned result of Bourgain and Gamburd, which can be considered as a weighted version of  \cite{Tao08}. 
Similar results have been proved earlier for some specific groups in \cite{Golsefidy-Varju-12, Benoist-Saxce-16, deSaxceLindentrauss15, BIG-17}. We start by recalling the definition of an approximate subgroup. 

\begin{definition}
For $K \ge 1$, a subset $X$ of a group $G$ is called a $K$-\emph{approximate subgroup} if $X$ is symmetric, that is, $X=X^{-1}$ and there exists $T \subseteq X\cdot X$ with $\# T \le K$, such that  $X\cdot X\subseteq T \cdot X$.
\end{definition}

Recall also that if $X$ is a random variable taking finitely many values, then the R\'{e}nyi entropy (of order $2$) of $X$ is defined by 
\[ H_2(X) = - \log \left( \sum_x {\mathbb{P} }(X=x)^2 \right),  \]
where, here and in what follows $\log$ refers to logarithm in base $2$.  It is easy to see that when $X$ and $Y$ take values in a group $G$ then $H_2(XY) \ge \frac{H_2(X)+ H_2(Y)}{2}$ holds.

\begin{theorem}[Bourgain-Gamburd]\label{thm:BG}
Let $G$ be a finite group and suppose $X$ and $Y$ are two $G$-valued random variables. If 
\[
H_2(XY)\le \frac{H_2(X)+H_2(Y)}{2}+\log K
\]
for some positive number $K \ge 2$, then there exists  $H\subseteq G$ such that:
\begin{enumerate}
	\item (Approximate structure) $H$ is an $O(K^{O(1)})$-approximate subgroup.

	\item (Controlling the order) $|\log (\# H)-H_2(X)|\ll \log K$.
	\item (Almost equidistribution) There are elements $x,y\in G$ such that for all $h\in H$
	$$\bbp(X=xh)\ge K^{-O(1)} (\# H)^{-1}, \qquad \bbp(Y=hy)\ge K^{-O(1)} (\# H)^{-1}.$$
\end{enumerate}
\end{theorem}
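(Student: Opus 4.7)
The plan is to convert the entropy hypothesis into a statement about multiplicative energy of carefully chosen level sets, and then apply the non-commutative Balog--Szemer\'edi--Gowers (BSG) machinery of Tao to extract the approximate subgroup.

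First, I would rewrite the hypothesis in terms of the probability mass functions $f(x):=\bbp(X=x)$ and $g(y):=\bbp(Y=y)$. Writing $(f\ast g)(z)=\bbp(XY=z)$, the inequalities
\[
2^{-H_2(X)}=\|f\|_2^2,\quad 2^{-H_2(Y)}=\|g\|_2^2,\quad 2^{-H_2(XY)}=\|f\ast g\|_2^2
\]
turn the hypothesis into the $L^2$-convolution bound $\|f\ast g\|_2^2\ge K^{-1}\|f\|_2\|g\|_2$. Next, I would run a dyadic pigeonhole: decompose
\[
\{x:f(x)>0\}=\bigsqcup_j A_j,\qquad A_j:=\{x:2^{-j-1}<f(x)\le 2^{-j}\},
\]
and similarly $B_k$ for $g$. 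There are only $O(\log(1/\|f\|_\infty))$ relevant $j$'s contributing non-negligibly to $\|f\|_2^2$, and likewise for $g$. Hence one can, at the cost of a $K^{O(1)}$ factor, replace $f$ and $g$ by their restrictions to a single dyadic layer, say $\tilde f=f\cf_{A}$ and $\tilde g=g\cf_B$ with $A=A_j$, $B=B_k$, where $f$ is essentially $(\#A)^{-1}\cdot 2^{-H_2(X)/2}\cdot \cf_A$ up to a factor of two (and similarly for $g$). The hypothesis then reads
\[
\|\cf_A\ast \cf_B\|_2^2 \gg K^{-O(1)}(\#A)^{3/2}(\#B)^{3/2},
\]
i.e., the multiplicative energy $E(A,B)$ is large.

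Now I would invoke Tao's non-commutative Balog--Szemer\'edi--Gowers theorem (Theorem~6.10 of~\cite{Tao08}, which the paper already cites): large multiplicative energy forces the existence of subsets $A'\subseteq A$ and $B'\subseteq B$ of density $\gg K^{-O(1)}$ such that the product set $A'\cdot B'$, or equivalently a suitable $K^{O(1)}$-fold product, has size $O(K^{O(1)})\cdot\sqrt{\#A\cdot \#B}$. Applying the non-commutative Pl\"unnecke--Ruzsa covering lemma (also developed in~\cite{Tao08}) to the symmetrised set $(A'\cup A'^{-1})\cdot(B'\cup B'^{-1})$ yields an $O(K^{O(1)})$-approximate subgroup $H$ with $\#H$ comparable to $\sqrt{\#A\cdot\#B}$. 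Up to translation, $A'$ and $B'$ are each covered by $K^{O(1)}$ left, respectively right, cosets of $H$; another pigeonhole gives single cosets $xH$ and $Hy$ capturing $K^{-O(1)}$ of the mass of $X$ and $Y$. This yields the approximate structure in (1), the order bound in (2) (since $\#A\asymp 2^{H_2(X)}$ and $\#B\asymp 2^{H_2(Y)}$, and $\log K$ absorbs the gap between these two quantities), and the almost-equidistribution in (3).

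The main obstacle is the non-abelian BSG step: the classical additive BSG is immediate from Pl\"unnecke, but in the non-commutative setting one must be careful to iterate products correctly so that the output $H$ is genuinely closed under multiplication up to a controlled set, rather than merely having a small product set. This is precisely what Tao's refinement in~\cite{Tao08} accomplishes, and once that is invoked as a black box the remaining steps are dyadic pigeonholing and bookkeeping of the $K^{O(1)}$ losses. A subtlety worth flagging is the bound in (2): one direction is immediate from $H\supseteq$ (a translate of) $A'$, but the matching upper bound $\log(\#H)\le H_2(X)+O(\log K)$ requires that the distribution on $xH$ be compared to the uniform distribution on $A'$, which works because on $A'$ the mass $f$ is concentrated to within a factor of $2$, so the entropy and the log-cardinality agree up to constants.
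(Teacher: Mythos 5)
First, a point of reference: the paper does not prove Theorem~\ref{thm:BG} at all --- it is quoted as a known result of Bourgain--Gamburd (see \cite{Bourgain-Gamburd-2-08} and \cite[Lemma 15]{Varju12}), and what the paper actually proves is the multi-scale analogue, Theorem~\ref{thm:multi-scale-BG}. That said, your strategy --- rewrite the hypothesis as $\|f\ast g\|_2^2\ge K^{-1}\|f\|_2\|g\|_2$, strip off the tails of $f$ and $g$, reduce to an energy bound $E(A,B)\gg K^{-O(1)}(\#A)^{3/2}(\#B)^{3/2}$ for level sets, and invoke Tao's noncommutative BSG theorem (the paper's Theorem~\ref{thm:tao}) --- is exactly the route taken in Section~\ref{sec:BG}. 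The one structural difference is that the paper does not dyadically pigeonhole: it keeps the entire band $\{x:\ K^{-10}\|f\|_2^2\le f(x)\le K^{10}\|f\|_2^2\}$ as a single quasi-level (its set $\ccal(\mu;\sim)$). On your version of this step, the justification needs adjusting: the criterion for discarding a level cannot be ``contributes little to $\|f\|_2^2$'' (the number of levels contributing non-negligibly to $\|f\|_2^2$ is governed by $\log(\|f\|_\infty/\|f\|_2^2)$, which need not be bounded in terms of $K$). What one actually shows is that the upper tail $\{f>K^{10}\|f\|_2^2\}$ has small $L^1$ mass and the lower tail $\{f<K^{-10}\|f\|_2^2\}$ has small $L^2$ norm, so both contribute $O(K^{-4})\|f\|_2^{1/2}\|g\|_2^{1/2}$ to $\|f\ast g\|_2$ (the paper's Lemma~\ref{lem:controlling-tails} and Corollary~\ref{cor:middlepart-has-weighted-energy}); only then are $O(\log K)$ levels left and the pigeonhole is harmless. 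Also a normalization slip: on the selected level $f\approx(\#A)^{-1/2}2^{-H_2(X)/2}\cf_A\approx 2^{-H_2(X)}\cf_A$, not $(\#A)^{-1}2^{-H_2(X)/2}\cf_A$.

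The one genuine gap is against part (3) as literally stated. Your argument (and Theorem~\ref{thm:tao}) produces cosets $xH$ and $Hy$ on which $f\ge K^{-O(1)}(\#H)^{-1}$ holds only on a subset of density $K^{-O(1)}$ in $H$, namely on $A'\cap xH$; points of $xH$ outside the level set $A$ may carry no mass whatsoever, so ``for all $h\in H$'' does not follow. To get the literal statement one must post-process, e.g.\ replace $H$ by the set of $h$ for which both $\bbp(X=gh)$ and $\bbp(X=hg)$ exceed the threshold and re-verify the approximate-subgroup property for that smaller set, which is how \cite[Lemma 15]{Varju12} is arranged. Note that the paper's own multi-scale statement, Theorem~\ref{thm:multi-scale-BG}(3), asserts only the density version --- precisely what your plan delivers --- so the gap is relative to the quoted finite-group statement, not to anything the paper's later arguments require.
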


More generally, suppose  that $G$ is an arbitrary compact group and $A,B\subseteq G$ are two measurable subsets of positive measure.  The energy of the pair $(A, B)$ is defined by  
\be\label{def:energy}
E(A,B):=\|\bb1_A\ast\bb1_B\|_2^2.
\ee
When $G$ is finite, this reduces to 
\[
E(A,B)=\# Q(A,B)/(\# G)^3,
\]
where
\[
Q(A,B):=\{(a,b,a',b')\in A\times B\times A\times B|\h ab=a'b'\}.
\]

For general compact groups, the notion of  $\eta$\emph{-approximate energy} has been introduced in \cite{Tao08}. We will work with two different metrics on $G^4$: For $(g_i)_{1 \le i \le 4}$  and  $(g'_i)_{1 \le i \le 4}$ in $G^4$, define
\begin{equation}
\begin{split}      
d^+ \big( (g_i)_{1 \le i \le 4}, (g'_i)_{1 \le i \le 4} \big) & :=  \sum_{ 1 \le i \le 4} d( g_i, g'_i),  \quad  \text{and} \\
d \big((g_i)_{1 \le i \le 4}, (g'_i)_{1 \le i \le 4} \big) & := \max_{ 1 \le i \le 4} d( g_i, g'_i).
\end{split}
\end{equation}
For non-empty $A, B \subseteq G$ and $\eta>0$, we let
\be\label{def:approx-energy}
E_{\eta}(A,B):=\ncal_{\eta}(Q_{\eta}(A,B)),
\ee
where
\[
Q_{\eta}(A,B):=\{(a,b,a',b')\in A\times B\times A\times B|\h ab\in (a'b')_{\eta}\}
\]
where $ \ncal_{\eta}$ is computed with respect to $d^+$. 
 
The results of this section are proved under a weaker dimension condition that we now define. 
We say that $(G, d)$ satisfies {\it the dimension condition at scale $\eta$ with parameter $C'$} if 
there exist $C>1$ and $d_0 > 0$ such that  
\[
C^{-1} \eta^{d_0} \le |1_{c\eta}|\le C \eta^{d_0}
\]	
holds for all $c\in [C'^{-1},C']$.

Abusing the notation, for two positive quantities $X$ and $Y$ we write $X \preccurlyeq Y$ if $X/Y$ is bounded from above by an expression of the form $ \Omega^{O(1)}$, where  
$ \Omega= {2^{d_0}C^2}$. If $ X\preccurlyeq Y$ and $Y \preccurlyeq X$, we write $X \approx Y$.

\begin{theorem}[\cite{Tao08}, Theorem 6.10]\label{thm:tao}
Suppose $G$ is a compact group with a fixed bi-invariant metric. Suppose $A,B \subseteq G$ are non-empty. For every  $\eta>0$ 
and $K\succcurlyeq 1$, if $G$ satisfies the dimension condition at scale $\eta$ with parameter $C'$ (which is a large universal constant), and the energy bound 
\[
\emph{\text{(EB)}} \hspace{1cm}
E_{\eta}(A,B)\gg K^{-O(1)} \ncal_{\eta}(A)^{3/2} \ncal_{\eta}(B)^{3/2} 
\]
holds, then there is $H\subseteq G$ such that 
\begin{enumerate}
\item (Approximate structure) $H$ is an $K^{O(1)}$-approximate subgroup.
\item (Controlling the metric entropy) $|h(H;\eta)-\frac{h(A;\eta)+h(B;\eta)}{2}|\leq \log K$.
\item (Large intersection) There are $x,y\in G$, such that $|h(A\cap xH;\eta)-h(A;\eta)|\leq \log K$ and $|h(B\cap Hy;\eta)-h(B;\eta)|\leq \log K$.  	
\end{enumerate}	
\end{theorem}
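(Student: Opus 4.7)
The plan is to reduce the metric statement to a combinatorial one via $\eta$-discretization and then apply the non-abelian Balog--Szemer\'edi--Gowers--Pl\"unnecke--Ruzsa machinery. First I would fix maximal $\eta$-separated subsets $A^\ast \subseteq A$ and $B^\ast \subseteq B$; by Lemma~\ref{lem:basic-properties-metric-entropy} (which by Remark~\ref{rem:boundedscale} only uses the dimension condition at a bounded range of scales around $\eta$, and so is available here), we have $\# A^\ast \approx \ncal_\eta(A)$ and $\# B^\ast \approx \ncal_\eta(B)$. The approximate energy $E_\eta(A,B)$ can then be reinterpreted, up to the same $\Omega^{O(1)}$ slack, as
\[
E_\eta(A,B) \approx \#\left\{ (a,b,a',b') \in A^\ast \times B^\ast \times A^\ast \times B^\ast : ab \in (a'b')_{O(\eta)} \right\},
\]
since the dimension condition at scale $\eta$ allows us to freely trade covering numbers of the set $Q_\eta(A,B) \subset G^4$ for combinatorial counts. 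The hypothesis thus becomes a bipartite ``multiplicative'' energy bound on the combinatorial sets $A^\ast, B^\ast$.

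Next I would run the non-abelian BSG argument on the bipartite graph with vertex classes $A^\ast$ and $B^\ast$, where $(a,b)$ is joined when there are many ``partners'' $(a',b')$ with $ab \in (a'b')_{O(\eta)}$. A weighted form of the standard BSG theorem (or dependent random choice) produces subsets $A'' \subseteq A^\ast$ and $B'' \subseteq B^\ast$ with $\# A'' \succcurlyeq K^{-O(1)} \# A^\ast$, $\# B'' \succcurlyeq K^{-O(1)} \# B^\ast$, whose $\eta$-tolerant product satisfies
\[
\ncal_\eta(A'' \cdot B'') \le K^{O(1)} \, (\# A'' \cdot \# B'')^{1/2}.
\]
At this stage I have, in the metric setting, the analog of Ruzsa's small-doubling conclusion for a pair of large subsets.

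From here I would invoke the non-abelian Pl\"unnecke--Ruzsa covering lemma, applied to the $\eta$-tolerant product structure, to construct a set $H \subseteq G$ which is a $K^{O(1)}$-approximate subgroup and satisfies
\[
h(H;\eta) = \tfrac{1}{2}\bigl(h(A;\eta)+h(B;\eta)\bigr) + O(\log K),
\]
together with coverings $A'' \subseteq x_1 H \cup \cdots \cup x_m H$ and $B'' \subseteq H y_1 \cup \cdots \cup H y_{m'}$ with $m, m' \le K^{O(1)}$. By pigeonhole, some $x_i$ contributes $h(A \cap x_i H;\eta) \ge h(A;\eta) - O(\log K)$, and the matching upper bound is trivial since $A \cap x_i H \subseteq A$; likewise for some $y_j$. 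Taking this $x_i, y_j$ as the required $x, y$ verifies property (3), while (1) and (2) are immediate from the construction of $H$.

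The main obstacle is the propagation of the $\eta$-tolerance through the several Ruzsa triangle inequalities and covering-lemma invocations that go into BSG and Pl\"unnecke--Ruzsa: each step involves passing to $k$-fold products for some constant $k = O(1)$, so multiplication is only defined up to an accumulated error of roughly $C^k \eta$. The role of the large universal constant $C'$ in the dimension condition ``at scale $\eta$'' is precisely to absorb these constant-factor inflations of the scale: the volumes of $C^k\eta$-balls remain comparable to $\eta^{d_0}$ up to factors $\Omega^{O(1)}$, which by Lemma~\ref{lem:basic-properties-metric-entropy} and Corollary~\ref{cor:larger-nbhd} keeps metric entropies and discretized cardinalities interchangeable throughout the argument.
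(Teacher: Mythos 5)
You should first note that the paper itself contains no proof of this statement: it is imported wholesale from Tao's product-set paper, as the attribution [\cite{Tao08}, Theorem 6.10] indicates, and is then used as a black box in the proof of Theorem~\ref{thm:multi-scale-BG}. So the only meaningful comparison is with Tao's original argument, and your outline is essentially a reconstruction of it: discretize $A$ and $B$ by maximal $\eta$-separated nets, convert the approximate energy $E_\eta(A,B)$ into a combinatorial energy on those nets (your use of Lemma~\ref{lem:basic-properties-metric-entropy}, Remark~\ref{rem:boundedscale} and the scale-drift discussion via the parameter $C'$ is exactly how the surrounding section, e.g.\ Lemma~\ref{lem:approx-energy}, handles this), run a Balog--Szemer\'edi--Gowers step, and then pass from small tolerant doubling to an approximate subgroup by Ruzsa-type covering. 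That is the correct architecture.

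As a proof, however, the proposal has a genuine gap precisely at the two steps you cite as off-the-shelf tools, because those steps are the content of the theorem. First, a ``weighted form of the standard BSG theorem'' in the $\eta$-tolerant non-commutative setting is not a quotable standard statement: the relation $ab\in(a'b')_{O(\eta)}$ is not transitive, the path/popularity arguments in BSG compound the scale at every composition, and building the metric-entropy calculus in which these arguments go through is exactly what occupies Sections 5--6 of \cite{Tao08}. Second, there is no ``non-abelian Pl\"unnecke--Ruzsa'' inequality to invoke: Pl\"unnecke's inequality fails in non-commutative groups, and what replaces it is the Ruzsa triangle inequality together with Ruzsa covering lemmas, from which the existence of a single $K^{O(1)}$-approximate subgroup $H$ that simultaneously left-controls the large subset of $A$ and right-controls the large subset of $B$ (which is what both halves of conclusion (3) require, with the same $H$) is a non-trivial theorem of Tao (his Theorem 4.6 and its scale-$\eta$ analogue), not a routine corollary of $\ncal_\eta(A''\cdot B'')\le K^{O(1)}(\#A''\,\#B'')^{1/2}$. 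If your aim is to reprove Theorem~\ref{thm:tao} rather than cite it as the paper does, these two steps must be carried out in full, tracking the inflation $\eta\mapsto C^{O(1)}\eta$ as you indicate; otherwise the honest move is simply to quote \cite{Tao08} as the authors do.
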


Theorem~\ref{thm:multi-scale-BG} is both a multi-scale version of Theorem \ref{thm:BG} and a weighted version of Theorem \ref{thm:tao}.

Let $X$ and $Y$ be Borel random variables whose distributions are given by measures $\mu$ and $\nu$, respectively. Let $\mu_{\eta}:=\mu\ast P_{\eta}$ and $\nu_{\eta}:=\nu\ast P_{\eta}$.  The idea of the proof is to approximate $\mu_{\eta}$ and $\nu_{\eta}$ by step functions, and find subsets of $\eta$-neighborhoods of supports of $\mu$ and $\nu$ with large $\eta$-approximate energy. We will then apply Theorem \ref{thm:tao} to finish the proof. 
The following lemma summarizes some of the properties of the function $\mu_{\eta}$.

\begin{lem}\label{lem:distribution-func-changing-scale}
Suppose $G$ is a compact group and $G$ satisfies the dimension condition at scale $\eta$ with parameter $C'$
for some  $C'\gg 1$ (larger than a universal constant). Suppose $\mu$ and $\nu$ are two Borel probability measures on $G$ and $f\in L^2(G)$ is non-negative. Then 
\begin{enumerate}
\item 	For all $y\in x_{\eta}$ and $c\in [C'^{-1},C'-1]$, we have
$
\mu_{c\eta}(y) \preccurlyeq \mu_{(c+1)\eta}(x),
$
and $f_{c\eta}(y)\preccurlyeq f_{(c+1)\eta}(x);$
in particular $\mu_{\eta}(y)\preccurlyeq  \mu_{2\eta}(x) \preccurlyeq \mu_{3\eta}(y)$.
\item For any $\eta,\eta'>0$ and $y\in G$, we have $P_{\eta'}(y)\le \frac{|1_{\eta+\eta'}|}{|1_{\eta'}|} P_{\eta'+\eta}\ast  P_{\eta}(y)$. (see~\cite[Lemma A.5]{BIG-17})
\item For $c\in [(C'-1)^{-1},(C'-1)]$, we have $\|\mu_{c\eta}\|_2\approx \|\mu_{\eta}\|_2$ and $\|f_{c\eta}\|_2\approx \|f_{\eta}\|_2$.
\item $\|\mu_{\eta}\ast \nu_{\eta}\|_2\le \|(\mu\ast \nu)_{\eta}\|_2\preccurlyeq \|\mu_{\eta}\ast \nu_{\eta}\|_2$.
\end{enumerate}
\end{lem}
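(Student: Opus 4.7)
The plan is to prove the four parts sequentially, since each builds on the previous ones. Throughout, the key ingredients will be the formula $\mu_\eta(x) = \mu(x_\eta)/|1_\eta|$ from Lemma~\ref{eq:density-function-thickened}, the triangle inequality for the bi-invariant metric $d$, and the dimension condition at scale $\eta$ with parameter $C'$.

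For part~(1), I would observe that $y \in x_\eta$ forces $y_{c\eta} \subseteq x_{(c+1)\eta}$ by the triangle inequality, hence
\[
\mu_{c\eta}(y) \;=\; \frac{\mu(y_{c\eta})}{|1_{c\eta}|} \;\le\; \frac{|1_{(c+1)\eta}|}{|1_{c\eta}|}\,\mu_{(c+1)\eta}(x),
\]
and the volume ratio is $\preccurlyeq 1$ by the dimension condition (for all $c+1 \le C'$). The chain $\mu_\eta(y) \preccurlyeq \mu_{2\eta}(x) \preccurlyeq \mu_{3\eta}(y)$ then follows by applying the above first with $c=1$, then with $c=2$ after swapping the roles of $x$ and $y$. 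The same argument applies verbatim to any non-negative $f$.

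For part~(2), I would compute directly:
\[
P_{\eta+\eta'} \ast P_\eta(y) \;=\; \frac{|1_{\eta+\eta'} \cap y\cdot 1_\eta|}{|1_{\eta+\eta'}|\,|1_\eta|}.
\]
When $y \in 1_{\eta'}$, the triangle inequality yields $y\cdot 1_\eta \subseteq 1_{\eta+\eta'}$, so the intersection equals $y\cdot 1_\eta$ with Haar measure $|1_\eta|$, and the convolution evaluates to $1/|1_{\eta+\eta'}|$, matching $P_{\eta'}(y) = 1/|1_{\eta'}|$ times the indicated ratio. For $y \notin 1_{\eta'}$ the inequality is trivial since $P_{\eta'}(y)=0$. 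This step does not invoke the dimension condition.

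For part~(3), I would apply (2) with $\eta'=\eta$ and auxiliary parameter $(c-1)\eta$ (valid for $c \ge 1$) to get the pointwise bound $P_\eta \le (|1_{c\eta}|/|1_\eta|)\,P_{c\eta}\ast P_{(c-1)\eta}$; convolving with $\mu$ and invoking Young's inequality yields $\|\mu_\eta\|_2 \preccurlyeq \|\mu_{c\eta}\|_2$. The symmetric application with $\eta' = c\eta$ and auxiliary parameter $(1-c)\eta$ (valid for $c \le 1$) gives $\|\mu_{c\eta}\|_2 \preccurlyeq \|\mu_\eta\|_2$. The remaining directions then follow by reapplying these arguments after interchanging the scales $\eta$ and $c\eta$, using that $1/c$ also lies in $[(C'-1)^{-1}, C'-1]$. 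The proof for $f$ is identical. Finally, for part~(4), the first inequality is Young's, applied to $\mu_\eta \ast \nu_\eta = (\mu\ast\nu)_\eta \ast P_\eta$. For the second, I would establish the pointwise bound $P_\eta \ast P_\eta \succcurlyeq P_{\eta/2}$: for $y \in 1_{\eta/2}$, the triangle inequality gives $1_{\eta/2} \subseteq 1_\eta \cap y\cdot 1_\eta$, so
\[
(P_\eta \ast P_\eta)(y) \;\ge\; \frac{|1_{\eta/2}|}{|1_\eta|^2} \;\succcurlyeq\; \frac{1}{|1_{\eta/2}|} \;=\; P_{\eta/2}(y),
\]
and the bound is trivial outside $1_{\eta/2}$. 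Convolving with $\mu\ast\nu$ yields $\mu_\eta\ast\nu_\eta \succcurlyeq (\mu\ast\nu)_{\eta/2}$ pointwise, and part~(3) applied to $\rho = \mu\ast\nu$ at scales $\eta/2$ and $\eta$ gives $\|(\mu\ast\nu)_{\eta/2}\|_2 \approx \|(\mu\ast\nu)_\eta\|_2$, closing the estimate.

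The main obstacle will be the two-sided control in part~(3): (2) naturally produces inequalities in which enlarging the scale weakens the pointwise bound on the $P_\cdot$ factors, so one direction of $\|\mu_{c\eta}\|_2 \approx \|\mu_\eta\|_2$ is immediate from Young while the reverse requires the symmetry trick outlined above. Care will also be needed to absorb the recurring volume ratios $|1_{c\eta}|/|1_\eta|$ into the $\preccurlyeq$ constants via the dimension condition at the several comparable scales involved.
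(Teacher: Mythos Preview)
Your treatment of parts~(1), (2), and (4) is correct and close to the paper's (your argument for the second inequality in (4) via the pointwise bound $P_\eta\ast P_\eta\succcurlyeq P_{\eta/2}$ is a legitimate variant of the paper's route through $P_\eta\preccurlyeq P_{2\eta}\ast P_\eta$).

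There is, however, a genuine gap in part~(3). Both of your applications of (2) yield the \emph{same} direction of inequality: your ``bullet~1'' for $c\ge 1$ gives $\|\mu_\eta\|_2\preccurlyeq\|\mu_{c\eta}\|_2$, and your ``bullet~2'' for $c\le 1$ gives $\|\mu_{c\eta}\|_2\preccurlyeq\|\mu_\eta\|_2$; in each case the norm at the \emph{smaller} scale is bounded by the norm at the \emph{larger} scale. Your proposed ``swap $\eta\leftrightarrow c\eta$, $c\leftrightarrow 1/c$'' does not help: applying bullet~2 with base scale $c\eta$ and ratio $1/c\le 1$ reproduces exactly $\|\mu_\eta\|_2\preccurlyeq\|\mu_{c\eta}\|_2$, not the reverse. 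So you never obtain $\|\mu_{c\eta}\|_2\preccurlyeq\|\mu_\eta\|_2$ for $c>1$.

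The fix is to choose the auxiliary parameter in (2) differently. For $c>1$, apply (2) with $\eta'=c\eta$ and the second parameter equal to $\eta$ (not $(1-c)\eta$):
\[
P_{c\eta}\;\le\;\frac{|1_{(c+1)\eta}|}{|1_{c\eta}|}\,P_{(c+1)\eta}\ast P_\eta\;\preccurlyeq\;P_{(c+1)\eta}\ast P_\eta.
\]
Convolving with $\mu$ and grouping $P_\eta$ with $\mu$ gives $\mu_{c\eta}\preccurlyeq P_{(c+1)\eta}\ast\mu_\eta$, and now Young's inequality (with $\|P_{(c+1)\eta}\|_1=1$) yields $\|\mu_{c\eta}\|_2\preccurlyeq\|\mu_\eta\|_2$. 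The point is that by keeping the auxiliary parameter equal to the \emph{target} scale $\eta$, the factor $P_\eta$ can be absorbed into $\mu_\eta$ rather than $\mu_{c\eta}$, which is what reverses the inequality. Once (3) is repaired this way, your argument for (4) goes through as written.
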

\begin{proof}
The sequence of inequalities $$\mu_{c\eta}(y)=\frac{\mu(y_{c\eta})}{|1_{c\eta}|}\le \frac{|1_{(c+1)\eta}|}{|1_{c\eta}|}\cdot \frac{\mu(x_{(c+1)\eta})}{|1_{(c+1)\eta}|}	\preccurlyeq \mu_{(c+1)\eta}(x)$$
proves the first claim of part (1). The second claim of (1) is a special case. Part (2) is an easy consequence of the fact that, if $y\in 1_{\eta'}$, then for any $x\in 1_{\eta}$ we have $x^{-1}y\in 1_{\eta'+\eta}$.
 
For part (3), by symmetry we can and will assume that $c> 1$. Note that 
$$\mu_{\eta}(y)=\frac{\mu(y_{\eta})}{|1_{\eta}|}\le \frac{|1_{c \eta}|}{|1_{\eta}|}\cdot \frac{\mu(y_{c\eta}) } {|1_{c\eta}|}	\preccurlyeq \mu_{c\eta}(y).$$
Hence, we have $\|\mu_{\eta}\|_2\preccurlyeq \|\mu_{c\eta}\|_2$, and, in particular,  $\|f_{\eta}\|_2\preccurlyeq \|f_{c\eta}\|_2$ . In order to prove the reverse inequality, note that by 
(2) we have $\mu_{c\eta}\preccurlyeq P_{(c+1)\eta}\ast \mu_{\eta}$ and $f_{c\eta}\preccurlyeq P_{(c+1)\eta}\ast f_{\eta}$. These imply that 
\[
\|\mu_{c\eta}\|_2\preccurlyeq 
\|P_{(c+1)\eta}\ast \mu_{\eta}\|_2\le \|\mu_{\eta}\|_2
\quad \text{ and } \quad 
\|f_{c\eta}\|_2\preccurlyeq 
\|P_{(c+1)\eta}\ast f_{\eta}\|_2\le \|f_{\eta}\|_2.
\]  
Finally, to prove (4), first note that 
$$\|\mu_{\eta}\ast \nu_{\eta}\|_2 = \|  P_{\eta}\ast (\mu\ast\nu)_{\eta} \|_2
\le \|(\mu\ast\nu)_{\eta}\|_2.$$

Part (2) implies that  $P_{\eta}\preccurlyeq P_{2\eta}\ast P_{\eta}$, which, in turn, shows that 
\be\label{eq:thickening-one-vs-two}
(\mu\ast \nu)_{\eta}\preccurlyeq \mu_{2\eta}\ast \nu_{\eta}.
\ee 
On the other hand, using (3) and the fact that $\mu\ast \nu_{\eta}$ is a non-negative function,  we have 
\be\label{eq:thickening-one-vs-two-correcting}
\|\mu_{2\eta}\ast \nu_{\eta}\|_2=\|(\mu\ast\nu_{\eta})_{2\eta}\|_2\approx \|(\mu\ast \nu_{\eta})_{\eta}\|_2=\|\mu_{\eta}\ast \nu_{\eta}\|_2;
\ee
applying \eqref{eq:thickening-one-vs-two} and 
\eqref{eq:thickening-one-vs-two-correcting} we obtain the desired inequality.
\end{proof}

From now on, we will assume that $\mu$ and $\nu$ denote the distributions of the random variables $X$ and $Y$, respectively, and that the inequality
$$H_2(XY;\eta)\le \log K+ \frac{H_2(X;\eta)+H_2(Y;\eta)}{2}$$
holds.  Hence we have
\[\label{eq:non-flattening-l2-norm}
\|(\mu\ast \nu)_{\eta}\|_2\ge K^{-1} \|\mu_{\eta}\|_2^{1/2}\|\nu_{\eta}\|_2^{1/2}. 
\]
By Lemma~\ref{lem:distribution-func-changing-scale} and the above inequality we deduce that
\be\label{eq:non-flattening-l2-norm}
\|\mu_{\eta}\ast \nu_{\eta}\|_2\succcurlyeq K^{-1} \|\mu_{\eta}\|_2^{1/2}\|\nu_{\eta}\|_2^{1/2}.
\ee
By \eqref{eq:Young-ineq}, we have  $\|\mu_{\eta}\ast \nu_{\eta}\|_2\le \min(\|\mu_{\eta}\|_2,\|\nu_{\eta}\|_2)$, which implies
\be\label{eq:l2-norms-are-poly-K-the-same}
K^{-2}\|\mu_{\eta}\|_2\preccurlyeq \|\nu_{\eta}\|_2 \preccurlyeq K^2 \|\mu_{\eta}\|_2.
\ee

To find the desired step function approximation of $\mu_\eta$, we discretize $G$ and then choose subsets of this discrete model according to the value of $\mu_{\eta}$. We fix a maximal $\eta$-separating subset $\ccal$ of $G$. 

As it was mentioned  in Remark \ref{rem:boundedscale}, the proof of Lemma~\ref{lem:basic-properties-metric-entropy} only uses the dimension condition 
 for $\eta,  \eta/2$ and $2\eta$.  Hence for $c\in [(C'/2)^{-1},C'/2]$ we have 

\be\label{eq:basic-metric-entropy}
\ncal_{c\eta}(A)\approx \frac{|A_{\eta}|}{|1_{\eta}|}.
\ee
We partition $\ccal$ according to the value of $\mu_{2\eta}$ as follows:
\be\label{eq:partition-large}
\ccal(\mu;>):=\{x\in \ccal|\h \mu_{2\eta}(x) > K^{10} \|\mu_{\eta}\|_2^2\},
\ee 
\be\label{eq:partition-small}
\ccal(\mu;<):=\{x\in \ccal|\h \mu_{2\eta}(x) < K^{-10} \|\mu_{\eta}\|_2^2\},
\ee
and 
\be\label{eq:partition-middle}
\ccal(\mu;\sim):=\{x\in \ccal|\h K^{-10} \|\mu_{\eta}\|_2^2\le  \mu_{2\eta}(x) \le K^{10} \|\mu_{\eta}\|_2^2\}.
\ee
We also define the following functions: 
\be\label{eq:function-partition}
\mu^{>}_{\eta}:=\bb1_{\ccal(\mu;>)_{\eta}}\cdot \mu_{\eta}, \h\h 
\mu^{<}_{\eta}:=\bb1_{\ccal(\mu;<)_{\eta}}\cdot \mu_{\eta},
\ee
and 
\[
\mu^{\sim}_{\eta}(x):=\begin{cases} 
\mu_{\eta}(x) &\text{ if } x\not\in \ccal(\mu;>)_\eta\cup \ccal(\mu;<)_\eta\\
0 & \text{ otherwise.}	
\end{cases}
\]
And so $\mu_{\eta}(x)\le \mu^>_{\eta}(x)+\mu^<_\eta(x)+\mu^\sim_\eta(x)$, and inequality can possibly occur only in $\ccal(\mu;>)_\eta\cap \ccal(\mu;<)_\eta$.
The functions $\mu^{>}_{\eta}$ and $\mu^{<}_{\eta}$ should be viewed as \emph{tails} of $\mu_{\eta}$ and will now be shown to be negligible. 
\begin{lem}\label{lem:controlling-tails}
In the above setting, $\|\mu^{>}_{\eta}\|_1\preccurlyeq K^{-10}$ and $\|\mu^{<}_{\eta}\|_2\preccurlyeq K^{-5} \|\mu_{\eta}\|_2$.	
\end{lem}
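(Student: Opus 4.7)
My plan is to handle both inequalities through the same discretization scheme and then exploit the size conditions defining $\ccal(\mu;>)$ and $\ccal(\mu;<)$ to gain factors of $K$ via a Markov-type upgrade.

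First I would replace $\mu_\eta$ by its restriction to a discrete model living on $\ccal$. Part~(1) of Lemma~\ref{lem:distribution-func-changing-scale} provides the multiplicative Lipschitz bound $\mu_\eta(y)\preccurlyeq \mu_{2\eta}(x)$ whenever $y\in x_\eta$. Covering $\ccal(\mu;>)_\eta$ (resp.\ $\ccal(\mu;<)_\eta$) by the balls $\{x_\eta:x\in\ccal(\mu;>)\}$ (resp.\ $\{x_\eta:x\in\ccal(\mu;<)\}$), and using the dimension condition to get $|x_\eta|=|1_\eta|$, this reduces the two target inequalities to
\[
\|\mu_\eta^{>}\|_1\preccurlyeq \sum_{x\in\ccal(\mu;>)}\mu_{2\eta}(x)|1_\eta|,\qquad
\|\mu_\eta^{<}\|_2^2\preccurlyeq \sum_{x\in\ccal(\mu;<)}\mu_{2\eta}(x)^2|1_\eta|.
\]

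Next I would feed in the defining inequalities of $\ccal(\mu;>)$ and $\ccal(\mu;<)$. On $\ccal(\mu;>)$ we have $1<\mu_{2\eta}(x)/(K^{10}\|\mu_\eta\|_2^2)$, so $\mu_{2\eta}(x)\le \mu_{2\eta}(x)^2/(K^{10}\|\mu_\eta\|_2^2)$, turning the sum in the first inequality into $(K^{10}\|\mu_\eta\|_2^2)^{-1}\sum_{x\in\ccal(\mu;>)}\mu_{2\eta}(x)^2|1_\eta|$. On $\ccal(\mu;<)$ we have $\mu_{2\eta}(x)<K^{-10}\|\mu_\eta\|_2^2$, so $\mu_{2\eta}(x)^2\le K^{-10}\|\mu_\eta\|_2^2\cdot \mu_{2\eta}(x)$, reducing the second inequality to bounding $K^{-10}\|\mu_\eta\|_2^2\sum_{x\in\ccal(\mu;<)}\mu_{2\eta}(x)|1_\eta|$.

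Thus the proof boils down to two Riemann-sum style estimates, which I would verify using that the balls $\{x_\eta:x\in\ccal\}$ have bounded overlap (since $\ccal$ is $\eta$-separated, each point of $G$ lies in at most $\preccurlyeq 2^{d_0}$ such balls). For the $\ell^2$ sum I would apply Lemma~\ref{lem:distribution-func-changing-scale}(1) in the form $\mu_{2\eta}(x)\preccurlyeq \mu_{3\eta}(y)$ for $y\in x_\eta$ (obtained by swapping $x,y$ and taking $c=2$) and integrate, to obtain
\[
\sum_{x\in\ccal}\mu_{2\eta}(x)^2|1_\eta|\preccurlyeq \int_G \mu_{3\eta}(y)^2\,dy=\|\mu_{3\eta}\|_2^2\approx \|\mu_\eta\|_2^2,
\]
the last step using Lemma~\ref{lem:distribution-func-changing-scale}(3). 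For the $\ell^1$ sum I would use $|1_\eta|\approx |1_{2\eta}|$ from the dimension condition and then $\sum_{x\in\ccal}\mu(x_{2\eta})=\int_G\#\{x\in\ccal:z\in x_{2\eta}\}\,d\mu(z)\preccurlyeq 1$ by the same bounded-overlap argument. Substituting back yields $\|\mu_\eta^{>}\|_1\preccurlyeq K^{-10}$ and $\|\mu_\eta^{<}\|_2^2\preccurlyeq K^{-10}\|\mu_\eta\|_2^2$, hence $\|\mu_\eta^{<}\|_2\preccurlyeq K^{-5}\|\mu_\eta\|_2$. The only mildly delicate step is the swap-and-scale application of Lemma~\ref{lem:distribution-func-changing-scale}(1) needed to pass from a sum over the discrete set $\ccal$ back to an honest integral against $\mu_{3\eta}^2$; everything else is book-keeping.
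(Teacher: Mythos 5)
Your argument is correct and is essentially the paper's proof: both rest on the same two ingredients, namely the threshold defining $\ccal(\mu;>)$ and $\ccal(\mu;<)$ used Markov-style to trade one power of $\mu_{2\eta}$ for $K^{\pm 10}\|\mu_\eta\|_2^2$, and Lemma~\ref{lem:distribution-func-changing-scale}(1),(3) to compare values and norms at scales $\eta$, $2\eta$, $3\eta$. The only difference is bookkeeping: the paper integrates the pointwise bounds $\mu_{3\eta}(y)\succcurlyeq\mu_{2\eta}(x)$ and $\mu_\eta(y)\preccurlyeq\mu_{2\eta}(x)$ directly over the thickened sets (so $\int_{\ccal(\mu;>)_\eta}\mu_{3\eta}^2\le\|\mu_{3\eta}\|_2^2$ and $\int_{\ccal(\mu;<)_\eta}\mu_\eta\le 1$ come for free), whereas you take a detour through Riemann sums over $\ccal$ and then need the bounded-overlap count to return to integrals — valid, but avoidable.
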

\begin{proof}
	For any $y\in \ccal(\mu;>)_{\eta}$, there is $x\in \ccal(\mu,>)$ such that $y\in x_{\eta}$. Applying part (1) of Lemma~\ref{lem:distribution-func-changing-scale} we have 
	\[
	\mu_{3\eta}(y)\succcurlyeq \mu_{2\eta}(x) > K^{10} \|\mu_\eta\|_2^2.
	\]
On the other hand, by part (3) of Lemma~\ref{lem:distribution-func-changing-scale} we have $\|\mu_\eta\|_2\approx \|\mu_{3\eta}\|_2$. Hence, we have 
	\[
	\|\mu_\eta\|_2^2\succcurlyeq \int_{\ccal(\mu,>)_{\eta}} \mu_{3\eta}(y)^2 \d y\succcurlyeq K^{10} \|\mu_{\eta}\|_2^2 \int_{\ccal(\mu,>)_{\eta}} \mu_{\eta}(y) \d y= K^{10} \|\mu_{\eta}\|_2^2 \|\mu^{>}_{\eta}\|_1,
	\]	
	which implies the first inequality. 
	
	For any $y\in \ccal(\mu,<)_{\eta}$, there is $x\in \ccal(\mu,<)$ such that $y\in x_{\eta}$; and so by part (1) of Lemma~\ref{lem:distribution-func-changing-scale} we have $\mu_{\eta}(y)\preccurlyeq \mu_{2\eta}(x)\le K^{-10} \|\mu_{\eta}\|_2^2$. Therefore
	\[
	\|\mu^{<}_{\eta}\|_2^2=\int_{\ccal(\mu,<)_{\eta}} \mu_{\eta}(y)^2 \d y \preccurlyeq  K^{-10} \|\mu_{\eta}\|_2^2 \int_{\ccal(\mu,<)_{\eta}} \mu_{\eta}(y) \d y\le K^{-10} \|\mu_{\eta}\|_2^2;
	\]
	and the second inequality follows.	
\end{proof}
\begin{corollary}\label{cor:middlepart-has-weighted-energy}
In the above setting, $\|\mu^{\sim}_{\eta}\ast \nu^{\sim}_{\eta}\|_2\ge (2K)^{-1} \|\mu_{\eta}\|_2^{1/2} \|\nu_{\eta}\|_2^{1/2}$ if $K\succcurlyeq 1$.	
\end{corollary}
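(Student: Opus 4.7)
The plan is to derive the bound by establishing the pointwise inequality
\[
\mu_{\eta}\le \mu^{>}_{\eta}+\mu^{<}_{\eta}+\mu^{\sim}_{\eta}
\]
and the analogous one for $\nu_\eta$, then expanding the convolution $\mu_\eta \ast \nu_\eta$ into nine pieces indexed by pairs $(a,b)\in\{>,<,\sim\}^2$ and isolating the $(\sim,\sim)$-piece. By the triangle inequality in $L^2$:
\[
\|\mu^{\sim}_\eta \ast \nu^{\sim}_\eta\|_2 \;\ge\; \|\mu_\eta \ast \nu_\eta\|_2 \;-\; \sum_{(a,b)\neq(\sim,\sim)} \|\mu^a_\eta \ast \nu^b_\eta\|_2.
\]
The left endpoint is lower bounded via the energy hypothesis through \eqref{eq:non-flattening-l2-norm}, namely $\|\mu_\eta \ast \nu_\eta\|_2 \succcurlyeq K^{-1} \|\mu_\eta\|_2^{1/2}\|\nu_\eta\|_2^{1/2}$, so everything reduces to showing that each of the eight cross terms is a smaller-order-in-$K$ multiple of $\|\mu_\eta\|_2^{1/2}\|\nu_\eta\|_2^{1/2}$.

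For this, I would apply Young's inequality in the two natural forms $\|f\ast g\|_2 \le \|f\|_1\|g\|_2$ and $\|f \ast g\|_2 \le \|f\|_2\|g\|_1$, using whichever slot is ``tail-controlled'' by Lemma~\ref{lem:controlling-tails}. Concretely: whenever a $>$-factor appears, pair it with its $L^1$-bound $\|\mu^{>}_\eta\|_1 \preccurlyeq K^{-10}$ (and similarly for $\nu^{>}_\eta$), and bound the other factor's $L^2$-norm trivially by $\|\mu_\eta\|_2$ or $\|\nu_\eta\|_2$; whenever a $<$-factor appears without a $>$-factor, put it in the $L^2$ slot and use $\|\mu^{<}_\eta\|_2 \preccurlyeq K^{-5}\|\mu_\eta\|_2$, bounding the other factor's $L^1$-norm by $1$. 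Each of the eight cross terms then comes out at most $\Omega^{O(1)} K^{-5}\max(\|\mu_\eta\|_2,\|\nu_\eta\|_2)$. Invoking \eqref{eq:l2-norms-are-poly-K-the-same} to trade $\|\mu_\eta\|_2$ and $\|\nu_\eta\|_2$ for the geometric mean $\|\mu_\eta\|_2^{1/2}\|\nu_\eta\|_2^{1/2}$ (at the cost of an extra factor $K$), each cross term is bounded by $\Omega^{O(1)} K^{-4}\|\mu_\eta\|_2^{1/2}\|\nu_\eta\|_2^{1/2}$.

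Summing the eight cross terms and subtracting from the lower bound yields
\[
\|\mu^{\sim}_\eta \ast \nu^{\sim}_\eta\|_2 \;\ge\; \bigl(\Omega^{-O(1)} K^{-1}-8\,\Omega^{O(1)} K^{-4}\bigr)\|\mu_\eta\|_2^{1/2}\|\nu_\eta\|_2^{1/2}.
\]
Under the hypothesis $K\succcurlyeq 1$, that is, $K$ exceeds a sufficiently large power of $\Omega$, the second term inside the parenthesis is at most half of the first; absorbing universal constants into the $\succcurlyeq$ and rearranging gives the stated bound $\|\mu^{\sim}_\eta \ast \nu^{\sim}_\eta\|_2 \ge (2K)^{-1}\|\mu_\eta\|_2^{1/2}\|\nu_\eta\|_2^{1/2}$. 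The only delicate point is bookkeeping: one must keep track of which Young inequality to apply to which cross term so that the tail control is used in the correct slot, and ensure that the $\Omega$-factors accumulating from \eqref{eq:l2-norms-are-poly-K-the-same} and Lemma~\ref{lem:controlling-tails} can indeed be absorbed into the hypothesis $K\succcurlyeq 1$. No further ideas seem necessary beyond this careful accounting.
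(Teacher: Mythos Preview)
Your proposal is correct and follows essentially the same approach as the paper: decompose via $\mu_\eta \le \mu^{>}_\eta + \mu^{<}_\eta + \mu^{\sim}_\eta$, apply the triangle inequality and Young's inequality with the tail bounds from Lemma~\ref{lem:controlling-tails}, and use \eqref{eq:l2-norms-are-poly-K-the-same} to convert everything to the geometric mean. The only cosmetic difference is that the paper expands sequentially (first split $\mu_\eta$ keeping $\nu_\eta$ intact, then split $\nu_\eta$ against $\mu^{\sim}_\eta$), producing four cross terms rather than your eight, but the bounds and the logic are identical.
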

\begin{proof}
For all $y \in G$, we have $\mu_\eta(y)\ge \mu^\sim_\eta(y)$. By Lemma~\ref{lem:controlling-tails}, and \eqref{eq:l2-norms-are-poly-K-the-same}, we have
\begin{align}
\label{eq:needed-inequalities->-mu}
\|\mu^{>}_{\eta}\ast \nu_{\eta}\|_2\preccurlyeq & K^{-10} \|\nu_{\eta}\|_2 \preccurlyeq K^{-9} \|\mu_{\eta}\|_2^{1/2} \|\nu_{\eta}\|_2^{1/2},\\
\label{eq:needed-inequalities-<-mu}
\|\mu^{<}_{\eta}\ast \nu_{\eta}\|_2\le &\|\mu^{<}_{\eta}\|_2 \preccurlyeq K^{-5} \|\mu_{\eta}\|_2\preccurlyeq K^{-4} \|\mu_{\eta}\|_2^{1/2} \|\nu_{\eta}\|_2^{1/2},
\\
\label{eq:needed-inequalities->-nu}
\|\mu^{\sim}_{\eta}\ast \nu^{>}_{\eta}\|_2\preccurlyeq & K^{-10} \|\mu^{\sim}_{\eta}\|_2\le K^{-10} \|\mu_{\eta}\|_2  \preccurlyeq K^{-9} \|\mu_{\eta}\|_2^{1/2} \|\nu_{\eta}\|_2^{1/2},
\\
\label{eq:needed-inequalities-<-nu}
\|\mu^{\sim}_{\eta}\ast \nu^{<}_{\eta}\|_2\le &\|\mu^{\sim}_{\eta}\|_1\|\mu^{<}_{\eta}\|_2 \preccurlyeq K^{-5} \|\nu_{\eta}\|_2\preccurlyeq K^{-4} \|\mu_{\eta}\|_2^{1/2} \|\nu_{\eta}\|_2^{1/2}.
\end{align}
Hence by the triangle inequality and $\mu_\eta(y)\le \mu^>_\eta(y)+\mu^<_\eta(y)+\mu^\sim_\eta(y)$ we get
\[
\|\mu^{\sim}_{\eta}\ast \nu^{\sim}_{\eta}\|_2 \ge (K^{-1}-\Omega^{O(1)}(2K^{-4}+2K^{-9}))\|\mu_{\eta}\|_2^{1/2} \|\nu_{\eta}\|_2^{1/2}.
\]
For $K \succcurlyeq 1$, the claim follows. 
\end{proof}
We will now apply Corollary \ref{cor:middlepart-has-weighted-energy} to prove that the energy $E_{16\eta}(\ccal^{\sim}(\mu;\eta), \ccal^{\sim}(\nu;\eta))$ is \emph{large}. Using this bound and Theorem \ref{thm:tao}, we deduce Theorem \ref{thm:multi-scale-BG}. 

\begin{lem}\label{lem:approx-energy}
For non-empty sets $A,B\subseteq G$, we have 
\[
E_{\eta/16}(A,B)\preccurlyeq \frac{E({A_{\eta}},{B_{\eta}})}{|1_{\eta}|^{3}} \preccurlyeq E_{6\eta}(A,B).
\]	
\end{lem}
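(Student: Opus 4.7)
The plan is to recognize that $E(A_\eta,B_\eta)$ is essentially a three-dimensional integral over the submanifold $\{uv=u'v'\}\subseteq G^4$, whereas $E_{c\eta}(A,B)$ is a four-dimensional covering number; the factor $|1_\eta|^3$ is precisely the missing dimension. I would bridge the two via the \emph{thickened energy} $\tilde{E}_\rho(A_\eta,B_\eta):=|Q_\rho(A_\eta,B_\eta)|$, measured in product Haar measure on $G^4$. Setting $F:=\bb1_{A_\eta}\ast\bb1_{B_\eta}$ and using bi-invariance of $d$, the measure-preserving substitution $(u,v,u',v')\mapsto(u,uv,u',u'v')$ collapses $\tilde{E}_\rho$ to
\[
\tilde{E}_\rho(A_\eta,B_\eta)=\iint F(x)F(y)\bb1[d(x,y)<\rho]\,dx\,dy=\langle F,F\ast\bb1_{1_\rho}\rangle\leq|1_\rho|\cdot E(A_\eta,B_\eta),
\]
the last step by Cauchy--Schwarz followed by Young's inequality $\|F\ast\bb1_{1_\rho}\|_2\leq|1_\rho|\|F\|_2$. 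This is the central estimate.

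For the left inequality, I would take a maximal $\eta/16$-separated (in $d^+$) subset $T\subseteq Q_{\eta/16}(A,B)$, so that $\#T\approx E_{\eta/16}(A,B)$ by the $G^4$-analogue of Lemma~\ref{lem:basic-properties-metric-entropy}. For each $t=(a,b,a',b')\in T$, the $d$-ball $t_{\eta/128}$ (the product of four coordinate balls of radius $\eta/128$) sits inside $(A_\eta)^2\times(B_\eta)^2$, and for every $(u,v,u',v')\in t_{\eta/128}$ bi-invariance gives
\[
d(uv,u'v')\leq d(u,a)+d(v,b)+d(ab,a'b')+d(a',u')+d(b',v')<\tfrac{3\eta}{32}.
\]
These product balls are pairwise disjoint, since $d^+$-separation $\eta/16$ forces $d$-separation $\geq\eta/64$. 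Hence $\#T\cdot|1_{\eta/128}|^4\leq\tilde{E}_{3\eta/32}(A_\eta,B_\eta)\leq|1_{3\eta/32}|\cdot E(A_\eta,B_\eta)$, and the dimension condition reduces $|1_{3\eta/32}|/|1_{\eta/128}|^4$ to $|1_\eta|^{-3}$ up to a factor absorbed by $\preccurlyeq$, yielding the claim.

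For the right inequality, I would discretize $A_\eta,B_\eta$ using maximal $\eta/2$-separated subsets $S_A\subseteq A$ and $S_B\subseteq B$. Since $A\subseteq(S_A)_{\eta/2}$ gives $A_\eta\subseteq(S_A)_{3\eta/2}$, one has $\bb1_{A_\eta}\leq\sum_{a\in S_A}\bb1_{a_{3\eta/2}}$ pointwise, and similarly for $B$. Expanding the square,
\[
E(A_\eta,B_\eta)\leq\sum_{a,a'\in S_A,\ b,b'\in S_B}\langle\bb1_{a_{3\eta/2}}\ast\bb1_{b_{3\eta/2}},\bb1_{a'_{3\eta/2}}\ast\bb1_{b'_{3\eta/2}}\rangle,
\]
each term is at most $\|\cdot\|_\infty\|\cdot\|_1\leq|1_{3\eta/2}|^3$, and it is nonzero only when the supports $(ab)_{3\eta}$ and $(a'b')_{3\eta}$ meet, i.e., $d(ab,a'b')<6\eta$. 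Thus $E(A_\eta,B_\eta)\leq|1_{3\eta/2}|^3\cdot N$, where $N$ counts admissible quadruples $(a,b,a',b')\in S_A\times S_B\times S_A\times S_B$. Such quadruples form an $\eta/2$-separated (in $d^+$) subset of $Q_{6\eta}(A,B)$, so by the $G^4$-analogue of Lemma~\ref{lem:basic-properties-metric-entropy} combined with the standard comparison of covering numbers at different scales, $N\preccurlyeq\mathcal{N}_{\eta/2}(Q_{6\eta}(A,B))\preccurlyeq\mathcal{N}_{6\eta}(Q_{6\eta}(A,B))=E_{6\eta}(A,B)$; combined with $|1_{3\eta/2}|^3\approx|1_\eta|^3$, this finishes the bound.

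The main obstacle will be the careful bookkeeping required to ensure the explicit constants $1/16$ and $6$ in the statement come out correctly: the box radius $\eta/128$ in the lower bound and the discretization scale $\eta/2$ in the upper bound must be calibrated against the separation, cover, and triangle-inequality requirements so that the approximate-quadruple sets genuinely sit in $Q_{\eta/16}$ and $Q_{6\eta}$ respectively. The dimension condition is invoked only on a bounded range of scales (roughly $\eta/128$ through $6\eta$), which is admissible provided the parameter $C'$ in the hypothesis is taken sufficiently large.
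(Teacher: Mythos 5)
Your proof is correct, and the overall strategy is the same as the paper's: convert the covering numbers $E_{c\eta}(A,B)$ into Haar measures of thickened quadruple sets via the $G^4$-version of Lemma~\ref{lem:basic-properties-metric-entropy} (with the $d^+$-ball volume $\approx|1_\eta|^4$), and then relate those measures to the $L^2$-energy by Fubini and the dimension condition on a bounded range of scales. The execution differs in both directions, in ways worth noting. For the lower bound on $E(A_\eta,B_\eta)$, the paper writes $E(A_{\eta/16},B_{\eta/16})$ as a $G^3$-integral, inserts an auxiliary variable $h\in 1_{\eta/16}$, and exhibits the resulting set inside $Q_\eta(A,B)_\eta$ by containment; you instead pack disjoint product balls around a separated net of $Q_{\eta/16}(A,B)$ into $Q_{3\eta/32}(A_\eta,B_\eta)$ and control the latter by the identity $|Q_\rho(A_\eta,B_\eta)|=\langle F,F\ast\bb1_{1_\rho}\rangle\le |1_\rho|\,\|F\|_2^2$ with $F=\bb1_{A_\eta}\ast\bb1_{B_\eta}$ --- a clean Cauchy--Schwarz/Young substitute for the paper's set-containment bookkeeping. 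For the upper bound, the paper bounds $|Q_\eta(A,B)_\eta|$ directly by $|1_\eta|\,E(A_{6\eta},B_{6\eta})$ through containments, whereas you decompose $\bb1_{A_\eta}$ and $\bb1_{B_\eta}$ over $\eta/2$-nets, bound each nonvanishing cross term by $|1_{3\eta/2}|^3$, and note that the admissible quadruples form an $\eta/2$-separated subset of $Q_{6\eta}(A,B)$. Both routes produce the stated constants $1/16$ and $6$ and use the dimension condition only for multipliers in a bounded range, so your argument is a valid (and in places tidier) alternative to the paper's.
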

\begin{proof}
By definition $E_{\eta}(A,B):=\ncal_{\eta}(Q_{\eta}(A,B))$ with $d^+$-metric on $G^4$. Hence by Lemma~\ref{lem:basic-properties-metric-entropy} we have 
\[
 E_{\eta}(A,B) \approx \frac{|(Q_{\eta}(A,B))_{\eta}|}{|(1,1,1,1)_{\eta}^+|},
\]	
where $+$ indicates that we are using the $d^+$-metric. Since 
\[
(1,1,1,1)_{\eta/4}\subseteq (1,1,1,1)_{\eta}^+\subseteq (1,1,1,1)_{\eta},
\]
 by $|1_{c\eta}|\approx |1_{\eta}|$ we deduce 
\be\label{eq:approx-energy}
E_{\eta}(A,B) \approx \frac{|(Q_{\eta}(A,B))_{\eta}|}{|1_{\eta}|^4}.
\ee
Based on \eqref{eq:approx-energy}, we will focus on $|Q_{\eta}(A,B)_{\eta}|$ and relate it to energies of thickened sets. First, we will exprees $E(A_{\eta},B_{\eta})$ as the measure of a subset of $G^3$:
\begin{align}
\notag E(A_{\eta},B_{\eta})=& \|\bb1_{A_{\eta}}\ast \bb1_{B_{\eta}}	\|_2^2 \\
\notag
= &\int_G\int_G\int_G \bb1_{A_{\eta}}(x)\bb1_{B_{\eta}}(x^{-1}y)\bb1_{A_{\eta}}(z)\bb1_{B_{\eta}}(z^{-1}y)\h \d x \d z \d y \\
\notag
= & |\{(x,z,y)\in A_{\eta}\times A_{\eta}\times G|\h x^{-1}y\in B_{\eta}, z^{-1}y\in B_{\eta}\}|
\\
\label{eq:energy-thicken-sets}
= & |\{(x,z,t)\in A_{\eta}\times A_{\eta}\times B_{\eta}|\h z^{-1}xt\in B_{\eta}\}|.
\end{align}
Using \eqref{eq:energy-thicken-sets}, we can find an upper bound for $|Q_{\eta}(A,B)_{\eta}|$. We have 
\begin{align}
\notag |Q_{\eta}(A,B)_{\eta}|\le & |\{(x_1,x_2,y_1,y_2)\in A_{\eta}\times A_{\eta}\times B_{\eta}\times B_{\eta}|\h y_2^{-1}x_2^{-1}y_1x_1\in 1_{5\eta} \}| 
\\
\notag = &
|\{(x_1,x_2,y_1,h)\in  A_{\eta}\times A_{\eta}\times B_{\eta}\times 1_{5\eta}|\h x_2^{-1}x_1y_2h^{-1}\in B_{\eta}\}|
\\
\notag \le &
 |\{(x_1,x_2,y_1,h)\in  A_{\eta}\times A_{\eta}\times B_{\eta}\times 1_{5\eta}|\h x_2^{-1}x_1y_2\in B_{6\eta}\}|	
 \\
\notag \preccurlyeq & |1_{\eta}|  |\{(x_1,x_2,y_1)\in  A_{6\eta}\times A_{6\eta}\times B_{6\eta}|\h x_2^{-1}x_1y_2\in B_{6\eta}\}|	
\\
\label{eq:upper-bound-energy} = &|1_{\eta}| E(A_{6\eta},B_{6\eta}).
\end{align}
Again using \eqref{eq:energy-thicken-sets}, we find a lower bound for $|Q_{\eta}(A,B)_{\eta}|$:
\begin{align}
\notag 
|Q_{\eta}(A,B)_{\eta}| \ge & 
|\{(x_1,x_2,y_1,y_2)\in A_{\eta/8}\times A_{\eta/8}\times B_{\eta/8}\times B_{\eta/8}|\h y_2^{-1}x_2^{-1}y_1x_1\in 1_{\eta/2} \}| 
\\
\notag
= &
|\{(x_1,x_2,y_1,h)\in  A_{\eta/8}\times A_{\eta/8}\times B_{\eta/8}\times 1_{\eta/2}|\h x_2^{-1}y_1x_1h^{-1}\in B_{\eta/8}\}|
\\
\notag
\ge 
&
|\{(x_1,x_2,y_1,h)\in  A_{\eta/8}\times A_{\eta/8}\times B_{\eta/8}\times 1_{\eta/16}|\h x_2^{-1}y_1x_1h^{-1}\in B_{\eta/16}\}|
\\
\label{eq:lower-energy}
\succcurlyeq
&
|1_{\eta}| E(A_{\eta/16},B_{\eta/16}).	
\end{align}
By \eqref{eq:approx-energy}, \eqref{eq:upper-bound-energy}, and \eqref{eq:lower-energy}, claim follows.
\end{proof}
\begin{lem}\label{lem:volume-thickenned-middle-part}
In the above setting, $\frac{1}{K^{O(1)}\|\mu_{\eta}\|_2^2}\le |\ccal(\mu,\sim)_\eta| \le \frac{K^{O(1)}}{\|\mu_{\eta}\|_2^2}$.	
\end{lem}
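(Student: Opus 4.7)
The plan is to exploit the mixed-energy lower bound of Corollary~\ref{cor:middlepart-has-weighted-energy} together with pointwise control of $\mu_\eta$ on $\ccal(\mu;\sim)_\eta$ coming from the definition \eqref{eq:partition-middle} and part (1) of Lemma~\ref{lem:distribution-func-changing-scale}. The upper bound on $|\ccal(\mu;\sim)_\eta|$ will be proved by a pointwise lower bound on $\mu_{3\eta}$ on $\ccal(\mu;\sim)_\eta$, while the lower bound will be proved by a pointwise upper bound on $\mu_\eta$ there combined with $\|\mu^\sim_\eta\|_1\succcurlyeq K^{-O(1)}$.

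First I would establish the key pointwise bounds. Since $\ccal$ is a maximal $\eta$-separated subset of $G$, we have $G=\ccal_\eta=\ccal(\mu;>)_\eta\cup\ccal(\mu;<)_\eta\cup\ccal(\mu;\sim)_\eta$, and in particular $\supp\mu^\sim_\eta\subseteq\ccal(\mu;\sim)_\eta$. For any $y\in\ccal(\mu;\sim)_\eta$ pick $x\in\ccal(\mu;\sim)$ with $y\in x_\eta$; part~(1) of Lemma~\ref{lem:distribution-func-changing-scale} applied with $c=1$ and $c=2$ gives
\[
\mu_\eta(y)\preccurlyeq\mu_{2\eta}(x)\le K^{10}\|\mu_\eta\|_2^2\quad\text{and}\quad\mu_{3\eta}(y)\succcurlyeq\mu_{2\eta}(x)\ge K^{-10}\|\mu_\eta\|_2^2.
\]

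For the upper bound of $|\ccal(\mu;\sim)_\eta|$, I would integrate the square of the second inequality over $\ccal(\mu;\sim)_\eta$ and use part~(3) of Lemma~\ref{lem:distribution-func-changing-scale}:
\[
K^{-20}\|\mu_\eta\|_2^4\cdot|\ccal(\mu;\sim)_\eta|\preccurlyeq\int_{\ccal(\mu;\sim)_\eta}\mu_{3\eta}(y)^2\,\mathrm dy\le\|\mu_{3\eta}\|_2^2\approx\|\mu_\eta\|_2^2,
\]
which rearranges to $|\ccal(\mu;\sim)_\eta|\preccurlyeq K^{20}/\|\mu_\eta\|_2^2$. For the lower bound, Young's inequality together with $\|\nu^\sim_\eta\|_1\le1$ and Corollary~\ref{cor:middlepart-has-weighted-energy} gives $\|\mu^\sim_\eta\|_2\ge\|\mu^\sim_\eta\ast\nu^\sim_\eta\|_2\succcurlyeq K^{-1}\|\mu_\eta\|_2^{1/2}\|\nu_\eta\|_2^{1/2}$, and then \eqref{eq:l2-norms-are-poly-K-the-same} yields $\|\mu^\sim_\eta\|_2^2\succcurlyeq K^{-O(1)}\|\mu_\eta\|_2^2$. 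By Cauchy--Schwarz (or equivalently $\|f\|_2^2\le\|f\|_\infty\|f\|_1$) and the first pointwise bound,
\[
\|\mu^\sim_\eta\|_1\ge\frac{\|\mu^\sim_\eta\|_2^2}{\|\mu^\sim_\eta\|_\infty}\succcurlyeq\frac{K^{-O(1)}\|\mu_\eta\|_2^2}{K^{10}\|\mu_\eta\|_2^2}\succcurlyeq K^{-O(1)}.
\]
On the other hand, $\|\mu^\sim_\eta\|_1\le\|\mu^\sim_\eta\|_\infty\cdot|\supp\mu^\sim_\eta|\preccurlyeq K^{10}\|\mu_\eta\|_2^2\cdot|\ccal(\mu;\sim)_\eta|$. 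Combining these two estimates yields $|\ccal(\mu;\sim)_\eta|\succcurlyeq K^{-O(1)}/\|\mu_\eta\|_2^2$.

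This is largely a packaging step; the substantive input (the energy-style lower bound on $\|\mu^\sim_\eta\ast\nu^\sim_\eta\|_2$) has already been secured in Corollary~\ref{cor:middlepart-has-weighted-energy}, so there is no significant new obstacle. The only point requiring mild care is the interplay between the two scales $\eta$ and $3\eta$: one needs the pointwise upper bound on $\mu_\eta$ (so that $\mu^\sim_\eta$ cannot be too tall on a tiny set) and the pointwise lower bound on $\mu_{3\eta}$ (so that a large thickened set cannot be hidden inside a bounded $L^2$ mass), both of which follow cleanly from Lemma~\ref{lem:distribution-func-changing-scale}.
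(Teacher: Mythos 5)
Your proposal is correct and follows essentially the same route as the paper: the upper bound by integrating the pointwise lower bound $\mu_{3\eta}\succcurlyeq K^{-10}\|\mu_\eta\|_2^2$ on $\ccal(\mu;\sim)_\eta$ against $\|\mu_{3\eta}\|_2^2\approx\|\mu_\eta\|_2^2$, and the lower bound by combining Corollary~\ref{cor:middlepart-has-weighted-energy}, \eqref{eq:l2-norms-are-poly-K-the-same}, and the bound $\|\mu^\sim_\eta\|_\infty\preccurlyeq K^{10}\|\mu_\eta\|_2^2$. Your detour through $\|\mu^\sim_\eta\|_1$ is just a repackaging of the paper's direct estimate $\|\mu^\sim_\eta\|_2^2\le\|\mu^\sim_\eta\|_\infty^2\,|\ccal(\mu;\sim)_\eta|$.
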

\begin{proof}
For all $y\in \ccal(\mu,\sim)_\eta$, there exists $x\in \ccal(\mu,\sim)$ such that $y\in x_\eta$. Hence by part (1) of Lemma~\ref{lem:distribution-func-changing-scale} we have
\[
\mu_{3\eta}(y) \succcurlyeq \mu_{2\eta}(x) \succcurlyeq K^{-20} \|\mu_{\eta}\|_2^2,
\]
which implies that 
\[
\|\mu_{3\eta}\|_2^2 \succcurlyeq K^{-20} \|\mu_{\eta}\|_2^4|\ccal(\mu, \sim)_\eta|.
\]
Therefore by part (3) of Lemma~\ref{lem:distribution-func-changing-scale} we deduce that 
\[
|\ccal(\mu, \sim)_\eta|\preccurlyeq \frac{K^{20}}{\|\mu_{\eta}\|_2^2}.
\]
It follows from the definition of $\mu^\sim_\eta$ that the support of $\mu^\sim_\eta$ is a subset of $\ccal(\mu,\sim)_{\eta}$. Hence if 
$\mu^\sim_\eta(y)\neq 0$, then there is $x\in \ccal(\mu,\sim)$ such that $y\in x_{\eta}$. So, by part (1) of Lemma~\ref{lem:distribution-func-changing-scale} we have 
\be\label{eq:infinity-norm-middle-weighted}
\mu_\eta(y)\preccurlyeq \mu_{2\eta}(x)\le K^{10}\|\mu_\eta\|_2^2, \text{ which implies } \|\mu^\sim_\eta\|_\infty\preccurlyeq K^{10}\|\mu_\eta\|_2^2.
\ee
Therefore we get
\be\label{eq:upper-bound-volume-middle-set}
\|\mu^\sim_\eta\|_2^2\le \|\mu^\sim_\eta\|^2_\infty |\ccal(\mu,\sim)_{\eta}|\preccurlyeq K^{20}\|\mu_\eta\|_2^4 |\ccal(\mu,\sim)_{\eta}|.
\ee
By \eqref{eq:l2-norms-are-poly-K-the-same}, Corollary~\ref{cor:middlepart-has-weighted-energy}, and \eqref{eq:upper-bound-volume-middle-set}, we get
\begin{align*}
K^{-2} \|\mu_{\eta}\|_2^2 \preccurlyeq & \|\mu_\eta\|_2\|\nu_\eta\|_2 \preccurlyeq K^2\|\mu^\sim_{\eta}\ast \nu^\sim_{\eta}\|_2^2
\\
\le & K^2 \|\mu^\sim_{\eta}\|_2^2 \preccurlyeq K^{22} \|\mu_\eta\|_2^4 |\ccal(\mu,\sim)_{\eta}|;
\end{align*}
Therefore
\[
\frac{1}{K^{24}\|\mu_\eta\|_2^2}\preccurlyeq |\ccal(\mu,\sim)_{\eta}|;
\]
and the claim follows.
\end{proof}
\begin{proposition}\label{prop:energy-middle-sets}
In the above setting the inequality
\[ E_{16\eta}(\ccal(\mu;\sim),\ccal(\nu;\sim))\succcurlyeq \frac{1}{K^{O(1)}} \ncal_{16\eta}(\ccal(\mu;\sim))^{3/2}\ncal_{16\eta}(\ccal(\nu;\sim))^{3/2}
\]
holds, where $\ccal(\mu;\sim)$ is defined in \eqref{eq:partition-middle}. 
\end{proposition}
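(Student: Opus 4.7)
The plan is to convert the $L^2$ lower bound on $\mu_\eta^\sim\ast\nu_\eta^\sim$ provided by Corollary~\ref{cor:middlepart-has-weighted-energy} into a lower bound on $E(\mathcal C(\mu;\sim)_\eta,\mathcal C(\nu;\sim)_\eta)$, then pass to $E_{16\eta}(\mathcal C(\mu;\sim),\mathcal C(\nu;\sim))$ via Lemma~\ref{lem:approx-energy}, and finally compare with $\mathcal N_{16\eta}(\mathcal C(\mu;\sim))^{3/2}\mathcal N_{16\eta}(\mathcal C(\nu;\sim))^{3/2}$ using Lemma~\ref{lem:volume-thickenned-middle-part} and \eqref{eq:basic-metric-entropy}.

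First I would use the pointwise $L^\infty$ estimate \eqref{eq:infinity-norm-middle-weighted}, which gives
\[
\mu_\eta^\sim \;\preccurlyeq\; K^{10}\|\mu_\eta\|_2^2\,\mathbf 1_{\mathcal C(\mu;\sim)_\eta},\qquad \nu_\eta^\sim \;\preccurlyeq\; K^{10}\|\nu_\eta\|_2^2\,\mathbf 1_{\mathcal C(\nu;\sim)_\eta},
\]
together with the fact (immediate from the definition) that each $\mu_\eta^\sim$ is supported in $\mathcal C(\mu;\sim)_\eta$. Convolving and applying the $L^2$ norm, this yields
\[
\|\mu_\eta^\sim\ast\nu_\eta^\sim\|_2^2 \;\preccurlyeq\; K^{O(1)}\|\mu_\eta\|_2^4\|\nu_\eta\|_2^4\,E\bigl(\mathcal C(\mu;\sim)_\eta,\mathcal C(\nu;\sim)_\eta\bigr).
\]
Combining with Corollary~\ref{cor:middlepart-has-weighted-energy} (which gives $\|\mu_\eta^\sim\ast\nu_\eta^\sim\|_2^2\succcurlyeq K^{-2}\|\mu_\eta\|_2\|\nu_\eta\|_2$) rearranges to
\[
E\bigl(\mathcal C(\mu;\sim)_\eta,\mathcal C(\nu;\sim)_\eta\bigr) \;\succcurlyeq\; \frac{1}{K^{O(1)}\|\mu_\eta\|_2^{3}\|\nu_\eta\|_2^{3}}.
\]

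Next I would invoke Lemma~\ref{lem:approx-energy} (with the scale parameter chosen so that the thickening inside the $E(\cdot,\cdot)$ factor is $\eta$, using $|1_{c\eta}|\approx|1_\eta|$ for $c$ of bounded order under the standing dimension condition) to obtain
\[
E_{16\eta}\bigl(\mathcal C(\mu;\sim),\mathcal C(\nu;\sim)\bigr) \;\succcurlyeq\; \frac{E(\mathcal C(\mu;\sim)_\eta,\mathcal C(\nu;\sim)_\eta)}{|1_\eta|^{3}} \;\succcurlyeq\; \frac{1}{K^{O(1)}|1_\eta|^{3}\|\mu_\eta\|_2^{3}\|\nu_\eta\|_2^{3}}.
\]

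Finally, I would bound the right-hand side of the target inequality from above. By \eqref{eq:basic-metric-entropy} and Lemma~\ref{lem:volume-thickenned-middle-part},
\[
\mathcal N_{16\eta}(\mathcal C(\mu;\sim)) \;\approx\; \frac{|\mathcal C(\mu;\sim)_\eta|}{|1_\eta|} \;\preccurlyeq\; \frac{K^{O(1)}}{|1_\eta|\,\|\mu_\eta\|_2^{2}},
\]
and similarly for $\nu$. Raising to the $3/2$ power and multiplying gives
\[
\mathcal N_{16\eta}(\mathcal C(\mu;\sim))^{3/2}\mathcal N_{16\eta}(\mathcal C(\nu;\sim))^{3/2} \;\preccurlyeq\; \frac{K^{O(1)}}{|1_\eta|^{3}\,\|\mu_\eta\|_2^{3}\,\|\nu_\eta\|_2^{3}},
\]
which, combined with the lower bound on $E_{16\eta}$ above, yields the claim after absorbing all $\Omega^{O(1)}$ factors into $K^{O(1)}$. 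The steps are all routine bookkeeping given the earlier lemmas; the only mild subtlety will be to keep the scales ($\eta$, $2\eta$, $3\eta$, $16\eta$, $8\eta/3$) consistent when applying Lemma~\ref{lem:distribution-func-changing-scale} and Lemma~\ref{lem:approx-energy}, which is handled by the standing assumption that the dimension condition holds at scale $\eta$ with a sufficiently large parameter $C'$.
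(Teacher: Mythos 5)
Your proposal follows essentially the same route as the paper's proof: the pointwise bound \eqref{eq:infinity-norm-middle-weighted} combined with Corollary~\ref{cor:middlepart-has-weighted-energy} to lower-bound $E(\ccal(\mu;\sim)_\eta,\ccal(\nu;\sim)_\eta)$, then Lemma~\ref{lem:approx-energy} to pass to the approximate energy, and Lemma~\ref{lem:volume-thickenned-middle-part} with \eqref{eq:basic-metric-entropy} to express everything in terms of covering numbers. The only difference is cosmetic (you compare the two sides at the end rather than substituting $|\ccal(\mu;\sim)_\eta|$ for $\|\mu_\eta\|_2^{-2}$ first), and the scale bookkeeping you flag is handled the same way in the paper.
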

\begin{proof}
By \eqref{eq:infinity-norm-middle-weighted}, we have 
\[
\mu^\sim_\eta\preccurlyeq (K^{10} \|\mu_\eta\|_2^2)\h \bb1_{\ccal(\mu,\sim)_\eta}
\text{ and }
\nu^\sim_\eta\preccurlyeq (K^{10} \|\nu_\eta\|_2^2)\h  \bb1_{\ccal(\nu,\sim)_\eta}.
\]	
It follows that 
\[ \|  \mu^\sim_\eta  \ast \nu^\sim_\eta  \|^2_2 \preccurlyeq  K^{40} \|\mu_\eta\|_2^4 \|\nu_\eta\|_2^4  \| \bb1_{\ccal(\mu,\sim)_\eta} \ast \bb1_{\ccal(\nu,\sim)_\eta} \|_2^2 =
K^{40} \|\mu_\eta\|_2^4 \|\nu_\eta\|_2^4 E(\ccal(\mu,\sim)_\eta,\ccal(\nu,\sim)_\eta). \]
By Corollary~\ref{cor:middlepart-has-weighted-energy} and the above inequality we have
\be\label{eq:towards-lower-bound-energy}
K^{-2} \|\mu_\eta\|_2\|\nu_\eta\|_2 \preccurlyeq K^{40} \|\mu_\eta\|_2^4 \|\nu_\eta\|_2^4 E(\ccal(\mu,\sim)_\eta,\ccal(\nu,\sim)_\eta).
\ee
By Lemma~\ref{lem:volume-thickenned-middle-part} and \eqref{eq:towards-lower-bound-energy}, we obtain
\be\label{eq:lower-bound-energy-1}
K^{-O(1)} |\ccal(\mu,\sim)_\eta)|^{3/2} |\ccal(\nu,\sim)_\eta)|^{3/2}\preccurlyeq E(\ccal(\mu,\sim)_\eta,\ccal(\nu,\sim)_\eta);
\ee
and so by Lemma~\ref{lem:basic-properties-metric-entropy} and Lemma~\ref{lem:approx-energy}, we deduce
\[
K^{-O(1)} \ncal_{16\eta}(\ccal(\mu,\sim))^{3/2} \ncal_{16\eta}(\ccal(\nu,\sim))^{3/2} 
\preccurlyeq E_{16\eta}(\ccal(\mu,\sim),\ccal(\nu,\sim));
\]
and the claim follows.
\end{proof}
\begin{proof}[Proof of Theorem~\ref{thm:multi-scale-BG}]
Recall that $\mu$ and $\nu$ denote the distribution measures of random variables $X$ and $Y$, respectively, and $Z$ denotes a random variable independent of $X$ and $Y$ with uniform distribution over $1_{ 3 \eta}$.

By Proposition~\ref{prop:energy-middle-sets}, for $K\succcurlyeq 1$, 
we can apply Theorem~\ref{thm:tao} to the sets $A= \ccal(\mu;\sim)$ and $B= \ccal(\nu;\sim)$ to obtain  $H\subseteq G$ and $x,y\in G$ such that
\begin{enumerate}
\item (Approximate structure) $H$ is an $K^{O(1)}$-approximate subgroup.
\item (Controlling the metric entropy) $|h(H;16\eta)-\frac{h(\ccal(\mu;\sim);16\eta)+h(\ccal(\nu;\sim);16\eta)}{2}| \leq \log K$. 	
\item (Large intersection) $|h(\ccal(\mu;\sim)\cap xH;16\eta)-h(\ccal(\mu;\sim);16\eta)| \leq\log K$ and 
$$|h(\ccal(\nu;\sim)\cap Hy;16\eta)-h(\ccal(\nu;\sim);16\eta)| \leq \log K.$$
\end{enumerate}
We will show that Theorem \ref{thm:multi-scale-BG} holds for these choices of $H \subseteq G$ and $x, y \in G$. 

By Lemma~\ref{lem:basic-properties-metric-entropy} we have $|\log \ncal_{16\eta}(\ccal(\mu;\sim))-\log(|\ccal(\mu;\sim)_\eta|/|1_\eta|)|\preccurlyeq 1$. Hence, Lemma~\ref{lem:volume-thickenned-middle-part} implies 
\[
|\log \ncal_{16\eta}(\ccal(\mu;\sim))-(\log(1/|1_\eta|)-\log \|\mu_\eta\|_2^2)| \ll \log K
\]
if $K\succcurlyeq 1$. Thus
\be\label{eq:metric-entropy-renyi-entropy}
|\log \ncal_{16\eta}(\ccal(\mu;\sim))-H_2(\mu;\eta)|\ll \log K.
\ee
By \eqref{eq:metric-entropy-renyi-entropy}, Lemma~\ref{lem:basic-properties-metric-entropy}, and part (2) of Theorem \ref{thm:tao} we have 
\[
\left|h(H;\eta)-\frac{H_2(\mu;\eta)+H_2(\nu;\eta)}{2}\right|\ll \log K
\]
if $K\succcurlyeq 1$. We also notice that by \eqref{eq:l2-norms-are-poly-K-the-same} we have $|H_2(\mu;\eta)-H_2(\nu;\eta)|\ll \log K$. Combining these two fact we deduce that
\[
|h(H;\eta)-H_2(\mu;\eta)|\ll \log K.
\] 
This proves the second property mentioned in Theorem \ref{thm:multi-scale-BG} for the set $H$. 

Finally, to prove the third property, note that 
\[ \ncal_\eta(\ccal(\mu;\sim)\cap xH)\succcurlyeq K^{-O(1)} \ncal_{\eta}(\ccal(\mu;\sim));
\]
and so by \eqref{eq:metric-entropy-renyi-entropy} we get
\begin{equation}\label{eq:boundNH}
\ncal_\eta(\ccal(\mu;\sim)\cap xH) \succcurlyeq  K^{-O(1)} 2^{H_2(\mu;\eta)}.
\end{equation}
On the other hand, by Lemma~\ref{lem:basic-properties-metric-entropy}, Corollary~\ref{cor:larger-nbhd}, and the fact that $\ccal(\mu;\sim)$ is an $\eta$-separated set, we have 
\[
\ncal_\eta(\ccal(\mu;\sim)\cap xH)\approx \ncal_{\eta/2}(\ccal(\mu;\sim)\cap xH)=  \#(\ccal(\mu;\sim)\cap xH).
 \]
 Altogether we have
\begin{equation}\label{eq:tedaad}
\#(\ccal(\mu;\sim)\cap xH)\succcurlyeq K^{-O(1)} 2^{H_2(\mu;\eta)}.
\end{equation}

For every $z'\in \ccal(\mu;\sim)_\eta$ there exist $z\in \ccal(\mu;\sim)$ such that $z'\in z_\eta$.  Since $\mu_{3 \eta}(z')=\mu(z'_{3\eta})/|1_{3\eta}|$
and $\mu_{2\eta}(z)\ge K^{-10}\|\mu_{\eta}\|_2^2$,  by part (1) of Lemma \ref{lem:distribution-func-changing-scale} we have
\begin{equation}\label{eq:karaan}
\mu_{3\eta}(z')\succcurlyeq \mu_{2\eta}(z) \ge K^{-10}\|\mu_{\eta}\|_2^2, \quad \text{ and }  \quad \mu(z'_{3\eta}) \ge \widehat{C} K^{-10} 2^{-H_2(\mu;\eta)}.
\end{equation}
where $ \widehat{C}= \Omega^{O(1)}$. 
 Therefore
 \begin{align*}
\bbp(XZ\in (xH)_{\eta})  \ge &  \int_{(\ccal(\mu;\sim)\cap xH)_\eta} \mu_{3\eta}(z')\d z' \\
\succcurlyeq &
  K^{-10}\|\mu_{\eta}\|_2^2 |(\ccal(\mu;\sim)\cap xH)_\eta| \\
  \approx &  K^{-10} 2^{-H_2(\mu;\eta)} \ncal_{\eta}(\ccal(\mu;\sim)\cap xH) \succcurlyeq   K^{-O(1)}.	
 \end{align*}
The lower bound for $\bbp(ZY\in (Hy)_{\eta})$ can be proved by a similar argument. Finally, to prove the last claim, we have
\begin{align*}
| \{h\in H_{\eta} | \h \bbp(X\in (xh)_{3\eta})  \ge \widehat{ C} K^{-10} 2^{-H_2(X;\eta)}\}  | & =
| \{ z' \in (xH)_{\eta} |   \mu( z'_{ 3 \eta})   \ge \widehat{ C} K^{-10} 2^{-H_2(X;\eta)}  \} |  \\
& \ge  | \{ z' \in (\ccal(\mu;\sim)\cap xH)_{\eta} |   \mu( z'_{ 3 \eta})   \ge \widehat{ C} K^{-10} 2^{-H_2(X;\eta)}  \} | \\
& =  | (\ccal(\mu;\sim)\cap xH)_{\eta}| \\
& \succcurlyeq K^{-O(1)} 2^{H_2(\mu;\eta)} \cdot |1_{\eta}| = K^{-O(1)} | H_{\eta} |.
 \end{align*}
This proves the claim.  
\end{proof}

\bibliographystyle{plain}
\bibliography{Ref}
\end{document}